\newtheorem{theorem}{Theorem}[section]
\newtheorem{corollary}[theorem]{Corollary}
\newtheorem{lemma}[theorem]{Lemma}
\newtheorem{proposition}[theorem]{Proposition}
\newtheorem{definition}[theorem]{Definition}
\theoremstyle{remark}
\newtheorem{remark}[theorem]{Remark}
\newcommand{\s}{{\mathbf{s}}}
\newcommand{\ii}{{\iota}} 
\newcommand{\C}{{\mathbb{C}}}
\newcommand{\A}{{\mathbb{A}}}
\newcommand{\ord}{{\text{ord}}}
\def\Z{\mathbb{Z}}
\def\o{\mathfrak{o}} 
\def\a{\alpha}
\newcommand\xqed[1]{%
  \leavevmode\unskip\penalty9999 \hbox{}\nobreak\hfill
  \quad\hbox{#1}}
\newcommand\example{\xqed{$\triangle$}}
\let\@fnsymbol\@arabic
\begin{document}

\title{Whittaker Coefficients of Metaplectic Eisenstein Series}
\author{Benjamin Brubaker} 
\author{Solomon Friedberg}
\address{School of Mathematics, University of Minnesota, 206 Church Street SE, Minneapolis, MN 55455}
\email{brubaker@math.umn.edu}
\address{Department of Mathematics, Boston College, Chestnut Hill, MA 02467-3806}
\email{solomon.friedberg@bc.edu}
 \thanks{This work was supported by NSF grant DMS-1258675 (Brubaker) and NSA grant H98230-13-1-0246 (Friedberg).}
\subjclass[2010]{Primary 11F70; Secondary 05E10, 11F68}
\keywords{Eisenstein series, metaplectic group, Whittaker coefficient, canonical bases, Lusztig data}
\begin{abstract} We study Whittaker coefficients for maximal parabolic 
Eisenstein series on metaplectic covers of split reductive groups.  
By the theory of Eisenstein series these coefficients have meromorphic continuation and functional equation.
However they are not Eulerian and the standard methods to compute them in the reductive case do not
apply to covers.  For ``cominuscule'' maximal parabolics, we give an explicit description of the coefficients as
Dirichlet series whose arithmetic content is expressed in an exponential sum. The exponential sum
is then shown to satisfy a twisted multiplicativity, reducing its determination to prime
power contributions. These, in turn, are connected to Lusztig data for canonical bases on the dual group using a result of Kamnitzer. 
The exponential sum at prime powers is shown to simplify for generic Lusztig data. At the remaining degenerate
cases, the exponential sum seems best expressed in terms of Gauss sums depending on string data for canonical
bases, as shown in a detailed example in $GL_4$. Thus we demonstrate that the arithmetic part of metaplectic 
Whittaker coefficients is intimately connected to the relations between these two expressions for canonical
bases. 
\end{abstract}
\maketitle

\section{Introduction}

The computation of the Whittaker coefficients of Eisenstein series on reductive groups has had
far-reaching consequences for
the study of automorphic forms. As two prominent examples, Langlands' computations of
the constant terms of Eisenstein series highlighted the role of the dual group in automorphic forms
and inspired his functoriality conjectures, while the Casselman-Shalika formula for the
coefficients of Borel Eisenstein series at unramified places has been a staple of the Langlands-Shahidi and
Rankin-Selberg methods used to obtain
analytic properties of $L$-functions. In this paper, we provide a general method for explicitly 
computing the Whittaker coefficients of parabolic
Eisenstein series on metaplectic covers of reductive groups. 

In order to explain the novel features of this more general context, we briefly review
the theory for reductive algebraic groups.
Let $G$ be a split connected reductive algebraic group defined over a number field $F$
and let $P$ be a standard maximal parabolic subgroup of $G$ with Levi factorization $P=MN$.   If $\pi$ is an automorphic
representation of $M(\A_F)$ then one may form the corresponding Eisenstein series on $G(\A_F)$.  The constant term
of this Eisenstein series is an Euler product and may be expressed in terms of the Langlands
$L$-functions $L(s,\pi,r_i)$ where $\oplus_{i=1}^m r_i$ is a decomposition of the adjoint representation of $^L\!G$ on
the complexified Lie algebra $^L{\mathfrak n}$.  The Eisenstein series has continuation in $s$ and
satisfies a functional equation and hence so
does this constant term.
When $\pi$ is generic the further study of the Whittaker coefficients
of the Eisenstein series may be used to obtain the continuation of each of these $L$-functions; this is the
Langlands-Shahidi method.  In doing so one uses in a critical way the uniqueness of the Whittaker
functional to see that the Whittaker coefficients are Eulerian.  Then the Casselman-Shalika formula may
be applied at almost all places to evaluate the local contributions to the Whittaker coefficients in
terms of local Langlands $L$-functions.  By an induction in stages argument, 
one can similarly study Eisenstein series for non-maximal parabolics.

When $G$ is a split semisimple simply connected algebraic group over $F$
and $F$ contains the $n$-th roots of unity,
Matsumoto~{\cite{matsumoto}} defined an $n$-fold ``metaplectic" covering group $\widetilde{G}_{\A_F}$ of $G(\A_F)$
that splits over $G(F)$.  The group law makes use of the local Hilbert symbol in the completions
of $F$.  Following Brylinski and Deligne \cite{brylinski-deligne}, one may also define metaplectic covers of 
reductive and non-simply connected groups, sometimes at the expense of
requiring that $F$ contain more roots of unity.

In view of the importance of Whittaker coefficients for Eisenstein series on reductive groups,  it is 
natural to ask whether similar explicit formulas exist for metaplectic covers.  
Already for the $n$-fold cover of $GL_2$ one sees that the situation in the metaplectic case  is 
fundamentally different.
The constant term of the Eisenstein series in that case is a quotient of Hecke $L$-functions, but in fact 
Kubota showed in \cite{kubota} that each 
non-degenerate Whittaker coefficient is a 
Dirichlet series that is an infinite sum of $n$-th order Gauss sums.  
When $n > 2$, these Whittaker coefficients are not
Eulerian and the space of Whittaker functionals is of dimension
greater than 1. However, the metaplectic Eisenstein series still has analytic continuation and functional equation
and the Whittaker coefficients do as well.  Though these 
Dirichlet series built out of Gauss sums are not Langlands $L$-functions, 
they share all of the same analytic properties while possessing a more complicated arithmetic structure.

This structure was further explored for covers of higher rank groups in the case of Eisenstein series
induced from the Borel subgroup by the authors and Daniel Bump \cite{bbf-wmd2, eisenxtal, bbf-book}. 
Working in the $S$-integer formalism (for a finite set of places $S$, to be described in the next section), we gave a complete answer for the 
Whittaker coefficients of Borel Eisenstein series on an $n$-fold cover of $SL_{r+1}$ in \cite{eisenxtal}. The results are expressed
using $n$-th order Gauss sums whose defining data comes from a surprising source -- Kashiwara's crystal bases. 

More precisely, the Whittaker coefficients are indexed by an $r$-tuple of nonzero $S$-integers~$\boldsymbol{k}$
which specifies the character of the maximal unipotent at each simple root. To each such $\boldsymbol{k}$
the associated Whittaker coefficient contains a multiple Dirichlet series roughly of the form
$$\sum_{C_1,\dots,C_r\neq0} H(C_1,\dots,C_r; \boldsymbol{k}) |C_1|^{-2s_1}\dots|C_r|^{-2s_r}.$$
Here the $s_i$ are complex variables and the series initially converges when their real parts are sufficiently large.
The sum is over $S$-integers $C_i$ modulo units, and $|C_i|$ denotes the norm of 
$C_i$. As noted above, these series are not typically Eulerian but rather
are {\it twisted Euler products} -- the coefficients $H$ are not multiplicative,
but combine by means of $n$-th power residue symbols -- so that ultimately the entire
series is determined by coefficients of the form $H(p^{\ell_1}, \ldots, p^{\ell_r}; p^{m_1}, \ldots, p^{m_r}) =: H(p^{\boldsymbol{\ell}}; p^{\boldsymbol{m}})$ 
for a prime $p$ in the $S$-integers.  It is these latter coefficients which may be described using crystal bases. For work
on Borel Eisenstein series on covers of other classical groups, see 
\cite{bbcg, friedberg-zhang, friedberg-zhang2}.

In this work, we study the Whittaker coefficients
of metaplectic Eisenstein series induced from a cover of a maximal parabolic
subgroup $P$.  Such series are constructed from metaplectic automorphic representations of lower rank groups.
While a number of our methods work in complete generality, our sharpest results are obtained in the case when 
the maximal parabolic $P$ is ``cominuscule'' -- the omitted simple root appears with multiplicity one in the highest root. 
Equivalently, $P$ is cominuscule if its unipotent radical is abelian. In
particular, all maximal parabolics for groups of type $A$ satisfy this condition, and the only groups without such
parabolics are those of types $E_8, F_4,$ or $G_2$. See Section~5 for further discussion. 
In this case, we establish a general formula for these coefficients in terms
of exponential sums and the Whittaker coefficients of the inducing data (Theorem~\ref{whittakeratlast}).  These coefficients
inherit analytic continuation and functional equations from the Eisenstein series, thus giving new examples of Dirichlet series with
these analytic properties. The exponential sums appearing in the Whittaker coefficient are then connected to
the representation theory of the dual group of~$G$. 

The metaplectic cover presents obstacles to the use of standard methods for 
studying parabolic Eisenstein series on reductive groups.  Indeed,
local Whittaker models on covers are not unique (and hence the resulting global Whittaker coefficients are
not Eulerian).  Moreover there is a complicated determination of the metaplectic cocycle which
describes the group law on the cover, one that is not amenable to calculations except in rank one.

To explain our resolution of these issues, recall that the Eisenstein series is defined (working over the ring $\o_S$ of $S$-integers)
as an average over elements $\gamma \in P(\o_S) \backslash G(\o_S)$. We show that representatives $\gamma$ may be
parametrized by embeddings of rank one matrices at the roots in the unipotent radical of $P$. For such 
matrices, it is possible to systematically handle the cocycle that arises and so this parametrization
is well-suited for covers.  In it, the
order of roots appearing in the product is dictated by a reduced decomposition for $w^P$, the Weyl
group element such that $w_0 = w_M w^P$ where $w_M$ is the long element in the Weyl group of
the Levi factor $M$ of $P$. If $P$ is assumed cominuscule, then representatives $\gamma$ may be
determined using only the bottom rows of the embedded $SL_2(\o_S)$ matrices, called $(c_i, d_i)$.
Furthermore, we may parametrize double cosets in the flag variety needed in the computation by all non-zero
$d_i$ and $c_i$'s modulo prescribed products of the $d_i$.

Though the decomposition depends on a reduced decomposition for $w^P$, if $P$ is cominuscule
such decompositions are unique up to commuting simple reflections (see Remark~\ref{canremark}).
Moreover in Section~\ref{canbases}, to any fixed prime $p$ in $\o_S$, the valuations of the $d_i$
are shown to match the lengths of edges in the one-skeleton of Kamnitzer's MV polytopes \cite{kamnitzer}
corresponding to $w^P$. Thus, by results in \cite{kamnitzer}, the valuations of the $d_i$ are the $\boldsymbol{i}^P$-Lusztig
data for canonical basis elements in the dual group $G^\vee$, where $\boldsymbol{i}^P$ is the reduced word for the decomposition of $w^P$.
Thus we parametrize contributions to the Whittaker coefficient in terms of Lusztig data.
 Two qualifications are in order. First, Kamnitzer works
in the affine Grassmannian, but as the combinatorics of valuations is independent of the base field, we
can draw a formal analogy. Second, the results of \cite{kamnitzer} are presented for the Borel case,
but one can do a relative version of \cite{kamnitzer}.
A similar connection was made in
the local setting by McNamara \cite{mcnamara-duke}, who was able to connect a decomposition of the
unipotent radical of the standard Borel $B$ directly to MV cycles, initially defined by Mirkovi\'c and Vilonen \cite{mirkovic-vilonen} 
in the context of the geometric Satake correspondence.

There is another approach to computing maximal parabolic
Eisenstein series on the metaplectic group, using Pl\"ucker
coordinates on the quotient space $P\backslash G$.  Indeed, this was the approach used in
prior work of Bump, Hoffstein and the authors
\cite{bfh-nonvanishing, bump-hoffstein, bbfh}.  To use
such a parametrization on the metaplectic group, 
one must compute the Kubota symbol, the root of unity arising from the metaplectic two-cocycle. 
But computing the Kubota symbol from Pl\"ucker coordinates is difficult, and 
the formulas even in low rank cases are extremely complicated (compare Proskurin \cite{proskurin}
and \cite{bbfh}, Eqn.\ (18)). 
Our approach here avoids this by systematic use of
the factorization into rank one subgroups.

There is also another description of Whittaker coefficients of metaplectic Borel
Eisenstein series due to Chinta, Gunnells, and Offen~\cite{chinta-gunnells-method, chinta-offen}. Its local components are constructed as alternators built from a metaplectic version of the usual Weyl group action,
generalizing the Casselman-Shalika formula combined with the Weyl character formula in the case of the trivial cover $n=1$. 
This description over a local field holds for all covers of unramified reductive groups \cite{mcnamara-metcs}. However, this 
approach does not provide a way to calculate the individual coefficients
$H(p^{\boldsymbol{\ell}}; p^{\boldsymbol{m}})$ (for $n=1$ this is equivalent to passing from the Weyl
character formula to the Freudenthal multiplicity formula) and seems restricted to the Borel case.

We expect several applications from the results established here. First, the Dirichlet series we obtain have analytic continuation and functional
equation, and these properties may then be used to study the distribution of their coefficients. To give an example, one may induce a pair of automorphic forms on the double cover of $GL_2$ 
to $GL_4$ using the detailed example presented here. A Whittaker coefficient of the resulting Eisenstein series yields a Dirichlet series involving an exponential sum, analyzed below, and
a product of Fourier coefficients of the inducing data.  These Fourier coefficients in turn
are related to central values of quadratically-twisted $GL_2$ $L$-functions by work of Waldspurger.  Thus our work may be used to study the distribution of pairs of such central values
as one varies the quadratic twist. 

We emphasize that our results are not limited to maximal parabolic subgroups. They apply equally well to any metaplectic, parabolic Eisenstein series where the parabolic arises from a sequence of nested maximal cominuscule parabolics simply by inducing in stages. This was carried out for one particular sequence in type $A$ in \cite{eisenxtal}. Using the results of this paper to induce in stages from the Borel subgroup, we obtain generalizations of many of the results in \cite{eisenxtal} for covers of groups not of type $E_8, F_4,$ or $G_2$. (As noted above, these three exceptional types are the only cases with no cominuscule parabolics.)  A further analysis of a combinatorial nature is carried out for certain types
in \cite{friedberg-zhang, friedberg-zhang2}.

Lastly, even in the linear algebraic group (i.e.\ the 1-fold cover)
case, this computation is new. As noted above, previous applications of parabolic Eisenstein series 
for reductive groups avoided evaluating an exponential sum by invoking the fact that automorphic forms are unramified principal
series at almost all places and hence evaluated by the Casselman-Shalika formula. When computing Whittaker coefficients
in the case of unique Whittaker models, it may still be preferable to use the more familiar method exploiting intertwining operators 
on unramified principle series. But in contrast, the methods developed here apply to other interesting unipotent periods arising in the character expansions of automorphic forms,
expressed as sums over unipotent orbits (see for example \cite{miller-sahi}) as well as certain classes of unipotent period integrals motivated by mathematical physics \cite{green-miller-russo-vanhove, kleinschmidt-et-al}.

This paper is organized as follows.  In Section 2 we set the notation and review information
about the metaplectic groups (both local and global).  Section 3 explains how to construct 
a metaplectic Eisenstein series on a split reductive group $G$ by inducing parabolically.  This is not as trivial as it sounds, 
since if the Levi subgroup $M$ of a
maximal parabolic subgroup $P$ is of the form $M_1\times M_2$ where the $M_i$ are linear algebraic groups,
then it does not follow that the inverse image of $P$ in the covering group is a direct product.
(See for example Takeda \cite{takeda}.) 
The computation of the Whittaker coefficients of the Eisenstein series is given in Section 4, using
a decomposition theorem from Section 5. This order of presentation allows us to complete the discussion
of the Whittaker integral before turning to combinatorial questions that occupy the remainder of the paper.
Section 5 begins with results for arbitrary reductive groups and their quotients $P\backslash G$. Gradually we place
additional assumptions on the parabolic $P$, arriving at cominuscule parabolics in order to achieve a particularly nice parametrization of double cosets that arise in computing the Eisenstein series.
For such parabolics, the Whittaker coefficient may be expressed as a Dirichlet series involving an exponential piece
and the Whittaker coefficients of the inducing data.  

In Section 6 the exponential sum appearing in the Whittaker function is examined and a 
twisted multiplicativity theorem is established, reducing these contributions for general argument to the prime
power case.  This result thus subsumes and generalizes twisted multiplicativity
for specific Lie types as in \cite{eisenxtal, friedberg-zhang}.  
In Section 7 we briefly illustrate these ideas by working
out one case in detail, that of the Eisenstein series on a cover of $GL_4$ attached to $P$
with Levi factor $GL_2\times GL_2$.  
Section 8 concerns canonical bases
and the exponential sum.  Here we explain the link between the computations above, the Lusztig data,
and MV cycles and polytopes.  The key point is that Lusztig data appears naturally in the double coset
parametrization presented in Section 5, while the value of the exponential sum that appears
in the Whittaker coefficient computation is best understood via the polytope built from Kashiwara's string data, 
as studied by Berenstein-Zelevinsky and Littelmann. It is an important feature that both objects appear 
naturally at different points in the computation. The problem of giving uniform expressions
for the exponential sum at powers of a prime $p$, valid for all groups and all reduced decompositions
for long elements of the Weyl group, is thus closely connected to the appearance of difficult piecewise linear maps
in the bijection between Lusztig and string data. 

Finally, Sections~\ref{genericevalsection} and \ref{vanishing} concern the evaluation of the exponential sum $H(p^{\boldsymbol{\ell}}; p^{\boldsymbol{m}})$ arising in the Whittaker coefficient at powers of a fixed prime $p$.  In Section 9 we prove
two results applicable for all cominuscule parabolics. The first (Proposition~\ref{phifunprop}) states that the exponential sum has a very regular
evaluation for prime powers $p^{\boldsymbol{\ell}}$ for $\boldsymbol{\ell}$ away from certain bounding hyperplanes depending on $\boldsymbol{m}$. These hyperplanes conjecturally cut out the associated $\boldsymbol{i}$-Lusztig data for the representation of highest weight $\boldsymbol{m}$ of the dual group (see Remark~\ref{canbasesconnection}). The second result (Proposition~\ref{genericvanprop}) states that the exponential sum vanishes for $\boldsymbol{\ell}$ outside the set of Lusztig data bounded in terms of the highest weight $\boldsymbol{m} + \rho$ of the dual group, where $\rho$ is the Weyl vector of the dual
group, with the possible exception of a certain set of hyperplanes. 

We would like to be able to show that for fixed $\boldsymbol{m}$, the exponential sum $H(p^{\boldsymbol{\ell}}; p^{\boldsymbol{m}})$ vanishes at {\it all} integer lattice points $\boldsymbol{\ell}$ outside the set of $\boldsymbol{i}$-Lusztig data for the corresponding highest weight representation. This would allow us to conclude that any prime-powered Whittaker coefficient is supported at only finitely many $\boldsymbol{\ell}$. In \cite{eisenxtal}, the authors and Bump were able to show this in the very special case of the nicest (i.e.~``Gelfand-Tsetlin'') word $\boldsymbol{i}$ in type $A$.  However, in Section~\ref{vanishing} we revisit the $GL_4$ example initially presented in Section 7 and demonstrate that even in the cominuscule case, such a simple vanishing statement is not possible in general.  First we explain that the exponential sum at prime powers 
indeed has support at infinitely many lattice points outside the set of Lusztig data for the corresponding highest weight representation, under the equivalence 
of the prime power coordinates with Lusztig data. Nevertheless, the total contribution to the Dirichlet series from all basis vectors 
in a given weight space does vanish for Lusztig data outside the string data polytope (Proposition~\ref{cancel}).
It seems reasonable to expect a similar phenomenon to hold for all cominuscule parabolic, metaplectic Eisenstein series.
In Section~\ref{evaluation}, we demonstrate the complexity of the evaluation for Lusztig data lying outside the hyperplanes
determined by $\boldsymbol{m}$, but inside that of $\boldsymbol{m}+\rho$, i.e.~the lattice points not covered by Proposition~\ref{phifunprop}. For many of these points, we give the evaluation
in terms of the corresponding string data (Theorem~\ref{stringmatching}). Thus the evaluation of the Whittaker coefficients
depends, in an essential way, on the relations between the two most important parametrizations of canonical bases.

We thank Arkady Berenstein, Daniel Bump, Jeffrey Hoffstein, Peter McNamara, and Lei Zhang for helpful conversations.

\section{Metaplectic covers\label{covers}}

Throughout this paper, let $n\geq1$ be a fixed integer and let
$F$ be a totally complex number field containing a full set of $2n$-th roots of unity.
(The arguments below would work for function fields as well with minor modifications.)
For each place $v$ of $F$, let $F_v$ denote the corresponding completion at $v$.
When $v$ is non-archimedean, let $\mathfrak{o}_v$ denote the ring of integers of $F_v$ and $\varpi_v$ be a 
local uniformizer. Assume that $G$ is a split reductive group over $F$. This ensures that, at each place, $G(F_v)$
arises by base extension from a smooth reductive group scheme $\mathbf{G}$ over $\mathfrak{o}_v$.
We first discuss the $n$-fold metaplectic covers attached to $G$, both local and global.

\subsection{Local fields\label{localfieldssect}}

The local metaplectic group 
$\widetilde{G}_v$ is a central extension of the group $G(F_v)$ by the group of $n$-th roots of unity $\mu_n$:
$$ 1 \longrightarrow \mu_n \longrightarrow \widetilde{G}_v \longrightarrow
  G(F_v)\longrightarrow 1. $$
Central extensions of semisimple, simply connected, algebraic groups $G(k)$ over an infinite field $k$ by $K_2(k)$ were classified by Brylinski and Deligne \cite{brylinski-deligne} as follows. Let $T$
be a maximal split torus of $G$, and let 
$Y = \text{Hom}(\mathbb{G}_m, T)$ be the group of cocharacters of $T$. This comes with a natural action of the Weyl group $W$ of $G$. Then the central extensions are in bijection with $W$-invariant symmetric bilinear forms
$$ B: Y \times Y \longrightarrow \mathbb{Z} $$
such that $Q(\alpha^\vee) := B(\alpha^\vee, \alpha^\vee) / 2 \in \mathbb{Z}$ for all coroots $\alpha^\vee$.
In the case that $k$ is the local field $F_v$, by composing with the Hilbert symbol map
$ K_2(F_v) \rightarrow \mu_n$, one realizes the group $\widetilde{G}_v$ above. 

If $v$ is a complex place, then the Hilbert symbol is identically $1$ and the $n$-fold metaplectic cover $\widetilde{G}_v$ is
isomorphic to $G(F_v)\times\mu_n$.  For the rest of this subsection, suppose instead that $v$ is non-archimedean.

Basic properties of the $n$-th order local Hilbert symbol may be found in~\cite{serre}.
In particular, we recall that when
$\ord_v(n)=0$ the symbol is a bilinear map $( \cdot, \cdot ) : F_v^\times \times F_v^\times \longrightarrow \mu_n$
satisfying
$$ (s, t) (t,s) = (t, -t) = (t, 1-t) = 1 \quad \text{for all $s, t \in F_v^\times$}. $$
The assumptions on $F$ ensure that $n | (q_v-1)$, where $q_v$ is the cardinality of the residue field
of $F_v$. This implies that the Hilbert symbol $(s,t)$, $s, t \in F_v^\times$, is determined from the congruence
$$ (s,t) \equiv \left( (-1)^{\ord_v(s) \ord_v(t)} s^{\ord_v(t)}t^{-\ord_v(s)} \right)^{(q_v-1)/n}\bmod \varpi_v\mathfrak{o}_v, $$
where $\ord_v$ is the valuation at $v$.
Moreover, since $2n | (q_v-1)$, we have the evaluations
$$ (-1, t) = 1, \quad (\varpi_v^a, \varpi_v^b) = 1 \quad \text{for any} \quad t \in F_v^\times, a, b \in \mathbb{Z} .$$

In the special case $G = SL_2$, the restriction map $K_2(F_v) \rightarrow \mu_n$ induces a map
of abelian groups $\xi: \mathbb{Z} \longrightarrow H^2(SL_2(F_v), \mu_n)$.
To work over a split reductive group we use a slightly more general result, due to McNamara \cite{mcnamara-found}.

\begin{theorem} To any split reductive group $G$ over a local field $F_v$ with assumptions
as above, and a choice of bilinear form $B$, there exists a central extension $\widetilde{G}_v$ of $G(F_v)$ by
$\mu_n$ such that, for each root $\alpha$, the pullback of the central extension under the morphism of
group schemes
$$ \iota_\alpha : SL_2 \rightarrow G $$
to a central extension of $SL_2$ is realized by the cohomology class $\xi(Q(\alpha^\vee))$.
\label{localmet} \end{theorem}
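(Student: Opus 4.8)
The plan is to reduce the statement for a general split reductive group to the already-classified case of simply connected semisimple groups, and then to control the behavior at each root through the morphisms $\iota_\alpha$. First I would recall that by Brylinski--Deligne the central extensions of a split reductive group $G$ over $F_v$ by $K_2$ are classified by the pair consisting of a $W$-invariant quadratic form $Q$ on the cocharacter lattice $Y$ (equivalently the symmetric bilinear form $B$ with $B(y,y) = 2Q(y)$) together with additional data recording how the extension behaves on the center and on the torus; when $G$ is semisimple simply connected the form $Q$ alone determines the extension. So the first step is to produce, from the given $B$, a Brylinski--Deligne extension $\widetilde{G}$ of $G$ by $K_2(F_v)$ whose restriction along the canonical $\iota_\alpha: SL_2 \to G$ has invariant $Q(\alpha^\vee)$; this is exactly what the Brylinski--Deligne classification gives, since $\iota_\alpha$ pulls the form $Q$ back to the standard form on the $SL_2$ cocharacter lattice scaled by $Q(\alpha^\vee)$, and for $SL_2$ the form is the complete invariant. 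Pushing out along the Hilbert symbol $K_2(F_v) \to \mu_n$ then yields the desired $\widetilde{G}_v$.

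Next I would verify the compatibility with $\xi$. The key observation is that pushout commutes with pullback: the functor sending a $K_2$-extension to its $\mu_n$-pushout is compatible with restriction along any homomorphism of group schemes, because both operations are defined by universal properties on the category of central extensions. Hence the pullback of $\widetilde{G}_v$ along $\iota_\alpha$ is the $\mu_n$-pushout of the pullback of $\widetilde{G}$ along $\iota_\alpha$. By the previous step the latter is the $K_2$-extension of $SL_2(F_v)$ classified by the integer $Q(\alpha^\vee)$, and the $\mu_n$-pushout of that extension is by definition the image under the map $\xi: \mathbb{Z} \to H^2(SL_2(F_v),\mu_n)$ of $Q(\alpha^\vee)$, i.e.\ the class $\xi(Q(\alpha^\vee))$. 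This establishes the asserted property for every root $\alpha$ simultaneously.

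I expect the main obstacle to be bookkeeping rather than conceptual: one must pin down exactly which Brylinski--Deligne extension to use, since for a non-simply-connected or non-semisimple $G$ the form $B$ does not determine the extension uniquely, and different choices could in principle differ by extensions that become trivial after pushout to $\mu_n$ but still need to be handled consistently across all roots at once. Concretely, the subtle point is to check that a single global choice of $\widetilde{G}$ over $G$ induces the right local $SL_2$-class at every root, i.e.\ that the restrictions along the various $\iota_\alpha$ are mutually compatible; this is where one invokes that $B$ is $W$-invariant and that $Q(\alpha^\vee) \in \mathbb{Z}$ for all coroots, precisely the hypotheses in the statement. Since this is McNamara's theorem, I would cite \cite{mcnamara-found} for the construction and limit the write-up to indicating the pushout/pullback compatibility and the reduction to $SL_2$, remarking also that at a complex place the Hilbert symbol is trivial so $\widetilde{G}_v \cong G(F_v) \times \mu_n$ and the statement is vacuous there.
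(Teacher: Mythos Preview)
The paper does not give a proof of this theorem at all; it simply attributes the result to McNamara and cites \cite{mcnamara-found}. Your proposal therefore goes beyond what the paper does: you sketch an argument via Brylinski--Deligne classification, pushout along the Hilbert symbol, and the compatibility of pushout with pullback along each $\iota_\alpha$, which is a reasonable outline of how the construction actually works. Your closing remark, that one should ultimately cite \cite{mcnamara-found} for the construction and only indicate the reduction, is exactly what the paper does (minus even the indication), so in that sense your approach and the paper's agree; the additional explanation you provide is sound background but is not strictly required by the paper's standards.
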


We will work with a two-cocycle $\sigma_v\colon G(F_v)\times G(F_v)\to \mu_n$ whose cohomology
class in $H^2(G(F_v), \mu_n)$ corresponds to $\widetilde{G}_v$.
Then we may realize
$$\widetilde{G}_v=\{(g,\zeta)\mid g\in G(F_v), \zeta\in\mu_n\}$$ with multiplication given by
$$(g_1,\zeta_1)(g_2,\zeta_2)=(g_1g_2,\sigma_v(g_1,g_2)\,\zeta_1\zeta_2).$$
If $G=SL_2$, following Kubota \cite{kubota}, we have the following explicit formula
for a cocycle $\sigma^{\alpha}_v$ whose class in $H^2(SL_2(F_v),\mu_n)$
is $\xi(Q(\alpha^\vee))$:
$$ \sigma^{\alpha}_v(g, h) = \left( \frac{x(gh)}{x(g)}, \frac{x(gh)}{x(h)} \right)^{-Q(\alpha^\vee)}, \quad \text{where} \quad x(g) = x \left( \begin{matrix} a & b \\ c & d \end{matrix} \right) = \begin{cases} c & \text{if $c \ne 0$}, \\ d & \text{if $c = 0$.} \end{cases} $$
For general $G$, we use the two-cocycle $\sigma_v$ that matches $\sigma^{\alpha}_v$ upon
composition with $\iota_\alpha$ for each coroot $\alpha$.  Since $\sigma_v$ is given
in terms of local Hilbert symbols, by Hilbert reciprocity we have that
if $g,h\in G(F)$ then $\prod_v \sigma_v(g,h)=1$.

Moreover, given a choice of ordered basis  $e_1, \ldots, e_r$ for the cocharacter group $Y$, there is an induced isomorphism $(F_v^\times)^r \simeq T(F_v)$. Then 
the 2-cocycle $\sigma$ on a pair of torus elements $s = (s_1, \ldots, s_r)$ and $t = (t_1, \ldots, t_r)$ may be given
explicitly (see p.~304 of~\cite{mcnamara-found}) by:
\begin{equation} \sigma_v(s,t) = \prod_{i \leq j} (s_i, t_j)_{2n}^{q_{i,j}} \quad \text{where} \quad Q(\sum_i y_i e_i) = \sum_{i \leq j} q_{i,j} y_i y_j. 
\label{torusformula} \end{equation}
Here we have written $(s,t)_{2n}$ to denote the local Hilbert symbol of order $2n$. 
Note that the $q_{i,j}$ depend on the ordered basis.
In Section~\ref{eisensteincon}, we place additional constraints on this ordered basis of $Y$ in order to
construct parabolic Eisenstein series. 

Finally, we record two splitting properties of the central extension $\widetilde{G}_v$.

\begin{proposition} The extension
$\widetilde{G}_v$ splits canonically over any unipotent subgroup of $G(F_v)$.
\end{proposition}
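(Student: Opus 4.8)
The plan is to prove existence and uniqueness of a (continuous) homomorphic section separately; uniqueness is what justifies the word \emph{canonically} and moreover forces the section to be compatible with all the natural operations. Since every unipotent subgroup of $G(F_v)$ lies in a maximal unipotent subgroup, and all of these are $G(F_v)$-conjugate to the unipotent radical $U$ of the fixed Borel $B=TU$ --- conjugation altering $\sigma_v$ only by a coboundary, so a section over $U(F_v)$ transports and restricts --- it suffices to split $\widetilde{G}_v$ over $U(F_v)$. For a root $\alpha$, write $x_\alpha(t)=\iota_\alpha\!\left(\begin{smallmatrix}1&t\\0&1\end{smallmatrix}\right)$ for the corresponding one-parameter subgroup. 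By Theorem~\ref{localmet} the pullback of $\widetilde{G}_v$ along $\iota_\alpha$ is represented by Kubota's cocycle $\sigma^\alpha_v$ \cite{kubota}; since $x\!\left(\begin{smallmatrix}1&t\\0&1\end{smallmatrix}\right)=1$ for all $t$, the explicit formula for $\sigma^\alpha_v$ recalled above gives $\sigma^\alpha_v\equiv 1$ on this subgroup. Hence $x_\alpha(t)\mapsto(x_\alpha(t),1)$ is a homomorphic section over each root subgroup.

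Next I would assemble a section on all of $U(F_v)$. Fix an ordering $\alpha_1,\dots,\alpha_N$ of the positive roots refining height, so that every $u\in U(F_v)$ factors uniquely as $x_{\alpha_1}(t_1)\cdots x_{\alpha_N}(t_N)$, and set $s(u)=(x_{\alpha_1}(t_1),1)\cdots(x_{\alpha_N}(t_N),1)$. To see that $s$ is a homomorphism it suffices to check that the Chevalley relations --- additivity within $U_\alpha$, already established, and the commutator relations $[x_\alpha(t),x_\beta(u)]=\prod_{i,j>0}x_{i\alpha+j\beta}(c_{ij}t^iu^j)$ --- lift to the sections with no $\mu_n$-defect, since $U(F_v)$ is presented by these relations. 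Arguing by induction along the root-height filtration $U=U^{(1)}\supseteq U^{(2)}\supseteq\cdots$ (each $U^{(k)}$ normal in $U$, with $[U^{(j)},U^{(k)}]\subseteq U^{(j+k)}$ and successive quotients vector groups), one reduces this to showing that the $\mu_n$-valued discrepancy $\zeta(t,u)$ between $[(x_\alpha(t),1),(x_\beta(u),1)]$ and $(\,[x_\alpha(t),x_\beta(u)],1\,)$ is additive in $t$ for fixed $u$, and conversely; this comes from the commutator identity $[ab,c]={}^{a}(\,[b,c]\,)\,[a,c]$, the centrality of $\mu_n$, and the inductive hypothesis applied to the higher-height factors on the right. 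Since $F_v$ has characteristic $0$, the group $(F_v,+)$ is divisible, so $\mathrm{Hom}((F_v,+),\mu_n)=1$; hence $\zeta\equiv 1$ and $s$ is a homomorphism. The same divisibility gives $\mathrm{Hom}(U(F_v),\mu_n)=1$ --- $U(F_v)$ being an iterated extension of copies of $(F_v,+)$ --- so the section is unique, and by restriction the section over any unipotent subgroup is unique too. Uniqueness makes the section compatible with restriction, with $G(F_v)$-conjugation, and --- via Hilbert reciprocity $\prod_v\sigma_v(g,h)=1$ for $g,h\in G(F)$ --- with the local sections over unipotent $F$-points, which is the canonicity claimed.

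I expect the only genuinely non-formal point to be the bookkeeping in the commutator step: organizing the induction along the height filtration and verifying that the $\mu_n$-valued defects are honestly additive in each root parameter, so that divisibility of $(F_v,+)$ can annihilate them. The pointwise existence of a section and its triviality on individual root subgroups are elementary by contrast. One can bypass the hands-on argument altogether by appealing to the analogous property of the Brylinski--Deligne $K_2$-extension of $G$, which splits canonically over unipotent subgroups \cite{brylinski-deligne}, and then pushing forward along the Hilbert symbol $K_2(F_v)\to\mu_n$.
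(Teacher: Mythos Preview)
Your argument is correct and considerably more detailed than what the paper offers. The paper does not prove this proposition at all: it simply asserts that the trivial section $u\mapsto(u,1)$ is a splitting over the standard unipotent radical and cites M\oe glin--Waldspurger \cite[Appendix 1]{moeglin-waldspurger} for the verification. That is, the paper's claim is really a statement about the particular cocycle $\sigma_v$ constructed following Matsumoto and McNamara --- namely that $\sigma_v|_{U(F_v)\times U(F_v)}\equiv 1$ --- rather than a general splitting argument.

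Your route is genuinely different and more robust: you argue cohomologically, showing that \emph{any} central $\mu_n$-extension of $U(F_v)$ splits, by exploiting the divisibility of $(F_v,+)$ in characteristic zero to kill both $\mathrm{Hom}$ into $\mu_n$ (giving uniqueness, hence canonicity) and the $\mu_n$-valued defects in the commutator relations (giving existence). This buys you independence from the specific cocycle representative and a clean conceptual explanation of why the splitting is canonical, which the paper's citation does not supply. The one place your sketch is thin is the induction along the height filtration in the commutator step: the claim that the defect $\zeta(t,u)$ is additive in each variable does follow from $[ab,c]={}^a[b,c]\cdot[a,c]$ together with centrality and the inductive hypothesis on the higher-height factors, but the bookkeeping of the conjugation cocycles deserves a line or two more. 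Your closing remark that one can alternatively push forward the canonical unipotent splitting of the Brylinski--Deligne $K_2$-extension is also a valid shortcut, and closer in spirit to what the paper is implicitly invoking.
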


The splitting is via the trivial section for the usual unipotent subgroup corresponding to positive roots.  
See, for example, M\oe glin-Waldspurger
 \cite[Appendix 1] {moeglin-waldspurger}.  Building on work of Brylinski-Deligne (see Section 10.7 of
\cite{brylinski-deligne}) and Moore (\cite{moore}, Lemma 11.3), one also has the following splitting (see~
\cite[Theorem 2]{mcnamara-found} and the references cited there).

\begin{proposition}\label{max-cpt} Suppose that $n$ is relatively prime to the residue characteristic of $F_v$. Then
$\widetilde{G}_v$ splits over the maximal compact subgroup $K_v = G(\mathfrak{o}_v)$.
\end{proposition}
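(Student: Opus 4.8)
The plan is to produce an explicit splitting homomorphism $\kappa\colon K_v\to\widetilde{G}_v$ by gluing the canonical sections already available over the torus and over each root subgroup, and then to check compatibility against a presentation of $G(\mathfrak{o}_v)$ of Steinberg type. The single arithmetic input is the hypothesis that $n$ is prime to the residue characteristic, i.e.\ $\ord_v(n)=0$: by the congruence formula recalled in Section~\ref{localfieldssect}, together with $2n\mid q_v-1$, this forces $(u,u')=1$ and $(u,u')_{2n}=1$ for all units $u,u'\in\mathfrak{o}_v^{\times}$ (for the Hilbert symbols of order $n$ and order $2n$, respectively). Two partial splittings follow at once. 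By \eqref{torusformula}, every coordinate of a point of $T(\mathfrak{o}_v)$ is a unit, so $\sigma_v$ vanishes on $T(\mathfrak{o}_v)\times T(\mathfrak{o}_v)$ and the trivial section splits $\widetilde{G}_v$ over $T(\mathfrak{o}_v)$; and by the preceding Proposition the trivial section $x\mapsto(x,1)$ splits $\widetilde{G}_v$ over each root subgroup $U_\alpha(F_v)$, in particular over $U_\alpha(\mathfrak{o}_v)$.

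Next I would use that $K_v=G(\mathfrak{o}_v)$ is generated by $T(\mathfrak{o}_v)$ together with the root subgroups $U_\alpha(\mathfrak{o}_v)$, $\alpha$ ranging over all roots: when $G$ is semisimple and simply connected this is the statement that a Chevalley group over a local ring equals its elementary subgroup, and for general reductive $G$ it follows from the structure theory, reducing to that case via the derived subgroup and $T(\mathfrak{o}_v)$. This yields a presentation of $G(\mathfrak{o}_v)$ with generators $x_\alpha(t)$ ($t\in\mathfrak{o}_v$) and torus elements, and it then suffices to check that setting $\kappa(x_\alpha(t))=(x_\alpha(t),1)$ and $\kappa|_{T(\mathfrak{o}_v)}=\text{trivial section}$ respects each defining relation, i.e.\ that its $\sigma_v$-discrepancy equals $1$. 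The additivity relations $x_\alpha(s)x_\alpha(t)=x_\alpha(s+t)$ and the Chevalley commutator relations $[x_\alpha(s),x_\beta(t)]=\prod x_{i\alpha+j\beta}(\,\cdots)$ take place inside a single unipotent subgroup (all roots occurring lie in one open half-space) and so are respected by the canonical unipotent splitting; likewise the conjugation of $U_\alpha(\mathfrak{o}_v)$ by $T(\mathfrak{o}_v)$ takes place inside $T(\mathfrak{o}_v)U_\alpha(\mathfrak{o}_v)$.

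What remains are the torus law $h_\alpha(u)h_\alpha(u')=h_\alpha(uu')$ and the Weyl conjugation relations $w_\alpha(u)\,x_\beta(t)\,w_\alpha(u)^{-1}=x_{s_\alpha\beta}(\pm\,u^{\langle\beta,\alpha^{\vee}\rangle}t)$ with $u,u'\in\mathfrak{o}_v^{\times}$, where $w_\alpha(u)$ is lifted via the canonical unipotent sections on $U_{\pm\alpha}$. For these I would pull the extension back along $\iota_\alpha$ --- and, for the two-root conjugation relations, along the rank-two subgroup generated by $\iota_\alpha$ and $\iota_\beta$ --- and invoke Theorem~\ref{localmet} to reduce to $SL_2$ (respectively $SL_3$ or $Sp_4$). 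There the classical Matsumoto--Moore computation expresses the discrepancy as a power of a Hilbert symbol of units, $(u,u')$ or $(u,\cdot)$; for $SL_2$ this is exactly Kubota's observation \cite{kubota} that the explicit cocycle $\sigma^{\alpha}_v$ restricts to a coboundary on $SL_2(\mathfrak{o}_v)$ when $\ord_v(n)=0$. By the first paragraph all these symbols equal $1$, so $\kappa$ is a well-defined homomorphism, and since it lies over the identity of $K_v$ it splits the extension --- recovering the splitting of \cite{brylinski-deligne, moore, mcnamara-found} cited above.

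The step I expect to be the main obstacle is this last verification: controlling the cocycle discrepancy on the Weyl conjugation and torus relations. It is precisely here that one must combine the explicit description of $\sigma_v$ through Hilbert symbols with the reduction to rank-one and rank-two subgroups afforded by Theorem~\ref{localmet}; and it is the triviality of Hilbert symbols of units --- which uses $\ord_v(n)=0$ in an essential way --- that lets the splitting exist at all, the statement genuinely failing at places dividing $n$.
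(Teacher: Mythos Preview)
The paper does not actually prove this proposition: it is stated with attribution to Brylinski--Deligne \S10.7, Moore's Lemma~11.3, and McNamara's Theorem~2, and the text immediately moves on to record the explicit Kubota section \eqref{ksection}. So there is no ``paper's own proof'' to compare against; you are reconstructing an argument that the authors deliberately outsourced.

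Your outline is in the right spirit --- it is essentially the Matsumoto--Moore generators-and-relations approach --- and you have correctly isolated the arithmetic input (triviality of $(\,\cdot\,,\,\cdot\,)$ on $\mathfrak{o}_v^\times\times\mathfrak{o}_v^\times$ when $\ord_v(n)=0$). But two steps are underjustified as written.

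First, generation is not presentation. Knowing that $K_v$ is generated by $T(\mathfrak{o}_v)$ and the $U_\alpha(\mathfrak{o}_v)$ lets you \emph{define} a candidate lift on words in the generators; it does not let you conclude the lift is well-defined on $K_v$. For that you need an actual presentation of $G(\mathfrak{o}_v)$, i.e.\ a complete list of relations. Over a field this is Steinberg's theorem, but over a local ring it is a separate (true, nontrivial) fact due to Stein and others, and the ``extra'' relations beyond the Steinberg list are governed by $K_2(\mathfrak{o}_v)$. You would need to cite this, or --- more efficiently --- argue directly that the relevant map $K_2(\mathfrak{o}_v)\to\mu_n$ is trivial.

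Second, your rank-two reduction is not licensed by Theorem~\ref{localmet} as stated: that theorem only describes the pullback along a single $\iota_\alpha$, not along the subgroup generated by two root embeddings. To know that the restriction of $\widetilde{G}_v$ to the rank-two subgroup is the ``expected'' cover of $SL_3$ or $Sp_4$ you must invoke more of the Brylinski--Deligne functoriality (the extension is determined by the form $B$, and $B$ restricts compatibly to Levi subgroups). This is true, but it is additional input, and once you are using that machinery you may as well give the cleaner conceptual proof: the extension is pulled back from $K_2(F_v)\to\mu_n$ via the Hilbert symbol, and the tame symbol vanishes on the image of $K_2(\mathfrak{o}_v)$ precisely when $n$ is prime to the residue characteristic. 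That is essentially what the cited references do.
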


In particular, if
$n$ is relatively prime to the residue characteristic, following Kubota \cite{kubota} we may choose a lifting 
$(k, \kappa_v(k))$ of $G(\mathfrak{o}_v)$ to $\widetilde{G}_v$ where, on matrices in $SL_2(\mathfrak{o}_v)$,
\begin{equation} \kappa_v \left(\iota_\alpha \left( \begin{matrix} a & b \\ c & d \end{matrix} \right) \right) = \begin{cases} (c,d)^{-Q(\alpha^\vee)} & \text{if $0 < |c|_v < 1$} \\ 1 & \text{otherwise.} \end{cases} \label{ksection} \end{equation}
For these $F_v$ the local ``Kubota symbol'' $\kappa_v$ then satisfies
\begin{equation} \sigma_v(g_v, g_v') = \frac{\kappa_v(g_v) \kappa_v(g_v')}{\kappa_v(g_v g_v')} \quad \text{for all $g_v, g_v' \in G(\mathfrak{o}_v)$.} \label{localkubota} \end{equation}  

For the remaining non-archimedean places, the cover splits over a subgroup of $K_v$ of finite index.

\subsection{S-integers\label{sints}}

The metaplectic cover may also be defined over a ring of $S$-integers $\mathfrak{o}_S$, the elements of $F$ integral outside a finite set of places $S$.  We require that $S$ 
contains the archimedean places $S_\infty$ and all places ramified over $\mathbbm{Q}$ (in particular
including those dividing $n$), and is large enough such that the ring $\mathfrak{o}_S$ is a principal ideal domain.  
Let $F_S = \prod_{v \in S} F_v$. The metaplectic extension of
$G(F_S)$ by $\mu_n$, denoted $\widetilde{G}_{F_S}$, is the fiber product over $\mu_n$ of the local extensions $\widetilde{G}_v$ for
each $v \in S$ (that is, the quotient of the direct product $\prod_{v\in S}\widetilde{G}_v$ by the equivalence relation
identifying the central $\mu_n$ in each factor).
We will use $\sigma$ to denote the two-cocycle $\sigma=\prod_{v\in S}\sigma_v$
and $\s$ for the section $\s(g)=(g,1)$. For brevity, we will sometimes write $\widetilde{G}$ for $\widetilde{G}_{F_S}$.

We recall the Kubota map $\kappa$ in the context of the $S$-integers.  
If $c,d\in\mathfrak{o}_S$ are coprime, let $ \left( \frac{d}{c} \right)\in\mu_n$
denote the $n$-th power residue symbol.  (Thus if $c=p$ is prime in $\mathfrak{o}_S$ then this quantity
is congruent to $d^{(|p|-1)/n}$ modulo $p\mathfrak{o}_S$, where $|\cdot|$ denotes the absolute norm; see~\cite{neukirch}.)
Embed $\mathfrak{o}_S$ in $F_S$ by the diagonal embedding;
this gives rise to an embedding of  $G(\mathfrak{o}_S)$ 
in $ G(F_S)$.

\begin{lemma} 
  There exists a map $\kappa : G(\mathfrak{o}_S)
  \longrightarrow \mu_n$ such that
  \begin{equation}
    \label{kubotabasicproperty} \kappa (\gamma \gamma') = \sigma (\gamma,
    \gamma') \kappa (\gamma) \kappa (\gamma') .
  \end{equation}
  If $\alpha$ is any positive root then
  \begin{equation}
    \label{kubotaformula} \kappa \left( \iota_{\alpha} \left(\begin{array}{cc}
      a & b\\
      c & d
    \end{array}\right) \right) = \begin{cases}
      \left( \frac{d}{c} \right)^{Q(\alpha^\vee)} & \text{if $c \neq 0$}\\
      1 & \text{if $c = 0$,}
    \end{cases}
  \end{equation}
 where  $\iota_\alpha :
{SL}_2 \longrightarrow G$ is the canonical embedding.
\end{lemma}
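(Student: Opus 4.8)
The plan is to assemble $\kappa$ from the local Kubota symbols at the finite places away from $S$, and then to read off both assertions from Hilbert reciprocity together with the local identities \eqref{ksection}--\eqref{localkubota}. Since $S$ contains every place dividing $n$, at each finite place $v\notin S$ the integer $n$ is prime to the residue characteristic, so Proposition~\ref{max-cpt} supplies a splitting $k\mapsto(k,\kappa_v(k))$ of $\widetilde{G}_v$ over $K_v=G(\mathfrak{o}_v)$ that satisfies \eqref{localkubota} and is normalized on $\iota_\alpha(SL_2(\mathfrak{o}_v))$ by \eqref{ksection}. Identifying $G(\mathfrak{o}_S)$ with its diagonal image in $G(F_S)$, every $\gamma\in G(\mathfrak{o}_S)$ lies in $K_v$ for all such $v$, and I would set
\begin{equation*}
\kappa(\gamma)=\prod_{\substack{v\notin S\\ v\ \mathrm{finite}}}\kappa_v(\gamma).
\end{equation*}
The first thing to verify is that this product is finite, i.e.\ that $\kappa_v(\gamma)=1$ for all but finitely many $v$. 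Writing a given $\gamma$ as a word in the subgroups $\iota_\alpha(SL_2(\mathfrak{o}_S))$ and in $T(\mathfrak{o}_S)\cong(\mathfrak{o}_S^\times)^r$ (possible because $\mathfrak{o}_S$ is a principal ideal domain) and using that $k\mapsto(k,\kappa_v(k))$ is a homomorphism on $K_v$, one expresses $\kappa_v(\gamma)$ as a product of the values of $\kappa_v$ on the individual factors together with values $\sigma_v(\cdot,\cdot)$ on a fixed finite list of elements of $G(F)$. By \eqref{ksection} and \eqref{torusformula} the first kind of factor is trivial at every $v$ prime to the finitely many matrix entries involved, and each $\sigma_v$-factor is a product of Hilbert symbols of fixed nonzero elements of $F$, hence trivial for all but finitely many $v$. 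So $\kappa$ is well defined; this is just the classical globalization of Kubota's construction.

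Granting this, property \eqref{kubotabasicproperty} is formal: for each finite $v\notin S$, equation \eqref{localkubota} gives $\kappa_v(\gamma)\kappa_v(\gamma')/\kappa_v(\gamma\gamma')=\sigma_v(\gamma,\gamma')$, so $\kappa(\gamma)\kappa(\gamma')/\kappa(\gamma\gamma')=\prod_{v\notin S}\sigma_v(\gamma,\gamma')$. As $F$ is totally complex, $\sigma_v\equiv 1$ at the archimedean places, so Hilbert reciprocity $\prod_v\sigma_v(\gamma,\gamma')=1$ identifies this with $\big(\prod_{v\in S}\sigma_v(\gamma,\gamma')\big)^{-1}=\sigma(\gamma,\gamma')^{-1}$, which is exactly \eqref{kubotabasicproperty}.

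For \eqref{kubotaformula}, take $\gamma=\iota_\alpha\!\left(\begin{smallmatrix}a&b\\ c&d\end{smallmatrix}\right)$. If $c=0$ then $\kappa_v(\gamma)=1$ for every $v$ by \eqref{ksection}, hence $\kappa(\gamma)=1$. If $c\neq 0$, the places $v\notin S$ with $0<|c|_v<1$ are precisely the finitely many primes dividing $c$ in $\mathfrak{o}_S$, and \eqref{ksection} yields $\kappa(\gamma)=\big(\prod_{v\mid c}(c,d)_v\big)^{-Q(\alpha^\vee)}=\big(\prod_{v\mid c}(d,c)_v\big)^{Q(\alpha^\vee)}$, using $(c,d)_v(d,c)_v=1$. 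At each $v\mid c$ the element $d$ is a unit (coprimality of $c,d$), so the congruence characterization of the Hilbert symbol recalled in Section~\ref{localfieldssect} gives $(d,c)_v\equiv d^{\,\ord_v(c)(q_v-1)/n}$, the $\ord_v(c)$-th power of the $n$-th power residue symbol at $v$; multiplying over $v\mid c$ recovers $\left(\frac{d}{c}\right)$, so $\kappa(\gamma)=\left(\frac{d}{c}\right)^{Q(\alpha^\vee)}$.

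The only genuine point in all of this is the finiteness, and hence well-definedness, of the defining product; everything else is a direct consequence of Hilbert reciprocity and the local formulas already recorded. I expect the main thing requiring care to be exactly that step — in particular, pinning down the generation of $G(\mathfrak{o}_S)$ by the $\iota_\alpha(SL_2(\mathfrak{o}_S))$ and $T(\mathfrak{o}_S)$ for a general split reductive $G$, so that each $\kappa_v(\gamma)$ can be reduced to Hilbert symbols of fixed elements that vanish at almost all places.
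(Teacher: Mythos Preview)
Your proof is correct and takes essentially the same approach as the paper: define $\kappa(\gamma)=\prod_{v\notin S}\kappa_v(\gamma)$, deduce \eqref{kubotabasicproperty} from \eqref{localkubota} together with Hilbert reciprocity, and read off \eqref{kubotaformula} from \eqref{ksection} and the identification of the local Hilbert symbol with the power residue symbol. Your treatment of the finiteness of the defining product (via generation of $G(\mathfrak{o}_S)$ by rank-one subgroups and the torus) is more explicit than the paper's one-line appeal to \eqref{localkubota}, but otherwise the arguments coincide.
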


\begin{proof} For $\gamma \in G(\mathfrak{o}_S)$, define
$$ \kappa(\gamma) = \prod_{v \not\in S} \kappa_v(\gamma). $$
The right-hand side is well-defined owing to (\ref{localkubota}), which ensures that
$\kappa_v(\gamma) = 1$ for almost all $v \not\in S$. 
Using  (\ref{ksection}) and the relation between the power residue and Hilbert
symbols (see for example Neukirch \cite{neukirch}; in 
Proposition V.3.4 note that the Hilbert symbol there is the inverse of ours), one obtains
$$ \kappa \left( \iota_\alpha \left( \begin{matrix} a & b \\ c & d \end{matrix} \right) \right) = \prod_{\substack{v \not\in S \\ \varpi_v | c}} (\varpi_v,d)^{-Q(\alpha^\vee) \text{ord}_v(c)} = \prod_{\substack{v \not\in S \\ \varpi_v | c}} \left( \frac{d}{\varpi_v} \right)^{Q(\alpha^\vee) \text{ord}_v(c)} = \left( \frac{d}{c} \right)^{Q(\alpha^\vee)}, $$
since $c, d$ are coprime, thus $d$ is a unit in $\mathfrak{o}_v$ whenever $\varpi_v | c$.
Then if $\gamma,\gamma'\in G(\mathfrak{o}_S)$, by Hilbert reciprocity
$$ \sigma(\gamma,\gamma') = \prod_{v \in S} \sigma_v(\gamma, \gamma') = \prod_{v \not\in S} \sigma_v(\gamma,\gamma')^{-1} = \frac{\kappa(\gamma\gamma')}{\kappa(\gamma) \kappa(\gamma')}, $$
as desired. 
 \end{proof}

\begin{corollary} \label{isahom} The map $\ii:G (\mathfrak{o}_S)\to 
\widetilde{G}$ given by $\ii(\gamma)=(\gamma,\kappa(\gamma))$
is a homomorphism.
\end{corollary}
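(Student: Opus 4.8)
The plan is to deduce this immediately from the cocycle relation (\ref{kubotabasicproperty}) established in the preceding Lemma, together with the explicit group law on $\widetilde{G}$. First I would unwind the product $\ii(\gamma)\ii(\gamma')$ for $\gamma,\gamma'\in G(\o_S)$: by the multiplication rule $(g_1,\zeta_1)(g_2,\zeta_2)=(g_1g_2,\sigma(g_1,g_2)\zeta_1\zeta_2)$ on $\widetilde{G}$ (with $\sigma=\prod_{v\in S}\sigma_v$, evaluated on the images of $\gamma,\gamma'$ under the diagonal embedding $G(\o_S)\hookrightarrow G(F_S)$), one gets $\ii(\gamma)\ii(\gamma')=(\gamma\gamma',\,\sigma(\gamma,\gamma')\,\kappa(\gamma)\kappa(\gamma'))$. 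Then I would simply substitute (\ref{kubotabasicproperty}), namely $\kappa(\gamma\gamma')=\sigma(\gamma,\gamma')\kappa(\gamma)\kappa(\gamma')$, to rewrite the second coordinate as $\kappa(\gamma\gamma')$, so that $\ii(\gamma)\ii(\gamma')=(\gamma\gamma',\kappa(\gamma\gamma'))=\ii(\gamma\gamma')$, which is exactly the homomorphism property.

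For completeness I would add two one-line remarks. First, $\ii$ is well defined: $\kappa(\gamma)=\prod_{v\notin S}\kappa_v(\gamma)$ is a finite product because (\ref{localkubota}) forces $\kappa_v(\gamma)=1$ for almost all $v\notin S$, as already observed in the proof of the Lemma. Second, the verification above already shows $\ii$ preserves identities, since taking $\gamma=\gamma'=1$ (or using (\ref{kubotaformula}) with $c=0$) gives $\kappa(1)=1$; and $\ii$ is injective because composing with the projection $\widetilde{G}\to G(F_S)$ recovers the diagonal embedding of $G(\o_S)$, which is injective. Thus $\ii$ is an injective homomorphism, i.e.\ a splitting of the cover over $G(\o_S)$.

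There is no genuine obstacle in this corollary: all of the arithmetic content — in particular the appeal to Hilbert reciprocity $\prod_v\sigma_v(\gamma,\gamma')=1$ and the passage between Hilbert symbols and $n$-th power residue symbols — has already been carried out in proving the Lemma, and what remains is the purely formal observation that a map-plus-cocycle satisfying (\ref{kubotabasicproperty}) is the same data as a group homomorphism into the twisted product $\widetilde{G}$.
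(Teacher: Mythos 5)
Your argument is correct and is exactly the deduction the paper intends: the corollary follows immediately from the cocycle identity \eqref{kubotabasicproperty} combined with the stated group law on $\widetilde{G}$, which is why the paper leaves the proof unwritten. The extra remarks on well-definedness, $\kappa(1)=1$, and injectivity are accurate and harmless, though not needed for the statement as given.
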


\section{Eisenstein series on the metaplectic group\label{eisensteincon}}

We now define the maximal parabolic Eisenstein series on the $n$-fold cover $\widetilde{G}$ of $G(F_S)$ that will be the
focus of the remainder of the paper. Let $P$ be a standard maximal parabolic subgroup of $G$ and 
let $M$ be the Levi subgroup of $P$. We shall suppose
that this Levi subgroup factors as $M = M_1 \times M_2$ where $M_i$ are linear algebraic groups.
Given automorphic representations on suitable covers of $M_1(F_S)$ and $M_2(F_S)$ 
we shall define a maximal parabolic Eisenstein series on $\widetilde{G}$.

If $H$ is any subgroup of $G(F_S)$, let $\widetilde{H}=\{(h,\zeta)\mid h\in H,\zeta\in\mu_n\}$ denote the full 
inverse image of $H$ in $\widetilde{G}$.  In particular, we
have the subgroups $\widetilde{M}_i$, $i=1,2$ and $\widetilde{M}$ of $\widetilde{G}$.  
For any two subgroups $H_1,H_2$ of $G(F_S)$, let $\widetilde{H}_1\times_{\mu_n} \widetilde{H}_2$ denote the fiber product of their inverse images over $\mu_n$.
In general $\widetilde{M}_1 \times_{\mu_n} \widetilde{M}_2$ does {\sl not} embed in $\widetilde{G}$,
owing to the nature of the metaplectic cocycle. (For example in the case $G = GL_r$, this may be seen using 
block compatibility as in Banks, Levy and Sepanski~\cite{bls}.) Instead, in what follows, we construct a finite index subgroup 
$\widetilde{M}_1^0 \times_{\mu_n} \widetilde{M}_2^0$ of
$\widetilde{M}_1 \times_{\mu_n} \widetilde{M}_2$ which does embed in $\widetilde{G}$.

This requires additional assumptions on the construction of the metaplectic cover from Section~\ref{covers}. There we chose an ordered basis $e_1, \ldots, e_r$
for the cocharacter group $Y$ of $G$ giving rise to an induced
isomorphism $(F_S^\times)^r \simeq T(F_S)$. Let $T_i$ be the maximal split torus of $M_i$ for $i=1,2$.  We further assume that the ordered basis for $Y$ may
be partitioned into ordered bases for the group of cocharacters $Y_i$ of the $T_i$ giving rise to respective isomorphisms with copies of $F_S^\times$. Let $\chi^{(i)}_1, 
\ldots, \chi^{(i)}_{m_i}$ be the dual basis of characters $X_i$ of $T_i$.
Then we at last define 
$$T_i^0 =  \left\{ t \in T_i(F_S) \; \Big\vert \; \prod_{j=1}^{m_i} \chi_j^{(i)}(t) \in \Omega \right\}  \quad \text{where} \quad \Omega = \mathfrak{o}_S^\times (F_S^\times)^n. $$
The subgroup $\Omega$ is maximal isotropic with respect to the $n$-th order Hilbert symbol. The maximality may be deduced
from Proposition~8 of Section~XIII.5 of \cite{weil-bnt}. The condition that the product of characters is in $\Omega$ mimics the condition on the determinant naturally appearing in block compatibility for the general linear group. It is clear that the subgroup $\widetilde{T}_1^0 \times_{\mu_n} \widetilde{T}_2^0$ 
embeds in $\widetilde{G}$ according to~(\ref{torusformula}), provided that we order the basis elements of $Y$ so that $e_1, \ldots, e_{m_1}$ are a basis for $Y_1$ of $T_1$.

Now define $\widetilde{M}_i^0$ to be the subgroup of $\widetilde{M}_i$ generated by $\widetilde{T}_i^0$, $U_i$ (the maximal unipotent subgroup of $M_i$, which embeds
in $\widetilde{M_i}$ by the trivial section) and $W_{M_i}$, the Weyl group of $M_i$. (Here we realize $W_{M_i}$ as a subgroup of $M_i(\mathfrak{o}_S)$.)
Then $\widetilde{M}_i^0$ is a subgroup of finite index in $\widetilde{M}_i$ according to the Bruhat decomposition for $M_i$. 
To see that the resulting subgroup $\widetilde{M}_1^0 \times_{\mu_n} \widetilde{M}_2^0$ embeds in $\widetilde{G}$,
 it suffices to compute the cocycle coming from a Weyl group element in
 $W_{M_i}$ and a torus element of $\widetilde{T}_j^0$, $\{i,j\}=\{1,2\}$. 
 This is trivial according to the definition of the action in \cite{matsumoto, mcnamara-thesis}.
 
 Recall that $S_\infty$ denotes the set of archimedean places.  Let $S_{\text{fin}}$ denote the set of finite
 places in $S$, and let $F_\infty=\prod_{v\in S_\infty} F_v$, $F_{S_\text{fin}}=\prod_{v\in S_\text{fin}} F_v$.
Thus setting $H = T_i$ or $M_i$ with $i=1,2$, we may write
$H^0=H^0_{\text{fin}}\times H(F_{\infty})$ where $H(F_{\infty})=\prod_{v\in S_{\infty}} H(F_v)$.  Since the
cocycle is trivial at any complex place, we also have a natural factorization
$\widetilde{H}^0=\widetilde{H}^0_{\text{fin}}\times H(F_{\infty})$.  For the same reason, we also
have a natural factorization $\widetilde{G}=\widetilde{G}_{\text{fin}}\times G(F_\infty)$.

\subsection{Inducing in Stages}  We fix an embedding of $\mu_n$ into $\mathbb{C}^\times$ throughout this paper.
Let $(\pi_i, V_i)$ for $i=1,2$ be genuine automorphic representations of $\widetilde{M}_i$
which are unramified outside $S$.  
This requires a bit of care.  Indeed,
if $\sigma$ is a two-cocycle representing a given cohomology class in $H^2(G(F_S),\mu_n)$,  let $\sigma_i$ for $i=1,2$ be the restriction of $\sigma$ to ${M}_i(F_S)$.
Then the class of $\sigma_i$ is in $H^2(M_i(F_S),\mu_n)$.   However, it is possible
that the class of 
this cocycle lies in a smaller group $H^2(M_i(F_S),\mu_{n'})$ where $n'$ is a proper divisor of $n$.  (For example,
if $P$ is a parabolic subgroup of $G=GSp_{2a+b}$ such that $M\cong GL_a\times GSp_b$, the cocycle $\sigma$ restricted to
${GL}_a$ lies in $H^2(GL_a(F_S),\mu_{n/(n,2)})$.) In this case, a genuine automorphic 
representation on $\widetilde{M}_i$ corresponds to one on the $n'$-fold cover, extended to the $n$-fold cover
obtained by the inclusion $H^2(M_i(F_S),\mu_{n'})\subseteq H^2(M_i(F_S),\mu_n)$.

Let $(\pi_i^0, V_i)$ denote the restriction of $\pi_i$ to $\widetilde{M}_i^0$.  
Then $\pi_1^0\otimes \pi_2^0$ gives a representation of $\widetilde{M}_0\simeq \widetilde{M}_1^0 \times_{\mu_n} \widetilde{M}_2^0$.
(Indeed, since both $\pi_i^0$ are genuine, the product $\pi_1^0\otimes \pi_2^0$ is well-defined modulo $\sim_{\mu_n}$.)
We will define a representation of $\widetilde{G}$ by inducing in stages, first from $\widetilde{M}_0$ to $\widetilde{M}$ and then parabolically inducing from $\widetilde{M}$ to $\widetilde{G}$.

For each $v\in S_\text{fin}$, let $K_v$ be a compact open subgroup of $G(F_v)$ such that $\widetilde{G}_v$ splits over $K_v$. For each $v\in S_\infty$ let
$K_v$ be a maximal compact subgroup. For all $v \in S$, we choose $K_v$ in ``good'' relative position with respect to $M(F_v)$. If $v$ is Archimedean this means that $K$ and the maximal torus of $M$ are orthogonal relative to the Killing form, and if $v$ is finite, then the vertex of $K$ in the Bruhat-Tits building lies in the apartment of the maximal torus of $M$. (In particular, as noted in Proposition~\ref{max-cpt}, we may take $K_v=G(\mathfrak{o}_v)$ when $n$ is relatively prime to the residual characteristic of $F_v$.) This ensures an Iwasawa decomposition to be used later.
Finally let  $K=\prod_{v\in S} K_v$ and $K_\infty = \prod_{v\in S_\infty} K_v$. We regard $K$ as contained in $\widetilde{G}$ via the product of the local splittings.

For the first step, the induced representation $V_{\tilde{M}}$ of $\widetilde{M}$ is constructed as follows:
\begin{multline*} V_{\tilde{M}} := \text{Ind}_{\tilde{M}_0}^{\tilde{M}}(\pi_1^0 \otimes \pi_2^0) =  \{ \phi : \widetilde{M} \longrightarrow V_1 \otimes V_2 \; | \text{$\phi$ genuine and $K\cap \widetilde{M}$-finite, and} \\ \phi(\tilde{h}  \tilde{m}) = (\pi_1^0\otimes \pi_2^0)(\tilde{h})\cdot \phi(\tilde{m}) \; \forall \; \tilde{h} \in \widetilde{M}_0, \tilde{m} \in \widetilde{M} \}. \end{multline*}
As usual $\widetilde{M}$ acts on $V_{\tilde{M}}$ by the right regular representation; we denote this $\pi_{\tilde{M}}$ below.
Note that any $\phi \in V_{\tilde{M}}$ is determined by its restriction to $\widetilde{M}_{\text{fin}}$ since $M(F_\infty)$ embeds in $\widetilde{M}_0$.

Let us explain how to construct functions in $V_{\tilde{M}}$ concretely.  
Let $R$ be a set of coset representatives for $M_0\backslash M(F_S)$. For later use, we require this set of representatives to be in $T(F_S)$ and the equal to the identity element of $T(F_v)$ for $v \in S_\infty$. 
Let 
\begin{equation}\label{momega}
 \mathcal{M}_{P,R}(\Omega) := \Big\{ \Psi : M \rightarrow \mathbb{C} \, | \Psi(hr)=\sigma(h,r)^{-1}\Psi(r)~\text{for all $h\in M_0$,
$r\in R$}\Big\}.
\end{equation}
Note that $$\sigma(h,m)=\sigma_1(h_1,m_1)\sigma_2(h_2,m_2)$$ if $h=(h_1,h_2)$ and $m=(m_1,m_2)$ are in $M_0$
by the block compatibility property of $\sigma$ and the isotropy property of $\Omega$.
 In the special case $M=GL_1\times GL_1$ this factor is identically $1$, so the space is independent of the
 choice of $R$ (compare \cite{brubaker-bump}).  
Let $f_i\in V_i$ for $i=1,2$, and $\Psi\in \mathcal{M}_{P,R}(\Omega)$.  Define the $V_1 \otimes V_2$-valued function
\begin{equation}\label{general-phi}
\phi_{f_1,f_2,\Psi}(\tilde{m}):=\zeta\,\Psi(m)\,\,
 \pi_1^0(\s(m_1)) \otimes \pi_2^0(\s(m_2)) \cdot (f_1\otimes f_2)
 \end{equation}
 for $\tilde{m}=(m, \zeta) = ((m_1,m_2)r,\zeta)$ with $m_i\in M_i^0$, $r\in R$,  $\zeta\in\mu_n.$
(We frequently write $\phi(\tilde{m})$ and omit the subscripts.)
 
 \begin{proposition}\label{spanning}  The space $V_{\tilde{M}}$ is spanned by the functions 
 $\phi_{f_1,f_2,\Psi}$ with $f_i\in V_i$, $i=1,2$ and $\Psi\in \mathcal{M}_{P,R}(\Omega)$.
 Moreover, this spanning set does not depend on the choice
of coset representatives $R$ (up to multiplication by nonzero constants).
 \end{proposition}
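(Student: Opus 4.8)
The plan is to verify directly that each $\phi_{f_1,f_2,\Psi}$ lies in $V_{\tilde M}$, and then to show that these functions exhaust the space by a dimension/restriction argument using the factorization $\widetilde{M}_0 \simeq \widetilde{M}_1^0 \times_{\mu_n} \widetilde{M}_2^0$. First I would check the defining transformation law: writing an arbitrary element of $\widetilde M$ as $\tilde m = ((m_1,m_2)r,\zeta)$ with $m_i \in M_i^0$, $r \in R$, $\zeta \in \mu_n$, and an element of $\widetilde{M}_0$ as $\tilde h = ((h_1,h_2),\xi)$, one computes $\tilde h \tilde m$ inside $\widetilde M$ using the cocycle $\sigma$. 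Here the key inputs are: block compatibility of $\sigma$ on $M_0$ together with the isotropy of $\Omega$, which gives $\sigma(h,(m_1,m_2)r) = \sigma_1(h_1,m_1)\,\sigma_2(h_2,m_2)\,\sigma(h,r)$ (absorbing the $r$-contribution and no cross terms between the two factors because the basis of $Y$ was partitioned and $\Omega$ is isotropic); the defining property $\Psi(hr)=\sigma(h,r)^{-1}\Psi(r)$ of $\mathcal{M}_{P,R}(\Omega)$; and the fact that $\pi_i^0(\s(h_i m_i)) = \sigma_i(h_i,m_i)^{-1}\pi_i^0(\s(h_i))\pi_i^0(\s(m_i))$ since $\s$ is only a section, not a homomorphism. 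Assembling these, the $\sigma_i$ factors and the $\sigma(h,r)$ factor cancel against the ones produced by $\Psi$ and the $\pi_i^0$'s, leaving exactly $\phi(\tilde h\tilde m) = (\pi_1^0\otimes\pi_2^0)(\tilde h)\cdot\phi(\tilde m)$. Genuineness is immediate from the explicit factor $\zeta$, and the $K\cap\widetilde M$-finiteness follows from that of $f_1\otimes f_2$ together with the fact that $R \subset T(F_S)$ may be taken trivial at archimedean places, so that only finitely many translates occur; this also shows $\phi$ is determined by its restriction to $\widetilde M_{\mathrm{fin}}$, consistent with the remark after the definition of $V_{\tilde M}$.

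For the spanning statement, I would argue as follows. By the very definition of the induced representation, any $\phi \in V_{\tilde M}$ is determined by its values on a set of representatives for $\widetilde M_0\backslash \widetilde M$, which by construction we may take to be $\{(r,1) : r\in R\}$ (together with the central $\mu_n$, handled by genuineness). Fix $r\in R$. The value $\phi((r,1)) \in V_1\otimes V_2$ is an arbitrary vector, but the transformation law under $\widetilde M_0$ forces compatibility: replacing $r$ by $hr$ with $h=(h_1,h_2)\in M_0$ multiplies $\phi((r,1))$ by $(\pi_1^0\otimes\pi_2^0)((h_1,h_2),\text{(cocycle)})$. Now decompose $V_1\otimes V_2$ into pure tensors $f_1\otimes f_2$; for each such tensor and each $r$, the function $\phi_{f_1,f_2,\Psi}$ with $\Psi$ chosen to be supported on the single coset $M_0 r$ (which is legitimate since $\mathcal{M}_{P,R}(\Omega)$ is built coset-by-coset) and normalized so that $\Psi(r)=1$ realizes the value $f_1\otimes f_2$ at $(r,1)$ and is forced everywhere else by the $\widetilde M_0$-equivariance. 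Summing over $r\in R$ and over a spanning set of pure tensors, we obtain every $\phi\in V_{\tilde M}$ as a (finite, by $K$-finiteness) linear combination of the $\phi_{f_1,f_2,\Psi}$.

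Finally, for independence of the spanning set from the choice of $R$: if $R'$ is another set of representatives in $T(F_S)$ trivial at archimedean places, then each $r' \in R'$ equals $h_{r'}\, r$ for a unique $r\in R$ and $h_{r'}\in M_0$, and one checks that $\mathcal{M}_{P,R}(\Omega)$ and $\mathcal{M}_{P,R'}(\Omega)$ are carried to each other by the corresponding rescaling $\Psi \mapsto \Psi'$ with $\Psi'(r') = \sigma(h_{r'},r)^{-1}\Psi(r)$; the same cocycle bookkeeping as in the first paragraph, combined with $\pi_i^0(\s(h_{r',i} m_i)) = \sigma_i(h_{r',i},m_i)^{-1}\pi_i^0(\s(h_{r',i}))\pi_i^0(\s(m_i))$, shows $\phi_{f_1,f_2,\Psi} = c\cdot \phi_{f_1',f_2',\Psi'}$ for a nonzero constant $c$ depending on the $\pi_i^0(\s(h_{r',i}))$ (which act by scalars on the relevant vectors only after the genuine-representation normalization — more precisely, $f_i' = \pi_i^0(\s(h_{r',i}))f_i$ and the constants are absorbed). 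I expect the main obstacle to be the careful tracking of the cocycle and section discrepancies across the three places they enter ($\Psi$, the two $\pi_i^0 \circ \s$ factors, and the passage between $R$ and $R'$): one must be sure that block compatibility plus isotropy of $\Omega$ genuinely kills all cross terms and that the leftover scalars match up, rather than merely agreeing up to an uncontrolled unit. Everything else is formal manipulation of induced representations.
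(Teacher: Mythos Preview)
Your approach matches the paper's in all three parts: verify the transformation law by cocycle bookkeeping, span $V_{\tilde M}$ by restricting to the representatives $r\in R$ and using $\Psi$'s supported on single cosets (the paper takes $\Psi_r(m_0r)=\sigma(m_0,r)^{-1}$, which agrees with your normalization $\Psi_r(r)=1$ and gives $\phi=\sum_{r\in R}\phi_{f_{1,r},f_{2,r},\Psi_r}$), and handle independence from $R$ by showing $\phi^{R'}_{f_1,f_2,\Psi'}$ equals a nonzero scalar times $\phi^{R}_{f_1',f_2',\Psi}$ with $f_i'=\pi_i^0(\s(h_i))^{\pm1}f_i$. The one place to tighten is the identity $\sigma(h,(m_1,m_2)r)=\sigma_1(h_1,m_1)\sigma_2(h_2,m_2)\,\sigma(h,r)$, which is not correct as written; the paper instead uses the cocycle relation to recast the $\mathcal{M}_{P,R}$ condition as $\Psi(hm)=\sigma(h,m)^{-1}\sigma(h,mr^{-1})\Psi(m)$, and since $mr^{-1}=(m_1,m_2)\in M_0$ block compatibility gives $\sigma(h,mr^{-1})=\sigma_1(h_1,m_1)\sigma_2(h_2,m_2)$, after which the cancellation you describe goes through cleanly.
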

 
 \begin{proof}
If $\tilde{h}=(h,\eta)=((h_1,h_2),\eta)\in\widetilde{M}_1^0\times_{\mu_n}\widetilde{M}_2^0$ and
$\tilde{m}=(m,\zeta)=((m_1,m_2)r,\zeta)$ with $m_i\in M_i^0$, $r\in R$,  $\zeta\in\mu_n$, then we have
$\tilde{h}\tilde{m}=((h_1m_1,h_2m_2)r,\zeta\eta\sigma(h,m))$.  Also 
using the cocycle property it is not hard to check that $\Psi\in \mathcal{M}_{P,R}$ if and only if
$$\Psi(h m) = \sigma(h, m)^{-1} 
\sigma(h,mr^{-1})
\Psi(m) 
\text{~for all $h\in M_0$, $m\in M$ with $m\in M_0r$, $r\in R$}. $$
Thus we see that if $\phi=\phi_{f_1,f_2,\Psi}$, then
\begin{multline*}
\phi(\tilde{h}\tilde{m})=\zeta\eta\sigma(h,m) \Psi(hm)  \pi_1^0(\s(h_1m_1)) \otimes \pi_2^0(\s(h_2m_2)) \cdot (f_1\otimes f_2)
=\\\eta\,\pi_1^0(\s(h_1))\otimes\pi_2^0(\s(h_2))\cdot \phi(\tilde{m}).
\end{multline*}
 Hence $\phi_{f_1,f_2,\Omega}\in V_{\tilde{M}}$.

Conversely, the space $V_{\tilde M}$ is spanned by the functions $\phi\in V_{\tilde{M}}$ such that $\phi(\s(r))$
is a pure tensor for each $r\in R$; write $\phi(\s(r))=f_{1,r}\otimes f_{2,r}$ with $f_{i,r}\in V_i$.  
Then $\phi(\tilde{h}\s(r))=(\pi_1^0\otimes \pi_2^0)(\tilde{h})\cdot f_{1,r}\otimes f_{2,r}$
for each $\tilde{h}  \in \widetilde{M}_0$.   
If $\tilde{m}=((m_1,m_2)r,\zeta)$ with $m_i\in M_i^0$, $r\in R$,  $\zeta\in\mu_n,$
then with $\tilde{h}=((m_1,m_2),\sigma((m_1,m_2),r)^{-1}\zeta)$ we see that
$\tilde{m}=\tilde{h}\s(r)$. Thus
$$\phi(\tilde{m})=\sigma((m_1,m_2),r)^{-1}\zeta\,\pi_1^0(\s(m_1))\otimes \pi_2^0(\s(m_2))\cdot f_{1,r}\otimes f_{2,r}.$$
For each $r\in R$, define $\Psi_r(m)=\sigma(m_0,r)^{-1}$ when $m=m_0r$ with $m_0\in M_0$, and 
$\Psi_r(m)=0$ if $m\not\in M_0r$.  
Then it is immediate that
 $\Psi_r\in\mathcal{M}_{P,R}(\Omega)$.  We see that
 $$\phi =\sum_{r\in R} \phi_{f_{1,r},f_{2,r},\Psi_r}.$$
Thus we conclude that the space $V_{\tilde{M}}$ is indeed spanned by functions
of the form (\ref{general-phi}).

Last, let us show that the set of functions $\phi_{f_{1},f_{2},\Psi}$ with $f_1\in V_1$, $f_2\in V_2$ and
$\Psi\in \mathcal{M}_{P,R}(\Omega)$ does not depend on the choice
of coset representatives $R$.  To see this, let $r\in R$, and 
let $R'$ be a set of coset representatives in which the representative $r$ is replaced by 
$hr$ with $h\in M_0$.  We add superscripts to indicate the choice of coset
representatives.  If $h=(h_1,h_2)$ and if $m=m_0r\in M_0 r$ then
$\Psi^{R'}_{hr}(m)=\sigma(m_0h^{-1},hr)^{-1}$.  Using the cocycle property, it is then easy to check that
$$\phi^{R'}_{f_1,f_2,\Psi^{R'}_{hr}}=\sigma(h^{-1},hr)^{-1} \phi^R_{f_1',f_2',\Psi_r^R}$$
with $f_1'=\pi_1^0(\s(h_1))^{-1}f_1$, $f_2'=\pi_2^0(\s(h_2))^{-1}f_2$.
\end{proof}

Next we parabolically induce up to $\widetilde{G}$. Suppose that $P$ has Levi decomposition $P=MU^P$.
Then  $\widetilde{P} = \widetilde{M} U^P$, where we have identified $U^P$ with its canonical embedding $\s(U^P)$.
Let  $\delta_P$ be the modular character of $P$ and for 
 $s \in \mathbb{C}$, let $\chi_s$ denote the character $\chi_s = \delta_P^s$.
 Extend these characters trivially to the cover by composing with the projection $\widetilde{M}\to M(F_S)$ (thus they are non-genuine characters of $\widetilde{M}$).
Let $\pi_s=\chi_s\delta_P^{1/2}\pi_{\tilde{M}}$, and extend this to a representation of $\widetilde{P}$ that is trivial on $U^P$. The space of the induced representation is
\begin{multline*}  \text{Ind}_{\tilde{P}}^{\tilde{G}}(\pi_s) =  \{ \varphi : \widetilde{G} \longrightarrow V_{\tilde{M}} \; |
~\varphi~\text{smooth~and} \\ \varphi(\tilde{m} u \tilde{g}) = \pi_s(\tilde{m}) \cdot \varphi(\tilde{g}) \; \forall \; \tilde{m} \in \widetilde{M}, u \in U^P, \tilde{g} \in \widetilde{G} \}. \end{multline*}

For later use, note that since $\widetilde{P} = \widetilde{P}_\text{fin} \times P(F_\infty)$, the representation $\pi_s$ factors as $\pi_s = \pi_{s, \text{fin}} \otimes \pi_{s, \infty}$.

\subsection{Explicit Test Vectors}

Let $B$ be a standard Borel subgroup of $G$ such that $B\subseteq P$ and let $U:=U^B$
denote the unipotent radical of $B$. Let $U^P$ denote the unipotent radical of $P$ and $U_M := U \cap M$, where $M$ is the Levi subgroup of $P$. Let $w_0\in G$ denote the long element of the Weyl
group $W$. It may be factored as $w_0 = w_M w^P$ where $w_M$ is the long element of $W_M = W \cap M$, the Weyl group of $M$, and $w^P$ is
a minimal length coset representative for the long element in $W_M \backslash W$.  We realize $w_0,w_M,w^P$ in $G(\mathfrak{o}_S)$
(and keep the same notation).
  
Recall that the finite set of places $S = S_\text{fin} \cup S_{\infty}$. Let $\mathfrak{o}_\text{fin} := \prod_{v \in S_\text{fin}} \mathfrak{o}_v$. For each $v\in S_{\text{fin}}$, let $\mathfrak{a}_v$ be an ideal of $\mathfrak{o}_v$
which is sufficiently large that if $a\in F_v$, $a-1\in \mathfrak{a}_v,$ then $a$ is an $n$-th power
in $F_v^\times$. 
Let $\mathfrak{a} := \prod_{v \in S_\text{fin}} \mathfrak{a}_v$, an ideal of $\mathfrak{o}_{\text{fin}}$.
We further require that $\mathfrak{a}$ is divisible
by the conductors of $\pi_1$ and $\pi_2$ in $\mathfrak{o}_\text{fin}$.
Let $U^P(\mathfrak{a})$ be the subgroup of $U^P(F_{S_\text{fin}})$ generated by the one-parameter subgroups
at roots in $U^P$ with elements in $\mathfrak{a}_v$ for all $v\in S_{\text{fin}}$. These definitions of groups and
elements depend on a realization of the algebraic group $G$; this is reviewed briefly in Section~\ref{sectionMDT}.

Fix coset representatives $R$ for $M_0 \backslash M(F_S)$ as above.  Given automorphic forms $f_i\in V_i$, $i=1,2$ and 
given $\Psi$ in $\mathcal{M}_{P,R}(\Omega)$ (see (\ref{momega})), recall that $\phi_{f_1,f_2,\Psi}\in V_{\tilde{M}}$ is 
defined by (\ref{general-phi}).
Using this function,
we may construct a function $\varphi_{s,\mathfrak{a}}$ in $\text{Ind}_{\tilde{P}}^{\tilde{G}}(\pi_s)$ associated to 
sufficiently large ideals $\mathfrak{a}$ of $\mathfrak{o}_\text{fin}$ as follows.  For simplicity, we assume that the vectors $f_i$ in $V_i$ are fixed under the standard maximal compact subgroups of $M_i(F_\infty)$. (More generally, one may proceed along the lines of Section 2 of~\cite{shahidi}, by acting on the right by a suitably chosen finite dimensional representation of the compact subgroup of $M(F_\infty)$.)
To define $\varphi_{s, \mathfrak{a}}$, factor $\tilde{g} = (\tilde{g}_\text{fin}, g_\infty)$, and let
\begin{equation} \varphi_{s,\mathfrak{a}}(\tilde{g}) =  \begin{cases} \pi_s((\tilde{m}_\text{fin},m_\infty)) \cdot \phi_{f_1,f_2,\Psi} & \begin{array}{l} \text{if $\tilde{g}_\text{fin} = \tilde{m}_\text{fin} u_\text{fin} w^P n_\text{fin} \in \widetilde{P}_{\text{fin}} w^P U^P(\mathfrak{a})$} \\ \text{and $g_\infty =  m_\infty u_\infty k_\infty \in G(F_\infty)$}, \end{array} \\ 0 & \; \text{otherwise.} \end{cases}
\label{goodmtildevector} \end{equation}
Here $\tilde{m}_\text{fin} \in\widetilde{M}_\text{fin}$, $u_\text{fin} \in U^P(F_{S_\text{fin}})$, and $n_\text{fin} \in U^P(\mathfrak{a})$. Moreover, $m_\infty \in M(F_\infty)$, $u_\infty \in U(F_\infty)$ and $k_\infty \in K_\infty$.

\begin{proposition} \label{funinducedspace}  The function $\varphi_{s,\mathfrak{a}}$
is an element of $\text{\textnormal{Ind}}_{\tilde{P}}^{\tilde{G}}(V_{\tilde{M}})$.
\end{proposition}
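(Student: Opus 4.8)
The plan is to verify the two defining conditions of $\mathrm{Ind}_{\tilde P}^{\tilde G}(V_{\tilde M})$: first, that $\varphi_{s,\mathfrak a}$ is well-defined (i.e.\ independent of the various decompositions chosen in its definition), and second, that it transforms on the left by $\pi_s$ under $\widetilde P$. The smoothness will follow from the construction, since on the finite part the function is locally constant (being supported on the open set $\widetilde P_{\text{fin}} w^P U^P(\mathfrak a)$ and determined there by the smooth data $\pi_{s,\text{fin}}$ and $\phi_{f_1,f_2,\Psi}$), and on the archimedean part it is built from the $K_\infty$-fixed vectors $f_i$ and the Iwasawa decomposition $g_\infty = m_\infty u_\infty k_\infty$.

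The first and most delicate point is well-definedness. On the finite part, if $\tilde g_\text{fin} = \tilde m_\text{fin} u_\text{fin} w^P n_\text{fin} = \tilde m'_\text{fin} u'_\text{fin} w^P n'_\text{fin}$ are two decompositions in $\widetilde P_\text{fin} w^P U^P(\mathfrak a)$, one must show $\pi_s(\tilde m_\text{fin} u_\text{fin}) \cdot \phi = \pi_s(\tilde m'_\text{fin} u'_\text{fin}) \cdot \phi$; since $\pi_s$ is trivial on $U^P$, this reduces to controlling the ambiguity in $\tilde m_\text{fin}$. The point is that $w^P U^P(\mathfrak a) (w^P)^{-1}$ meets $\widetilde P$ only in a group on which $\pi_s$ acts trivially: the relevant root subgroups get conjugated into $U^P$ or, where they land back in $P$, into unipotent pieces (using that $\mathfrak a_v$ is large enough that the resulting torus entries are $n$-th powers, hence act trivially via the non-genuine character $\chi_s\delta_P^{1/2}$ and via $\pi_{\tilde M}$ after shrinking — this is exactly why $\mathfrak a$ was chosen to kill $n$-th powers and to be divisible by the conductors of $\pi_1,\pi_2$). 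The ambiguity in $\tilde m_\text{fin}$ therefore lies in $\widetilde M_\text{fin}$ acting through a unitary character on which $\phi$ (built from conductor-divisible $f_i$) is invariant. On the archimedean part, the Iwasawa decomposition $g_\infty = m_\infty u_\infty k_\infty$ is unique up to $M(F_\infty)\cap K_\infty$ times $U(F_\infty)\cap K_\infty$, and the hypothesis that the $f_i$ are fixed by the standard maximal compact subgroups of $M_i(F_\infty)$ (together with the normalization that $R$ equals the identity at archimedean places, so $\Psi$ contributes nothing there) makes $\pi_s(m_\infty)\cdot\phi$ well-defined.

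The second point, the $\widetilde P$-equivariance, is then a formal check on the support set. For $\tilde p = \tilde m u \in \widetilde P$ (with $\tilde m\in\widetilde M$, $u\in U^P$) and $\tilde g\in\widetilde G$, one has $\tilde p\tilde g$ in the support of $\varphi_{s,\mathfrak a}$ if and only if $\tilde g$ is, because $\widetilde P_\text{fin}\cdot(\widetilde P_\text{fin} w^P U^P(\mathfrak a)) = \widetilde P_\text{fin} w^P U^P(\mathfrak a)$ and $P(F_\infty)\cdot G(F_\infty) = G(F_\infty)$; in the vanishing case both sides are $0$, and in the non-vanishing case, writing $\tilde g_\text{fin} = \tilde m_\text{fin} u_\text{fin} w^P n_\text{fin}$ and $g_\infty = m_\infty u_\infty k_\infty$, one gets $(\tilde p\tilde g)_\text{fin} = (\tilde m_\text{fin}\tilde m_\text{fin}')\cdots$ (absorbing the cocycle via the group law on the cover) so that $\varphi_{s,\mathfrak a}(\tilde p\tilde g) = \pi_s(\tilde p)\cdot\varphi_{s,\mathfrak a}(\tilde g)$ by the cocycle identity for $\pi_s$ and the fact that the representatives multiply correctly; the archimedean factor is handled identically using the uniqueness of Iwasawa up to the ambiguity already addressed.

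I expect the main obstacle to be the well-definedness on the finite part: precisely pinning down that the torus-valued ambiguity arising from commuting $U^P(\mathfrak a)$ past $w^P$ back into $\widetilde M_\text{fin}$ consists of $n$-th powers (so that $\chi_s\delta_P^{1/2}$ is trivial on it) and of conductor-level units (so that $\pi_{\tilde M}$, hence $\pi_1^0\otimes\pi_2^0$ via~\eqref{general-phi}, is invariant). This is where the careful choices of $\mathfrak a_v$ in the definition of $\varphi_{s,\mathfrak a}$ are used in an essential way; everything else is bookkeeping with the cocycle $\sigma$ and the two factorizations $\widetilde P = \widetilde P_\text{fin}\times P(F_\infty)$, $\widetilde G = \widetilde G_\text{fin}\times G(F_\infty)$.
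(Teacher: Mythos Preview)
Your proposal has the priorities inverted relative to the paper, and this leads to a genuine gap.

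You treat smoothness at the finite places as essentially automatic (``supported on the open set $\widetilde P_{\text{fin}} w^P U^P(\mathfrak a)$ and determined there by the smooth data''), but this is precisely the substantive step. Being built from smooth ingredients via a continuous decomposition does not by itself produce right invariance under a compact open subgroup of $\widetilde G_v$: one must show that right translation by small $k$ moves the $\widetilde M$-component of the factorization only within a subgroup on which $\pi_s$ fixes $\phi$. The paper does this explicitly using generalized Pl\"ucker coordinates $\lambda: P(F_v)\backslash G(F_v)\to \mathbb{P}(V)$, identifying the support with $\{[1:a_1:\ldots:a_{t-1}]:a_i\in\mathfrak a_v\}$, and then checking that for $k\in K(\mathfrak a_v)$ the $\widetilde M$-components of $\tilde g$ and $\tilde g k$ differ by an element of $K(\mathfrak a_v)\cap\widetilde M_v$. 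This is where the hypotheses on $\mathfrak a$ are actually used: divisibility by the conductors of $\pi_1,\pi_2$ makes $\phi_{f_1,f_2,\Psi}$ invariant under $K(\mathfrak a_v)\cap\widetilde M_v$, and the $n$-th power condition makes the cover split over $K(\mathfrak a_v)$ so the cocycle computation goes through. Your sketch does not supply this argument.

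Your well-definedness discussion is also off target. The ambiguity in writing $\tilde g_{\text{fin}}=\tilde m_{\text{fin}} u_{\text{fin}} w^P n_{\text{fin}}$ comes from $w^P U^P(\mathfrak a)(w^P)^{-1}\cap P$, and since $w^P(\Phi_P)\subset \Phi_M^+\cup(-\Phi_P)$ this intersection lies in $U_M(\mathfrak a)$ --- it is purely unipotent, with no torus component. So there are no ``torus entries'' to force into $n$-th powers; the $n$-th power hypothesis on $\mathfrak a_v$ plays no role here. (It is the conductor condition alone that makes $\phi$ invariant under this unipotent ambiguity.) In fact the paper does not isolate well-definedness as a separate step at all: its right $K(\mathfrak a_v)$-invariance argument handles it simultaneously with smoothness, since taking $k=e$ in that argument already shows any two decompositions of the same $\tilde g$ have $\widetilde M$-parts differing by $K(\mathfrak a_v)\cap\widetilde M_v$.
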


\begin{proof} It follows immediately from the definition that $\varphi_{s,\mathfrak{a}}$ satisfies the correct transformation property under left multiplication.
To check smoothness, if $v\in S_\text{fin}$ then make use of the bijective ``generalized Pl\"ucker coordinate" map
$\lambda: P(F_v)\backslash G(F_v)\to \mathbb{P}(V)$ where $\mathbb{P}(V)$ is the projective space attached to a certain 
finite dimensional $F_v$-vector space $V$.  Moreover, we may identify $V$ with $F_v^t$ for some $t$ such that
$\lambda(w_P)=[1:0:\ldots:0]$ and such that the image of $P(F_v) w^P U^P(\mathfrak{a}_v)$ is precisely the points
of the form $[1:a_1:\ldots:a_{t-1}]$ with $a_{i,v}\in \mathfrak{a}_v$, $1\leq i\leq t-1$.
Let $K(\mathfrak{a}_v)$ be the principal congruence subgroup of $K_v$ 
modulo $\mathfrak{a}_v$.  If $k\in K(\mathfrak{a}_v)$, then since $k$ is congruent to the identity modulo $\mathfrak{a}_v$, 
$\lambda(P(F_v)g)$ is of
the form $[1:a_1:\ldots:a_{t-1}]$ with $a_{i,v}\in \mathfrak{a}_v$ for $1\leq i\leq t-1$ if and only if $\lambda(P(F_v)gk)$ is of
this form.   
To see that $f_{\mathfrak{a}}$ is locally constant at $v$,  
we look at the decompositions of $g$ and of $gk$ for such $k$.
Suppose that $g=muw_0n$ and $gk=m'u'w_0n'$ with $m, m'\in M(F_v)$, $u,u'\in U^P(F_v)$, $n,n'\in U^P(\mathfrak{a}_v)$.
Then $k=g^{-1}gk=n^{-1}w_0^{-1}u^{-1}m^{-1}m'u'w_0n'$. 
Since $n, n'\in K(\mathfrak{a}_v)$ and $w_0$ normalizes $K(\mathfrak{a}_v)$, it follows that $u^{-1}m^{-1}m'u'\in K(\mathfrak{a}_v)$, whence $m^{-1}m'\in K(\mathfrak{a}_v)\cap M(F_v)$.  Moreover, the cocycles in this last computation do not change when we push this from $G(F_v)$ to $\widetilde{G}_v$.  
Indeed, if $\tilde g=\tilde m u w_0 n$ and $\tilde g k=\tilde m' u' w_0 n'$
then $\tilde m'=\tilde m k'$ for some $k'\in K(\mathfrak{a}_v)\cap \widetilde{M}(F_v)$,
and also $K(\mathfrak{a}_v)\cap \widetilde{M}_v$ is naturally
isomorphic to $(K(\mathfrak{a}_v) \cap \widetilde{M}_{1,v})\times (K(\mathfrak{a}_v)\cap \widetilde{M}_{2,v})$.
It follows that $\varphi_{s,\mathfrak{a}}(\tilde{g}) = \varphi_{s,\mathfrak{a}}(\tilde{g}k)$ for all $k\in K(\mathfrak{a}_v)$;
that is, $\varphi_{s,\mathfrak{a}}$ is locally constant at $v$.  If instead $v\in S_\infty$, then since the local cocycle
at an archimedean place is trivial, it follows immediately from the definition that 
$\varphi_{s,\mathfrak{a}}(\tilde{g}) = \varphi_{s,\mathfrak{a}}(\tilde{g}k)$ for all $k\in K_v$.
\end{proof}

\subsection{Construction of the Eisenstein Series\label{construction}}
Suppose that $\varphi_s\in  \text{Ind}_{\tilde{P}}^{\tilde{G}}(\pi_s)$ for $s$ in an open set of $\mathbb{C}$,
that $\varphi_s$ is continuous as a function of $s$ and that $\varphi_s$ restricted to $K$ is independent of $s$.
(These assumptions are  true for the test vectors described above.)  

Consider $\varphi_s(\iota(\gamma) \tilde{g})$ where $\gamma\in P(\mathfrak{o}_S)$ and $\tilde{g}\in\widetilde{G}$.
Write $\gamma=hu$ with $h\in M(\mathfrak{o}_S)$ and $u\in U^P(\mathfrak{o}_S)$.  By the defining property of 
$\text{Ind}_{\tilde{P}}^{\tilde{G}}(\pi_s)$ and the definition of $\delta_P$, we have
$$\varphi_s(\iota(\gamma) \tilde{g})=\pi_s(\iota(h)) \varphi_s(\tilde{g})=\pi_{\tilde{M}}(\iota(h)) \varphi_s(\tilde{g}).$$
Moreover, $\iota(h)\in \widetilde{M}_0$. Suppose 
 $\tilde{m}\in \widetilde{M}$.  Then, using the definition of $V_{\tilde{M}}$, we obtain
$$\varphi_s(\iota(\gamma)\tilde{g})(\tilde{m})=\varphi_s(\tilde{g})(\iota(h) \tilde{m}).$$

Let $\Lambda:V_1\otimes V_2\to\C$ be the functional that evaluates automorphic
forms at the identity.  If $\phi\in V_{\tilde{M}}$ and $\tilde{m}\in \widetilde{M}$ let us write
$\phi_{\tilde{m}}$ in place of $\phi(\tilde{m})$.  Then $\phi_{\tilde{m}}\in V_1\otimes V_2$
and 
\begin{equation*}
\Lambda(\phi_{\tilde{h}\tilde{m}})=\phi_{\tilde{m}}(\tilde{h})\qquad \text{ for all $\tilde{h}\in \widetilde{M}_0$, $\tilde{m}
\in \widetilde{M}$.}
\end{equation*}
Denote the application of $\Lambda$ to $\varphi_s$ by
\begin{equation}\label{invariance}
\varphi_s(\iota(\gamma)\tilde{g})(\tilde{m})(e)=\varphi_s(\tilde{g})(\tilde{m})(\iota(h)).
\end{equation}
From now on we assume that $\varphi_s(\tilde{g})(\tilde{m})$ is $\iota(M(\mathfrak{o}_S))$-fixed.  For given $f_i \in V_i$, 
this can always be arranged for the test vectors defined above by choosing $S$ sufficiently large.
Writing $\xi_s$ for the composition of $\varphi_s$ with $\Lambda$, we obtain
$$\xi_s(\iota(\gamma)\tilde{g})=\xi_s(\tilde{g})\qquad \text{~for~all~}\gamma\in P(\mathfrak{o}_S), \tilde{g}\in\widetilde{G}, $$
(an equality of complex-valued functions on $\widetilde{M}$).  Let us observe that $\xi_s$ restricts to an automorphic form of $\widetilde{M}$.  Indeed, (\ref{invariance}) shows that $\xi_s$ is a function on $\widetilde{M}$ that is left-invariant under $M(\mathfrak{o}_S)$.
Applying a second functional $\Lambda'$ given by evaluation at the identity of $\widetilde{M}$, then $f_s := \Lambda' \circ \xi_s$ is a complex-valued function on $\widetilde{G}$. Hence we may construct an Eisenstein series from $f_s$.

Given a function $f_s$ as above, we define the Eisenstein series
 \begin{equation}\label{Eisenstein}
 E_{f,s}(g)=\sum_{\gamma\in P(\mathfrak{o}_S)\backslash G(\mathfrak{o}_S)}
  f_s(\ii(\gamma) g).
  \end{equation}
  It converges for $\Re(s)$ sufficiently large.
  We sometimes suppress the dependence on $f$ and write the series simply as $E_s(g)$.

\begin{proposition}[M\oe glin-Waldspurger]\label{ACFE} $E_{f,s}$ possesses analytic continuation in $s$ to a meromorphic function on $\mathbb{C}$ with functional equation as $s \mapsto 1-s$ and $f\mapsto {\mathcal M}(w_0)f_s$,
where ${\mathcal M}(w_0)$ is the intertwining operator corresponding to the long element $w_0$.
\end{proposition}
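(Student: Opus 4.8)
The plan is to deduce the proposition from the general theory of Eisenstein series on covering groups, in the form developed by M\oe glin--Waldspurger \cite{moeglin-waldspurger}, after first translating the $S$-integer formalism of Section~\ref{eisensteincon} into the adelic language in which that theory is phrased. So the proof is really a two-part affair: build the dictionary, then quote the machine.

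First I would pass from the sum over $P(\o_S)\backslash G(\o_S)$ to an adelic Eisenstein series on the cover. Using the splitting of $\widetilde{G}_v$ over $K_v = G(\o_v)$ for $v\notin S$ (Proposition~\ref{max-cpt}), the splitting over $G(F)$ coming from Hilbert reciprocity (which is exactly what makes $\ii$ of Corollary~\ref{isahom} a homomorphism), and the fact that $\o_S$ is a principal ideal domain, one identifies $G(\o_S) = G(F)\cap \prod_{v\notin S}K_v$ inside the adelic metaplectic group and matches $P(\o_S)\backslash G(\o_S)$ with the relevant double cosets $P(F)\backslash G(\A)/\widetilde{K}$ for an open compact $\widetilde{K}$ that equals $\prod_{v\notin S}K_v$ away from $S$ and carries the chosen level at $S$; strong approximation for the simply connected cover handles the passage between arithmetic and adelic quotients. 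Under this identification the function $f_s$ on $\widetilde{G}_{F_S}$ extends, by right $\widetilde{K}$-invariance outside $S$, to a function $\widetilde{f}_s$ on $\widetilde{G}_\A$, and $E_{f,s}$ becomes a finite linear combination of adelic Eisenstein series $\sum_{\gamma\in P(F)\backslash G(F)}\widetilde{f}_s(\ii(\gamma)g)$.

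Second I would check that $\widetilde{f}_s$ is a $K$-finite section of $\mathrm{Ind}_{\widetilde{P}(\A)}^{\widetilde{G}(\A)}(\pi_{\widetilde{M}}\otimes\delta_P^{s+1/2})$ for a genuine automorphic representation $\pi_{\widetilde{M}}$ of $\widetilde{M}(\A)$. This is essentially the content of the construction in Section~\ref{construction}: $\xi_s$ was shown there to restrict to an automorphic form on $\widetilde{M}$, assembled by inducing in stages from the genuine automorphic representations $\pi_1,\pi_2$ of $\widetilde{M}_1,\widetilde{M}_2$ — first from $\widetilde{M}_0 = \widetilde{M}_1^0\times_{\mu_n}\widetilde{M}_2^0$ to $\widetilde{M}$, then parabolically to $\widetilde{G}$. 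One must verify that this genuinely lands in the axiomatic framework of \cite{moeglin-waldspurger}: genuineness, automorphy, and moderate growth of $\widetilde{f}_s$, continuity in $s$, and $s$-independence of the restriction to $K$. The finiteness of $[\widetilde{M}:\widetilde{M}_0]$ together with the splitting over unipotents and (at almost all places) over maximal compacts makes all of this go through. Once this is in place, the analytic continuation in $s$ to a meromorphic function on $\C$, the convergence for $\Re(s)\gg 0$, and the functional equation relating the Eisenstein series attached to $(\pi_{\widetilde{M}},s)$ and to its $w_0$-conjugate via the standard intertwining operator $\mathcal{M}(w_0)$ are exactly the output of the M\oe glin--Waldspurger theory; their arguments (reduction theory, truncation, the constant-term and inner-product computations) use only the group structure and the splitting over unipotents, so they apply to covers verbatim. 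The remaining point is purely the bookkeeping of the parameter: with the normalization $\pi_s = \chi_s\delta_P^{1/2}\pi_{\widetilde{M}}$, $\chi_s=\delta_P^s$, the maximal-parabolic case is a one-dimensional family on which $w_0 = w_M w^P$ acts by $s\mapsto 1-s$, giving the stated functional equation $f\mapsto\mathcal{M}(w_0)f_s$.

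The main obstacle I expect is precisely the first step: faithfully translating the explicit $S$-integer construction — with its Kubota cocycle $\kappa$, the subgroups $\widetilde{M}_i^0$, and the concrete test vectors $\varphi_{s,\mathfrak a}$ — into the adelic covering-group setup so that the hypotheses of \cite{moeglin-waldspurger} are literally met, and so that the adelic series one writes down really equals $E_{f,s}$ rather than some twist of it. Verifying that $\widetilde{f}_s$ is \emph{genuine} and automorphic, and that the inducing datum on $\widetilde{M}(\A)$ is of the type the general theory accepts, is where all the care lies; after the dictionary is correctly set up, invoking \cite{moeglin-waldspurger} is routine, which is why the result is recorded as a proposition rather than proved in detail here.
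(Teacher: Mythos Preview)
Your proposal is correct and follows essentially the same route as the paper: both use strong approximation together with the splittings of the cover over $G(F)$ and over $K_v$ for $v\notin S$ to lift $f_s$ from $\widetilde{G}_{F_S}$ to an adelic section, and then invoke the M\oe glin--Waldspurger machinery. The paper is slightly more explicit about the non-$K^S$-invariant case (integrating against a finite-dimensional representation $\tau$ of $K^S$ to produce an operator-valued section, then taking matrix coefficients), but this is exactly the $K$-finiteness you allude to.
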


\begin{proof} We show how to use the strong approximation theorem to prove that
a test vector given as a function on $\widetilde{G}$ can be written as an adelic
test vector satisfying the conditions imposed by M\oe glin-Waldspurger \cite{moeglin-waldspurger}.

Using the strong approximation theorem, it follows that the adelic metaplectic cover $\widetilde{G}_{\mathbb{A}}$ of 
$G(\mathbb{A})$ has the decomposition
$$ \widetilde{G}_{\mathbb{A}} = G(F) \widetilde{G}_{F_S} K^S $$
where $K^S$ denotes the product of $G(\mathfrak{o}_v)$ at $v \not\in S$. Here
$G(F)$ and $K^S$ have been identified with their embeddings as 
subgroups of the metaplectic group $\widetilde{G}_{\mathbb{A}}$, as the 
cocycle is trivial on $G(F)$ by Hilbert reciprocity and splits over $K^S$
according to the Kubota map.

Suppose first that $f_s$ is $K^S$-invariant.
Then we may lift the functions $f_s$ on $\widetilde{G}_{F_S}$ appearing in $\pi(s)$ 
to functions $f_s^{\mathbb{A}}$ on $\widetilde{G}_{\mathbb{A}}$, as follows. For any 
$\tilde{g}_\mathbb{A} \in \widetilde{G}_{\mathbb{A}}$,
we write
$$ \tilde{g}_\mathbb{A} = \gamma \tilde{g} k, \qquad \gamma \in G(F), \tilde{g} \in \widetilde{G}_{F_S}, k \in K^S $$
and then define $f_s^{\mathbb{A}}$ via the formula
$$ f_s^{\mathbb{A}} (\tilde{g}_{\mathbb{A}}) =  f_s (\tilde{g}). $$  
This function can be used to write the Eisenstein series $E_{f,s}$ adelically, and the results
of \cite{moeglin-waldspurger} then apply.  (A similar formula appears for Borel Eisenstein series on covers
of the general linear group in \cite{kazhdan-patterson}, p.\ 124.)

More generally, since functions in $V_{\tilde{M}}$ are $K\cap \widetilde{M}$-finite, we may find a finite dimensional
representation $\tau$ of $K^S$ and integrate against $f_s$ to extend it to an operator-valued function $\hat{f}_s$ that transforms
under the right by $\tau$, as in \cite{Shahidi-ParkCity}, Lecture 2.   This function may be extended to an adelic
function by $\hat{f}_s^{\mathbb{A}} (\tilde{g}_{\mathbb{A}}) = \tau(k)  \hat{f}_s (\tilde{g}).$  The matrix coefficients
may then be used to make a test vector with the desired properties, and the properties of the original series follow.
See Langlands \cite{scripture},  Chapter 4 and Shahidi \cite{Shahidi-ParkCity}, Section 2.1, especially Equation (2.7).
\end{proof}

We note that the section $f_s^{\mathbb{A}}$ is not factorizable.  Indeed, even inducing from the Borel subgroup,
it is necessary to introduce an infinite sum of factorizable sections.  For type $A$, this is the passage from $f_*$ to
$f_0$ found in \cite{kazhdan-patterson}, p.\ 109.

\section{Whittaker coefficients of the Eisenstein series}
We turn to the computation of the Whittaker coefficients of the Eisenstein series.
Recall that $B$ denotes a Borel subgroup of $G$ such that $B\subseteq P$ and $U$
denotes the unipotent radical of $B$.  For convenience,
since $U(F_S)$ embeds canonically in $\widetilde{G}$, if $u\in U(F_S)$ we write
$u\in\widetilde{G}$ instead of $(u,1)\in \widetilde{G}$.
Let $\psi$ be a non-degenerate character of $U(F_S)$ that is trivial on $U(\o_S)$.
Then  the $\psi$-Whittaker coefficient of the Eisenstein series is given by the integral
$$ \int_{U(\o_S) \backslash U(F_S)} E_{f,s}(u\tilde{g})\, \overline{\psi(u)} \,du, \qquad \tilde{g}\in\widetilde{G}.$$

In this section we evaluate these coefficients, modulo some decomposition theorems that are treated in
detail in later sections.   The sharpest of these theorems require that $G$ is cominuscule -- see
Definition~\ref{comindef} -- so we make these assumptions henceforth.  
Since $\widetilde{G}$ acts on the right, without loss we take $\tilde{g}=1$ from now on.
(To be clear, we compute the coefficients for a general  vector $f_s$, but the cleanest expression
is obtained for a specific set of special vectors which are not stable under this right action.  This is similar to the situation for Borel Eisenstein series;
see \cite{eisenxtal}, Section 6.)
To evaluate, we suppose that $\Re(s)$ is sufficiently large,
and we replace the Eisenstein series by its definition as a sum.  Since $\psi$ is non-degenerate,
it is not hard to check that the
Whittaker coefficient is supported on summands
arising from the relative ``big Bruhat cell'' $P(F_S) w_0 U^P(F_S) \, \cap \, G(\o_S)$, where $w_0\in G(\mathfrak{o}_S)$ is a representative for the long Weyl group element. 
Using the usual unfolding arguments, together with the fact that $U^P(\o_S)$ acts properly on the 
quotient $P(\o_S) \backslash P(F_S) w_0 U^P(F_S) \, \cap \, G(\o_S)$ (see Section~\ref{parametrization}, 
Corollary~\ref{corgencase}, below), we obtain
$$ \int_{U^P(F_S)} \int_{U_M(\o_S) \backslash U_M(F_S)} 
\sum_{\gamma \in P(\o_S) \backslash P(F_S) w_0 U^P(F_S) \, \cap \, G(\o_S) / U^P(\o_S)} \!\!\!\!\!\!\!\!\!\!\!
f_s(\ii (\gamma) u_M u^P)\, \overline{\psi(u_Mu^P)} \, du_M \, du^P, $$
where we have factored $U = U_M U^P$, writing $u\in U$ as $u = u_Mu^P$, $u_M\in U_M$, $u^P\in U^P$.  

Let ${U}_-^{P}$ be the opposite unipotent to $U^P$.
Next we replace double coset representatives $\gamma$ by $\gamma w_0$ such that
$$ \gamma \in P(\o_S) \backslash (P(F_S) U_{-}^P(F_S) \, \cap \, G(\o_S) ) / U^P(\o_S), $$
and apply a decomposition theorem for double cosets from Section~\ref{sectionMDT}
(see Corollary~\ref{corgencase} and Proposition~\ref{generalunipres}) together with Proposition~\ref{isahom} above. 
These results
express each such $\gamma$ as a product of $N$ embedded $SL_2(\mathfrak{o}_S)$ matrices according to a reduced decomposition for the long element in the relative Weyl group, where $N$ is the dimension of $U^P$. 
The double cosets are parametrized by the bottom rows $(c_j,d_j)$ of the $SL_2(\mathfrak{o}_S)$ matrices $g_j$
with $d_j\ne 0$ and with $(c_j,d_j)$ modulo units for $1\leq j\leq N$ and $c_j$ modulo 
$D_j := d_j \prod_{\ell = j+1}^N d_{\ell}^{\langle \gamma_j, \gamma_{\ell}^\vee \rangle}.$   The $\gamma_j$ here are the
roots in $U^P$ and the ordering of these roots is described in Section~\ref{sectionMDT}.

By repeated use of a rank one Bruhat decomposition, and keeping track of the contributions from the cocyles, we 
arrive at a decomposition $\gamma=v^P \tilde{\mathfrak{D}} w_0 u_\gamma$ for each
coset representative $\gamma$ where $v^P\in U^P(F_S)$,
$\tilde{\mathfrak{D}}\in\widetilde{G}$ projects to an element $\mathfrak{D}$ of $T(F_S)$, and $u_\gamma \in U(F_S)$.  (In the
notation of Proposition~\ref{generalunipres}, $u_\gamma=w_0vw_0^{-1}$. As indicated in this result, $u_\gamma$
depends on the choice of $g_j$.) 
We thus obtain:
\begin{multline*}
\int_{U^P(F_S)} \int_{U_M(\o_S) \backslash U_M(F_S)} \sum_{\substack{g_j \\ j=1, \ldots, N}} 
\left[ \prod_{k=1}^N (d_k, c_k)_S^{q_k} \left( \frac{d_k}{c_k} \right)^{\!\! q_k} \right] \times \\ 
f_s(v^P \tilde{\mathfrak{D}} w_0 u_\gamma u_Mu^P) \overline{\psi(u_Mu^P)} \, du_M \, du^P, 
\end{multline*}
where the $g_j$ have bottom row $(c_j, d_j)$, varying over coprime pairs with additional conditions as above, and
$\tilde{\mathfrak{D}}\in\widetilde{G}$ is given explicitly in Proposition~\ref{generalunipres}
below. For $c,d \in \mathfrak{o}_S$, the symbol $(d,c)_S$ is the product of the local Hilbert symbols $(d,c)_v$ over $v \in S$. The $q_k$ are the
integers $Q(\gamma_k^\vee)$ coming from the choice of bilinear form in the definition of $\widetilde{G}$.
Moreover, the group elements $v^P, u_\gamma$ depend only on this choice of $g_j$ 
while the $\tilde{\mathfrak{D}}$ depends only on the $d_j$.

The reciprocity law (see for example Neukirch \cite{neukirch}, Chapter VI, Theorem 8.3) implies that each of the terms in the product may be rewritten:
$$  (d_k, c_k)_S \left( \frac{d_k}{c_k} \right) = \left( \frac{c_k}{d_k} \right). $$
For $\text{Re}(s)$ sufficiently large, after a variable change we then obtain
\begin{equation*} \sum_{\substack{g_j \\ j=1, \ldots, N}} \psi(u_\gamma) \prod_{k=1}^N \left( \frac{c_k}{d_k} \right)^{\!\! q_k} \int_{U^P(F_S)} \int_{U_M(\o_S) \backslash U_M(F_S)} f_s(\tilde{\mathfrak{D}} w_M w^P u_Mu^P) \overline{\psi(u_Mu^P)} \, du_M \, du^P 
\end{equation*}
where we have factored $w_0 = w_M w^P$.
Since $\psi((w^P)^{-1} u_M w^P) = \psi(u_M)$, this becomes
\begin{equation}
\sum_{\substack{g_j \\ j=1, \ldots, N}} \psi(u_\gamma) \prod_{k=1}^N \left( \frac{c_k}{d_k} \right)^{\!\! q_k} \int_{U^P(F_S)} \int_{U_M(\o_S) \backslash U_M(F_S)} f_s(\tilde{\mathfrak{D}} w_M u_M w^P u^P) \overline{\psi(u_Mu^P)} \, du_M \, du^P. \label{mdsform-1} \end{equation}
Note further that the invariance of $f_s$ under left multiplication by elements $P(\mathfrak{o}_S) \cap U^-(F_S)$ ensures that the character $\psi$ must be trivial on all elements of the form $(\mathfrak{D} w_0)^{-1} p (\mathfrak{D} w_0)$ with $p \in P(\mathfrak{o}_S) \cap U^-(F_S)$. Indeed, we just move these elements $p$ rightward in $f_s$ and change variables. Hence, for any fixed choice of $\psi$, this places further divisibility conditions on the $d_i$ in order to yield a non-zero sum (see Lemma~\ref{whitdivis}). It follows that the exponential sum appearing in (\ref{mdsform-1}) depends only on the bottom row elements $(c_j, d_j)$ of $g_j$. Summing over $c_j$ mod $D_j$ for fixed choices of $d_j$ with $j = 1, \ldots, N$, we denote the result
\begin{equation}
H(d_1, \ldots, d_N) := \sum_{c_j (\text{mod } D_j)} \psi(u_\gamma) \prod_{k=1}^N \left( \frac{c_k}{d_k} \right)^{\!\! q_k}. 
\label{hdefined} \end{equation}
(The $u_\gamma$ here do depend on the $g_j$ so this is a modest abuse of notation; as noted the sum is independent of the choices of top rows.)

The integral appearing in the series (\ref{mdsform-1}) is given by
\begin{multline}
\int_{U^P(F_S)} \int_{U_M(\o_S) \backslash U_M(F_S)} f_{s}(\tilde{\mathfrak{D}} w_M u_M w^P u^P) \overline{\psi(u_Mu^P)} \, du_M \, du^P = \\
\delta_P^{s+1/2}(\mathfrak{D}) \int_{U^P(F_S)} \int_{U_M(\o_S) \backslash U_M(F_S)} \mkern-30mu \Lambda' \circ \Lambda \circ [ \pi_{\tilde{M}}(\tilde{\mathfrak{D}} w_M u_M ) \cdot \varphi_s(w^P u^P)] \overline{\psi(u_Mu^P)} \, du_M \, du^P. \label{ifsdefined}
\end{multline}
Thus~(\ref{mdsform-1}) may be expressed as a Dirichlet series in $s$:
$$ \sum_{\substack{d_j \in \mathfrak{o}_S  / \mathfrak{o}_S^\times, \, d_j \ne 0 \\ j = 1, \ldots, N}} H(d_1, \ldots, d_N) \delta_P^{s+1/2}(\mathfrak{D}) I_{f_s}(d_1, \ldots, d_N), $$
where $I_{f_s}(d_1, \ldots, d_N)$ is the double integral appearing on the right-hand side of~(\ref{ifsdefined}).

To further simplify $I_{f_s}(d_1, \ldots, d_N)$, write 
$$\tilde{\mathfrak{D}} w_M u_M = w_M (w_M^{-1} \tilde{\mathfrak{D}} w_M) u_M = w_M u_M^{\mathfrak{D}} (w_M^{-1} \tilde{\mathfrak{D}} w_M), $$ where $u_M^{\mathfrak{D}}$ denotes $u_M$ conjugated by $w_M^{-1} \tilde{\mathfrak{D}} w_M$. Note that $w_M^{-1} \tilde{\mathfrak{D}} w_M = (\mathfrak{D}^{w_M}, \zeta_{\mathfrak{D}})$ with $\mathfrak{D}^{w_M} \in M$ and $\zeta_\mathfrak{D} \in \mu_n$. Then, further factoring $\mathfrak{D}^{w_M} = (\mathfrak{D}_1, \mathfrak{D}_2) r$ with $(\mathfrak{D}_1, \mathfrak{D}_2) \in M_0 \simeq M_1^0 \times M_2^0$ and $r \in R$, we obtain
$$ \tilde{\mathfrak{D}} w_M u_M = w_M u_M^{\mathfrak{D}} (\mathfrak{D}, \zeta_\mathfrak{D}) = w_M u_M^{\mathfrak{D}} \mathbf{s}(\mathfrak{D}_1,\mathfrak{D}_2) \mathbf{s}(r) \zeta_\mathfrak{D} \sigma( (\mathfrak{D}_1, \mathfrak{D}_2), r)^{-1}. $$
Then $I_{f_s}(d_1,\ldots,d_N)$ is equal to $\zeta_\mathfrak{D} \sigma( (\mathfrak{D}_1, \mathfrak{D}_2), r)^{-1}$ times
\begin{equation}
\int_{U^P(F_S)} \int_{U_M(\o_S) \backslash U_M(F_S)} [\pi_{\tilde{M}}(\mathbf{s}(r)) \cdot \varphi_s(w^P u^P)](e)(w_M 
u_M^{\mathfrak{D}} \mathbf{s}(\mathfrak{D}_1,\mathfrak{D}_2)) \overline{\psi(u_Mu^P)} \, du_M \, du^P.
\label{readytospecialize} \end{equation}
Here $[\pi_{\tilde{M}}(\mathbf{s}(r)) \cdot \varphi_s(w^P u^P)](e)$ denotes evaluation of the function $[\pi_{\tilde{M}}(\mathbf{s}(r)) \cdot \varphi_s(w^P u^P)]$ on $\widetilde{M}$ at the identity $e$ in $\widetilde{M}$. Thus it defines an element of $V_1\otimes V_2$ and the inner integral is a Whittaker coefficient of an automorphic form in $V_1\otimes V_2$.

To say more, we restrict to the
test vector $f_{s,\mathfrak{a}}$ obtained from $\varphi_{s,\mathfrak{a}}$ as explained above. Then separating $w^Pu^P$ into its finite and infinite components and extracting the components in $\tilde{m}_\text{fin}(w^P u^P) = 1$ and $m_\infty = m_\infty(w^P u^P)$ according to~(\ref{goodmtildevector}), then~(\ref{readytospecialize}) becomes
\begin{multline*} \int_{U^P(\mathfrak{a})\times U^P(F_\infty)} \int_{U_M(\o_S) \backslash U_M(F_S)} \mkern-55mu [\pi_{\tilde{M}}(\mathbf{s}(r)) \pi_s(1,m_\infty) \cdot \phi_{f_1, f_2, \Psi}](e)(w_M 
u_M^{\mathfrak{D}} \mathbf{s}(\mathfrak{D}_1,\mathfrak{D}_2)) \overline{\psi(u_Mu^P)} \, du_M \, du^P \\
 =  \mu(\mathfrak{a}) \int_{U^P(F_\infty)} \int_{U_M(\o_S) \backslash U_M(F_S)} \mkern-20mu \Psi(r)\pi_s((1, m_\infty)) (f_1 \otimes f_2)(w_M 
u_M^{\mathfrak{D}} \mathbf{s}(\mathfrak{D}_1,\mathfrak{D}_2)) \overline{\psi(u_Mu^P)} \, du_M \, du^P,
\end{multline*} 
where $\mu(\mathfrak{a})$ denotes the measure of $U^P(\mathfrak{a}) \cap U^P(\mathfrak{o}_\text{fin})$. Thus including the 
above factor of $\zeta_\mathfrak{D} \sigma( (\mathfrak{D}_1, \mathfrak{D}_2), r)^{-1}$ into $I_{f_s}(d_1,\ldots,d_N)$ and making use of (\ref{momega}), $I_{f_s}(d_1,\ldots,d_N)$ equals
$$\mu(\mathfrak{a}) \zeta_\mathfrak{D} \Psi(\mathfrak{D}) \int_{U^P(F_\infty)} \int_{U_M(\o_S) \backslash U_M(F_S)} \pi_s((1,m_\infty)) (f_1 \otimes f_2)(w_M 
u_M^{\mathfrak{D}} \mathbf{s}(\mathfrak{D}_1,\mathfrak{D}_2)) \overline{\psi(u_Mu^P)} \, du_M \, du^P. $$
Now the cover splits at the infinite places, and the Whittaker models for $f_1$ and $f_2$ at those places are thus unique.
Hence, after a change of variables in $U_M$ to undo the conjugation by $\mathfrak{D}^{w_M}$, the inner integral gives the Whittaker coefficients of $f_1$ and $f_2$ associated with the character $u_M\mapsto \psi(u_M^{(\mathfrak{D}^{w_M})^{-1}})$, denoted $c_{f_1, f_2}^\psi(\mathfrak{D})$, times the archimedean Whittaker functions  of $f_1$ and $f_2$.  These are
evaluated at the  Borel (or $P(F_\infty)$)-part of $w^P u^P$ written in Iwasawa coordinates.  We also get a contribution of $\delta_P^{s+1/2}$
on this part.  The result is a Jacquet-type integral representing the Whittaker function of $G(F_\infty)$ attached to the representation with
Langlands parameters determined by the inducing data and $s$. We refer to this function as $\mathcal{W}_{f_1, f_2, s}(1)$, noting in particular that it is independent of $\mathfrak{D}$. Collecting these results in a theorem, we have shown:
\begin{theorem} Given a cominuscule parabolic $P$, the Whittaker coefficient of the maximal parabolic Eisenstein series $E_{f,s}$ is given by
\begin{equation}\label{general-case} \int_{U(\o_S) \backslash U(F_S)} E_{f,s}(u)\, \overline{\psi(u)} \,du = \sum_{\substack{d_j \in \mathfrak{o}_S / \mathfrak{o}_S^\times,  \, d_j \ne 0 \\ j=1, \ldots, N}} H(d_1, \ldots, d_N)\delta_P^{s+1/2}(\mathfrak{D}) I_{f_s}(d_1, \ldots, d_N).
\end{equation}
Here $H(d_1, \ldots, d_N)$ is the exponential sum defined in (\ref{hdefined})
(and described in detail in Section~\ref{sectionMDT} below), $\mathfrak{D}$ is computed from the $d_j$ as in Proposition~\ref{generalunipres}, and $I_{f_s}(d_1, \ldots, d_N)$ is defined in~(\ref{ifsdefined}).

Further, if one sets $\varphi_s = \varphi_{s,\mathfrak{a}}$ as defined in~(\ref{goodmtildevector}) with associated test vector $f := f_{s,\mathfrak{a}}$, then the Whittaker coefficient equals
\begin{equation}\label{series-with-ACFE} \mathcal{W}_{f_1,f_2,s}(1) \sum_{\substack{d_j \in \mathfrak{o}_S / \mathfrak{o}_S^\times, \, d_j \ne 0 \\ j=1, \ldots, N}} H(d_1, \ldots, d_N)\delta_P^{s+1/2}(\mathfrak{D}) \Psi(\mathfrak{D}) \zeta_\mathfrak{D} \, c_{f_1, f_2}^\psi(\mathfrak{D}). 
\end{equation}
\label{whittakeratlast} \end{theorem}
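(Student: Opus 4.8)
The plan is to unfold $E_{f,s}$ against $\overline{\psi}$ and then reduce the resulting sum of integrals by factoring the double-coset representatives into rank-one pieces. First, for $\Re(s)$ large one may replace $E_{f,s}$ by its defining sum over $P(\o_S)\backslash G(\o_S)$ and interchange summation with the Whittaker integral; since $\psi$ is non-degenerate, only the summands with $\gamma$ in the relative big cell $P(F_S)w_0U^P(F_S)\cap G(\o_S)$ contribute. Using that $U^P(\o_S)$ acts freely on $P(\o_S)\backslash\bigl(P(F_S)w_0U^P(F_S)\cap G(\o_S)\bigr)$ (Corollary~\ref{corgencase}) and unfolding the $U^P$-part of the integral in the usual way, the Whittaker coefficient becomes an integral over $U^P(F_S)\times\bigl(U_M(\o_S)\backslash U_M(F_S)\bigr)$ of a sum over the double cosets $P(\o_S)\backslash\bigl(P(F_S)w_0U^P(F_S)\cap G(\o_S)\bigr)/U^P(\o_S)$, writing $U=U_MU^P$.

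Next I would replace each $\gamma$ by $\gamma w_0$, so that representatives lie in $P(F_S)U_-^P(F_S)\cap G(\o_S)$ with $U_-^P$ the opposite unipotent, and invoke the decomposition theorems of Section~\ref{sectionMDT} (Corollary~\ref{corgencase} and Proposition~\ref{generalunipres}). These express each such $\gamma$ as a product of $N=\dim U^P$ embedded $SL_2(\o_S)$-matrices $g_j$, ordered according to a reduced word for the long element of the relative Weyl group, so that the double cosets are parametrized by the bottom rows $(c_j,d_j)$ with $d_j\ne 0$, taken modulo units, together with $c_j$ modulo the prescribed products $D_j$. The decisive step is then the cocycle bookkeeping: applying the Kubota homomorphism $\ii$ of Corollary~\ref{isahom} and iterating the rank-one Bruhat decomposition, each matrix $g_j$ contributes an arithmetic factor $(d_k,c_k)_S^{q_k}(d_k/c_k)^{q_k}$ with $q_k=Q(\gamma_k^\vee)$, and one arrives at $\gamma=v^P\,\tilde{\mathfrak{D}}\,w_0\,u_\gamma$ with $v^P\in U^P(F_S)$, $u_\gamma\in U(F_S)$, and $\tilde{\mathfrak{D}}\in\widetilde{G}$ lying over the explicit torus element $\mathfrak{D}$ of Proposition~\ref{generalunipres}, which depends only on the $d_j$.

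With the cocycle in hand, the reciprocity law for $S$-integers rewrites $(d_k,c_k)_S(d_k/c_k)=(c_k/d_k)$; a change of variables in $U^P$, the factorization $w_0=w_Mw^P$, and the identity $\psi((w^P)^{-1}u_Mw^P)=\psi(u_M)$ then split each summand into the purely arithmetic piece $\psi(u_\gamma)\prod_k(c_k/d_k)^{q_k}$ and the double integral $I_{f_s}$ of~(\ref{ifsdefined}). Summing the arithmetic piece over the top rows $c_j$ modulo $D_j$ with the $d_j$ held fixed produces the exponential sum $H(d_1,\dots,d_N)$ of~(\ref{hdefined}), while left-invariance of $f_s$ under $P(\o_S)\cap U^-(F_S)$ simultaneously imposes the divisibility conditions on the $d_j$ (Lemma~\ref{whitdivis}) and shows that $I_{f_s}$ depends only on the $d_j$; this establishes~(\ref{general-case}). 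For the second formula one specializes to $f=f_{s,\mathfrak{a}}$ built from $\varphi_{s,\mathfrak{a}}$ in~(\ref{goodmtildevector}): writing $\tilde{\mathfrak{D}}w_Mu_M=w_Mu_M^{\mathfrak{D}}(\mathfrak{D},\zeta_{\mathfrak{D}})$ and factoring $\mathfrak{D}^{w_M}=(\mathfrak{D}_1,\mathfrak{D}_2)r$ with $r\in R$, the support condition in~(\ref{goodmtildevector}) extracts the contribution with $\tilde{m}_{\mathrm{fin}}=1$ (producing the volume $\mu(\mathfrak{a})$) while~(\ref{momega}) yields the factor $\Psi(\mathfrak{D})$. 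Since $\widetilde{G}$ splits over the archimedean places, the Whittaker model there is unique, so after undoing the conjugation by $\mathfrak{D}^{w_M}$ the inner integral factors as the Whittaker coefficients $c^\psi_{f_1,f_2}(\mathfrak{D})$ of the inducing data times a Jacquet-type archimedean integral $\mathcal{W}_{f_1,f_2,s}(1)$ that is independent of $\mathfrak{D}$; collecting these factors gives~(\ref{series-with-ACFE}).

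I expect the main obstacle to be the cocycle bookkeeping in the middle step. One must follow the metaplectic two-cocycle $\sigma$ through the iterated rank-one Bruhat decompositions, check that the Kubota symbols assemble exactly into the factors $(d_k,c_k)_S^{q_k}(d_k/c_k)^{q_k}$, and verify via Hilbert reciprocity that these collapse to the clean power-residue symbols $(c_k/d_k)^{q_k}$; this is precisely where the rank-one description of the cover (Theorem~\ref{localmet}) and the Kubota homomorphism of Corollary~\ref{isahom} are indispensable. By contrast, the unipotent integrations, and --- given uniqueness of the archimedean Whittaker model --- the archimedean Jacquet integral, are routine.
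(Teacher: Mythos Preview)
Your proposal is correct and follows essentially the same approach as the paper: unfold to the big cell, parametrize the double cosets via the rank-one factorization of Corollary~\ref{corgencase} and Proposition~\ref{generalunipres}, collapse the Kubota and Hilbert-symbol factors via reciprocity into the power-residue symbols defining $H$, and then specialize the test vector to extract $\Psi(\mathfrak{D})$, $\zeta_{\mathfrak{D}}$, the Whittaker coefficients of the inducing data, and the archimedean Jacquet integral. One trivial slip: in your third paragraph the $c_j$ you sum over are the bottom-row entries, not ``top rows.''
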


From Proposition~\ref{ACFE}, we deduce that the Whittaker coefficient \eqref{general-case}, when multiplied by a suitable
normalizing factor, has analytic continuation to all complex $s$ and satisfies a functional equation in which $s$
is replaced by $1-s$ and $f$ by ${\mathcal M}(w_0)f$.  For the trivial cover case $n=1$, this observation is the starting point for
Langlands-Shahidi theory, but for $n>1$ these series had not been exhibited before except in a few
instances.  Moreover, one may then apply Tauberian methods to
obtain information about the growth of the coefficients of these series.  Below, our focus will be on the combinatorial
and representation-theoretic description of the exponential sum $H$ that appears in \eqref{general-case} and
\eqref{series-with-ACFE}, so we 
do not pursue this further here.
However, we plan to study these consequences in subsequent works.

We remark that one could carry out this computation adelically, writing an adelic Eisenstein series
as discussed following Proposition~\ref{ACFE} and integrating over $U(F)\backslash U(\mathbb{A})$.  Using the factorization
$U=U_M\,U^P$ and unfolding in the usual way, one gets an iterated integral over $U^P(\mathbb{A})$ and 
over $U_M(F)\backslash U_M(\mathbb{A})$, the latter giving rise to the Whittaker coefficients of the inducing data.
The difficulty here is that the section in the Eisenstein series is not factorizable, and the resulting expression is
thus not simply a product of local terms.  (In addition, the Whittaker integral does not arise from a unique
model and is not factorizable.)  
In fact, this difficulty arises already for the Borel Eisenstein series on an $n$-fold cover of $GL_2$, 
whose Whittaker coefficients
are infinite sums of $n$-th order Gauss sums and are not Eulerian; compare the treatments in
\cite{brubaker-bump}, Section~5 (working over $F_S$) and \cite{kazhdan-patterson}, Ch.~II.3 (working over $\mathbb{A}$).
Working over $F_S$ for arbitrarily large but finite $S$ carries the same information as the 
corresponding adelic calculation.

\section{Double coset parametrizations\label{sectionMDT}}

In this section, we present an explicit parametrization for double cosets appearing in the definition of Eisenstein series. 
The idea is to use embedded rank one subgroups on which the metaplectic cocycle has a simple form. 
First, we perform the parametrization over the points of the algebraic group, from which the generalization to 
metaplectic covers readily follows. The use of Chevalley-Steinberg relations to explore double cosets of reductive 
groups is a common technique in algebraic combinatorics, especially that relating to canonical bases as in 
\cite{berenstein-zelevinsky, morier-genoud}. Indeed we connect the results of this section to canonical bases in 
Section~\ref{canbases}. But because our decomposition differs from those previously appearing in the literature, 
including working over a ring, we give a complete and self-contained treatment. Initially in this section, we 
let $\mathfrak{o}$ be any principal ideal domain with field of fractions $F$.

\subsection{Coset parametrizations for reductive groups}

Before discussing the general coset parametrization theorems of this section, we briefly recall the necessary structure theory of reductive groups via root data, for which a basic reference is Springer \cite{springer}.

Let $(X, \Phi, Y, \Phi^\vee)$ be the root datum corresponding to the pair $(G, T)$. It comes equipped with a bilinear pairing $\langle \cdot , \cdot \rangle \; : \; X \times Y \longrightarrow \mathbb{Z}$ such that $\langle \alpha, \alpha^\vee \rangle = 2$ for all roots $\alpha \in \Phi$ and simple reflections $s_\alpha(x) := x - \langle x, \alpha^\vee \rangle \alpha$ which generate the Weyl group $W$. Let $e_\alpha$ denote the isomorphism from the additive group $G_a$ to the unique closed subgroup $U_\alpha$ of $G$ such that
\begin{equation} t e_\alpha(x) t^{-1} = e_\alpha( \alpha(t) x) \label{tconj} \end{equation}
for all $x \in F$ and $t \in T(F)$. The isomorphism $e_\alpha$ is chosen so that, if we set
$$ w_\alpha (x) := e_\alpha(x) e_{-\alpha}(-x^{-1}) e_\alpha(x) \quad \text{for any $x \in F^\times$,} $$
then $w_\alpha(1)$ has image $s_\alpha$ in $W$. Then for any $x \in F^\times$ and any root $\beta \in \Phi$, defining the torus element $h_\beta(x) := w_\beta(x) w_\beta(1)^{-1}$, the conjugation in (\ref{tconj}) may be rewritten
\begin{equation} h_\beta(y) e_\alpha(x) h_\beta(y)^{-1} = e_\alpha( y^{\langle \alpha, \beta^\vee \rangle} x). \label{hecommutation} \end{equation}

Let $\Phi^+$ denote a set of positive roots with simple roots $\alpha_i$. Let $s_i$ be shorthand for the reflection corresponding to $\alpha_i$. To any $w \in W$, and a reduced decomposition as product of simple reflections $w = s_{i_1} \ldots s_{i_j}$, one has 
$$ \Phi_w := \{ \alpha \in \Phi^+ \; | \; w^{-1}(\alpha) \in \Phi^- \} = \{ \alpha_{i_1}, s_{i_1}(\alpha_{i_{2}}), \ldots, s_{i_1} \cdots s_{i_{j-1}}(\alpha_{i_{j}})\}. $$ 
Using this description, a reduced decomposition of $w$ gives rise to a total ordering on $\Phi_w$. For our application to parabolic induction, let $P$ be any parabolic subgroup of $G$. Write the long element $w_0 = w_M w^P$ with $w_M$ the long element of the Levi subgroup $M$ of $P$. Starting from a reduced decomposition for $w_M = s_{i_1} \cdots s_{i_t}$, extend this to a reduced decomposition for $w_0 = s_{i_1} \cdots s_{i_t} s_{i_{t+1}} \cdots s_{i_{t+N}}$. Let $\boldsymbol{i} = (i_1, \ldots, i_{t+N})$ be the corresponding ``reduced word'' and $<_{\boldsymbol{i}}$ be the corresponding ordering on $\Phi^+$ so that
\begin{equation} \alpha_{i_1} >_{\boldsymbol{i}} s_{i_1}(\alpha_{i_2}) >_{\boldsymbol{i}} \cdots >_{\boldsymbol{i}} s_{i_1} \cdots s_{i_{t+N-1}} (\alpha_{i_{t+N}}). \label{iordering} \end{equation}
The word may then be used to give an ordered parametrization of the positive roots $\Phi_P$ in $U^P$, the unipotent radical of the parabolic $P$.  We index the roots of $\Phi_P$ by $\gamma_i$ as follows:
\begin{equation} \gamma_1 = w_M (\alpha_{i_{t+1}}), \quad \gamma_{2} = w_M s_{i_{t+1}}(\alpha_{i_{t+2}}), \quad \ldots, \quad \gamma_N = w_M s_{i_{t+1}} \cdots s_{i_{t+N-1}} (\alpha_{i_{t+N}}). \label{phip} \end{equation}
Here $\gamma_{j} >_{\boldsymbol{i}} \gamma_k$ if $j < k$. Note that $\gamma_N = \alpha_{i_{t+N}}$, a simple root. The group $U_{-}^P$, the opposite of the unipotent radical of $P$, is generated by all $e_{-\a}(x)$ with $\a\in \{ \gamma_1, \ldots, \gamma_N \}$.

For any $\alpha \in \Phi^+$, we continue to denote by $\iota_\alpha$ the canonical morphism of group schemes from $SL_2$ to $G$ corresponding to $\alpha$:
$$ \iota_\alpha \; : \; SL_2 \longrightarrow \langle e_\alpha, e_{-\alpha} \rangle, \quad \text{such that} \quad \iota_\alpha \begin{pmatrix} 1 & x \\ 0 & 1 \end{pmatrix} = e_\alpha(x), \; \iota_\alpha \begin{pmatrix} 1 & 0 \\ x & 1 \end{pmatrix} = e_{-\alpha}(x). $$

Steinberg \cite{steinberg} gave a presentation for $G(F)$ valid over any field and depending only on $F$, the root datum, and structure constants $\eta_{\alpha, \beta; i,j}$ appearing in the commutator relation
\begin{equation}
 e_\alpha(s)e_\beta(t)e_\alpha(s)^{-1} = e_\beta(t) \big[\!\!\!\!\!\! \prod_{\substack{ i,j\in\Z^+ \\ i\alpha+j\beta=\gamma\in\Phi }} \!\!\!e_\gamma(\eta_{\alpha,\beta; i,j}s^it^j)\big] \quad (\alpha + \beta \ne 0) \label{eecommutation}. 
\end{equation}
The constants $\eta_{\alpha, \beta; i,j}$ are integers and the ordering on the product corresponds to a total ordering of the roots $\Phi$. We take the total ordering as in (\ref{iordering}) extended to $\Phi = \Phi^- \cup \Phi^+$ in the obvious way. Strictly speaking, Steinberg's presentation in \cite{steinberg} is for simply connected groups, but see \cite{springer}, Chapter 9, for a presentation valid for any reductive group.

The following result will be used repeatedly in the decomposition theorems below.
\begin{lemma} \label{convexorder} Fix a reduced word $\boldsymbol{i}$ and ordering $<_{\boldsymbol{i}}$ on $\Phi^+$. Let $\alpha, \beta \in \Phi^+$. \hfill \begin{enumerate}[a)]
\item Suppose $\alpha <_{\boldsymbol{i}} \beta$ and there exists $\gamma \in \Phi$ of the form $\gamma = i \alpha - j \beta$ for some positive integers $i,j$. Then either $\gamma \in \Phi^+$ with $\gamma <_{\boldsymbol{i}} \alpha$ or $\gamma \in \Phi^{-}$ with $-\gamma >_{\boldsymbol{i}} \beta$.
\item Suppose $\alpha >_{\boldsymbol{i}} \beta$ and there exists $\gamma \in \Phi$ of the form $\gamma = i \alpha + j \beta$ for some positive integers $i,j$. Then 
$\alpha >_{\boldsymbol{i}} \gamma >_{\boldsymbol{i}} \beta$.
\end{enumerate}
\end{lemma}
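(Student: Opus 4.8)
The plan is to reduce this to a standard fact about convex orders on positive roots, then use that the ordering $<_{\boldsymbol{i}}$ attached to a reduced word is precisely such a convex order. Recall that an ordering $\beta_1 >_{\boldsymbol{i}} \beta_2 >_{\boldsymbol{i}} \cdots >_{\boldsymbol{i}} \beta_m$ on $\Phi^+$ coming from a reduced word $\boldsymbol{i}$ (via \eqref{iordering}) has the defining property that whenever $\beta_a + \beta_b \in \Phi^+$ with $a < b$, the root $\beta_a + \beta_b$ lies strictly between $\beta_a$ and $\beta_b$ in the order; more generally any positive root that is a positive integer combination $i\beta_a + j\beta_b$ lies strictly between them. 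This is the content of part (b), and part (a) is the complementary statement obtained by transposing a difference into a sum. So the first step is to isolate and cite (or quickly reprove) this convexity property of $<_{\boldsymbol{i}}$.

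For part (b): suppose $\alpha >_{\boldsymbol{i}} \beta$ and $\gamma = i\alpha + j\beta \in \Phi$ with $i,j > 0$. Since $\alpha,\beta \in \Phi^+$ and $i,j>0$, the root $\gamma$ is a positive combination of simple roots, hence $\gamma \in \Phi^+$. Write the reduced word as $w_0 = s_{i_1}\cdots s_{i_\ell}$ and recall $\Phi^+ = \{\beta_k := s_{i_1}\cdots s_{i_{k-1}}(\alpha_{i_k}) : 1 \le k \le \ell\}$ with $\beta_1 >_{\boldsymbol{i}} \cdots >_{\boldsymbol{i}} \beta_\ell$. Let $\alpha = \beta_a$, $\beta = \beta_b$ with $a < b$. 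I would argue by contradiction using the inversion-set description: for the partial product $w_{(k)} := s_{i_1}\cdots s_{i_k}$, one has $\Phi_{w_{(k)}} = \{\beta_1,\ldots,\beta_k\}$, the set of positive roots sent to negative roots by $w_{(k)}^{-1}$. If $\gamma = \beta_c$ with $c \le a$, then applying $w_{(a-1)}^{-1}$ sends $\alpha = \beta_a \in \Phi^+ \setminus \Phi_{w_{(a-1)}}$ to a positive root while sending $\gamma$ and $\beta$ (both in... ) — more cleanly: the set $\{\beta_1,\ldots,\beta_{a}\}$ is $w_{(a)}$-inversion-closed and $\alpha \in$ it but $\beta \notin$ it, while inversion sets are "biconvex" (both $\Phi_w$ and its complement in $\Phi^+$ are closed under taking positive roots that are sums of two elements of the set). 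If $\gamma <_{\boldsymbol{i}} \beta$, i.e. $\gamma \in \Phi^+ \setminus \Phi_{w_{(b)}}$ while $\alpha \in \Phi_{w_{(b)}}$; but the complement is closed under addition within $\Phi^+$, and $\gamma = i\alpha + j\beta$ with $\beta$ also in the complement would force... here one needs the slightly stronger statement allowing multiples, which still follows since $\Phi$-closedness of (co)convex sets handles positive-integer combinations lying in $\Phi$. So $\gamma$ cannot be below $\beta$; symmetrically, using that $\Phi_{w_{(a)}}$ is closed, $\gamma$ cannot be above $\alpha$; hence $\alpha >_{\boldsymbol{i}} \gamma >_{\boldsymbol{i}} \beta$ (the inequalities being strict since $\gamma \ne \alpha, \beta$ as $i,j > 0$ and $\gamma$ is a genuine longer combination).

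For part (a): given $\alpha <_{\boldsymbol{i}} \beta$ and $\gamma = i\alpha - j\beta \in \Phi$ with $i,j>0$, rewrite as $i\alpha = \gamma + j\beta$. If $\gamma \in \Phi^+$, then $i\alpha = \gamma + j\beta$ expresses $\alpha$-multiple as a positive combination of $\gamma$ and $\beta$; applying part (b) to the pair among $\{\gamma,\beta\}$ that is $>_{\boldsymbol{i}}$ the other, we get $\alpha$ strictly between $\gamma$ and $\beta$ in the order, and since $\alpha <_{\boldsymbol{i}} \beta$ this forces $\gamma <_{\boldsymbol{i}} \alpha <_{\boldsymbol{i}} \beta$, giving the first alternative. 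If instead $\gamma \in \Phi^-$, set $\gamma' = -\gamma \in \Phi^+$, so $j\beta = i\alpha + \gamma'$, a positive combination; applying part (b) to the pair $\{\alpha, \gamma'\}$ shows $\beta$ lies strictly between them, and since $\alpha <_{\boldsymbol{i}} \beta$ this forces $\alpha <_{\boldsymbol{i}} \beta <_{\boldsymbol{i}} \gamma'$, i.e. $-\gamma = \gamma' >_{\boldsymbol{i}} \beta$, the second alternative. So part (a) is a formal consequence of part (b) by moving a term across the equation and distinguishing the sign of $\gamma$.

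The main obstacle I anticipate is handling the ``positive-integer-multiple'' refinement cleanly: the textbook biconvexity statement for inversion sets $\Phi_w$ is usually phrased only for sums $\beta_a + \beta_b \in \Phi^+$, whereas here we must accommodate $i\beta_a + j\beta_b \in \Phi$ for general $i,j \ge 1$ (which matters only in the non-simply-laced, in particular $G_2$-type, situations where such roots occur). I would address this by noting that a subset $\Psi \subseteq \Phi^+$ of the form $\Phi_w$ (or its complement in $\Phi^+$) is \emph{closed} in the strong sense: if $\mu,\nu \in \Psi$ and $c\mu + d\nu \in \Phi$ for integers $c,d \ge 0$, then $c\mu + d\nu \in \Psi$ — this follows directly from $\Phi_w = \{\alpha \in \Phi^+ : w^{-1}\alpha \in \Phi^-\}$ since $w^{-1}$ is linear and $\Phi^-$ is closed under nonnegative integer combinations that remain roots, and likewise for the complement using $\Phi^+$. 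With this strengthened closedness in hand, the contradiction arguments in part (b) go through verbatim. Everything else is bookkeeping with the bijection $k \leftrightarrow \beta_k$ between word positions and positive roots, and the identification of the truncated products $w_{(k)}$ with the corresponding inversion sets, both of which are standard (e.g.\ Springer \cite{springer}).
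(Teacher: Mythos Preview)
Your approach is essentially the same as the paper's: both use that $\Phi_w = \{\mu \in \Phi^+ : w^{-1}\mu \in \Phi^-\}$ for prefixes $w$ of the reduced word, together with linearity of $w^{-1}$ and the fact that a root expressed as a nonnegative (resp.\ nonpositive) combination of simple roots must lie in $\Phi^+$ (resp.\ $\Phi^-$). The paper argues directly by choosing the right prefix $w$ and checking signs; you package the same computation as ``strong biconvexity'' of inversion sets and their complements, which is fine.

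There is one small gap in your reduction of (a) to (b). You rewrite $i\alpha = \gamma + j\beta$ (resp.\ $j\beta = i\alpha + \gamma'$) and then invoke part~(b) to place $\alpha$ (resp.\ $\beta$) strictly between the two summands. But part~(b) as stated applies to a root that is a \emph{positive integer} combination of two others, whereas here you only get $\alpha = \tfrac{1}{i}\gamma + \tfrac{j}{i}\beta$ (resp.\ $\beta = \tfrac{i}{j}\alpha + \tfrac{1}{j}\gamma'$), with rational coefficients when $i>1$ or $j>1$. This is not vacuous: such higher multiples do occur in non--simply-laced types, which the lemma must cover. The fix is immediate from your own final paragraph: the closedness of $\Phi_w$ and of $\Phi^+ \setminus \Phi_w$ that you prove via $w^{-1}$-linearity holds for arbitrary positive real coefficients, not just integers, so you may either strengthen the statement of (b) accordingly before reducing (a) to it, or simply run the inversion-set argument for (a) directly (as the paper does) rather than appealing to (b) as a black box.
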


\begin{proof}[Proof of $(a)$] Write
$$ \alpha = s_{i_{1}} \cdots s_{i_{m-1}} \alpha_{i_m}, \quad \beta = s_{i_{1}} \cdots s_{i_{\ell-1}} \alpha_{i_\ell} \quad \text{where $m > \ell$.} $$
Then setting $w = s_{i_{1}} \cdots s_{i_{m-1}}$, we have $w^{-1}(\alpha) \in \Phi^+$ while $w^{-1}(\beta) \in \Phi^-$ (since $\beta \in \Phi_w$). Thus $w^{-1}(\gamma) = iw^{-1}(\alpha) - jw^{-1}(\beta) \in \Phi^+$. But $w$ is characterized by $\Phi_w = \{ \delta \in \Phi^+ \; | \; w^{-1}(\delta) \in \Phi^- \} = \{ \delta \in \Phi^+ \; | \; \delta >_{\boldsymbol{i}} \alpha \}$ so if $\gamma \in \Phi^+$, then $\gamma <_{\boldsymbol{i}} \alpha$. 

Using similar arguments for $-w_0 \alpha$ and $-w_0 \beta$ we may show that if $\gamma \in \Phi^{-}$, then $-\gamma >_{\boldsymbol{i}} \beta$. Pick $v = s_{i_{t+N}} \cdots s_{i_{\ell+1}}$ so that $v^{-1}(-w_0 \alpha) \in \Phi^{-}$ while $v^{-1}(-w_0 \beta) \in \Phi^+$. Since $\gamma \in \Phi^{-}$ if and only if $-w_0 \gamma \in \Phi^{-}$, then $-w_0(-\gamma) \in \Phi^+$. Further $v^{-1}(-w_0(-\gamma)) \in \Phi^+$. Since $-w_0$ is order reversing, the claim follows.
\end{proof}

\begin{proof}[Proof of $(b)$] Write
$$ \alpha = s_{i_1} \cdots s_{i_{k-1}} (\alpha_{i_k}), \quad \beta = s_{i_1} \cdots s_{i_{k'-1}} (\alpha_{i_{k'}}), \quad k' > k. $$
Choose $w = s_{i_{1}} \cdots s_{i_{k-1}}.$ Then $w^{-1}(\alpha)$ and $w^{-1}(\beta)$ are in $\Phi^+$ and thus, so is $w^{-1}(\gamma)$. According to the explicit description of $\Phi_w$, this implies $\gamma <_{\boldsymbol{i}} \alpha$. Now consider:
$$ - w_0 \alpha = s_{i_{t+N}} \cdots s_{i_{k+1}} (\alpha_{i_k}), \quad -w_0 \beta = s_{i_{t+N}} \cdots s_{i_{k'+1}} (\alpha_{i_{k'}}). $$
Choose $v = s_{i_{t+N}} \cdots s_{i_{k'+1}}$ so that $v^{-1}(-w_0 \beta)$ and $v^{-1}(-w_0 \alpha)$ are in $\Phi^+$. Thus $v^{-1}(- w_0 \gamma) \in \Phi^+$ and so $-w_0 \gamma <_{\boldsymbol{i}} -w_0 \alpha$. As $-w_0$ is order reversing, the claim follows.
\end{proof}

Let $B_1$ be the standard rank one Borel subgroup of $SL_2$ and $B_1^{-}$ the opposite Borel.

\begin{theorem} \label{mainthm}
To any $u \in U_{-}^P(F)$, there exist $g_1, \ldots, g_N \in SL_2(\o) \cap B_1(F) B_1^-(F)$ such that
\begin{equation} \iota_{\gamma_1}(g_1) \cdots \iota_{\gamma_N}(g_N) u^{-1} \in B(F), \label{gup} \end{equation}
the $F$-points of the standard Borel subgroup of $G$.
\end{theorem}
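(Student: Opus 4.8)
The plan is to induct on $N$, the number of roots in $\Phi_P$, peeling off one embedded $SL_2$ factor at a time and using convexity of the ordering $<_{\boldsymbol{i}}$ to control where the commutators land. Concretely, given $u \in U_-^P(F)$, write $u = e_{-\gamma_1}(y_1) u'$ where $u' \in U_-^P(F)$ is a product of root group elements at $\gamma_2, \ldots, \gamma_N$ only; this is possible because $\gamma_1$ is the $<_{\boldsymbol{i}}$-largest root of $\Phi_P$, and part (b) of Lemma~\ref{convexorder} (applied with $\gamma_1 >_{\boldsymbol{i}} \gamma_k$) guarantees that when we commute $e_{-\gamma_1}$ past the other $e_{-\gamma_k}$ the error terms are themselves at roots $-\gamma$ with $\gamma \in \Phi_P$ strictly between $\gamma_1$ and $\gamma_k$, hence absorbable into the tail. (Here one also checks these errors stay in $U_-^P$: a sum $i\gamma_k + j\gamma_\ell$ of two roots of $\Phi_P$ is again in $\Phi_P$ since $U^P$ is the unipotent radical of a parabolic — indeed abelian in the cominuscule case, though we don't need that here.)

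Next I would find $g_1 \in SL_2(\o) \cap B_1(F)B_1^-(F)$ so that $\iota_{\gamma_1}(g_1) e_{-\gamma_1}(y_1)^{-1}$ lies in the rank-one Borel $\iota_{\gamma_1}(B_1(F))$, i.e. reduce to the classical $SL_2$ fact that $\binom{1\ 0}{-y\ 1}$ (for $y \in F$) can be written as $b_1^{-1} g_1$ with $g_1 \in SL_2(\o)$ having lower-triangular-times-upper-triangular Bruhat form and $b_1 \in B_1(F)$. (When $y \in \o$ one can take $g_1$ itself; in general clear denominators, which is exactly where the PID hypothesis on $\o$ is used — write $y = a/c$ with $a,c \in \o$ coprime and build $g_1$ with bottom row $(c, \ast)$.) Then $\iota_{\gamma_1}(g_1)\, u^{-1} = \iota_{\gamma_1}(g_1) e_{-\gamma_1}(y_1)^{-1} (u')^{-1}$ equals (an element of the rank-one Borel at $\gamma_1$) times $(u')^{-1}$. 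Conjugating that Borel element across $(u')^{-1}$ using (\ref{hecommutation}) and (\ref{eecommutation}) replaces $u'$ by another element $u'' \in U_-^P(F)$ supported on $\gamma_2, \ldots, \gamma_N$ — again convexity, part (b), keeps all commutator corrections inside $\{\gamma_2,\dots,\gamma_N\}$ — while the leftover torus and positive-root factor from $\iota_{\gamma_1}(g_1)$ can be pushed all the way to the right into $B(F)$ (a torus element times an element of $U_M U^P$; using (\ref{tconj}) these only rescale the $y_j$'s and produce further $U^P$ terms, all harmless).

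The inductive hypothesis, applied to $u'' \in U_-^{P'}(F)$ for the "sub-parabolic" built from the truncated word, then produces $g_2, \ldots, g_N$ with $\iota_{\gamma_2}(g_2)\cdots\iota_{\gamma_N}(g_N)(u'')^{-1} \in B(F)$, and reassembling gives (\ref{gup}). The one bookkeeping subtlety is that after peeling off $g_1$ the remaining roots $\gamma_2, \ldots, \gamma_N$ must still be the convexly-ordered roots of a unipotent radical of a (relative) parabolic compatible with the truncated reduced word $s_{i_{t+2}} \cdots s_{i_{t+N}}$ — this follows formally from the definitions (\ref{phip}) after left-translating by $s_{i_{t+1}}$, so the induction is on firm footing.

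The main obstacle I anticipate is the second step in its full generality over a PID rather than a field: one must produce $g_1 \in SL_2(\o)$, not merely in $SL_2(F)$, and simultaneously ensure it lies in the big Bruhat cell $B_1(F)B_1^-(F)$ (equivalently, that the relevant matrix entry — the future "$d_1$" — is nonzero). Arranging integrality forces the $a/c$ denominator-clearing and careful choice of the top row of $g_1$; and one must verify that the torus/unipotent debris shed in the process does not destroy integrality of the later $g_j$'s, which requires tracking the exponents $\langle \gamma_j, \gamma_\ell^\vee\rangle$ exactly as they appear in (\ref{hecommutation}). Everything else is a controlled iteration of rank-one Bruhat decompositions plus the convexity lemma.
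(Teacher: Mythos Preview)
Your induction runs in the wrong direction, and this causes the reassembly step to fail. You choose $g_1$ first and obtain $\iota_{\gamma_1}(g_1)\,u^{-1} = (u'')^{-1}\,b$ with $b\in B(F)$ and $u''$ supported on $\gamma_2,\ldots,\gamma_N$; then the inductive hypothesis gives $\iota_{\gamma_2}(g_2)\cdots\iota_{\gamma_N}(g_N)\,(u'')^{-1}\in B(F)$. Substituting $(u'')^{-1}=\iota_{\gamma_1}(g_1)\,u^{-1}\,b^{-1}$ yields
\[
\iota_{\gamma_2}(g_2)\cdots\iota_{\gamma_N}(g_N)\,\iota_{\gamma_1}(g_1)\,u^{-1}\in B(F),
\]
which is the order $(g_2,\ldots,g_N,g_1)$, not $(g_1,\ldots,g_N)$ as required by \eqref{gup}. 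Since the $\iota_{\gamma_j}(g_j)$ do not commute and the roots $\gamma_j$ are fixed, there is no free relabeling that repairs this. The paper avoids the problem by attacking the \emph{smallest} root first: writing $u^{-1}=e_{-\gamma_N}(x_N)\cdots e_{-\gamma_1}(x_1)$, choosing $g_N$ so that $\iota_{\gamma_N}(g_N)e_{-\gamma_N}(x_N)\in\iota_{\gamma_N}(B_1(F))$, and pushing this Borel factor rightward past $e_{-\gamma_{N-1}},\ldots,e_{-\gamma_1}$. Each subsequent $g_{k-1}$ is then left-multiplied, so the product accumulates in the correct order $\iota_{\gamma_1}(g_1)\cdots\iota_{\gamma_N}(g_N)$.

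Two smaller points also need correction. First, from $u=e_{-\gamma_1}(y_1)\,u'$ one gets $u^{-1}=(u')^{-1}e_{-\gamma_1}(y_1)^{-1}$, not $e_{-\gamma_1}(y_1)^{-1}(u')^{-1}$ as you wrote. Second, the commutation of a \emph{positive} root element $e_{\gamma_1}$ past $e_{-\gamma_j}$ is governed by part~(a) of Lemma~\ref{convexorder}, not part~(b); and since here $\gamma_1>_{\boldsymbol i}\gamma_j$, the roles of $\alpha,\beta$ are reversed from the paper's own use of part~(a), producing corrections at positive roots $>_{\boldsymbol i}\gamma_1$ (i.e.\ roots of $M$) and at negative roots $-\gamma$ with $\gamma<_{\boldsymbol i}\gamma_j$. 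This is workable, but it is not the ``part~(b) keeps corrections inside $\{\gamma_2,\ldots,\gamma_N\}$'' claim you made. Finally, the set $\{\gamma_2,\ldots,\gamma_N\}$ is generally not the set of roots of a unipotent radical of a parabolic, so the ``sub-parabolic'' framing should be replaced by a direct inductive hypothesis on convexly ordered root sets.
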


\begin{proof}
Given any $u \in U_{-}^P(F)$, we may write
$$ u^{-1} = e_{-\gamma_N}(x_N) \cdots e_{-\gamma_1}(x_1), \quad \text{with} \; x_i \in F. $$
Write $x_N = -c_N / d_N$ with $c_N, d_N \in \o$ and $\gcd(c_N, d_N)=1$. Then we may choose $a_N, b_N \in \o$ such that $a_N d_N - b_N c_N =1$, as $\mathfrak{o}$ is a principal ideal domain.
Letting
\begin{equation} \label{rankonehee} g_N = \begin{pmatrix} a_N & b_N \\ c_N & d_N \end{pmatrix}
\quad \text{we have} \quad \iota_{\gamma_N}(g_N) e_{-\gamma_N}(x_N) = h_{\gamma_N}(d_N^{-1}) e_{\gamma_N}(b_N d_N). \end{equation}
(Note that the choice of $d_N$ is necessarily non-zero and hence $g_N \in B_1(F) B_1^-(F).$) Thus
$$ \iota_{\gamma_N}(g_N) u^{-1} = h_{\gamma_N}(d_N^{-1}) e_{\gamma_N}(b_N d_N) e_{-\gamma_{N-1}}(x_{N-1}) \cdots e_{-\gamma_{1}}(x_{1}). $$
Now we move the terms $h_{\gamma_N}(d_N^{-1}) e_{\gamma_N}(b_N d_N)$ rightward past all of the $e_{-\gamma_j}(x_j)$. We must show the result is of the form
$$ e_{-\gamma_{N-1}}(x_{N-1}') \cdots e_{-\gamma_{1}}(x_{1}') b \quad \text{for some} \; b \in B(F), x_j' \in F. $$
More generally, if $2 \leq k \leq N$, we will show that for any $d, y, x_1, \ldots, x_{k-1} \in F$, 
\begin{equation} h_{\gamma_k} (d) e_{\gamma_k}(y) e_{-\gamma_{k-1}}(x_{k-1}) \cdots e_{-\gamma_1}(x_1) = e_{-\gamma_{k-1}}(x_{k-1}') \cdots e_{-\gamma_1}(x_1') b, \label{cranky} \end{equation}
for some $x_1', \ldots x_{k-1}' \in F$ and $b \in B(F)$. Then choosing $g_{k-1}$ according to $x_{k-1}'$ in a similar manner to $g_N$ above, the first statement of the theorem will follow by repeated application of (\ref{cranky}).

The proof of (\ref{cranky}) will follow from the relations (\ref{hecommutation}) and (\ref{eecommutation}), together with Lemma~\ref{convexorder} which characterizes the terms appearing in relation (\ref{eecommutation}) when we attempt to move the $e_{\gamma_k}$ rightward past the terms $e_{-\gamma_j}$ with $\gamma_k <_{\boldsymbol{i}} \gamma_j$.

Note the condition that $\gamma = i \alpha - j \beta$ in part (a) of Lemma~\ref{convexorder} is precisely that appearing in Steinberg's commutation relation (\ref{eecommutation}) for $e_\alpha$ and $e_{-\beta}$ with $\alpha, \beta \in \Phi^+$. Take $\alpha = \gamma_k$ for some $k$ and $\beta = \gamma_j$ for some $j < k$. Thus $\alpha <_{\boldsymbol{i}} \beta$. Applying the commutation relation (\ref{eecommutation}) may produce elements of the form $e_\delta$ for additional positive roots 
$\delta$ which also must be moved rightward into $B(F)$. But part (a) of Lemma~\ref{convexorder} ensures that these are smaller than $\alpha$ (and hence smaller than $\beta$) so we may repeatedly use the lemma for any such $\delta$ and $\gamma_j$ with $j < k$. The total number of times a positive root may appear in a commutation is finite since the roots strictly decrease in the ordering with each application. All such one-dimensional unipotent subgroups associated to positive roots are, of course, contained in $B(F)$ so once they are moved rightward past the $e_{-\gamma_j}$ we need not consider them further.

If we commute an $e_\alpha$ (with $\alpha \in \Phi^+$) past an $e_{-\gamma_j}$ to produce a negative 
root $\gamma$, part (a) of the lemma also ensures that $-\gamma$ is larger than $\gamma_j$, so 
$\gamma = -\gamma_i$ for some $i < j$. That is, another element of the form $e_\gamma$ already appears 
to the right of $e_{-\gamma_j}$ in the expression for $u^{-1}$. We would like to combine these unipotents at the 
same negative root together. Indeed, part (b) of the lemma ensures that we may pass $e_\gamma(x)$ to the right 
past $e_{-\gamma_j}(x_j), \ldots, e_{-\gamma_{i+1}}(x_{i+1})$ without generating one-dimensional unipotent terms 
at negative roots not in this list. Then $e_\gamma(x) \cdot e_{-\gamma_i}(x_i) = e_{-\gamma_i}(x+x_i)$.

Thus to prove (\ref{cranky}), it remains only to move the term $h_{\gamma_k}(d)$ rightward past all of the 
unipotent elements $e_{-\gamma_j}$ with $1 \leq j < k$ and show that the result is of the form given on the 
right-hand side of (\ref{cranky}). This is immediate from (\ref{hecommutation}).
\end{proof}

Let $B_1(\o)$ denote the standard Borel subgroup of upper triangular matrices in $SL_2(\o)$. 
Recall that the cosets $B_1(\mathfrak{o}) \backslash SL_2(\o)$ are parametrized by their bottom rows -- pairs 
of integers $(c, d)$ with $\gcd(c,d)=1$ -- modulo the diagonal multiplication by units. Moreover, the pair 
$(c,d)$ corresponds to a coset in the ``big cell'' $B_1(\o) \backslash SL_2(\o) \cap B_1(F) B_1^-(F)$ if and only if 
$d \ne 0$. Motivated by this, we introduce the notation
$$ \mathcal{R} := \{ (c, d) \in \o^2 / \o^\times \; | \; \gcd(c, d)=1, d \ne 0 \}. $$

\begin{corollary} \label{leftcosets}
The assignment of $g_1, \ldots, g_N \in SL_2(\o) \cap B_1(F) B_1^-(F)$ to any 
$u \in U_{-}^P(F)$ as in Theorem~\ref{mainthm}
induces a bijection
$$ \mathcal{R}^N \stackrel{\eta}{\longrightarrow} P(F) \backslash P(F) U^-_P(F) $$
given by first completing $(c_i, d_i) \in \mathcal{R}^{(i)}$ to a matrix
$$ g_i = \begin{pmatrix} a_i & b_i \\ c_i & d_i \end{pmatrix} \in SL_2(\o) \cap B_1(F) B_1^-(F) $$
and then composing with the map
$$ (SL_2(\o) \cap B_1(F) B_1^{-}(F))^{N}\, \stackrel{\varphi}{\longrightarrow} \, P(F) \backslash P(F) U^-_P(F), $$
given in the previous theorem by $\varphi(g_1, \ldots, g_N) = P(F) \iota_{\gamma_1}(g_1) \cdots \iota_{\gamma_N}(g_N).$ The map $\varphi$ (and hence the bijection $\eta$) depends on the choices $(a_i, b_i)$ for each pair $(c_i, d_i)$, but is a bijection of sets for any such assignment.
\end{corollary}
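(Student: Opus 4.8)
The plan is to construct an explicit inverse to $\eta$, essentially by rerunning the computation in the proof of Theorem~\ref{mainthm}.

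\textbf{Step 1: $\varphi$ is well defined.} I would first verify that $\iota_{\gamma_1}(g_1)\cdots\iota_{\gamma_N}(g_N)$ actually lies in $P(F)U^-_P(F)$ for every $g_i\in SL_2(\o)\cap B_1(F)B_1^-(F)$. Because $d_i\neq 0$, each $g_i$ factors as $\beta_i\begin{pmatrix}1&0\\ c_i/d_i&1\end{pmatrix}$ with $\beta_i$ upper triangular, so $\iota_{\gamma_i}(g_i)=\iota_{\gamma_i}(\beta_i)\,e_{-\gamma_i}(c_i/d_i)$ with $\iota_{\gamma_i}(\beta_i)\in B(F)$. Now push each Borel factor $\iota_{\gamma_i}(\beta_i)$ to the left past the preceding $e_{-\gamma_j}(\cdot)$, $j<i$, exactly as the $h_\gamma e_\gamma$ terms are pushed in the proof of Theorem~\ref{mainthm}: since $\gamma_i<_{\boldsymbol{i}}\gamma_j$ there, part~(a) of Lemma~\ref{convexorder} shows every root subgroup produced by the Chevalley relations (\ref{hecommutation})--(\ref{eecommutation}) is either a positive root subgroup (hence in $B(F)\subseteq P(F)$), a subgroup $e_\delta$ with $\delta\in\Phi_M^-$ (hence in $M(F)\subseteq P(F)$), or a subgroup $e_{-\gamma_m}$ with $m<j$, which part~(b) of the Lemma lets one carry rightward and merge into the already-present factor $e_{-\gamma_m}(\cdot)$; since each step strictly lowers a root for $<_{\boldsymbol{i}}$, the process terminates and expresses $\iota_{\gamma_1}(g_1)\cdots\iota_{\gamma_N}(g_N)$ as $p\,u$ with $p\in P(F)$ and $u\in U^-_P(F)$. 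Crucially, as $\gamma_N$ is $<_{\boldsymbol{i}}$-minimal no $e_{-\gamma_N}$ term is ever created, so the $U_{-\gamma_N}$-component of $u$ is left undisturbed and equals $c_N/d_N$, the entry contributed by $\iota_{\gamma_N}(g_N)$. Nothing here depends on the completions $(a_i,b_i)$, which is the assertion at the end of the statement; moreover the same argument applies to the shorter products $\iota_{\gamma_1}(g_1)\cdots\iota_{\gamma_k}(g_k)$, since each tail $\{\gamma_1,\dots,\gamma_k\}$ is again closed under root addition by Lemma~\ref{convexorder}(b).

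\textbf{Step 2: the inverse map and conclusion.} Since $P(F)\cap U^-_P(F)=\{1\}$, each coset in $P(F)\backslash P(F)U^-_P(F)$ has a unique representative $u\in U^-_P(F)$. Define $\theta$ on it by running the construction from the proof of Theorem~\ref{mainthm} applied to $u$, at every stage completing the determined pair $(c_i,d_i)$ (unique up to a common unit, with $d_i\neq 0$) to the matrix $\bar g_i$ prescribed by the fixed section (any completion being admissible there), and recording the output $(c_1,d_1,\dots,c_N,d_N)\in\mathcal{R}^N$; the case $N=0$ is trivial. By the conclusion of Theorem~\ref{mainthm}, $\iota_{\gamma_1}(\bar g_1)\cdots\iota_{\gamma_N}(\bar g_N)u^{-1}\in B(F)\subseteq P(F)$, whence $\eta(\theta(C))=C$ for every coset $C$. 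Conversely, given $(c_1,d_1,\dots,c_N,d_N)\in\mathcal{R}^N$, Step~1 writes $\iota_{\gamma_1}(\bar g_1)\cdots\iota_{\gamma_N}(\bar g_N)=p\,u$ with $u\in U^-_P(F)$ representing $\eta(c_1,d_1,\dots,c_N,d_N)$ and with $U_{-\gamma_N}$-component $c_N/d_N$; therefore the first step of $\theta$ on this coset recovers $(c_N,d_N)$ up to units, so $\bar g_N$ is the very same matrix, and cancelling $\iota_{\gamma_N}(\bar g_N)$ on the right reduces to the identical statement for the shorter product. By induction on $N$, $\theta(\eta(c_1,d_1,\dots,c_N,d_N))=(c_1,d_1,\dots,c_N,d_N)$. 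Thus $\eta$ is a bijection; since the only property of the section used is that it is a section, this holds for any choice of completions.

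\textbf{Main obstacle.} I expect Step~1 to be the crux: one must carry out the Chevalley-relation bookkeeping of Theorem~\ref{mainthm} in this slightly more general setting and confirm, using both parts of Lemma~\ref{convexorder}, that commuting the Borel factors $\iota_{\gamma_i}(\beta_i)$ leftward produces only root subgroups that land in $P(F)$ or collapse into the surviving negative-root factors, with the $U_{-\gamma_N}$-component (and then, inductively, the lower-index components) left under control. This is precisely where convexity of $<_{\boldsymbol{i}}$ --- equivalently, the compatibility of the reduced word $\boldsymbol{i}$ with $P$ --- is doing the work; once Step~1 is in hand, the inverse map, its two defining identities, and the independence from the choice of completions are all formal.
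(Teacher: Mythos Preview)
Your approach is correct but takes a different route from the paper. You construct an explicit inverse $\theta$ by running the algorithm of Theorem~\ref{mainthm} and then verify $\eta\circ\theta=\mathrm{id}$ and $\theta\circ\eta=\mathrm{id}$, the latter by induction on $N$ using the preservation of the $U_{-\gamma_N}$-component established in your Step~1. The paper instead compares $\eta$ directly to the ``obvious'' bijection $\mathcal{R}^N\to P(F)\backslash P(F)U^-_P(F)$ sending $\{(c_i,d_i)\}$ to $P(F)\,e_{-\gamma_N}(c_N/d_N)\cdots e_{-\gamma_1}(c_1/d_1)$, and reads off from the proof of Theorem~\ref{mainthm} that $\eta$ differs from this obvious bijection by a \emph{triangular} additive change of coordinates $x_i\mapsto x_i+k_i$ with $k_i$ depending only on the $g_j$ with $j>i$; since such triangular maps are automatically bijections, $\eta$ is too. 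The paper's argument is shorter and avoids your Step~1 entirely, extracting everything needed from the existing proof of Theorem~\ref{mainthm}. Your argument, by contrast, makes the inverse fully explicit and isolates the key geometric fact (that the $U_{-\gamma_N}$-component is $c_N/d_N$) as a standalone claim, which is useful in its own right.

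One small slip in your Step~1: when a newly created $e_{-\gamma_m}$ with $m<j$ appears, you say part~(b) lets you carry it \emph{rightward} to merge with the existing $e_{-\gamma_m}$ factor; since $m<j$, that factor lies to the \emph{left} in the product $\iota_{\gamma_1}(g_1)\cdots\iota_{\gamma_N}(g_N)$, so you should be carrying leftward. The convexity argument still applies (Lemma~\ref{convexorder}(b) controls the commutators of $e_{-\gamma_m}$ with the intervening $e_{-\gamma_k}$, $m<k<j$), but the direction is reversed from what you wrote. Also, in Step~2 the phrase ``cancelling $\iota_{\gamma_N}(\bar g_N)$ on the right reduces to the identical statement for the shorter product'' is doing real work: you are implicitly using that the shorter product $\iota_{\gamma_1}(g_1)\cdots\iota_{\gamma_{N-1}}(g_{N-1})$ again admits a Step~1 decomposition with controlled $U_{-\gamma_{N-1}}$-component, which you did assert at the end of Step~1 but which deserves to be stated as the inductive hypothesis rather than left implicit.
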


\begin{proof}
In the proof of the previous theorem, the set of $SL_2(\o)$ matrices $(g_1, \ldots, g_N)$ mapping to a given coset $P(F) u$ is determined according to the ordering as follows. Each $g_i$ is determined by first choosing $(c_i, d_i)$ depending only on the $g_j$ with $j > i$. Then the choice of $a_i, b_i$ such that $a_i d_i - b_i c_i = 1$ is free. Thus the map $\varphi$ depends on the assignment of $(a_i, b_i)$ to $(c_i, d_i)$ but any choice is allowable.

There is an obvious bijection between $\mathcal{R}^N$ and $P(F) \backslash P(F) U^-_P(F)$ given by mapping
$$ \{ (c_i, d_i) \} \in \mathcal{R}^N \mapsto P(F) e_{-\gamma_N}(x_N) \cdots e_{-\gamma_1}(x_1), \quad \text{where } x_i = \frac{c_i}{d_i} \text{ for } i = 1, \ldots, n. $$
Writing
$$ \eta( \{ (c_i, d_i) \}_{i=1}^N) = P(F) e_{-\gamma_N}(x_N') \cdots e_{-\gamma_1}(x_1') $$
for some elements $x_i' \in F$, then the proof of the previous theorem shows that $x_N' = x_N$ (and so $\eta$ agrees with the obvious bijection in rank one). Moreover the previous proof demonstrates that for each $i < N$, $x_i' = x_i + k$ for some constant $k \in F$ depending on $g_j$ with $j > i$ (and the structure constants of the realization of $G$). Thus composing with these additive changes of variables preserves the bijection. 
\end{proof}

\subsection{Restriction to ``braidless'' and ``cominuscule'' parabolic subgroups}\label{parametrization}

Next, we give a parametrization for double cosets $P(F) \backslash P(F) U_{-}^P(F) / U_{-}^P(\o)$. The strategy for determining a set of representatives again uses the Steinberg commutation relations. These are defined over a field, but the following lemma will ensure that they may be used in certain cases to produce elements in the ring of integers $\o$. Thus far, our statements have held for all reductive groups, all parabolics $P$, and all reduced expressions $w^P$ for the ``long element'' of $W_M \backslash W$, i.e., the longest among minimal length left coset representatives. In this section, we restrict our attention to maximal parabolic subgroups $P$. First we require a definition, made in terms of fundamental weights $\omega_{\alpha_i} =: \omega_i$ for the weight lattice $X$ of $G$.

\begin{definition}[Littelmann, Section 3 of \cite{littelmann}]\label{braidless-123} A fundamental weight $\omega$ is said to be ``braidless'' if, for any $w \in W$, the following holds for pairs of simple roots $\alpha, \beta$:
$$ \text{If } \langle w(\omega), \alpha^\vee \rangle > 0 \; \text{and} \; \langle w(\omega), \beta^\vee \rangle > 0, \; \text{then} \; \langle \beta, \alpha^\vee \rangle = 0. $$
If $P_\omega$ is the associated maximal parabolic subgroup of $G$, we say the parabolic is ``braidless.''
\label{braidlessdef} \end{definition}

In Lemma~3.1 of \cite{littelmann}, the braidless fundamental weights are characterized as follows, using Bourbaki's labeling of the Dynkin diagram for the root system $\Phi$ of rank $r$ (so forks and double bonds in classical groups are closer to the node labeled `$r$' rather than the one labeled `$1$'). A fundamental weight $\omega$ is braidless if and only if it is in the following collection:
\begin{itemize}
\item $\Phi$ is of type $A$,
\item $\Phi$ is of type $B$ or $C$ and $\omega = \omega_1$ or $\omega_r$,
\item $\Phi$ is of type $D$ and $\omega = \omega_1, \omega_{r-1}$ or $\omega_r$,
\item $\Phi = E_6$ and $\omega = \omega_1$ or $\omega_6$ or $\Phi = E_7$ and $\omega = \omega_7$,
\item $\Phi = G_2$.
\end{itemize}
The name ``braidless'' refers to the fact that any element $w \in W_{M_\omega} \backslash W$, where $M_\omega$ is the Levi subgroup of $P_\omega$, has a reduced decomposition which is unique up to exchange of orthogonal reflections (i.e., no braid relations are required).

\begin{lemma} \label{nonneg}
Let $G$ be a connected, reductive group with $\Phi(G) \ne G_2$. Then the maximal parabolic $P = P_\omega$ is braidless if and only if 
$$ \langle \gamma_j, \gamma_k^\vee \rangle \geq 0 \quad \text{for all roots $\gamma_j, \gamma_k \in U^P.$} $$
\end{lemma}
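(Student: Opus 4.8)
The plan is to prove both implications by contraposition, using a simple dictionary between $\Phi_P$ and simple coroots. Fix a $W$-invariant inner product $(\cdot,\cdot)$ on the span of the roots and let $\alpha_{i_0}$ be the simple root with $\omega=\omega_{i_0}$. Writing an arbitrary root $\gamma$ in the basis of simple roots and using $(\omega,\alpha_j)=\delta_{i_0,j}(\alpha_j,\alpha_j)/2\geq 0$, one sees that $\langle\omega,\gamma^\vee\rangle>0$ if and only if $\gamma\in\Phi_P$ (and $\langle\omega,\gamma^\vee\rangle<0$ if and only if $-\gamma\in\Phi_P$). Combined with the $W$-invariance of the pairing $\langle\cdot,\cdot\rangle\colon X\times Y\to\mathbb Z$ and of $\gamma\mapsto\gamma^\vee$, this translates Littelmann's condition in Definition~\ref{braidlessdef} into a statement about obtuse pairs of roots inside $\Phi_P$. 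At the outset I will also observe that $\Phi_P$ is contained in the irreducible component of $\Phi$ containing $\alpha_{i_0}$, so we may assume $\Phi$ is irreducible; being of type other than $G_2$, it then has no pair of roots of length ratio $\sqrt3$, and every rank-two subsystem has type $A_1\times A_1$, $A_2$, or $B_2$.

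For the implication ``braidless $\Rightarrow$ non-negative'', suppose instead that $\langle\gamma,\delta^\vee\rangle<0$ for some $\gamma,\delta\in\Phi_P$. The key step is to find $u\in W$ with $u\gamma$ and $u\delta$ both simple roots; granting this, set $w=u$, so that $\langle w\omega,(u\gamma)^\vee\rangle=\langle\omega,\gamma^\vee\rangle>0$ and likewise $\langle w\omega,(u\delta)^\vee\rangle>0$, while $\langle u\delta,(u\gamma)^\vee\rangle=\langle\delta,\gamma^\vee\rangle<0$; as $u\gamma\neq u\delta$, this exhibits a failure of the braidless condition. To produce $u$: since $\gamma,\delta$ are distinct positive roots they are linearly independent, and as $(\gamma,\delta)<0$ the subsystem $\Phi'=\Phi\cap(\mathbb R\gamma+\mathbb R\delta)$ is irreducible of rank two, hence of type $A_2$ or $B_2$; in either case $\langle\gamma,\delta^\vee\rangle<0$ forces $\{\gamma,\delta\}$ to be a base of $\Phi'$. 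Now I invoke the standard fact that a subsystem of the form $\Phi\cap V$ is $W$-conjugate to a standard Levi subsystem (see, e.g., \cite{springer}; alternatively, move a generic vector of $V$ into the closed dominant chamber by $W$): thus $\Phi'=w\,\Phi_{\{i,k\}}$ for a pair of non-orthogonal simple roots $\alpha_i,\alpha_k$. Then $\{w\alpha_i,w\alpha_k\}$ and $\{\gamma,\delta\}$ are two bases of $\Phi'$, hence related by some $v\in W(\Phi')\subseteq W$, and $u=w^{-1}v$ carries $\{\gamma,\delta\}$ onto $\{\alpha_i,\alpha_k\}$. This simultaneous straightening of $\gamma$ and $\delta$ to simple roots is the heart of the argument, and it is the only place the hypothesis $\Phi\neq G_2$ is used: in $G_2$, if $\omega$ is dual to the long simple root $\alpha$ and $\beta$ is the short one, then the obtuse pair $\{\alpha,\alpha+3\beta\}\subset\Phi_{P_\omega}$ fails to be a base of $\Phi\cap(\mathbb R\alpha+\mathbb R\beta)=G_2$, consistent with $P_\omega$ being braidless, so the lemma genuinely fails there.

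For the converse ``non-negative $\Rightarrow$ braidless'', suppose $P_\omega$ is not braidless: there exist $w\in W$ and distinct simple roots $\alpha\neq\beta$ with $\langle w\omega,\alpha^\vee\rangle>0$, $\langle w\omega,\beta^\vee\rangle>0$ and $\langle\beta,\alpha^\vee\rangle<0$. By $W$-invariance, $\langle\omega,(w^{-1}\alpha)^\vee\rangle=\langle w\omega,\alpha^\vee\rangle>0$, so $w^{-1}\alpha\in\Phi_P$ by the dictionary, and similarly $w^{-1}\beta\in\Phi_P$; but $\langle w^{-1}\beta,(w^{-1}\alpha)^\vee\rangle=\langle\beta,\alpha^\vee\rangle<0$, contradicting the assumed non-negativity on $\Phi_P$. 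This direction requires no hypothesis on $G_2$. Assembling the two contrapositives proves the lemma.
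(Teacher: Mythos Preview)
Your argument is correct and proceeds along genuinely different lines from the paper's proof. The paper simply invokes Littelmann's explicit list of braidless fundamental weights and checks each Cartan type by hand: for each braidless $\omega$ it verifies that all pairwise $\langle\gamma_j,\gamma_k^\vee\rangle$ in $U^P$ are non-negative by inspecting sums of simple roots, and for each non-braidless $\omega$ it writes down an explicit pair violating the inequality. Your proof is classification-free. The key idea in your forward direction --- that an obtuse pair $\gamma,\delta\in\Phi_P$ must form a base of the rank-two subsystem $\Phi\cap(\mathbb R\gamma+\mathbb R\delta)$, which is then $W$-conjugate to a standard Levi subsystem --- is what replaces the case check, and it transparently isolates why $G_2$ is excluded: there an obtuse pair of long roots in $\Phi_P$ need not be a base. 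Your converse direction is a one-line unwinding of Definition~\ref{braidlessdef} via $W$-invariance, which the paper does not state. What the paper's approach buys is that it is entirely elementary and self-contained, needing nothing beyond the tables in Bourbaki; what yours buys is a uniform explanation that would survive, for instance, passage to affine or other generalized root systems where a classification might be unavailable. One cosmetic point: when you write $u=w^{-1}v$, the element $v$ should be taken to send $\{\gamma,\delta\}$ to $\{w\alpha_i,w\alpha_k\}$ (rather than the reverse) for the formula to read correctly, but this is only a choice of direction in the simply-transitive action of $W(\Phi')$ on bases.
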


\begin{proof} We use the characterization of braidless weights given above, and break the proof into finitely many cases according to Cartan type. By linearity of the inner product, the question reduces to a study of inner products for adjacent simple roots. For example, in type $A$ if $\alpha_j$ denotes the omitted root, then all roots in $U^P$ are of the form $\gamma = \alpha_i + \cdots + \alpha_k$ with $i \leq j \leq k$. But $\langle \alpha_i, \alpha_i^\vee \rangle = 2$ for all simple roots $\alpha_i$ and $\langle \alpha_i, \alpha_j^\vee \rangle = -1$ if $i = j \pm 1$ and $\langle \alpha_i, \alpha_j^\vee \rangle = 0$ otherwise, so the inner product of roots in $U^P$ is non-negative.
As the remaining cases are a similar elementary verification, we omit the details.

To prove the converse, that the inner product condition on $U^{P_\omega}$ implies the weight is $\omega$ is braidless, we simply produce counterexamples. The roots in $U^P$ are characterized as those whose expression, as a linear combination of simple roots, involves the omitted simple root $\alpha$. If $P$ is not braidless, it is straightforward to construct inner products $\langle \gamma, \alpha^\vee \rangle = -1$ with $\gamma \in U^P$. For example, in type $B$ with omitted simple long root $\alpha_j$ with $j > 1$ (and necessarily with $j < n$)
$$ \langle \alpha_i + \cdots + \alpha_{j} + 2\alpha_{j+1} + \cdots + 2\alpha_n, \alpha_j^\vee \rangle = -1. $$
We leave the remaining counterexamples, easily produced from the tables of positive roots in the appendices of Bourbaki \cite{bourbaki}, to the reader.
\end{proof}

\begin{definition} A fundamental weight $\omega_i$ is said to be ``cominuscule'' if its associated simple root $\alpha_i$ appears with coefficient 1 in the expansion of the highest root. In keeping with earlier conventions, the associated maximal parabolic $P := P_{\omega_i}$ is said to be cominuscule if $\omega_i$ is cominuscule.
\label{comindef} \end{definition}

A simple case-by-case check according to Cartan type shows that cominuscule weights are a subset of braidless weights. Moreover,
cominuscule parabolics are precisely those whose unipotent radical is abelian, as may be readily checked by the Steinberg commutation
relations~(\ref{eecommutation}).

\begin{proposition} \label{doublecosetprop}
If $\Phi(G) \ne G_2$ and $P$ is braidless, then there exists a bijection between 
the double cosets in $P(F) \backslash P(F) U_{-}^P(F) / U_{-}^P(\o)$ and pairs of
integers $(c_j, d_j)$ with $\gcd(c_j, d_j)=1$, $d_j \ne 0$ and modulo units, and $c_j$ modulo $D_j$, 
where $D_j$ is an integer depending on the entries of $\iota(\gamma_k)$ with $k \geq j$. 
In particular if $P$ is cominuscule then
$$ D_j = d_j \prod_{\ell = j+1}^N d_{\ell}^{\langle \gamma_{j}, \gamma_{\ell}^\vee \rangle}. $$ 
\end{proposition}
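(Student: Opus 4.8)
The plan is to build on Corollary~\ref{leftcosets}, which already gives a bijection $\mathcal{R}^N \to P(F)\backslash P(F)U_-^P(F)$ via $u^{-1} \mapsto \iota_{\gamma_1}(g_1)\cdots\iota_{\gamma_N}(g_N)$, and to understand the further right quotient by $U_-^P(\mathfrak{o})$. First I would observe that since $P$ is braidless (and $\Phi(G)\ne G_2$), Lemma~\ref{nonneg} gives $\langle\gamma_j,\gamma_k^\vee\rangle\ge 0$ for all roots in $U^P$; when $P$ is cominuscule the unipotent radical $U^P$ is abelian, so all the $e_{-\gamma_j}(x_j)$ commute with each other and there is no reordering to worry about on the $U_-^P$ side. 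The key computation is then to describe how right multiplication by a generator $e_{-\gamma_j}(t)$, $t\in\mathfrak{o}$, changes the representative $\iota_{\gamma_1}(g_1)\cdots\iota_{\gamma_N}(g_N)$ modulo $P(F)$ on the left.

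The main step is the following. Multiplying on the right by $e_{-\gamma_j}(t)$ translates the coordinate $x_j = c_j/d_j$ to $x_j + t$ in the "straightened" form $e_{-\gamma_N}(x_N)\cdots e_{-\gamma_1}(x_1)$ of the representative (using abelianness/braidlessness so that no lower-order $\gamma_i$ coordinates are disturbed by commutators when $t\in\mathfrak{o}$). Running this through the map $\eta$ of Corollary~\ref{leftcosets}, I then need to track how changing $x_j$ by an integer $t$ affects the pair $(c_j,d_j)$ — which is unchanged up to units only if the $d_\ell$ with $\ell>j$ are held fixed — and, crucially, how it shifts $c_j$. The point is that after completing $(c_j,d_j)$ to $g_j$ and moving the resulting torus element $h_{\gamma_j}(d_j^{-1})$ past the factors $e_{-\gamma_\ell}(x_\ell)$ with $\ell>j$ via~(\ref{hecommutation}), the effective integrality modulus for $c_j$ becomes $d_j$ times $\prod_{\ell>j} d_\ell^{\langle\gamma_j,\gamma_\ell^\vee\rangle}$: this is exactly where the exponents $\langle\gamma_j,\gamma_\ell^\vee\rangle$ enter, and why their non-negativity (braidlessness) is needed so that $D_j$ is an honest element of $\mathfrak{o}$ rather than a fraction. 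So I would: (i) fix $d_1,\dots,d_N$; (ii) show the $U_-^P(\mathfrak{o})$-orbit of a representative with these $d_j$ consists exactly of the representatives whose $c_j$ ranges over a full residue system mod $D_j$ for each $j$; (iii) conclude the double cosets are parametrized by $(c_j,d_j)$ with $d_j\ne0$ mod units and $c_j$ mod $D_j$.

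For the cominuscule formula specifically, I would do the bookkeeping explicitly: write $\iota_{\gamma_1}(g_1)\cdots\iota_{\gamma_N}(g_N)\,e_{-\gamma_j}(t)$, use that $e_{-\gamma_j}(t)$ commutes with all $\iota_{\gamma_k}(g_k)$ for $k>j$ (abelian $U_-^P$) to bring it adjacent to $\iota_{\gamma_j}(g_j)$, apply the rank-one identity~(\ref{rankonehee})-style computation $\iota_{\gamma_j}(g_j)e_{-\gamma_j}(t)$, and then push the emerging torus element $h_{\gamma_j}(\cdot)$ leftward through $\iota_{\gamma_k}(g_k)$, $k<j$ — here I'd need that conjugating $e_{\pm\gamma_k}$ by $h_{\gamma_j}(y)$ only scales by $y^{\pm\langle\gamma_k,\gamma_j^\vee\rangle}$, absorbing everything into $P(F)$ on the left except a net shift of the form $c_j \mapsto c_j + t\, d_j \prod_{\ell>j} d_\ell^{\langle\gamma_j,\gamma_\ell^\vee\rangle}$. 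Taking $t$ over $\mathfrak{o}$ gives precisely $c_j$ modulo $D_j = d_j\prod_{\ell=j+1}^N d_\ell^{\langle\gamma_j,\gamma_\ell^\vee\rangle}$.

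I expect the main obstacle to be the careful commutator bookkeeping in step (iii): verifying that when one slides $e_{-\gamma_j}(t)$ and the resulting torus factors past the other embedded $SL_2$ blocks, the only surviving effect modulo $P(F)$ on the left is the clean shift of $c_j$, with no cross-terms polluting the other coordinates $c_k$ ($k\ne j$) or the $d_\ell$. In the cominuscule case abelianness of $U^P$ (hence of $U_-^P$) makes the unipotent-unipotent commutators vanish, which is what rescues the argument; in the merely braidless case one must instead invoke Lemma~\ref{convexorder} to control the order in which auxiliary root subgroups appear, and argue that the induced modulus $D_j$ — though no longer given by the simple product formula — still lies in $\mathfrak{o}$ because all relevant exponents $\langle\gamma_j,\gamma_\ell^\vee\rangle$ are $\ge 0$. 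Making the inductive structure of this "sweep from $j=N$ down to $j=1$" precise, so that each $D_j$ genuinely depends only on the $\iota_{\gamma_k}(g_k)$ with $k\ge j$, is the delicate point.
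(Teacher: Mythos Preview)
Your overall plan --- start from Corollary~\ref{leftcosets} and analyze the effect of right multiplication by $e_{-\gamma_j}(t)$, $t\in\mathfrak{o}$, on the representative $\iota_{\gamma_1}(g_1)\cdots\iota_{\gamma_N}(g_N)$ --- is the same as the paper's, but the key step is miscomputed.

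The claim that ``$e_{-\gamma_j}(t)$ commutes with all $\iota_{\gamma_k}(g_k)$ for $k>j$ (abelian $U_-^P$)'' is false. Abelianness of $U_-^P$ only says $e_{-\gamma_j}$ commutes with $e_{-\gamma_k}$; but $\iota_{\gamma_k}(g_k)$ is not in $U_-^P$: it decomposes as $h_{\gamma_k}(d_k^{-1})\,e_{\gamma_k}(b_kd_k)\,e_{-\gamma_k}(c_k/d_k)$, and by~(\ref{hecommutation}) the torus piece conjugates $e_{-\gamma_j}(t)$ to $e_{-\gamma_j}\bigl(d_k^{\langle\gamma_j,\gamma_k^\vee\rangle}t\bigr)$. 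This non-commutation is exactly the source of the factor $\prod_{\ell>j}d_\ell^{\langle\gamma_j,\gamma_\ell^\vee\rangle}$ in $D_j$. Your alternative mechanism (an ``emerging torus element $h_{\gamma_j}(\cdot)$'' pushed leftward through the $k<j$ blocks) cannot produce that factor: if $e_{-\gamma_j}(t)$ really did commute past the $k>j$ blocks, then $\iota_{\gamma_j}(g_j)\,e_{-\gamma_j}(t)$ is just a new $SL_2(\mathfrak{o})$ matrix with bottom row $(c_j+td_j,\,d_j)$ --- same $d_j$, so no new torus element appears, and the only shift would be $c_j\mapsto c_j+td_j$, not $c_j\mapsto c_j+tD_j$.

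The paper's computation runs as follows. One pushes $e_{-\gamma_j}(t_j)$ leftward through each $\iota_{\gamma_k}(g_k)=h_{\gamma_k}(d_k^{-1})\,e_{\gamma_k}(b_kd_k)\,e_{-\gamma_k}(c_k/d_k)$ with $k>j$, using~(\ref{hecommutation}) and~(\ref{eecommutation}). The $h_{\gamma_k}$ piece dilates $t_j\mapsto d_k^{\langle\gamma_j,\gamma_k^\vee\rangle}t_j$ (this stays in $\mathfrak{o}$ precisely because $\langle\gamma_j,\gamma_k^\vee\rangle\ge 0$ by Lemma~\ref{nonneg}, which is where braidlessness enters). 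The $e_{\pm\gamma_k}$ pieces produce commutator terms at roots $a\gamma_k\pm\gamma_j$; in the cominuscule case $a\gamma_k+\gamma_j\notin\Phi$, so these do not feed back into the $e_{-\gamma_j}$-coordinate, and only the torus dilation survives. After passing all $k>j$, one has $e_{-\gamma_j}(t_j')$ with $t_j'=t_j\prod_{k>j}d_k^{\langle\gamma_j,\gamma_k^\vee\rangle}$ adjacent to $\iota_{\gamma_j}(g_j)$, and the rank-one multiplication gives $c_j\mapsto c_j+d_jt_j'=c_j+t_jD_j$. So the delicate dilation happens while passing the blocks with $k>j$, not afterward; once you correct this, your sketch lines up with the paper's proof.
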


\begin{proof} In light of Corollary~\ref{leftcosets}, it suffices to determine the effect of right multiplication by any element $u \in U_{-}^P(\o)$ on $\iota_{\gamma_1}(g_1) \cdots \iota_{\gamma_N}(g_N)$. Given any such $u$, factor it as
$$ u = e_{-\gamma_1}(t_1) \cdots e_{-\gamma_N}(t_N) \quad \text{with $t_j \in \o$}. $$ 
In order to perform the multiplication $\iota_{\gamma_1}(g_1) \cdots \iota_{\gamma_N}(g_N) u$, we break each $\iota_{\gamma_k}(g_k)$ into $h_{\gamma_k}, e_{\gamma_k},$ and $e_{-\gamma_k}$ as before and use the commutation relations (\ref{hecommutation}) and (\ref{eecommutation}) to move each $e_{-\gamma_j}(t_j)$ in $u$ leftward until it is adjacent to $\iota_{\gamma_j}(g_j)$. 

Caution is required here as the decomposition of $\iota_{\gamma_k}(g_k)$ into $h_{\gamma_k}, e_{\gamma_k},$ and $e_{-\gamma_k}$ is only defined over the field $F$. In particular, the commutation of $e_{-\gamma_j}(t_j)$ with $t_j \in \o$ and $h_{\gamma_k}(d_k^{-1})$ for certain pairs of positive roots $\gamma_j, \gamma_k$ in the unipotent radical of $P$ dilates the argument $t_j$ by an element of $\o$ only if $\langle \gamma_j, \gamma_k^\vee \rangle \geq 0$. This is guaranteed by Lemma~\ref{nonneg}.

The process of moving $e_{-\gamma_j}(t_j)$ leftward will produce linear changes of coordinates in $t_j$ and create many new elements $e_\alpha$ for $\alpha \in \Phi$ whose arguments depend on $t_j$ and the matrices $g_i$ with $i \in [j+1,N]$. It suffices to compute the dilation factor appearing in the linear change $t_j \mapsto t_j'$ once $e_{-\gamma_j}(t_j')$ is adjacent to $\iota_{\gamma_j}(g_j)$; as $t_j$ is arbitrary in $\mathfrak{o}$ then $c_j \in g_j$ may be restricted to a finite set determined by the size of the dilation. Upon fixing a choice of $t_j$, all of the $e_\alpha$ created by moving $e_{-\gamma_j}(t_j)$ leftward may be considered to have constant arguments in $t_j$ (now fixed) and the entries of the $g_i$. As we move subsequent $e_{-\gamma_{j'}}(t_{j'})$ leftward for $j' > j$, these $e_\alpha$'s involving $t_j$ produce harmless additive changes of variables in the $t_{j'}$ which are isomorphisms of $\mathfrak{o}$.

To determine the required dilation factor, we need only compute the effect of moving $e_{-\gamma_j}(t_j)$ past any $\iota_{\gamma_k}(g_k)$ as in (\ref{rankonehee}) with $k > j$. Then 
\begin{align} \iota_{\gamma_k}(g_k) e_{-\gamma_j}(t_j) & = h_{\gamma_k}(d_k^{-1}) e_{\gamma_k}(b_k d_k) e_{-\gamma_k}(c_k / d_k) e_{-\gamma_j}(t_j) \nonumber \\
& = h_{\gamma_k}(d_k^{-1}) e_{\gamma_k}(b_k d_k) e_{-\gamma_j}(t_j) \prod_{\gamma = a \gamma_k + b \gamma_j \in \Phi} e_{-\gamma}(\eta_{-\gamma_k, -\gamma_j; a, b} t_j^{b} (c_k / d_k)^a) e_{-\gamma_k}(c_k / d_k) \nonumber \\
& = e_{-\gamma_j}(d_k^{\langle \gamma_j, \gamma_k^\vee \rangle} t_j) h_{\gamma_k}(d_k^{-1}) \prod_{\gamma = a' \gamma_k - b' \gamma_j \in \Phi} e_{-\gamma}(\eta_{\gamma_k, -\gamma_j; a', b'} t_j^{b'} (b_k d_k)^{a'}) e_{\gamma_k}(b_k d_k) \times \nonumber \\
& \qquad \prod_{\gamma = a \gamma_k + b \gamma_j \in \Phi} e_{-\gamma}(\eta_{-\gamma_k, -\gamma_j; a, b} t_j^{b} (c_k / d_k)^a) e_{-\gamma_k}(c_k / d_k). \label{onecommutation}
\end{align}
In the leftmost product of (\ref{onecommutation}), which ranges over pairs $(a',b')$, $\gamma \ne -\gamma_j$ so it may be ignored. However, we still need to compute the commutator of $e_{\gamma_k}(b_k d_k)$ with terms in the rightmost product of (\ref{onecommutation}), as these may produce $e_{-\gamma_j}$'s. This can only happen for linear combinations $a \gamma_k + b \gamma_j$ with $b = 1$ since $\gamma_k$ and $\gamma_j$ are linearly independent.

Thus we obtain two cases: if $a \gamma_k + \gamma_j \in \Phi$ for some positive integer $a$ and $k > j$, then moving $e_{-\gamma_j}(t_j)$ past $\iota_{\gamma_k}(g_k)$ results in
$$ e_{-\gamma_j}(d_k^{\langle \gamma_j, \gamma_k^\vee \rangle} t_j (1 +\eta_{\gamma_k, -\gamma_k-\gamma_j; a, 1} \eta_{-\gamma_k, -\gamma_j; a, 1} b_k^a c_k^a) ). $$

If $a \gamma_k + \gamma_j \not\in \Phi$ for any $a$, then the dilation is simply
$$ e_{-\gamma_j}(d_k^{\langle \gamma_j, \gamma_k^\vee \rangle} t_j). $$ 
As $\gamma_k, \gamma_j$ are positive roots in the maximal parabolic $P$, they both contain the simple root $\alpha$ omitted from $P$. Thus the condition $a \gamma_k + \gamma_j \in \Phi$ is impossible if $P$ is cominuscule. Hence the simpler formula for $D_j$, $j=1, \ldots, N$ in these cases follows. 
 \end{proof}

\begin{corollary} \label{corgencase} If $\Phi(G) \ne G_2$ and $P$ is braidless, then the map 
\begin{eqnarray*}  \tilde{\varphi} : (B_1(\o) \backslash SL_2(\o) \cap B_1(F) B_1^{-}(F))^N & \longrightarrow & P(\o) \backslash G(\o) \cap P(F) U_{-}^P(F) \\
(g_1, \ldots, g_N) & \longmapsto & P(\o) \iota_{\gamma_1}(g_1) \cdots \iota_{\gamma_N}(g_N) \end{eqnarray*}
is a bijection of sets. Moreover, $U_{-}^P(\o)$ acts properly on the right of this quotient and the double cosets $P(\o) \backslash G(\o) \cap P(F) U_{-}^P(F) / U_{-}^P(\o)$ may be parametrized by
bottom rows $(c_j, d_j)$ of the $g_j$ with $\gcd(c_j,d_j)=1$, $d_j \ne 0$ and modulo units, and $c_j$ modulo $D_j$, with $D_j \in \o$ defined in the previous proposition.
\end{corollary}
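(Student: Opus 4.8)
The plan is to deduce the corollary from its field-theoretic counterparts, Corollary~\ref{leftcosets} and Proposition~\ref{doublecosetprop}, by checking that nothing is lost in passing from $F$-points to $\o$-points. Following the setup of Corollary~\ref{leftcosets}, I would first fix for each $(c_j,d_j)\in\mathcal{R}$ a completion to a matrix $g_j\in SL_2(\o)\cap B_1(F)B_1^-(F)$ — possible since $\o$ is a principal ideal domain and $\gcd(c_j,d_j)=1$; as there, the resulting $\tilde\varphi$ may depend on this choice, but I claim it is a bijection for any allowable choice. That $\tilde\varphi$ lands in the asserted target is immediate: each $\iota_{\gamma_j}$ is a morphism of group schemes over $\Z$, so carries $SL_2(\o)$ into $G(\o)$, whence $\gamma:=\iota_{\gamma_1}(g_1)\cdots\iota_{\gamma_N}(g_N)\in G(\o)$, while $\gamma\in P(F)U_{-}^P(F)$ by Corollary~\ref{leftcosets}.

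Bijectivity I would obtain by comparison with the obvious map
$$\pi:\ P(\o)\backslash\big(G(\o)\cap P(F)U_{-}^P(F)\big)\ \longrightarrow\ P(F)\backslash P(F)U_{-}^P(F),$$
which is injective because $\gamma'=p\gamma$ with $p\in P(F)$ and $\gamma,\gamma'\in G(\o)$ forces $p=\gamma'\gamma^{-1}\in G(\o)\cap P(F)=P(\o)$. Using the same completions throughout, $\pi\circ\tilde\varphi$ is precisely the map $\eta$ of Corollary~\ref{leftcosets}, which is a bijection; since $\pi$ is injective it follows that $\pi$ is a bijection, and hence so is $\tilde\varphi=\pi^{-1}\circ\eta$.

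For the second assertion, right multiplication by $U_{-}^P(\o)$ clearly preserves $G(\o)\cap P(F)U_{-}^P(F)$ and descends to the quotient by $P(\o)$; this action is free, for if $P(\o)\gamma u=P(\o)\gamma$ with $u\in U_{-}^P(\o)$ and $\gamma=pv$, $v\in U_{-}^P(F)$, then $vuv^{-1}=p^{-1}(\gamma u\gamma^{-1})p\in U_{-}^P(F)\cap P(F)=\{e\}$ (the opposite unipotent radical meets $P$ trivially), so $u=e$. Since $\pi$ is $U_{-}^P(\o)$-equivariant, it descends to a bijection
$$P(\o)\backslash\big(G(\o)\cap P(F)U_{-}^P(F)\big)\big/U_{-}^P(\o)\ \xrightarrow{\ \sim\ }\ P(F)\backslash P(F)U_{-}^P(F)\big/U_{-}^P(\o),$$
and the right-hand side was parametrized in Proposition~\ref{doublecosetprop} by the pairs $(c_j,d_j)$ modulo units with $c_j$ taken modulo $D_j$, $D_j\in\o$. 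This gives the claim.

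The one point requiring genuine care — and the only place the hypotheses $\Phi(G)\ne G_2$ and ``$P$ braidless'' are used — is the integrality bookkeeping already built into Corollary~\ref{leftcosets} and Proposition~\ref{doublecosetprop}: that Theorem~\ref{mainthm} can be arranged with $g_j\in SL_2(\o)$, and that the Steinberg rewritings used there introduce no denominators, which rests on the nonnegativity $\langle\gamma_j,\gamma_k^\vee\rangle\ge 0$ of Lemma~\ref{nonneg}. Granting those inputs, the present corollary is a purely formal consequence of the statements already established over $F$.
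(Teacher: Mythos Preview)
Your proof is correct and follows essentially the same route as the paper: your map $\pi$ is the paper's $\iota$, and both arguments reduce bijectivity of $\tilde\varphi$ to the injectivity of this comparison map (via $P(F)\cap G(\o)=P(\o)$) together with the bijection $\eta$ of Corollary~\ref{leftcosets}. You give more detail than the paper on the freeness of the $U_{-}^P(\o)$-action and on the equivariant descent to double cosets, but these are natural elaborations of the same argument rather than a different approach.
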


\begin{proof} The map $\varphi$ in the previous theorem factors as $\varphi = \iota \circ \tilde{\varphi}$ where
$$ \iota: P(\o) \backslash G(\o) \cap P(F) U_{-}^P(F) \hookrightarrow P(F) \backslash P(F) U_{-}^P(F) $$
induced by the inclusion of $G(\o)$ in $G(F)$. The map is injective since, if $\iota(g_1) = \iota(g_2)$, then
$g_1 g_2^{-1} \in P(F) \cap G(\o) = P(\o)$. Thus $\tilde{\varphi}$ is onto since $\varphi$ is onto and $\iota$ is
injective. Moreover, $\tilde{\varphi}$ is injective since $\varphi$ is injective. The parametrization of the double cosets
follows immediately from Theorem~\ref{mainthm}. 
\end{proof}

\subsection{A remark about parabolics and coset representatives\label{cosetrepremark}}

The proof of Proposition~\ref{doublecosetprop} using Chevalley-Steinberg relations for determining double coset representatives for $P(\o) \backslash G(\o) \cap P(F) U_{-}^P(F) / U_{-}^P(\o)$ forced us to assume that the parabolic $P$ was cominuscule. In Corollary~\ref{leftcosets}, left coset representatives for $P(F) \backslash P(F) U_{-}^P(F)$ with integral entries were constructed using pairs $(c_i, d_i)$ of bottom row elements of $SL_2( \mathfrak{o})$ for $i=1, \ldots N$. It may be possible to refine this set of representatives to provide double coset representatives indexed by integers $c_i$ mod $D_i$ where $D_i$ is a product of $d_j$ with $j \in [1, N]$ using alternate methods.  

For example, the Klingen parabolic of $Sp_4$ (which omits the short root $\alpha_1$) is braidless, but not cominuscule. However, if one uses the theory of generalized Pl\"{u}cker coordinates (cf. \cite{fomin-zelevinsky}) in conjunction with the above recipe for embedded $SL_2$'s, then a set of representatives for the required double cosets is given in terms of the bottom row of $\varphi(g_1, g_2, g_3)$ in $Sp_4$ with $\varphi$ as in Corollary~\ref{leftcosets}. For the unique convex ordering, this bottom row is (in terms of the entries $(a_i, b_i, c_i, d_i)$ of $g_i$):
$$ (A_1, A_2, A_3, A_4) := (a_3c_2 d_1 - c_1 c_3, c_1 d_3 - b_3 c_2 d_1, c_3 d_1 d_2, d_1 d_2 d_3) $$
and by Pl\"{u}cker coordinates, the double cosets required are given by $A_i$ mod $A_4$ with $A_4$ non-zero. Clearly the condition that $A_3$ is mod $A_4$ is equivalent to taking $c_3$ mod $d_3$. Making a change of coordinates in $(A_1, A_2)$ by left-multiplying by $\begin{pmatrix} d_3 & c_3 \\ b_3 & a_3 \end{pmatrix}$ in $SL_2$, then $(A_1, A_2) \mapsto (c_2 d_1, c_1)$ and thus the double cosets may be parametrized by $c_i$'s modulo products of $d_j$'s.
The example here is simplified by the fact that there are no non-trivial Pl\"{u}cker relations.

\subsection{Group decompositions for metaplectic covers}

Corollary~\ref{corgencase} ensures that the method for evaluating metaplectic Whittaker functionals works for any braidless maximal parabolic $P$.
Indeed, there is a factorization of representatives for $P(\o) \backslash G(\o)$ in terms of embedded rank one elements which allows for simple computation of the Kubota homomorphism. Further, because this factorization yields explicit representatives for double cosets, we may perform the characteristic unfolding arguments to evaluate the Whittaker coefficient of the parabolic Eisenstein series. We do this in the context of the metaplectic group and with $\o = \o_S$ for a suitably chosen finite set of places $S$ as before. Recall that $(d,c)_S$ denotes the product of the $n$-th order local Hilbert symbols over $v \in S$ and $\mathbf{s}(g)$ is the section $g \mapsto (g,1)$. Finally, write $q_k$ for value of the quadratic form $Q(\gamma_k^\vee)$ associated to the bilinear form in the definition of $\widetilde{G}$ as in Theorem~\ref{localmet}.

\begin{proposition} \label{generalunipres} Let $g_1, \ldots, g_N \in SL_2( \o_S)$ be of the form
$$ g_i = \begin{pmatrix} a_i & b_i \\ c_i & d_i \end{pmatrix}, \quad d_i \ne 0, \quad \text{for $1 \leq i \leq N$.} $$
Then there exists a decomposition of the form:
$$ \mathbf{s}(\iota_{\gamma_1}(g_1)) \mathbf{s}(\iota_{\gamma_{2}}(g_{2})) \cdots \mathbf{s}(\iota_{\gamma_N}(g_N)) = \prod_k (d_k, c_k)_S^{q_k} \, v^P
\tilde{\mathfrak{D}} \, v $$
with $v^P \in U^P(F_S)$,  $v \in U_{-}(F_S)$, and $\tilde{\mathfrak{D}} \in \widetilde{T}_{F_S}$ given by
$$ \tilde{\mathfrak{D}} = \mathbf{s}(h_{\gamma_1}(d_1^{-1})) \cdots  \mathbf{s}(h_{\gamma_N}(d_N^{-1})). $$
Moreover, when $-\alpha_j$ is in the parabolic $P$, the canonical projection of $v \in U_{-}(F_S)$ to the unipotent subgroup $U_{-\alpha_j}$ is $e_{-\alpha_j}(v_j)$ with
\begin{equation} v_{j} = \sum_{(k,k') \in \mathcal{S}_j} \left[ (-1)^{i+i'} \eta_{i,i',k,-k'} \left( b_k d_k^{-1} \right)^i \left( c_{k'} d_{k'}^{-1} \right)^{i'} \prod_{\ell \geqslant k} (d_{\ell}^{-1})^{\langle \alpha_j, \gamma_\ell^\vee \rangle} \prod_{k' < \ell < k} (d_{\ell}^{-1})^{i' \langle \gamma_{k'}, \gamma_{\ell}^\vee \rangle} \right] \label{unipelt} \end{equation} 
where $\mathcal{S}_j$ denotes the set of all pairs $(k,k')$ with $k > k'$ such that $i \gamma_k - i' \gamma_{k'} = -\alpha_j$ for positive integers $i, i'$. Here, $\eta_{i,i',k,-k'} := \eta_{i,i',\gamma_k,-\gamma_k'}$ are the integer coefficients that appear in the Steinberg commutation relation (\ref{eecommutation}). The projection of $v$ to the unipotent subgroup $U_{-\alpha_j}$ with $-\alpha_j$ not in $P$ is equal to $e_{-\alpha_j}(c_N / d_N)$.
\end{proposition}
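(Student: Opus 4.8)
The plan is to split the statement into a purely group-theoretic identity inside $G(F_S)$ and then a lift of that identity to the cover $\widetilde{G}_{F_S}$, the Hilbert symbols $\prod_k(d_k,c_k)_S^{q_k}$ entering only at the second stage. In the linear group I would prove that $\iota_{\gamma_1}(g_1)\cdots\iota_{\gamma_N}(g_N)=v^P\,\mathfrak{D}\,v$ with $\mathfrak{D}=h_{\gamma_1}(d_1^{-1})\cdots h_{\gamma_N}(d_N^{-1})$, $v^P\in U^P(F_S)$, $v\in U_-(F_S)$, and the $U_{-\alpha_j}$-component of $v$ as in (\ref{unipelt}). The starting point is the rank one Bruhat decomposition: combining (\ref{rankonehee}) with the commutation (\ref{hecommutation}) gives $\iota_{\gamma_k}(g_k)=e_{\gamma_k}(b_k d_k^{-1})\,h_{\gamma_k}(d_k^{-1})\,e_{-\gamma_k}(c_k d_k^{-1})$ whenever $d_k\ne 0$.

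Substituting this into the product, I would then \emph{sort}: reading from the left, repeatedly move the positive-root unipotent of each factor leftward through all lower-triangular material in front of it, so that the positive-root unipotents accumulate into $v^P$, the torus factors $h_{\gamma_k}(d_k^{-1})$ gather in order into $\mathfrak{D}$, and the remaining unipotents at negative roots form $v$. Each move dilates the moving argument by a power of some $d_\ell$ when it crosses a torus factor (by (\ref{hecommutation})) and, by Steinberg's relation (\ref{eecommutation}), spawns unipotents at roots $i\alpha-i'\beta$; Lemma~\ref{convexorder} controls these, part (a) guaranteeing that any new \emph{positive} root is strictly smaller for $<_{\boldsymbol{i}}$ (so the recursion terminates and these are absorbed into $v^P$), and any new \emph{negative} root $\gamma$ having $-\gamma>_{\boldsymbol{i}}\beta$, so that by part (b) its unipotent may be slid rightward and merged additively with negative-root material already present. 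Cominuscularity of $P$ (so $U^P$ abelian) makes the positive-root unipotents commute among themselves and forbids creating roots $i\gamma_k+i'\gamma_{k'}$ with both roots in $U^P$. To read off the $U_{-\alpha_j}$-component of $v$: if $\alpha_j$ is a simple root of $M$ then no factor natively contains a unipotent at $-\alpha_j$, so every contribution comes, exactly once, from the single commutation of $e_{\gamma_k}$ past $e_{-\gamma_{k'}}$ for a pair $(k,k')$ with $k>k'$ and $i\gamma_k-i'\gamma_{k'}=-\alpha_j$, i.e.\ for $(k,k')\in\mathcal{S}_j$; its scalar is the Steinberg integer times the appropriate powers of the (already dilated) arguments of the two unipotents and a sign from (\ref{eecommutation}), after which it collects the further torus dilations it crosses on its way into $v$, and gathering these powers reproduces the two products in (\ref{unipelt}). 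If instead $\alpha_j=\alpha$ is the omitted simple root, so $-\alpha_j=-\gamma_N$ is not a root of $P$, then the native term $e_{-\gamma_N}(c_N d_N^{-1})$ from the last factor already occupies the $v$-position and no commutator can contribute there: by (\ref{phip}) $\gamma_N$ is the $<_{\boldsymbol{i}}$-minimal positive root, so Lemma~\ref{convexorder}(a) forces $\mathcal{S}_{\alpha}=\emptyset$, and the component is $e_{-\alpha}(c_N/d_N)$.

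For the metaplectic lift I would work one factor at a time: by Theorem~\ref{localmet} the pullback of the cover along $\iota_{\gamma_k}$ is the Kubota cocycle $\sigma^{\gamma_k}$, so it suffices to compute inside $\widetilde{SL}_2$ the coboundary relating $\mathbf{s}$ of a big-cell matrix to the product of $\mathbf{s}$ of its Bruhat factors. Evaluating $\sigma^{\gamma_k}$ on $e_{\gamma_k}(b_k d_k^{-1})$, $h_{\gamma_k}(d_k^{-1})$, $e_{-\gamma_k}(c_k d_k^{-1})$ via the recipe for $x(\cdot)$ on upper-triangular, diagonal, and (when $c_k\ne 0$) lower-triangular matrices yields exactly $(d_k,c_k)_S^{q_k}$, the only other symbols appearing being of the shape $(d_k,-1)_S$, which vanish by Hilbert reciprocity since they are trivial at every $v\notin S$ (using $2n\mid q_v-1$); when $c_k=0$ no Hilbert symbol appears, consistent with the convention in (\ref{kubotaformula}). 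Multiplying the $N$ single-factor relations and sorting in $\widetilde{G}$ exactly as in the linear group introduces additional cocycle values, which I would check all vanish: cocycles among positive-root unipotents (including those spawned in the sort) vanish because $\mathbf{s}$ splits over the positive unipotent $U$; torus--torus cocycles among the $h_{\gamma_k}(d_k^{-1})$ are never evaluated because $\tilde{\mathfrak{D}}$ is by definition the \emph{ordered} product $\prod_k\mathbf{s}(h_{\gamma_k}(d_k^{-1}))$; and the remaining mixed cocycles reduce, via (\ref{torusformula}) and bilinearity, to Hilbert symbols killed either by the unipotent splitting or by reciprocity. What survives is precisely $\prod_k(d_k,c_k)_S^{q_k}$, and combining this with the linear identity and the description of $v$ gives the proposition. (One could alternatively route the lift through the Kubota homomorphism of Corollary~\ref{isahom}, since each $g_i\in SL_2(\mathfrak{o}_S)$ makes $\iota_{\gamma_i}(g_i)\in G(\mathfrak{o}_S)$; but as $v^P\mathfrak{D}v$ is not integral one is led back to the same rank one cocycle evaluation.)

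The hard part will be the bookkeeping in the sorting: proving that after the many applications of (\ref{eecommutation}) and (\ref{hecommutation}) the scalar deposited in each $U_{-\alpha_j}$ is \emph{exactly} the sum (\ref{unipelt}), with the right powers of the $d_\ell$, the right Steinberg coefficients, and no term omitted or double-counted. Cominuscularity and the convexity of $<_{\boldsymbol{i}}$ expressed in Lemma~\ref{convexorder} are what keep this finite and force each $-\alpha_j$-contribution to come from a single commutator. A secondary and more routine obstacle is verifying place by place — including at the wild places $v\in S$ dividing $n$, where the Hilbert symbol need not be tame — that every crossing cocycle in the metaplectic reassembly is trivial.
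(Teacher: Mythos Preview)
Your proposal is correct and follows the same strategy as the paper: rank one Bruhat decomposition of each $\iota_{\gamma_k}(g_k)$, then sorting via the Steinberg and torus--unipotent relations, with the cocycle entering only through the rank one decompositions. Two points are worth noting. First, you invoke cominuscularity of $P$ to control the sorting, but the paper states (Remark~\ref{canremark}) that the proposition holds for \emph{all} maximal parabolics; the existence of the $v^P\mathfrak{D}v$ decomposition is already supplied by Theorem~\ref{mainthm}, and the tracking of the $U_{-\alpha_j}$-component proceeds the same way without assuming $U^P$ abelian. Second, for the metaplectic lift you argue case-by-case that the crossing cocycles vanish (unipotent splitting, reciprocity for symbols of type $(d_k,-1)_S$), whereas the paper dispatches all of this in one stroke by citing Matsumoto and Steinberg: the cocycle restricted to any unipotent, and to any torus--unipotent pair, is trivial, so the only possibly nontrivial cocycles are among the torus factors $h_{\gamma_k}(d_k^{-1})$, and these are absorbed into the \emph{definition} of $\tilde{\mathfrak{D}}$ as an ordered product of sections. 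Your direct verification is not wrong, but the citation is both shorter and avoids having to worry about the wild places in $S$.
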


\begin{remark} \label{canremark} This result applies to all maximal parabolics $P$. If $P$ is assumed braidless, any reduced decomposition for $w^P$ is unique up to exchange of orthogonal reflections (cf. Lemma 3.2 of \cite{littelmann}), whose corresponding embedded rank one subgroups commute. Thus in all of the above results, the right-hand side of the factorization identity depends only on the choice of parabolic $P$, and not on the reduced decomposition of $w^P$.
\end{remark}

\begin{proof} The existence of the decomposition over reductive groups was proved in the previous section. In order to perform it over the metaplectic group, we need only keep track of the cocycles, which enter as follows. In decomposing each $\mathbf{s} (\iota_{\gamma_k}(g_k))$ according to a rank one Bruhat decomposition, we obtain
\begin{equation} \mathbf{s}(\iota_{\gamma_k}(g_k)) = (d_k, c_k)_S^{q_k} \, \mathbf{s}(e_{\gamma_k}(b_k / d_k)) \mathbf{s}(h_{\gamma_k}(d_k^{-1})) \mathbf{s}(e_{-\gamma_k}(c_k / d_k)). \label{rankonefact} \end{equation}
We move $e_{-\gamma_{k'}}$ rightward past the pieces of $\iota_{\gamma_{k}}$ where $k > k'$. If we wish to keep track of the terms $v_j$ appearing in $v \in U_{-}(F_S)$, then there are three sources. First,
moving the unipotent $e_{-\gamma_{k'}}(x)$ rightward past $h_{\gamma_\ell}(d_\ell^{-1})$ with $k' < \ell < k$ results in 
\begin{equation} e_{-\gamma_{k'}}(x d_\ell^{- \langle \gamma_{k'}, \gamma_\ell^\vee \rangle}) \label{firstcomms} \end{equation}
according to (\ref{hecommutation}). Next, we move the unipotent in (\ref{firstcomms}) rightward, past $\iota_{\gamma_k}(g_k)$ for a pair $(k,k')$ in $\mathcal{S}_j$ and
record when $i \gamma_k - i' \gamma_{k'} = -\alpha_j$ for a simple root $\alpha_j$ (as the associated commutation relations (\ref{eecommutation})
produce a term $e_{-\alpha_j}$ whose argument includes that of (\ref{firstcomms}) to the power $i'$). Then we move each of the unipotent elements 
$e_{-\alpha_j}(x)$ created by a commutation of $\gamma_k$ and $\gamma_{k'}$ rightward past $h_{\gamma_\ell}(d_\ell^{-1})$ with index $\ell \geqslant k$,
which multiplies $x$ by $(d_\ell^{-1})^{\langle \alpha_j, \gamma_\ell^\vee \rangle}$ according to (\ref{hecommutation}).

As for the resulting metaplectic cocycle, the individual Bruhat decompositions produce $S$-Hilbert symbols $(d_k, c_k)_S^{q_k}$
for each $g_k$, $k = 1, \ldots, N$. According to \cite{matsumoto} (or Section~6 of \cite{steinberg}) the only other nontrivial cocycles arise from multiplications of diagonal elements $h_{\gamma_k}$, so the result follows. \end{proof}

\section{Twisted Multiplicativity}\label{Twisted-Multiplicativity}

Let $\psi: F_S \longrightarrow \mathbb{C}$ be a character of $F_S$ that is trivial on $\mathfrak{o}_S$ but no larger fractional ideal. Given an element $\boldsymbol{t} = (t_1, \ldots, t_r) \in (\mathfrak{o}_S - \{0\})^r$, let $\psi_{\boldsymbol{t}}$ be the character of $U(F_S)$ such that $\psi_{\boldsymbol{t}} (w_0e_{-\alpha_j}(x)w_0^{-1})= \psi(t_j x)$ for $x \in F_S$ and $j=1, \ldots, r$.

In Theorem~\ref{whittakeratlast}, we showed that for special choice of test vector, the $\psi_{\boldsymbol{t}}$-Whittaker coefficient is expressible in the form
\begin{equation} \mathcal{W}_{f_1,f_2,s}(1) \sum_{\boldsymbol{d} \in (\mathfrak{o}_S - \{0\} / \mathfrak{o}_S^\times)^N} H(\boldsymbol{d}; \boldsymbol{t})\delta_P^{s+1/2}(\mathfrak{D}) \Psi(\mathfrak{D}) \zeta_\mathfrak{D} \, c_{f_1, f_2}^{\psi_{\boldsymbol{t}}}(\mathfrak{D}). \label{mdsform} \end{equation}
Here we have used $\boldsymbol{d}$ to denote the $N$-tuple $(d_1, \ldots, d_N)$ with $d_i \in \mathfrak{o}_S / \mathfrak{o}_S^\times$ and non-zero. We have also written $H(\boldsymbol{d}; \boldsymbol{t})$ in place of $H(\boldsymbol{d})$ to emphasize the dependence on the character
$\psi_{\boldsymbol{t}}$. Thus explicitly, we may write $H(\boldsymbol{d}; \boldsymbol{t})$ using the definition in~(\ref{hdefined}) as
\begin{equation} H(\boldsymbol{d}; \boldsymbol{t}) := \sum_{c_j (\text{mod } D_j)} \psi(\sum_j t_j v_j) \prod_{k=1}^N \left( \frac{c_k}{d_k} \right)^{\!\! q_k} \label{hdtdefined} \end{equation}
with $v_j$ as defined in~(\ref{unipelt}) and $D_j$ as in Proposition~\ref{doublecosetprop}. Implicit here is that summands are 0 unless the following divisibility condition holds.

\begin{lemma}To any $\boldsymbol{t} = (t_1, \ldots, t_r) \in (\mathfrak{o}_S - \{0\})^r$, the coefficient $H(\boldsymbol{d}; \boldsymbol{t})$ vanishes unless, for each simple root $\alpha_j \in P$,
$$ t_j \prod_{i=1}^N d_i^{- \langle \alpha_j, \gamma_i^\vee \rangle} \in \mathfrak{o}_S. $$ 
\label{whitdivis} \end{lemma}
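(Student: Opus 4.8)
The plan is to derive the divisibility condition directly from the formula~(\ref{unipelt}) for the unipotent coordinates $v_j$, using the fact that the $c_k$ in the sum~(\ref{hdtdefined}) range over residues modulo $D_j$ while the character $\psi$ is trivial on $\mathfrak{o}_S$. First I would observe that $H(\boldsymbol{d};\boldsymbol{t})$ is, by construction, a sum over $c_j$ running modulo $D_j$ of a summand whose only dependence on the $c_k$'s (beyond the product of power residue symbols) is through $\psi(\sum_j t_j v_j)$. For the sum over a complete set of residues modulo $D_j$ to be nonzero, the summand must be genuinely periodic modulo $D_j$ in each $c_j$; this forces $\psi(\sum_j t_j v_j)$ to be invariant under $c_j \mapsto c_j + D_j$, which in turn pins down an integrality statement for the coefficient of $c_j$ (and its powers) appearing in $\sum_j t_j v_j$.

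The key computation is then to extract, from~(\ref{unipelt}), the ``leading'' term in $v_j$ as a function of the $c_{k'}$ — namely the contribution of the pair $(k,k') = (j, j)$ wait, more precisely the diagonal-type contribution where the relation $i\gamma_k - i'\gamma_{k'} = -\alpha_j$ is satisfied trivially. In fact, the cleanest route is: when $-\alpha_j$ is \emph{not} in $P$, Proposition~\ref{generalunipres} gives $v_j = c_N/d_N$ outright (here $\gamma_N = \alpha_{i_{t+N}}$ is a simple root), so $\psi(t_j c_N/d_N)$ being $D_N$-periodic in $c_N$ — with $D_N = d_N$ in the cominuscule case, or more generally $D_N$ a multiple of $d_N$ — forces $t_j/d_N \in \mathfrak{o}_S$, matching the claim since $\langle \alpha_j, \gamma_i^\vee\rangle$ is nonzero only for $i = N$ with value $2$... hmm, this needs the precise bookkeeping of which $d_i$ divide $D_j$. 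When $-\alpha_j$ \emph{is} in $P$, one uses the general formula~(\ref{unipelt}): each term carries a factor $\prod_{\ell \geq k}(d_\ell^{-1})^{\langle \alpha_j, \gamma_\ell^\vee\rangle}$ times powers of $c_{k'}/d_{k'}$, and the requirement that the total be $\psi$-trivial after summing $c_{k'}$ modulo $D_{k'}$ yields precisely $t_j \prod_i d_i^{-\langle \alpha_j,\gamma_i^\vee\rangle} \in \mathfrak{o}_S$ once one checks that $D_{k'}$ (a product of $d_\ell$'s) absorbs the remaining denominators $d_{k'}^{i'}$ and the factors $\prod_{k'<\ell<k}(d_\ell^{-1})^{i'\langle\gamma_{k'},\gamma_\ell^\vee\rangle}$.

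I would organize the argument as follows: (1) reduce to showing that for each $j$, the quantity $t_j v_j$ viewed as a function of the summation variables $c_1,\dots,c_N$ is $\psi$-trivial on the lattice of allowed shifts, else the whole sum collapses; (2) treat the case $-\alpha_j \notin \Phi_P$ separately and quickly via $v_j = c_N/d_N$; (3) in the case $-\alpha_j \in \Phi_P$, isolate in~(\ref{unipelt}) the monomial in the $c_{k'}$'s with the maximal denominator — this is the ``extremal'' pair in $\mathcal{S}_j$ — and show its coefficient must lie in $\mathfrak{o}_S$; (4) verify by a direct valuation count (at each prime $p$ of $\mathfrak{o}_S$) that this extremal integrality condition is exactly $t_j\prod_i d_i^{-\langle\alpha_j,\gamma_i^\vee\rangle}\in\mathfrak{o}_S$, using $D_j = d_j\prod_{\ell>j}d_\ell^{\langle\gamma_j,\gamma_\ell^\vee\rangle}$ in the cominuscule case (and the analogous statement of Proposition~\ref{doublecosetprop} in general). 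The main obstacle I anticipate is step~(3)–(4): the formula~(\ref{unipelt}) is a sum of many monomials in distinct variables $c_{k'}$ with different denominators, and one must argue that \emph{no cancellation} can rescue the sum when the claimed divisibility fails — i.e., that the ``worst'' term cannot be killed by the others because they involve genuinely different variables — and then match the resulting condition against the product $\prod_i d_i^{-\langle\alpha_j,\gamma_i^\vee\rangle}$ by carefully accounting for how $\langle\alpha_j,\gamma_\ell^\vee\rangle$ distributes over the nested products in~(\ref{unipelt}). This is bookkeeping-heavy but should go through, since $\langle\alpha_j,\gamma_\ell^\vee\rangle \geq 0$ for all $\ell$ (by braidlessness, Lemma~\ref{nonneg}), so all the exponents have a consistent sign and no hidden cancellation of valuations occurs.
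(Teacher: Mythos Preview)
Your approach is genuinely different from the paper's, and while it is in principle workable (the $GL_4$ discussion in Section~7 carries out exactly this kind of direct argument), it is considerably more laborious and your outline has a conceptual wobble in step~(1). You write that ``for the sum over a complete set of residues modulo $D_j$ to be nonzero, the summand must be genuinely periodic modulo $D_j$'' and that this ``forces $\psi(\sum_j t_j v_j)$ to be invariant under $c_j \mapsto c_j + D_j$.'' But the sum is \emph{defined} over $c_j$ modulo $D_j$, so that shift is vacuous. The correct mechanism, visible in the $GL_4$ example, is to shift some $c_k$ by a multiple of $d_k$ \emph{strictly smaller} than $D_k$: such a shift leaves the power residue symbol $(c_k/d_k)$ unchanged while potentially altering $\psi(\sum_j t_j v_j)$ by a nontrivial root of unity, forcing the total sum to vanish. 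Making this precise in general requires handling the $b_k$'s (multiplicative inverses of the $c_k$'s) appearing in~(\ref{unipelt}), which are not linear in the $c_k$'s, and then verifying the combinatorial identity that the worst denominator among the $D(k,k';\alpha_j)$ is exactly $\prod_i d_i^{\langle\alpha_j,\gamma_i^\vee\rangle}$---a fact you flag as the main obstacle but do not prove.

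The paper's proof sidesteps all of this. Rather than working with the explicit sum~(\ref{hdtdefined}), it recalls that $H(\boldsymbol{d};\boldsymbol{t})$ arose from the Whittaker integral of $f_s$, and $f_s$ is left-invariant under $P(\mathfrak{o}_S)$. As noted below~(\ref{mdsform-1}), this invariance forces $\psi_{\boldsymbol{t}}$ to be trivial on every element $(\mathfrak{D} w_0)^{-1} p (\mathfrak{D} w_0)$ with $p \in P(\mathfrak{o}_S)\cap U_-(F_S)$. Taking $p = e_{-\alpha_j}(x)$ for $\alpha_j$ a simple root of the Levi and $x\in\mathfrak{o}_S$, the conjugation by $\mathfrak{D} = \prod_i h_{\gamma_i}(d_i^{-1})$ is computed in one line from~(\ref{hecommutation}), yielding exactly the stated divisibility. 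No analysis of~(\ref{unipelt}) or of extremal denominators is needed. The trade-off is that the paper's argument is not self-contained at the level of the exponential sum---it invokes the automorphic origin of $H$---whereas your approach, if completed, would prove the vanishing purely from the formula for $H$.
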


\begin{proof} As noted in the discussion below (\ref{mdsform-1}), the Whittaker integral vanishes unless the character $\psi_{\boldsymbol{t}}$ is trivial on all elements of the form 
$$ (\mathfrak{D} w_0)^{-1} p (\mathfrak{D} w_0) \quad \text{with $p \in P(\mathfrak{o}_S) \cap U_{-}(F_S)$.} $$
Here, $\mathfrak{D} = h_{\gamma_1}(d_1^{-1}) \cdots h_{\gamma_N}(d_N^{-1})$ according to Proposition~\ref{generalunipres}.
Thus it suffices to check the condition for $p = e_{-\alpha_j}(x)$ with $\alpha_j$ a simple root in the the parabolic $P$ and any $x \in \mathfrak{o}_S$. The result now follows from repeated application of (\ref{hecommutation}).
\end{proof}

The results of Section~\ref{sectionMDT} imply that the exponential sum $H(\boldsymbol{d}; \boldsymbol{t})$ has a particularly nice form when the maximal parabolic $P$ is cominuscule. In these cases, we now demonstrate that $H$ is multiplicative in both $\boldsymbol{d}$ and $\boldsymbol{t}$ up to an explicitly prescribed $n$-th root of unity. Collectively, these two properties are commonly referred to as ``twisted multiplicativity.'' 

Let $\omega_i^\vee$ denote the $i$-th fundamental coweight, so that
$$ \langle \alpha_j, \omega_i^\vee \rangle = \delta_{i,j}. $$

\begin{proposition} Let $P$ be a cominuscule parabolic. Given any $\boldsymbol{d} = (d_1, \ldots, d_N)$ with $d_i \in \mathfrak{o}_S / \mathfrak{o}_S^\times$ and non-zero, write $\boldsymbol{t} = (t_1 t_1', \ldots, t_r t_r')$ such that $\gcd(\prod_{i=1}^r t_i, \prod_{j=1}^{N} d_j ) = 1$. Then
$$ H(\boldsymbol{d}; \boldsymbol{t}) = \prod_{k=1}^N \prod_{i=1}^r \left( \frac{t_i^{-\langle \gamma_k, \omega_i^\vee \rangle}}{d_k} \right)^{\!\! q_k} H(\boldsymbol{d}; t_1', \ldots, t_r'). $$
\end{proposition}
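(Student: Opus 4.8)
The plan is to reduce the multiplicativity in $\boldsymbol{t}$ to a change of variables in the $c_k$-sum of~(\ref{hdtdefined}), tracking the $n$-th power residue symbols that arise. Since $P$ is cominuscule, $U^P$ is abelian, so by Proposition~\ref{generalunipres} the argument $v_j$ in~(\ref{unipelt}) simplifies considerably: for each simple root $\alpha_j\in P$ there is a unique pair $(k,k')$ (with $i=i'=1$) contributing, and in fact the relevant structure becomes transparent once one notes that $v_j$ is, up to a unit and a monomial in the $d_\ell$, simply a ratio built from the bottom-row entries $c_k$. More precisely, I would first record that the linear form $\psi(\sum_j t_j v_j)$ appearing in~(\ref{hdtdefined}) is, after the divisibility of Lemma~\ref{whitdivis}, a function of the $c_k$ that is $\mathbb{Z}$-linear, so replacing $t_j$ by $t_j t_j'$ and then substituting $c_k \mapsto u_k c_k$ for suitable units-times-powers $u_k$ (depending on the $t_i$) transforms $\psi(\sum_j t_j t_j' v_j)$ into $\psi(\sum_j t_j' v_j)$ while leaving the modulus $D_j$ invariant, since $D_j = d_j \prod_{\ell>j} d_\ell^{\langle\gamma_j,\gamma_\ell^\vee\rangle}$ is coprime to $\prod_i t_i$ by hypothesis.

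The key computation is then to identify the substitution. The weight of $c_k$ under the torus, and hence the way $c_k$ enters each $v_j$, is governed by $\gamma_k$; expanding $\gamma_k = \sum_i \langle\gamma_k,\omega_i^\vee\rangle \alpha_i$ shows that rescaling to absorb the $t_i$ requires replacing $c_k$ by $c_k \prod_i t_i^{-\langle\gamma_k,\omega_i^\vee\rangle}$ (as an element of $\mathfrak{o}_S/\mathfrak{o}_S^\times$, or more precisely adjusting by that unit after clearing denominators). Because the power residue symbol $\left(\frac{\cdot}{d_k}\right)$ is multiplicative in the numerator, this substitution pulls out exactly the factor
$$\prod_{k=1}^N \left( \frac{\prod_i t_i^{-\langle\gamma_k,\omega_i^\vee\rangle}}{d_k}\right)^{q_k} = \prod_{k=1}^N \prod_{i=1}^r \left(\frac{t_i^{-\langle\gamma_k,\omega_i^\vee\rangle}}{d_k}\right)^{q_k},$$
and the residual sum over the rescaled $c_k$ modulo $D_k$ is exactly $H(\boldsymbol{d};t_1',\dots,t_r')$ since the change of variables is a bijection on $\mathbb{Z}/D_k\mathbb{Z}$ (here one uses $\gcd(\prod t_i, D_k)=1$, which follows from $\gcd(\prod t_i, \prod d_j)=1$ and the displayed formula for $D_k$).

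The main obstacle I anticipate is making the substitution $c_k \mapsto u_k c_k$ genuinely simultaneous across all $k$ — that is, checking that a single rescaling of each $c_k$ indexed by $\gamma_k$ really does clear every $t_j$ from every $v_j$ at once. This requires the structural fact that in the cominuscule case the $c_k$-dependence of $v_j$ is ``monomial of degree one in each $c_{k'}$ with the correct torus weight,'' which one reads off from~(\ref{unipelt}) together with $a\gamma_k+\gamma_j\notin\Phi$; I would verify this by matching the exponent of $d_\ell$ in~(\ref{unipelt}) against $\langle\alpha_j,\gamma_\ell^\vee\rangle$ and confirming the bookkeeping is consistent with the Hilbert-symbol/reciprocity rewriting already carried out in Section~4 (the passage from $(d_k,c_k)_S(\tfrac{d_k}{c_k})$ to $(\tfrac{c_k}{d_k})$). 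Once that monomial structure is in hand, the rest is a routine bijective change of variables and an application of multiplicativity of the $n$-th power residue symbol, together with the vanishing criterion of Lemma~\ref{whitdivis} to ensure all arguments of $\psi$ remain in $\mathfrak{o}_S$ after rescaling.
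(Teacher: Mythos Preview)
Your approach is essentially the same as the paper's: both perform the change of variables $c_k \mapsto c_k \prod_i t_i^{-\langle \gamma_k,\omega_i^\vee\rangle}$, extract the residue symbols by multiplicativity, and then verify that this simultaneously cancels every $t_j$ from the additive character by using the defining relation $i\gamma_k - i'\gamma_{k'} = -\alpha_{j_0}$ for pairs $(k,k')\in\mathcal{S}_{j_0}$ (together with $i=i'=1$ in the cominuscule case, so that $b_k c_{k'}$ transforms by $t_{j_0}^{-1}$). One small inaccuracy: your opening remark that ``for each simple root $\alpha_j\in P$ there is a unique pair $(k,k')$ contributing'' is false in general (already the $GL_4$ example of Section~7 has two pairs for each of $\alpha_1,\alpha_3$), but your argument does not actually rely on this uniqueness---the same substitution handles all pairs at once, exactly as you anticipate in your discussion of the ``main obstacle.''
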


\begin{proof}
According to~(\ref{hdtdefined}), if $\boldsymbol{t} = (t_1 t_1', \ldots, t_r t_r')$, then
\begin{equation} H(\boldsymbol{d}; \boldsymbol{t}) = \sum_{c_i} \left[ \prod_{i=1}^N \left( \frac{c_i}{d_i} \right)^{\!\! q_i} \right] \psi \left( \sum_{j=1}^r t_j t_j' v_j \right) \label{expreminder} \end{equation}
where, because $P$ is assumed cominuscule, the sum is over $c_i$ modulo 
\begin{equation} D_i := d_i \prod_{\ell = i+1}^N d_{\ell}^{\langle \gamma_i, \gamma_{\ell}^\vee \rangle} \label{Dkdef} \end{equation} 
for $i=1, \ldots, N$. Moreover, $v_j \in F_S$ with $j = 1, \ldots, r$ is the element appearing in~(\ref{unipelt}).

The result will follow by applying a change of variables
$$ c_i \longmapsto c_i \prod_{j=1}^r t_j^{-\langle \gamma_i, \omega_j^\vee \rangle}. $$
Indeed, the change produces the residue symbols in the statement of the proposition and is an automorphism of residue classes mod $D_i$ according to the assumed relative primality of $t_j$'s and $d_i$'s. So it suffices to show this change of variables eliminates the dependence on $t_i$ appearing in the argument of the character $\psi$ in (\ref{expreminder}).

As $b_k$ is a multiplicative inverse of $c_k$ (mod $d_k$), then to any index $j_0$ and pairs $(k, k')$ in $\mathcal{S}_{j_0}$ (whose definition is given in Proposition~\ref{generalunipres}):
$$ b_k^{i} c_{k'}^{i'} \longmapsto b_k^{i} c_{k'}^{i'} \prod_{j=1}^r t_j^{-\langle -\gamma_k \cdot i + \gamma_k' \cdot i', \omega_j^\vee \rangle} = b_k^{i} c_{k'}^{i'} t_{j_0}^{-1} $$
where the last equality simply follows from the definition of $\mathcal{S}_{j_0}$. This cancels the $t_{j_0}$ appearing in $\psi$ for each $j_0 \in [1,\ell]$ and the result follows. \end{proof}

Twisted multiplicativity for $H(\boldsymbol{d}; \boldsymbol{t})$ with respect to $\boldsymbol{d}$ will follow from the Chinese remainder theorem, provided one can demonstrate that the moduli $D_i$ for integers $c_i$ in the exponential sum satisfy a precise relationship with the conductors of the additive characters in the sum. This relationship is the content of the following result.

\begin{lemma} \label{twistedmultlemma} For each
simple root $\alpha_j$ in the maximal parabolic $P$, and a pair $(k, k') \in \mathcal{S}_j$ (i.e., with $k > k'$ and $i \gamma_k - i' \gamma_{k'} = - \alpha_j$ for positive integers $i, i'$), set
$$ D(k, k'; \alpha_j) := d_k^i d_{k'}^{i'} \prod_{\ell \geqslant k} d_{\ell}^{\langle \alpha_j, \gamma_\ell^\vee \rangle} \prod_{k' < \ell < k} d_{\ell}^{i' \langle \gamma_{k'}, \gamma_{\ell}^\vee \rangle}, $$
the product of $d$'s appearing in the denominator of the summand of $v_j$ corresponding to the pair $(k,k')$ as in (\ref{unipelt}). Then
$$ D(k, k'; \alpha_j) = \frac{D_{k'}^{i'}}{D_k^i}, $$
where
$$ D_k := d_k \prod_{\ell = k+1}^N d_{\ell}^{\langle \gamma_{k}, \gamma_\ell^\vee \rangle}, $$
the modulus of $c_k$ ($k = 1, \ldots, N$) in the exponential sum $H(\boldsymbol{d}; \boldsymbol{t})$.
\end{lemma}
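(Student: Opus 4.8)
The plan is to prove this by a direct monomial comparison: expand both sides as products of powers of $d_1,\dots,d_N$ and check that the exponent of each $d_m$ agrees. The only structural input is the relation defining membership in $\mathcal{S}_j$, namely $i\gamma_k - i'\gamma_{k'} = -\alpha_j$, which I will use in the equivalent form
$$ \alpha_j = i'\gamma_{k'} - i\gamma_k ; $$
everything else is bookkeeping with the endpoints of the various products, together with the linearity of $\langle\cdot,\cdot\rangle$ in its first slot and the normalization $\langle\gamma_k,\gamma_k^\vee\rangle = 2$.

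First I would rewrite the right-hand side explicitly. Since $D_k = d_k\prod_{\ell=k+1}^N d_\ell^{\langle\gamma_k,\gamma_\ell^\vee\rangle}$, and likewise for $D_{k'}$,
$$ \frac{D_{k'}^{i'}}{D_k^{i}} = d_{k'}^{i'}\, d_k^{-i}\prod_{\ell=k'+1}^N d_\ell^{\,i'\langle\gamma_{k'},\gamma_\ell^\vee\rangle}\prod_{\ell=k+1}^N d_\ell^{\,-i\langle\gamma_k,\gamma_\ell^\vee\rangle}, $$
whereas $D(k,k';\alpha_j) = d_k^{i}\, d_{k'}^{i'}\prod_{\ell=k}^N d_\ell^{\langle\alpha_j,\gamma_\ell^\vee\rangle}\prod_{\ell=k'+1}^{k-1} d_\ell^{\,i'\langle\gamma_{k'},\gamma_\ell^\vee\rangle}$. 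Because $k > k'$, the indices $m \in \{1,\dots,N\}$ split into the five ranges $m < k'$, $m = k'$, $k' < m < k$, $m = k$, $m > k$, exactly at the breakpoints of the products above, and I would compare the total exponent of $d_m$ on the two sides range by range.

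The ranges $m < k'$ (exponent $0$ on each side), $m = k'$ (exponent $i'$ on each side), and $k' < m < k$ (exponent $i'\langle\gamma_{k'},\gamma_m^\vee\rangle$ on each side) are immediate and use no root relation; in particular the factor $\prod_{k'<\ell<k} d_\ell^{i'\langle\gamma_{k'},\gamma_\ell^\vee\rangle}$ in $D(k,k';\alpha_j)$ is precisely the sub-product of $\prod_{\ell=k'+1}^N d_\ell^{i'\langle\gamma_{k'},\gamma_\ell^\vee\rangle}$ over indices below $k$. For $m = k$ the left side has exponent $i + \langle\alpha_j,\gamma_k^\vee\rangle$ and the right side $i'\langle\gamma_{k'},\gamma_k^\vee\rangle - i$; these agree because substituting the displayed relation gives $\langle\alpha_j,\gamma_k^\vee\rangle = i'\langle\gamma_{k'},\gamma_k^\vee\rangle - i\langle\gamma_k,\gamma_k^\vee\rangle = i'\langle\gamma_{k'},\gamma_k^\vee\rangle - 2i$. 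For $m > k$ the left side has exponent $\langle\alpha_j,\gamma_m^\vee\rangle$ and the right side $i'\langle\gamma_{k'},\gamma_m^\vee\rangle - i\langle\gamma_k,\gamma_m^\vee\rangle = \langle i'\gamma_{k'} - i\gamma_k,\gamma_m^\vee\rangle$, equal again by the relation. This exhausts all indices, so the two monomials coincide and the lemma follows.

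I do not expect a genuine obstacle here: the content is entirely formal once the defining relation of $\mathcal{S}_j$ is substituted, and the only place demanding care is aligning the endpoints of the products, so that the purely ``interior'' factors cancel before the root relation is invoked on the $m \ge k$ part. If desired, the same computation can be packaged as one identity in the cocharacter lattice $Y$ by collecting the exponents into a $Y$-valued vector, but the explicit case split above is the cleanest presentation.
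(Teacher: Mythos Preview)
Your proof is correct and follows essentially the same approach as the paper: both expand $D_{k'}^{i'}/D_k^i$ and compare monomial exponents with those of $D(k,k';\alpha_j)$, using the relation $\alpha_j = i'\gamma_{k'} - i\gamma_k$ together with $\langle\gamma_k,\gamma_k^\vee\rangle = 2$. The paper's proof is terser, singling out only the $d_k$ exponent as the case requiring care, but the computation is identical to your $m=k$ range.
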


\begin{proof} This follows by straightforward calculation of $D_{k'}^{i'} / D_k^i$. The only term that requires special care is $d_k$, where $D_{k'}^{i'}$ contributes $d_k^{i' \langle \gamma_{k'}, \gamma_{k}^\vee \rangle}$ and $D_k^i$ contributes $d_k^i$. But
$$ i' \langle \gamma_{k'}, \gamma_{k}^\vee \rangle - i = \langle \alpha_j + i \gamma_k, \gamma_{k}^\vee \rangle - i = \langle \alpha_{j}, \gamma_{k}^\vee \rangle + i, $$
since $\langle \alpha, \alpha^\vee \rangle = 2$ for all roots $\alpha$. This matches the power of $d_k$ appearing in $D(k, k'; \alpha_j)$.
\end{proof}

The result is independent of any restrictions on the parabolic $P$ (e.g., cominuscule), so it could be used, for example, to prove twisted multiplicativity statements in the case discussed in Section~\ref{cosetrepremark}.

\begin{proposition} \label{isareone} Let $P$ be cominuscule with roots $\gamma_k \in U^P$ as above. If, for $k > k'$, there exist positive integers $i, i'$ such that
$$ i \gamma_k - i' \gamma_{k'} = -\alpha_j, \quad \text{for some simple root $\alpha_j \in P$}, $$
then $i = i' = 1$.
\end{proposition}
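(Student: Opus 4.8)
The plan is to deduce both equalities purely by comparing coefficients of simple roots in the relation $i\gamma_k - i'\gamma_{k'} = -\alpha_j$. Write $P = P_\omega$ and let $\alpha$ denote the unique simple root omitted by $P$, so that (as recalled in Section~\ref{sectionMDT}) the roots of $U^P$ are exactly the positive roots whose simple-root expansion involves $\alpha$, while the roots of the Levi $M$ are those not involving $\alpha$. First I would record the one structural input needed from the cominuscule hypothesis: since $\alpha$ occurs with coefficient $1$ in the highest root $\theta$ (Definition~\ref{comindef}) and every positive root is dominated by $\theta$ in the root order, $\alpha$ occurs with coefficient \emph{exactly} $1$ in every root of $U^P$. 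In particular the $\alpha$-coefficient of each of $\gamma_k$ and $\gamma_{k'}$ equals $1$.

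Next I would use that, in the situation of Proposition~\ref{generalunipres} where $\mathcal{S}_j$ is defined, $-\alpha_j$ is a root of $P$; equivalently $\alpha_j$ is a simple root of $M$, i.e., $\alpha_j \ne \alpha$. Reading off the coefficient of $\alpha$ on both sides of $i\gamma_k - i'\gamma_{k'} = -\alpha_j$ then gives $i\cdot 1 - i'\cdot 1 = 0$, so $i = i'$. Substituting, the relation collapses to $i(\gamma_k - \gamma_{k'}) = -\alpha_j$. Since $\gamma_k - \gamma_{k'}$ lies in the root lattice $\bigoplus_m \mathbb{Z}\alpha_m$, the coefficient of $\alpha_j$ on the left is a multiple of $i$, while on the right it equals $-1$; hence $i \mid 1$, so $i = 1$ and therefore $i = i' = 1$.

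The argument is essentially formal, so I do not expect a real obstacle; the only two points requiring care are (i) invoking the cominuscule hypothesis correctly so that the $\alpha$-coefficients of $\gamma_k$ and $\gamma_{k'}$ are pinned to exactly $1$ rather than merely $\geq 1$, and (ii) being explicit that the simple root $\alpha_j$ occurring in $\mathcal{S}_j$ is a Levi root (so its $\alpha$-coefficient is $0$), which is forced by the requirement that $-\alpha_j$ be a root of $P$. Both follow immediately from the definitions in Section~\ref{sectionMDT}.
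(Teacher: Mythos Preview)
Your proof is correct and follows essentially the same approach as the paper: both arguments compare the coefficient of the omitted simple root $\alpha$ (which equals $1$ in each $\gamma_k$, $\gamma_{k'}$ by the cominuscule hypothesis, and $0$ in $\alpha_j$) to force $i=i'$, and then compare the coefficient of $\alpha_j$ to conclude $i=1$. Your write-up is slightly more explicit in justifying why the $\alpha$-coefficient is exactly $1$ and why $\alpha_j \ne \alpha$, but the underlying argument is identical.
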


\begin{proof} Express $\gamma_k$ and $\gamma_{k'}$ as linear combinations of simple roots:
$$ \gamma_k = \sum_\ell c_\ell \alpha_\ell, \quad \gamma_{k'} = \sum_\ell c_\ell' \alpha_\ell. $$ 
As both $\gamma_k, \gamma_{k'}$ are in $U^P$ with $P$ cominuscule, both sums contain the omitted root $\alpha$ with coefficient 1. As $\alpha \ne \alpha_j$ this forces $i = i'$. But then $i = i' = 1$, else there is no pair of positive integers $c_j, c_{j}'$ such that $i (c_j - c_j') = -1$.
\end{proof}

With these results in hand, we are ready to prove the twisted multiplicativity of the exponential sum $H$ with respect to $\boldsymbol{d}$.

\begin{theorem} Let $P$ be a cominuscule parabolic. Given $\boldsymbol{d} = (d_1, \ldots, d_N)$ with each $d_i = e_i f_i$ such that $\gcd(e_1 \cdots e_N, f_1 \cdots f_N) = 1$, define
$$ E_k := e_k \prod_{\ell = k+1}^N e_{\ell}^{\langle \gamma_{k}, \gamma_\ell^\vee \rangle}, \quad F_k := f_k \prod_{\ell = k+1}^N f_{\ell}^{\langle \gamma_{k}, \gamma_\ell^\vee \rangle} $$
in analogy with $D_k$ in (\ref{Dkdef}). Then the exponential sum $H(\boldsymbol{d}; \boldsymbol{t})$ factors as:
\begin{equation} H(\boldsymbol{d}; \boldsymbol{t}) = \left( \frac{E_k}{f_k} \right)^{\!\! q_k} \left( \frac{F_k}{e_k} \right)^{\!\! q_k} H(e_1, \ldots, e_N; \boldsymbol{t}) H(f_1, \ldots f_N; \boldsymbol{t}) . \label{dtwistedmult} \end{equation}
\end{theorem}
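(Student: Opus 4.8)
The plan is to prove (\ref{dtwistedmult}) by the Chinese Remainder Theorem, exploiting that for a cominuscule parabolic the moduli of the summation variables $c_k$ and the conductors of the additive character in $v_j$ fit together compatibly. First I would record the arithmetic of the moduli. Since $P$ is cominuscule it is braidless, so by Lemma~\ref{nonneg} the exponents $\langle\gamma_k,\gamma_\ell^\vee\rangle$ are all $\geq 0$; hence (no exponent being negative) $D_k = d_k\prod_{\ell>k}d_\ell^{\langle\gamma_k,\gamma_\ell^\vee\rangle}$ factors as $D_k = E_k F_k$ with $E_k, F_k\in\mathfrak{o}_S$, and $\gcd(E_k, F_k)=1$ because $\gcd(e_1\cdots e_N, f_1\cdots f_N)=1$. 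Writing each $c_k = F_k c_k' + E_k c_k''$ with $c_k'$ running modulo $E_k$ and $c_k''$ modulo $F_k$ gives a complete set of residues modulo $D_k$, and the coprimality condition $\gcd(c_k, d_k)=1$ translates into $\gcd(c_k', e_k)=\gcd(c_k'', f_k)=1$; these are exactly the summation indices of $H(e_1,\dots,e_N;\boldsymbol t)$ and $H(f_1,\dots,f_N;\boldsymbol t)$ respectively. I would also invoke the remark following (\ref{hdefined}) that $H$ is independent of the choice of top-row entries $(a_k,b_k)$ completing $(c_k,d_k)$ to a matrix in $SL_2(\mathfrak{o}_S)$, which lets me fix $b_k$ to be a simultaneous lift of multiplicative inverses $b_k'$ of $-c_k'$ mod $e_k$ and $b_k''$ of $-c_k''$ mod $f_k$.

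The next step is to split the power residue symbol. Multiplicativity in the lower argument (using $\gcd(e_k,f_k)=1$) gives $\bigl(\tfrac{c_k}{d_k}\bigr)^{q_k}=\bigl(\tfrac{c_k}{e_k}\bigr)^{q_k}\bigl(\tfrac{c_k}{f_k}\bigr)^{q_k}$; since $c_k\equiv F_k c_k'\pmod{e_k}$ and $c_k\equiv E_k c_k''\pmod{f_k}$, this equals $\bigl(\tfrac{F_k}{e_k}\bigr)^{q_k}\bigl(\tfrac{c_k'}{e_k}\bigr)^{q_k}\bigl(\tfrac{E_k}{f_k}\bigr)^{q_k}\bigl(\tfrac{c_k''}{f_k}\bigr)^{q_k}$. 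Taking the product over $k=1,\dots,N$ pulls out exactly the prefactor $\prod_k\bigl(\tfrac{E_k}{f_k}\bigr)^{q_k}\bigl(\tfrac{F_k}{e_k}\bigr)^{q_k}$ appearing in (\ref{dtwistedmult}), leaving behind $\prod_k\bigl(\tfrac{c_k'}{e_k}\bigr)^{q_k}$ and $\prod_k\bigl(\tfrac{c_k''}{f_k}\bigr)^{q_k}$, which are precisely the multiplicative weights of $H(\boldsymbol e;\boldsymbol t)$ and $H(\boldsymbol f;\boldsymbol t)$.

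It remains to factor the additive character $\psi\bigl(\sum_j t_j v_j\bigr)$. Here I would first apply Proposition~\ref{isareone} to conclude that $i=i'=1$ for every pair $(k,k')\in\mathcal S_j$, so that by (\ref{unipelt}) each contribution to $v_j$ is $\eta_{1,1,k,-k'}\,b_k c_{k'}\,D(k,k';\alpha_j)^{-1}$, and then Lemma~\ref{twistedmultlemma} identifies $D(k,k';\alpha_j)^{-1}=D_k/D_{k'}=(E_k/E_{k'})(F_k/F_{k'})$. Substituting $c_{k'}=F_{k'}c_{k'}'+E_{k'}c_{k'}''$ and the chosen CRT lift of $b_k$, and using that $\psi$ is trivial on $\mathfrak{o}_S$ together with the divisibility $t_j\prod_i d_i^{-\langle\alpha_j,\gamma_i^\vee\rangle}\in\mathfrak{o}_S$ from Lemma~\ref{whitdivis}, one checks that $\psi\bigl(t_j v_j\bigr)$ breaks into a factor depending only on the $c_k'$ (and on $\boldsymbol e$, $\boldsymbol t$, and the $b_k'$) times a factor depending only on the $c_k''$, and that these agree on the nose with $\psi\bigl(t_j v_j^{(e)}\bigr)$ and $\psi\bigl(t_j v_j^{(f)}\bigr)$, where $v_j^{(e)},v_j^{(f)}$ denote (\ref{unipelt}) with the $d$'s replaced by the $e$'s, resp.\ by the $f$'s. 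Combining this with the splitting of the residue symbols and reindexing the $\boldsymbol d$-sum as a double sum over the $c_k'$ and the $c_k''$ yields (\ref{dtwistedmult}).

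The main obstacle will be the last verification: one must show that, modulo $\mathfrak{o}_S$ and after multiplying by $t_j$, the ``cross'' unit factors ($F_k$ on the $e$-side, $E_k$ on the $f$-side, and the discrepancy between $b_k$ and the lift of $b_k'$) may all be discarded, so that the $e$-factor genuinely retains no dependence on the $c_k''$ or on $\boldsymbol f$. This is where the interlocking of $D_k=E_kF_k$ with $D(k,k';\alpha_j)=D_{k'}/D_k$ supplied by Lemma~\ref{twistedmultlemma}, and the conductor bound of Lemma~\ref{whitdivis}, do the real work; the rest of the argument is bookkeeping with the Chinese Remainder Theorem and with the multiplicativity and reciprocity of the power residue symbol already used in Section~4.
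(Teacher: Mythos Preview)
Your proposal is correct and follows essentially the same route as the paper: a Chinese Remainder decomposition $c_k = x_k E_k + y_k F_k$, the resulting factorization of the power residue symbols, and then Proposition~\ref{isareone} together with Lemma~\ref{twistedmultlemma} (giving $D(k,k';\alpha_j)=D_{k'}/D_k$, hence $E_{k'}/E(k,k';\alpha_j)=E_k$ and similarly for $F$) to split each term $b_k c_{k'}/D(k,k';\alpha_j)$ of the additive character. Two small remarks: the paper treats the omitted simple root separately (there $v_j=c_N/d_N$, which you should also record), and it does \emph{not} need Lemma~\ref{whitdivis} here---once you set $\bar{x}_k:=b_k E_k$ (a legitimate inverse of $x_k$ mod $f_k$) the ``cross'' factors cancel on the nose, so your anticipated obstacle dissolves without appealing to the conductor bound.
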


\begin{proof} The result follows by use of the Chinese remainder theorem. Recall that $c_k$ runs mod $D_k$ as in (\ref{Dkdef}). Let us define $E_k$ and $F_k$ similarly, with $d_\ell$'s replaced by $e_\ell$'s and $f_\ell$'s, respectively, in (\ref{Dkdef}) so that $D_k = E_k F_k.$ Thus the $c_k$ may be reparametrized writing
\begin{equation} c_k = x_k E_k + y_k F_k \quad \text{with $x_k$ (mod $F_k$) and $y_k$ (mod $E_k$)}. \label{reparam} \end{equation}
Recall that $d_k || D_k$ and hence by definition $e_k || E_k$ and $f_k || F_k$, so that the residue symbols appearing in $H(\boldsymbol{d}; \boldsymbol{t})$ may be rewritten:
$$ \left( \frac{c_k}{d_k} \right) = \left( \frac{x_k E_k + y_k F_k}{e_k f_k} \right) = \left( \frac{x_k E_k}{f_k} \right) \left( \frac{y_k F_k}{e_k} \right) = \left[ \left( \frac{E_k}{f_k} \right) \left( \frac{F_k}{e_k} \right) \right] \left( \frac{x_k}{f_k} \right) \left( \frac{y_k}{e_k} \right).  $$
Thus the twisted multiplicativity in (\ref{dtwistedmult}) will follow if the additive character $ \psi \left( \sum_{j=1}^r t_j v_j \right) $ appearing in $H$ factors neatly into a product of characters for any choice of $c_k$'s in the sum $H$. There are two cases to consider. 

\medskip

\noindent {\bf Case 1:} If $j$ is the index of the omitted simple root, then $v_j$ is simply $c_N / d_N$. And
$$ \psi \left( t_j \frac{c_N}{d_N} \right) = \psi \left( t_j \frac{x_N E_N + y_N F_N}{e_N f_N} \right) = \psi \left( t_j \frac{x_N}{f_N} \right) \psi \left( t_j \frac{y_N}{e_N} \right), $$
where we have used that $E_N = e_N$ and $F_N = f_N$ in the last equality, as the products in the definition of $E_N$ and $F_N$ are empty.

\medskip

\noindent {\bf Case 2:} If $j$ is not the index of the omitted simple root, recall that $v_j$ is as in (\ref{unipelt}) and each summand in $v_j$ is (up to an inconsequential integral structure constant $\pm \eta$) of form $b_k^i c_{k'}^{i'} / D(k,k';\alpha_j)$ for a pair of indices $(k,k') \in \mathcal{S}_j$. Moreover since $P$ is cominuscule, by Proposition~\ref{isareone}, $i = i' =1$ for each pair $(k,k') \in \mathcal{S}_j$. In $H(\boldsymbol{d}; \boldsymbol{t})$, the $c_k$ are then summed mod $D_k$ and the $b_k$ are multiplicative inverses of the $c_k$ (mod $d_k$). Thus to prove the theorem, it suffices to show for each $j$ and each pair $(k, k') \in \mathcal{S}_j$:
\begin{equation} \psi \left( t_j \frac{b_k c_{k'}}{D(k,k';\alpha_j)} \right) = \psi \left( t_j \frac{\bar{x}_k x_{k'}}{F(k,k';\alpha_j)} \right) \psi \left( t_j \frac{\bar{y}_k y_{k'}}{E(k,k';\alpha_j)} \right), \label{psifactor} \end{equation}
where $c_k$ is parametrized as in (\ref{reparam}), $\bar{x}_k$ and $\bar{y}_k$ are inverses of the $x_k$ and $y_k$ mod $f_k$ and $e_k$, respectively, and $E(k,k';\alpha_j)$ and $F(k,k';\alpha_j)$ are defined analogously to $D(k,k';\alpha_j)$ with $d_j$'s replaced by $e_j$'s and $f_j$'s, respectively. 

To prove (\ref{psifactor}), expand:
$$ \frac{b_k c_{k'}}{D(k,k';\alpha_j)} = \frac{b_k (x_{k'} E_{k'} + y_{k'} F_{k'})}{E(k,k';\alpha_j) F(k,k';\alpha_j)} =  \frac{b_k x_{k'} E_k}{F(k,k';\alpha_j)} + \frac{b_k y_{k'} F_k}{E(k,k';\alpha_j)}, $$
where we have used Lemma~\ref{twistedmultlemma} with $(i, i') = (1,1)$ in the second equality. As $b_k$ satisfies $b_k x_k E_k \equiv 1$ (mod $f_k$), then we may rewrite
$$ \frac{b_k x_{k'} E_k}{F(k,k';\alpha_j)} = \frac{\bar{x}_k x_{k'}}{F(k,k';\alpha_j)}, $$
so the rational expression involving $x_{k'}$ gives one of the desired factors on the right-hand side of (\ref{psifactor}). Similarly, the expression involving $y_{k'}$ gives the other factor. 
\end{proof}

These two twisted multiplicativity properties of the exponential sum $H(\boldsymbol{d}; \boldsymbol{t})$ appearing in (\ref{mdsform}) imply 
that it suffices to determine $H$ when the components of $\boldsymbol{d}$ and $\boldsymbol{t}$ are powers of any fixed prime $p \in \mathfrak{o}_S.$ 
Given such a prime and non-negative integer tuples $\boldsymbol{\ell} = (\ell_1, \ldots, \ell_N)$ and $\boldsymbol{m} = (m_1, \ldots, m_r)$, define
\begin{equation} S_{\boldsymbol{\ell},\boldsymbol{m}} := H(p^{\ell_1}, \ldots, p^{\ell_N}; p^{m_1}, \ldots, p^{m_r}). \label{slmgeneral} \end{equation}

Before making general remarks about the sums $S_{\boldsymbol{\ell},\boldsymbol{m}}$ in Sections~\ref{canbases} and~\ref{genericevalsection}, we discuss an example to be used throughout the remainder of the paper.

\section{An example in $\widetilde{GL}_4$}

We now write the exponential sum explicitly in one of the first non-minimal cases: $\widetilde{G} = \widetilde{GL}_4$ and $P = MU$ with $M = GL_2 \times GL_2$. The metaplectic cover $\widetilde{G}$ is chosen to satisfy the conditions of Theorem~\ref{localmet} with $Q(\alpha^\vee) = 1$ for all roots $\alpha$. This choice does not uniquely determine $\widetilde{G}$ (see for example Chapter~0 of~\cite{kazhdan-patterson}) but the resulting exponential sum is independent of this choice. Lastly, the structure constants $\eta_{\alpha, \beta; i,j}$ in (\ref{eecommutation}) used for the realization of $GL_4$ may be taken to be $\pm 1$, as usual. The example presented here will be used repeatedly to illustrate more general phenomena discussed in the subsequent sections.

In order to apply the group decomposition theorems from Section~\ref{sectionMDT}, we choose a reduced expression for $w_0$ respecting the factorization $w_0 = w_M w^P$, where $w_M$ is the long element for the Levi subgroup $M$. Thus we may take $w_0 = s_1 s_3 s_2 s_1 s_3 s_2$ with $w^P = s_2 s_1 s_3 s_2$.
This gives rise to the ordering of positive roots in $U^P$ which are indexed according to~(\ref{phip}) by
$$ (\gamma_1, \gamma_2, \gamma_3, \gamma_4) = (\alpha_1 + \alpha_2 + \alpha_3, \alpha_2 + \alpha_3, \alpha_1 + \alpha_2, \alpha_2), $$
where, as usual, $\alpha_j$ denotes the simple positive root at position $(j, j+1)$ in $GL_4$.

Given $t_1,t_2,t_3\in \o_S$ nonzero, the Whittaker coefficient in (\ref{mdsform}) has the form
$$ \mathcal{W}_{f_1,f_2,s}(1) \sum_{\substack{d_1, d_2, d_3, d_4 \in \mathfrak{o}_S / \mathfrak{o}_S^\times \\ d_j \ne 0}} (d_2, d_1)_S (d_2 d_4, d_3)_S H(\boldsymbol{d}; \boldsymbol{t}) \Psi(\mathfrak{D}) c_{f_1, f_2}^{\psi_{\boldsymbol{t}}}(\mathfrak{D})  | d_1 d_2 d_3 d_4 |^{-(1+2s)}. $$
Here 
\begin{equation} H(\boldsymbol{d}; \boldsymbol{t}) = \sum_{c_1, c_2, c_3, c_4} \left[ \prod_{k=1}^4 \left( \frac{c_k}{d_k} \right) \right] \psi\left(-t_1 \left(\frac{b_2c_1d_4}{d_1 d_3}+\frac{b_4c_3}{d_3}\right)
+t_2 \frac{c_4}{d_4}+ t_3 \left(\frac{c_1b_3d_4}{d_1 d_2}+\frac{c_2b_4}{d_2}\right) \right) \label{specialized} 
 \end{equation}
with the sum over
$$ c_1 \; (\bmod{~d_1 d_2 d_3}), ~c_2 \; (\bmod{~d_2 d_4}), ~c_3 \; (\bmod{~d_3 d_4}), ~c_4 \; (\bmod{~d_4}), $$
and the $b_i$ are multiplicative inverses of $c_i$ for their respective moduli.
This sum arises only when the divisibility conditions of Lemma~\ref{whitdivis} are satisfied; in this case they are
\begin{equation}\label{divisibility}d_1d_3\mid t_1 d_2 d_4,\quad d_1d_2\mid t_3 d_3 d_4.
\end{equation}
We assume them henceforth.

The sum satisfies the twisted multiplicativity conditions of the previous section and so it suffices to compute the value of $H(\boldsymbol{d};\boldsymbol{t})$ when the parameters $d_i$ and $t_i$
are powers of a fixed prime $p$ in $\mathfrak{o}_S$. Let $q$ denote the cardinality of $\mathfrak{o}_S / p \mathfrak{o}_S$.

Let $\boldsymbol{\ell}=(\ell_1,\ell_2,\ell_3,\ell_4)$ and $\boldsymbol{m}=(m_1,m_2,m_3)$ be vectors of non-negative
integers. Then in the notation of (\ref{slmgeneral}), $S_{\boldsymbol{\ell},\boldsymbol{m}} = H((p^{\ell_1}, p^{\ell_2}, p^{\ell_3}, p^{\ell_4}); (p^{m_1}, p^{m_2}, p^{m_3}))$
where
\begin{multline}
S_{\boldsymbol{\ell},\boldsymbol{m}}:= q^{2\ell_4} \sum \left(\frac{c_1}{p^{\ell_1}}\right)\left(\frac{c_2}{p^{\ell_2}}\right)\left(\frac{c_3}{p^{\ell_3}}\right)
\left(\frac{c_4}{p^{\ell_4}}\right)\\
\psi\left(-p^{m_1}\left(\frac{b_2c_1p^{\ell_4}}{p^{\ell_1+\ell_3}}+\frac{b_4c_3}{p^{\ell_3}}\right)
+p^{m_2}\frac{c_4}{p^{\ell_4}}+p^{m_3}\left(\frac{c_1b_3p^{\ell_4}}{p^{\ell_1+\ell_2}}+\frac{c_2b_4}{p^{\ell_2}}\right)
\right). \label{slmdefined}
\end{multline}
Here we sum over $c_i$ modulo $p^{\ell_i}$ for $i=2,3,4$ and $c_1$ modulo $p^{\ell_1+\ell_2+\ell_3}$, 
with $c_i$ prime to $p$ if $\ell_i>0$ and no such condition if $\ell_i=0$; $b_i$  
satisfies the congruence $b_ic_i\equiv 1\pmod {p^{\ell_i}}$ for $i=2,3,4$.
The divisibility conditions become
\begin{align}
\ell_1+\ell_3&\leq m_1+\ell_2+\ell_4\label{div1}\\
\ell_1+\ell_2&\leq m_3+\ell_3+\ell_4.\label{div2}
\end{align}

Strictly speaking, (\ref{specialized}) has $c_j$ modulo $p^{\ell_j+\ell_4}$ for $j=2,3$ but using the above 
divisibility conditions we see that the sum depends only on $c_j$ modulo $p^{\ell_j}$
for $j=2,3$, so the support is unchanged by viewing $c_j$ modulo $p^{\ell_j}$.
Moreover, one could begin with the original sum with $c_j$ modulo $p^{\ell_j+\ell_4}$ for $j=2,3$
and observe that the sum is zero unless
the divisibility conditions hold.  For example, if $\ell_4>0$ then
changing $c_2$ to $c_2+ap^{\ell_2}$ where $a$ runs
modulo $p^{\ell_4}$ shows that the sum is zero unless
$\ell_1+\ell_3\leq m_1+\ell_2+\ell_4$.  Similarly, changing $c_3$ to $c_3+ap^{\ell_3}$ gives
the divisibility condition $\ell_1+\ell_2\leq m_3+\ell_3+\ell_4$.  If instead $\ell_4=0$ then we may take $c_4=b_4=0$
and the desired inequalities are easy to obtain.

\section{Canonical bases and the exponential sum \label{canbases}}

We return to the case of arbitrary (i.e.\ not necessarily cominuscule) parabolics $P$ and a reduced word $\boldsymbol{i}^P$ for $w^P$. Recall that for a fixed prime $p \in \mathfrak{o}_S$, in (\ref{slmgeneral}) we set
$$ S_{\boldsymbol{\ell},\boldsymbol{m}} := H(p^{\ell_1}, \ldots, p^{\ell_N}; p^{m_1}, \ldots, p^{m_r}), $$
with $H$ the exponential sum appearing in the Whittaker coefficient as in~(\ref{mdsform}).
Two basic questions immediately arise:
\begin{enumerate}
\item For which $\boldsymbol{\ell} \in \mathbb{Z}_{\geq 0}^N$ is $S_{\boldsymbol{\ell}, \boldsymbol{m}} \ne 0$?
\item Can we give an evaluation of $S_{\boldsymbol{\ell}, \boldsymbol{m}}$,
for any choice of $\boldsymbol{\ell}$ and $\boldsymbol{m}$, in terms of representation theoretic data on the Langlands dual group (independent of $p$)?
\end{enumerate}

In Section~\ref{connectthree}, the integers $\boldsymbol{\ell}$ are connected to the $\boldsymbol{i}^P$-Lusztig data for canonical basis elements on the dual group $G^\vee$ corresponding the choice of reduced word $\boldsymbol{i}^P$. (This terminology is recalled in Section~\ref{ilusztig}.) With this connection, answers to both of these questions  may be formulated in terms of representation theory and canonical bases for the dual group. Roughly, we expect the answer to the first question is that the set of $\boldsymbol{\ell}$'s for which $S_{\boldsymbol{\ell}, \boldsymbol{m}} \ne 0$ is {\it almost}
the set of Lusztig data for the highest weight representation of $G^\vee(\mathbb{C})$ of highest weight $\boldsymbol{m} + \rho$, identifying integer $r$-tuples $\boldsymbol{m}$ with elements of the weight lattice, with $\rho$ the Weyl vector of the dual group $G^\vee$. Here ``almost'' means that the statement is true up to a set of $\boldsymbol{\ell}$'s lying on a hyperplane of strictly smaller dimension than $N$. 
For the second question, we expect the general answer is expressed using the other most important parametrization of canonical bases -- Kashiwara's string data (whose definition is reviewed in Section~\ref{stringdata}). Indeed, for almost all $\boldsymbol{\ell}$, we expect that $S_{\boldsymbol{\ell}, \boldsymbol{m}}$ may be expressed as products of $N$ $n$-th order Gauss sums whose moduli are given by the Kashiwara string data of the corresponding canonical basis elements for the $G^\vee$ module with highest weight $\boldsymbol{m}+\rho$. Again ``almost all'' means that modifications must be made at certain degenerate points of the string polytope. Precise formulations of these results are given in the subsequent sections of the paper.

\subsection{The $\boldsymbol{i}$-Lusztig data\label{ilusztig}}

Let $\mathcal{U} := \mathcal{U}_q(\mathfrak{g})$ be the quantized universal enveloping algebra associated to a semisimple Lie algebra $\mathfrak{g}$ by Drinfeld and Jimbo. The algebra $\mathcal{U}$ has a presentation in terms of generators $E_j$, $K_j^{\pm}$, and $F_j$ for $j = 1, \ldots, r$ where $r$ is the rank of $\mathfrak{g}$. See for example~\cite{lusztig-book}, Section~3.1.1, (a)--(d), for their relations.
Let $\mathcal{U}^+$ denote the subalgebra generated by the $E_j$.

Lusztig describes bases for $\mathcal{U}^+$ in terms of certain algebra automorphisms $T_j$ on $\mathcal{U}_q(\mathfrak{g})$ for $j = 1, \ldots, r$. Their precise definition is given in \cite{lusztig-book}, Section~5.2 and Chapter~37, where they are called $T_{j, -1}'$. The $T_j$ satisfy the braid relations, so extend to an action of the braid group of $\mathcal{U}$. Moreover, if $s_{i_1} \cdots s_{i_{k}}$ is a reduced expression in $W$, the Weyl group of $\mathfrak{g}$, satisfying
$$ s_{i_1} \cdots s_{i_{k-1}}(\alpha_{i_k}) = \alpha_j, \qquad \alpha_j \; : \; \text{simple root}, $$
then
$$ T_{i_1} T_{i_2} \cdots T_{i_{k-1}} (E_{i_k}) = E_j. $$

Let $\boldsymbol{i} = (i_1, \ldots, i_N)$ be a word such that $s_{i_1} \cdots s_{i_N}$ is a reduced expression for the long element $w_0 \in W$. To each such $\boldsymbol{i}$, Lusztig associates a $\mathbb{C}(q)$-basis $\mathcal{B}_{\boldsymbol{i}}$ of $\mathcal{U}^+$ consisting of elements
\begin{equation} \left\{ p_{\boldsymbol{i}}^{\mathbf{c}} := E_{i_1}^{(c_1)} T_{i_1}(E_{i_2}^{(c_2)}) \cdots (T_{i_1} T_{i_2} \cdots {T}_{i_{N-1}})(E_{i_N}^{(c_N)}) \; \left| \; \mathbf{c} \in \mathbb{Z}_{\geq 0}^N \right. \right\}. \label{lbasiselt} \end{equation}
See Chapter~40 of \cite{lusztig-book} for a proof that these elements lie in $\mathcal{U}^+$ and form a basis.

One means of describing the canonical basis $\mathcal{B}$ of $\mathcal{U}^+$ is via a graph structure on the set of
pairs $(\boldsymbol{i}, \mathbf{c})$ as $\boldsymbol{i}$ ranges over all reduced decompositions of the long element $w_0$
and $\mathbf{c}$ runs over $\mathbb{Z}_{\geq 0}^N$ corresponding to elements of $\mathcal{B}_{\boldsymbol{i}}$ as in (\ref{lbasiselt}).
This is done in two steps. First, a preliminary graph structure may be placed on the set of all reduced decompositions. The words $\boldsymbol{i}$ and $\boldsymbol{i}'$ are joined by an edge if the two words differ by a single application of the braid relation $s_i s_j \ldots = s_j s_i \ldots$.

For any pair of words $\boldsymbol{i}$ and $\boldsymbol{i}'$ joined in this graph, define a map $R_{\boldsymbol{i}}^{\boldsymbol{i}'} : \mathbb{N}^N \rightarrow \mathbb{N}^N$ taking $\mathbf{c} \mapsto \mathbf{c'}$ according to the braid relation between $\boldsymbol{i}$ and $\boldsymbol{i}'$. If $s_i s_j = s_j s_i$ is the required braid relation and $(a,b)$ and $(a',b')$ are the respective consecutive entries of $\mathbf{c}$ and $\mathbf{c}'$ at which $\boldsymbol{i}$ and $\boldsymbol{i}'$ differ, then $R_{\boldsymbol{i}}^{\boldsymbol{i}'}$ restricted to these entries is
 \begin{equation} (a',b') = (b,a). \label{lusztigcommutes} \end{equation} 
If instead $s_i s_j s_i = s_j s_i s_j$ is the required relation, let $(a,b,c)$ and $(a',b',c')$ be the respective consecutive entries of $\mathbf{c}$ and $\mathbf{c}'$ at which $\boldsymbol{i}$ and $\boldsymbol{i}'$ differ. Then $R_{\boldsymbol{i}}^{\boldsymbol{i}'}$ restricted to these entries is
\begin{equation} (a',b',c') = (b+c - \min(a,c), \min(a,c), a+b-\min(a,c)). \label{lusztigbraid} \end{equation}
For the maps $R_{\boldsymbol{i}}^{\boldsymbol{i}'}$ corresponding to braid relations of type $B_2$ and $G_2$, see Section~3 of \cite{b-z-totpos} or Section~7 of \cite{mcnamara-duke}. For each, one can check that $R_{\boldsymbol{i}}^{\boldsymbol{i}'}$ is a bijection with inverse $R_{\boldsymbol{i}'}^{\boldsymbol{i}}$. The final graph structure is given by connecting $(\boldsymbol{i}, \mathbf{c})$ to $(\boldsymbol{i}', \mathbf{c}')$ by an edge if $\boldsymbol{i}$ and $\boldsymbol{i}'$ are adjacent in the preliminary graph, and $R_{\boldsymbol{i}}^{\boldsymbol{i}'}(\mathbf{c}) = \mathbf{c}'$. Let $\boldsymbol{X}$ denote the set of connected components of the resulting graph.

\begin{theorem}[Lusztig, \cite{lusztig-book}, Ch. 42] The connected components $\boldsymbol{X}$ are in canonical bijection with $\mathcal{B}$. Moreover, for any $\boldsymbol{i}$, the map from $\mathbb{Z}_{\geq 0}^N$ to $\boldsymbol{X}$ sending elements $\mathbf{c}$ to the connected component of $(\boldsymbol{i}, \mathbf{c})$ is a bijection.
\end{theorem}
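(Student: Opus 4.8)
The plan is to deduce the theorem from the word-independence of a single integral lattice, following the strategy of \cite{lusztig-book}. Let $\mathcal{A} \subset \mathbb{Q}(q)$ be the subring of rational functions regular at $q=0$, and for each reduced word $\boldsymbol{i}$ of $w_0$ set $\mathcal{L}_{\boldsymbol{i}} := \sum_{\mathbf{c} \in \mathbb{Z}_{\geq 0}^N} \mathcal{A}\, p_{\boldsymbol{i}}^{\mathbf{c}} \subseteq \mathcal{U}^+$. Since each $\mathcal{B}_{\boldsymbol{i}}$ is a $\mathbb{C}(q)$-basis of $\mathcal{U}^+$ (the cited result of \cite[Ch.~40]{lusztig-book}), $\mathcal{L}_{\boldsymbol{i}}$ is free over $\mathcal{A}$ on $\mathcal{B}_{\boldsymbol{i}}$, and the images $b_{\boldsymbol{i}}^{\mathbf{c}}$ of the $p_{\boldsymbol{i}}^{\mathbf{c}}$ form a basis of $\mathcal{L}_{\boldsymbol{i}}/q\mathcal{L}_{\boldsymbol{i}}$. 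The two assertions of the theorem will both follow from two claims: \textbf{(I)} $\mathcal{L}_{\boldsymbol{i}}$ is independent of $\boldsymbol{i}$, equal to a fixed lattice $\mathcal{L}$; and \textbf{(II)} whenever $\boldsymbol{i}, \boldsymbol{i}'$ are joined by a single braid move, $p_{\boldsymbol{i}}^{\mathbf{c}} \equiv p_{\boldsymbol{i}'}^{R_{\boldsymbol{i}}^{\boldsymbol{i}'}(\mathbf{c})} \pmod{q\mathcal{L}}$, so that $b_{\boldsymbol{i}}^{\mathbf{c}} = b_{\boldsymbol{i}'}^{R_{\boldsymbol{i}}^{\boldsymbol{i}'}(\mathbf{c})}$ in $\mathcal{L}/q\mathcal{L}$.

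Both claims are local. A braid move $s_i s_j \cdots = s_j s_i \cdots$ changes only the two or three consecutive entries of $\boldsymbol{i}$ corresponding to a rank-two subsystem, and by applying the appropriate composite of the braid operators $T_k$ one may conjugate so that $p_{\boldsymbol{i}}^{\mathbf{c}}$ and $p_{\boldsymbol{i}'}^{\mathbf{c}'}$ differ only through a factor built from divided powers in the rank-two parabolic subalgebra generated by $E_i, E_j$. Thus I would reduce to an explicit computation in types $A_1 \times A_1$ (where the claim is the transposition \eqref{lusztigcommutes}), $A_2$ (which should yield \eqref{lusztigbraid}), and the corresponding $B_2$ and $G_2$ identities. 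In each case one expands the relevant $T$-twisted divided powers $E^{(a)}\,T_i(E^{(b)})\cdots$ on both sides using the quantum Serre relations and the explicit formulas for the $T_i$, checks that the resulting transition matrix has all entries in $\mathcal{A}$, and checks that, reduced mod $q$, the unique entry equal to $1$ is the one indexed by $R_{\boldsymbol{i}}^{\boldsymbol{i}'}(\mathbf{c})$ while every other entry lies in $q\mathcal{A}$; the $\min$ in \eqref{lusztigbraid} should drop out of the leading $q$-power in the relevant $q$-binomial identities.

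Granting (I) and (II): $\mathcal{L}/q\mathcal{L}$ carries a distinguished basis whose labels are, by (II), precisely the connected components of the graph on the pairs $(\boldsymbol{i}, \mathbf{c})$, giving a bijection $\boldsymbol{X} \to (\text{this basis})$ once one also knows the $R_{\boldsymbol{i}}^{\boldsymbol{i}'}$ are mutually inverse and cohere along the relations among braid moves, so that no two distinct vertices sharing a word lie in one component — but the latter is automatic since the $b_{\boldsymbol{i}}^{\mathbf{c}}$ for fixed $\boldsymbol{i}$ are linearly independent. The same input gives, for each fixed $\boldsymbol{i}$, that $\mathbf{c} \mapsto [(\boldsymbol{i},\mathbf{c})]$ is injective (distinct $\mathbf{c}$ yield distinct $b_{\boldsymbol{i}}^{\mathbf{c}}$, hence distinct components) and surjective (each component corresponds to a basis vector of $\mathcal{L}/q\mathcal{L}$, which equals $b_{\boldsymbol{i}}^{\mathbf{c}}$ for a unique $\mathbf{c}$ since $\mathcal{B}_{\boldsymbol{i}}$ spans). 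It then remains to identify this basis of $\mathcal{L}/q\mathcal{L}$ with $\mathcal{B}$: one checks $\mathcal{L}$ is stable under the (suitably normalized) bar involution, and invokes the standard lemma on bar-invariant $\mathcal{A}$-lattices (see \cite{lusztig-book}) to get a unique $\mathcal{A}$-basis of $\mathcal{L}$ of bar-invariant elements congruent mod $q\mathcal{L}$ to the $b_{\boldsymbol{i}}^{\mathbf{c}}$; this is $\mathcal{B}$, and reduction mod $q\mathcal{L}$ is the asserted bijection $\mathcal{B} \xrightarrow{\sim} \boldsymbol{X}$.

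The main obstacle is step (II) in the exceptional rank-two types: the $B_2$ and especially $G_2$ transition identities demand careful control of quantum binomial coefficients and their $q \to 0$ limits, and it is here that the piecewise-linear formulas for $R_{\boldsymbol{i}}^{\boldsymbol{i}'}$ (beyond \eqref{lusztigcommutes}--\eqref{lusztigbraid}) must be pinned down. Alongside this one must verify that the maps $R_{\boldsymbol{i}}^{\boldsymbol{i}'}$ satisfy the higher coherence relations forced by the relations among braid moves — the combinatorial analogue of Tits' theorem for the braid graph — so that passing between two reduced words of $w_0$ along different chains of braid moves induces the same map on the $\mathbf{c}$'s. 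This piecewise-linear coherence, carried through the $q \to 0$ specialization, is what makes $\boldsymbol{X}$ a well-defined model; the rest is formal once $\mathcal{B}_{\boldsymbol{i}}$ is known to be a basis.
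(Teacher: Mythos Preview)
The paper does not prove this theorem; it is quoted as Lusztig's result with the citation \cite[Ch.~42]{lusztig-book} and used as input for the subsequent discussion of Lusztig data and MV polytopes. There is therefore no ``paper's own proof'' to compare against.

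Your sketch follows the broad strategy of Lusztig's book and is on the right track: the word-independence of the PBW lattice and the rank-two reduction for the transition maps are the correct ingredients. One point to tighten: the lattice $\mathcal{L}$ over $\mathcal{A}$ (functions regular at $q=0$) is \emph{not} bar-stable, since the bar involution sends $q \mapsto q^{-1}$ and hence carries $\mathcal{A}$ to functions regular at $q=\infty$. The correct characterization is that each canonical basis element is the unique bar-invariant element of $\mathcal{U}^+$ congruent to $p_{\boldsymbol{i}}^{\mathbf{c}}$ modulo $q\mathcal{L}$; the existence/uniqueness argument uses that the transition matrix from $\{\overline{p_{\boldsymbol{i}}^{\mathbf{c}}}\}$ to $\{p_{\boldsymbol{i}}^{\mathbf{c}}\}$ is unitriangular over $\mathbb{Z}[q,q^{-1}]$ for a suitable partial order, not bar-stability of $\mathcal{L}$ itself. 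With that correction, your outline matches Lusztig's.
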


The vectors $\mathbf{c}$ appearing in the above theorem are referred to as the ``$\boldsymbol{i}$-Lusztig data'' for $\mathcal{U}^+$. Let $\mathcal{B}^\vee$ be the canonical basis with respect to the upper triangular part of the quantized universal enveloping algebra of $G^\vee$, the dual group of $G$.

\subsection{Lusztig data, MV polytopes, and $S_{\boldsymbol{\ell}, \boldsymbol{m}}$\label{connectthree}}

In this section, we explain the connection between the $\boldsymbol{i}$-Lusztig data and the valuations $\ell_i$ of the $d_i$ appearing in $H(\boldsymbol{d}; \boldsymbol{t})$ of (\ref{mdsform}), using the theory of MV polytopes. Throughout this section, let $N$ continue to denote the length of the reduced word $\boldsymbol{i}$. Our first goal is to define terms in and explain consequences of the following result.

\begin{theorem}[Kamnitzer, \cite{kamnitzer} Thms. 7.1 and 7.2] There is a coweight preserving bijection between the set of stable MV polytopes and the canonical basis $\mathcal{B}^\vee$. Under this bijection, the $\boldsymbol{i}$-Lusztig data of $b$ in $\mathcal{B}^\vee$ is equal to the integer $N$-tuple of edge lengths $n_\bullet$ in the one-skeleton of the MV polytope at edges corresponding to $\boldsymbol{i}$. \label{kamtheorem}
\end{theorem}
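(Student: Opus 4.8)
The plan is to follow Kamnitzer's two–step strategy: first produce the bijection of part (1) abstractly via the geometric Satake correspondence, and then match the combinatorics of edge lengths to Lusztig data by a rank–two computation. Recall that a stable MV polytope is a pseudo-Weyl polytope $P$ whose vertices $(\mu_w)_{w \in W}$ (its GGMS data) satisfy the tropical Pl\"ucker, or Berenstein–Zelevinsky, relations; equivalently, for every reduced word $\boldsymbol{i}$ one may read off along the edge path prescribed by $\boldsymbol{i}$ a nonnegative integer $N$-tuple $n_\bullet^{\boldsymbol{i}}(P)$ of edge lengths, and the collection of these tuples is the BZ datum of $P$. First I would recall, following work of Anderson and Kamnitzer, that the MV cycles in the affine Grassmannian have moment polytopes which are exactly the stable MV polytopes, and that for a dominant coweight $\lambda$ the MV cycles inside $\mathrm{Gr}^\lambda$ index a basis of the weight spaces of the irreducible $G^\vee$-module $V(\lambda)$; passing to the limit $\lambda \to \infty$ yields a bijection between stable MV polytopes and the canonical basis $\mathcal{B}^\vee$. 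That this bijection is coweight preserving is then immediate from the moment map description: the lowest vertex $\mu_e$ and highest vertex $\mu_{w_0}$ of the polytope attached to $b$ record the weight of $b$.

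For part (2), the crux is purely combinatorial: one must show that the assignment $P \mapsto n_\bullet^{\boldsymbol{i}}(P)$ changes with the reduced word exactly as Lusztig data does. Concretely, if $\boldsymbol{i}$ and $\boldsymbol{i}'$ differ by a single braid move then $n_\bullet^{\boldsymbol{i}'}(P) = R_{\boldsymbol{i}}^{\boldsymbol{i}'}\bigl(n_\bullet^{\boldsymbol{i}}(P)\bigr)$, where $R_{\boldsymbol{i}}^{\boldsymbol{i}'}$ is the piecewise-linear map recalled in \eqref{lusztigcommutes}--\eqref{lusztigbraid}. This is where the tropical Pl\"ucker relations are used decisively: restricting to the rank-two sub–root system involved in the braid move, the BZ relations force min–plus identities among the four (respectively six) relevant vertices, and a direct unwinding of these identities shows that the edge lengths transform by precisely Lusztig's formula — in the braid case the $\min(a,c)$ appearing in \eqref{lusztigbraid} is exactly the tropicalization of the relevant Pl\"ucker relation. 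Granting this compatibility, the map $P \mapsto (\boldsymbol{i}, n_\bullet^{\boldsymbol{i}}(P))$ lands in a single connected component of Lusztig's graph $\boldsymbol{X}$, hence determines an element of $\mathcal{B}$ by the theorem of Lusztig quoted above; transporting this to $\mathcal{B}^\vee$ and checking agreement with the bijection of part (1) on, say, the extremal (chamber) elements, together with the fact that for fixed $\boldsymbol{i}$ both sides are bijective with $\mathbb{Z}_{\geq 0}^N$, identifies the two and proves the edge-length statement.

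The main obstacle is the rank-two braid-move verification that the edge-length tuples obey Lusztig's piecewise-linear recursion; everything else is bookkeeping around geometric Satake and the moment map, together with Lusztig's classification of the components of $\boldsymbol{X}$. For the relative, parabolic version needed here — where $\boldsymbol{i}^P$ is the portion of the word corresponding to $w^P$ — one runs the identical argument inside the Levi $M$, using MV cycles/polytopes for $M$, and then notes, as observed in the main text, that the valuation combinatorics of the $d_i$ is insensitive to the base field, so the affine-Grassmannian picture and the $\mathfrak{o}_S$-picture of Section~\ref{sectionMDT} produce the same polytopes and hence the same edge lengths $\ell_i = n_i$.
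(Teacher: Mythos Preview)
This theorem is not proved in the paper: it is quoted verbatim as Kamnitzer's Theorems~7.1 and~7.2, with a citation, and the surrounding text only \emph{explains} the objects appearing in the statement (GGMS strata, pseudo-Weyl polytopes, edge lengths, $A^{\boldsymbol{i}}(n_\bullet)$) so that it can be applied in Theorem~\ref{matchldata}. There is therefore no ``paper's own proof'' to compare against.

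Your sketch is a reasonable outline of Kamnitzer's actual argument, and nothing in it is wrong at the level of a sketch: produce the bijection via MV cycles and geometric Satake, then verify the rank-two braid compatibility so that edge-length tuples sit in a single component of Lusztig's graph $\boldsymbol{X}$. Two small comments. First, the coweight-preserving claim is about the difference $\mu_{w_0}-\mu_e$ (the coweight of the stable polytope, well-defined up to translation), not the individual vertices; you phrased this slightly loosely. Second, your description of the parabolic/relative version --- ``run the identical argument inside the Levi $M$'' --- is not quite how the paper uses the result: the paper does not redo Kamnitzer inside $M$, but rather restricts attention to the portion of the one-skeleton indexed by $\boldsymbol{i}^P$ inside the full polytope for $G^\vee$, and then observes (this is the content of Section~\ref{connectthree} leading to Theorem~\ref{matchldata}) that the algorithm computing those edge lengths in the affine Grassmannian is formally identical to the algorithm computing $\mathrm{ord}_p(d_i)$ in Section~\ref{sectionMDT}. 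So the ``relative'' step is a restriction of the full statement rather than a separate Levi computation.
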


To expand on this recall that in Section~4 of \cite{kamnitzer}, to a set of positive integers $n_\bullet$, the stable MV polytopes are ($Y$-orbits of Zariski closures of) certain subsets $A^{\boldsymbol{i}}(n_\bullet)$ of the affine Grassmannian $\mathcal{G} := G(\mathbb{C}[[t]]) \backslash G(\mathbb{C}((t))).$ Thus the aim of this section is twofold:
\begin{enumerate}
\item Define the sets $A^{\boldsymbol{i}}(n_\bullet)$ presented in \cite{kamnitzer}. For simplicity, we define them for reduced words $\boldsymbol{i}$ for the long element $w_0$ rather than $w^P$ for a maximal parabolic $P$. It is straightforward to adapt this to a relative version whenever the word $\boldsymbol{i}$ is compatible with the factorization $w_0 = w_M w^P$. 
\item Show that the computation of $n_\bullet$ corresponding to a unipotent element $u$ in $\mathcal{G}$ is formally identical to the computation of the valuations of the $d_i$ corresponding to a point $u$ in the big cell of the flag variety $P(\mathfrak{o}_S) \backslash G(\mathfrak{o}_S)$ in Corollary~\ref{corgencase}.
\end{enumerate}
In proving Statement (2), we must relate an Iwasawa decomposition for an element in $\mathcal{G}$ to a Bruhat decomposition for elements in the flag variety in Section~\ref{sectionMDT}; nevertheless, as we will show, the Steinberg commutation relations required in each are identical, and hence the result will follow. In view of  Theorem~\ref{kamtheorem}, Statement (2) then implies that the valuations of the $d_i$ corresponding to a reduced word $\boldsymbol{i}$ are equal to the $\boldsymbol{i}$-Lusztig data (see Theorem~\ref{matchldata} for the precise statement).

We now begin a systematic explanation of the terms in Theorem~\ref{kamtheorem}. The GGMS stratification of the affine Grassmannian $\mathcal{G}$ is given by the intersections of semi-infinite cells
$$ A(\lambda_\bullet) := \bigcap_{w \in W} S_w^{\lambda_w} \quad \text{where} \; S_w^\mu := t^\mu w U w^{-1}, $$
amd $\lambda_{\bullet} = (\lambda_w)_{w \in W}$ ranges over all $N$-tuples of coweights. The intersection is empty
unless $w^{-1} \lambda_v \geq w^{-1} \lambda_w$ for all $v, w \in W$. Here $\mu \geq \nu $ means that $\mu - \nu$ is
a non-negative linear combination of fundamental weights. See Section~5 of \cite{anderson-kogan} or 
Section~2.4 of \cite{kamnitzer} for details.

To any such collection of coweights $\lambda_\bullet$ satisfying the inequalities, define the corresponding pseudo-Weyl
polytope $P(\lambda_\bullet)$ to be the intersection of cones:
$$ P(\lambda_\bullet) := \bigcap_{w \in W} C_w^{\lambda_w}, \quad \text{where} \quad C_w^{\lambda_w} := \{ \alpha \in \mathfrak{t}_\mathbb{R} \; : \; w^{-1} (\alpha) \geq w^{-1} (\lambda_w) \}. $$
Here $\mathfrak{t}_\mathbb{R}$ denotes the real points of the Cartan subalgebra of $\mathfrak{g}$.
Further define the ``length'' of edges in the one-skeleton of $P(\lambda_\bullet)$ as follows. Any adjacent vertices in the polytope have Weyl group elements which differ by a simple reflection $s_i$. Then the length $n$ is given by
\begin{equation} \lambda_{w s_i} - \lambda_w = n \cdot w(\alpha_i^\vee). \label{lengthofedge} \end{equation}
Thus given a point $L$ in the affine Grassmannian $\mathcal{G}$, one may first determine the stratum $A(\lambda_\bullet)$ to which it belongs, compute the polytope $P(L) := P(\lambda_\bullet)$ corresponding to $L$ and extract the lengths $n_\bullet$ of the one-skeleton. Given a reduced word $\boldsymbol{i} = i_1 \cdots i_N$ for $w_0$, we focus
attention on the one-skeleton lengths for the edges between consecutive vertices in the set
\begin{equation} \lambda_\bullet^{\boldsymbol{i}} := \{ \lambda_e, \lambda_{s_{i_N}}, \lambda_{s_{i_{N}} s_{i_{N-1}}}, \ldots, \lambda_{w_0} \}. \label{lambdai} \end{equation}
These are the edges corresponding to $\boldsymbol{i}$ referred to in Theorem~\ref{kamtheorem}. With a view toward the theorem,
Kamnitzer calls these edge lengths the ``$\boldsymbol{i}$-Lusztig data for the pseudo-Weyl polytope.'' Kamnitzer's ordering is derived from a left-to-right reading
of the word. We have chosen a right-to-left reading in~(\ref{lambdai}) to match the notation of Section~\ref{sectionMDT}.

MV polytopes arise by executing this procedure for points of $\mathcal{G}$ in subsets of the form
$$ X(\lambda) := U \cap t^\lambda U^- = S_e^0 \cap S_{w_0}^\lambda \subset \mathcal{G} $$
for dominant coweights $\lambda$. Indeed, let $\boldsymbol{i}$ be a reduced word for $w_0$ and let $n_\bullet$ be
a set of $N$ natural numbers. Then let
$\mathcal{Q}^{\boldsymbol{i}}(n_\bullet; \lambda)$ denote the set of all pseudo-Weyl polytopes with one-skeleton lengths
$n_\bullet$ for edges between $\lambda_\bullet^{\boldsymbol{i}}$, with $\lambda_e = 0$ and $\lambda_{w_0} = \lambda$ for the fixed dominant 
coweight $\lambda$. Then
$$ A^{\boldsymbol{i}}(n_\bullet) := \{ u \in X(\lambda) \; : \; P(u) \in \mathcal{Q}^{\boldsymbol{i}}(n_\bullet; \lambda) \}. $$
In Section~4 of \cite{kamnitzer}, Kamnitzer proves that the closures of the $A^{\boldsymbol{i}}(n_\bullet)$ are precisely the MV cycles.
MV polytopes are the polytopes arising from distinguished elements of $A^{\boldsymbol{i}}(n_\bullet)$, but we are only concerned with the one-skeleton lengths 
for edges between $\lambda_\bullet^{\boldsymbol{i}}$ which are the same for any member of $A^{\boldsymbol{i}}(n_\bullet)$ by definition, and hence the lengths between
$\lambda_\bullet^{\boldsymbol{i}}$ for the MV polytope.

Finally, stable MV polytopes are the class of MV polytopes obtained from the $Y$ orbits of MV cycles induced from the action $\mu \in Y: L \mapsto L \cdot t^\mu$ on
$\mathcal{G}$, which then acts on the polytope by translation and so preserves the one-skeleton length data~$n_\bullet.$ With this, we've completed the explanation of
the objects in Theorem~\ref{kamtheorem}.

In the remainder of the section, we show that an algorithm for computing the integers $n_\bullet$ for an element $u \in X(\lambda) \subset \mathcal{G}$
is identical to the algorithm for computing the valuations of the $d_i$ in $\iota_{\gamma_i}(g_i)$ corresponding to $u \in P(\mathfrak{o}_S) \backslash G(\mathfrak{o}_S)$ 
in the bijection of Corollary~\ref{corgencase}. Thus the two sets of integers agree and, according to Theorem~\ref{kamtheorem}, the valuations of the $d_i$ for a fixed
prime in $\mathfrak{o}_S$ are the $\boldsymbol{i}$-Lusztig data.

Given $\boldsymbol{i} = i_1 \cdots i_N$, define the ordered set of 
positive roots as in Section~\ref{sectionMDT}:
\begin{equation} \gamma_1 = \alpha_{i_1}, \quad \gamma_2 = s_{i_1} (\alpha_{i_{2}}), \quad \ldots, \quad \gamma_N = s_{i_1} \cdots s_{i_{N-1}} (\alpha_{i_N}). \label{rootorderrev} \end{equation}
To compute the one-skeleton length data $n_\bullet$ for $u \in X(\lambda) \subset U \subset \mathcal{G}$, we must first determine the
values of $\lambda_w$ in~(\ref{lambdai}) for the GGMS stratum. Write
\begin{equation} u = e_{\gamma_{N}}(x_N) \cdots e_{\gamma_{1}}(x_1), \quad x_j \in \mathbb{C}((t)). \label{ufactagain} \end{equation}
By definition, $\lambda_{s_{i_N}}$ is the exponent $\lambda$ in the torus component of the equality
$$ u = t^{\lambda} s_{i_N} u' s_{i_N}^{-1}, $$
as points in $\mathcal{G}$. Thus $\lambda$ is easily extracted using the Iwasawa decomposition on $s_{i_N}^{-1} u s_{i_N}$. 
But using (\ref{ufactagain}),
\begin{equation} s_{i_N}^{-1} u s_{i_N} = e_{-\gamma_{N}}(x_N) s_{i_N}^{-1} e_{\gamma_{N-1}}(x_{N-1}) \cdots e_{\gamma_{1}}(x_1) s_{i_N} =  e_{-\alpha_{i_N}}(x_N) u'', \label{firststepiwasawa} \end{equation}
for some element $u'' \in U$. Here we have used the fact that $\gamma_N = \alpha_{i_N}$, which can be seen from applying $w_0$ to $\gamma_N$ in the form~(\ref{rootorderrev}). Thus we reduce to a rank one Iwasawa decomposition for $e_{-\gamma_{N}}(x_N)$. Write $x_N = y_N / z_N$ with $y_N, z_N \in \mathfrak{o} = \mathbb{C}[[t]]$ and coprime and find elements $a_N$ and $b_N$ in $\mathfrak{o}$ so that $a_N z_N + b_N y_N = 1$. Then the resulting Iwasawa decomposition (for the embedded version of this rank one decomposition) is:
\begin{equation} \begin{pmatrix} 1 & \\ x_N & 1 \end{pmatrix} = \begin{pmatrix} z_N & -b_N \\ y_N & a_N \end{pmatrix} \begin{pmatrix} z_N^{-1} & \\  & z_N \end{pmatrix} \begin{pmatrix} 1 & z_N b_N \\  & 1 \end{pmatrix}. \label{rankoneiwasawa} \end{equation}
The rightmost two matrices were called $h(z_N^{-1}) e(z_N b_N)$ in Section~\ref{sectionMDT}. Thus the length of the one-skeleton segment connecting
$\lambda_{s_{i_N}}$ and $\lambda_e = 0$ is just $\ord_t(z_N)$, according to (\ref{lengthofedge}) with $w=e$.

To find $\lambda_{s_{i_N} s_{i_{N-1}}}$, we further conjugate by $s_{i_{N-1}}$ in (\ref{firststepiwasawa}), ignoring the embedded matrix from the right-hand side of (\ref{rankoneiwasawa}) in $G(\mathbb{C}[[t]])$, which is invariant under conjugation by the Weyl group, and moving the one parameter subgroup $e_{\gamma_{N-1}}(x_{N-1})$ in the Borel leftward. Thus as a first simple step,
\begin{multline} s_{i_{N-1}} h_{\gamma_{N}}(z_N^{-1}) e_{\gamma_{N}}(z_N b_N) s_{i_N} e_{\gamma_{N-1}}(x_{N-1}) \cdots e_{\gamma_{1}}(x_1) s_{i_N} s_{i_{N-1}} = \\ s_{i_{N-1}} s_{i_N} h_{\gamma_{N}}(z_N) e_{-\gamma_{N}}(z_N b_N) e_{\gamma_{N-1}}(x_{N-1}) \cdots e_{\gamma_{1}}(x_1) s_{i_N} s_{i_{N-1}}. \label{afterfirstreflection} \end{multline}
Then moving the $h_{\gamma_{N}}$ and $e_{-\gamma_{N}}$ rightward changes the arguments of the $e_{\gamma_i}$. Indeed, the right-hand side of (\ref{afterfirstreflection}) can be rewritten using the Steinberg relations (\ref{hecommutation}) and (\ref{eecommutation}) and some judicious regrouping as:
\begin{multline} [s_{i_{N-1}} s_{i_N} e_{\gamma_{N-1}}(x_{N-1}') s_{i_N} s_{i_{N-1}}] [s_{i_{N-1}} s_{i_N} h_{\gamma_{N}}(z_N) e_{-\gamma_{N}}(z_N b_N) s_{i_N} s_{i_{N-1}}] \\ [(s_{i_N} s_{i_{N-1}})^{-1} e_{\gamma_{N-2}}(x_{N-2}') \cdots e_{\gamma_{1}}(x_1') s_{i_N} s_{i_{N-1}}] = \\ 
e_{-\alpha_{i_{N-1}}}(x_{N-1}') [s_{i_{N-1}} s_{i_N} h_{\gamma_{N}}(z_N) e_{-\gamma_{N}}(z_N b_N) s_{i_N} s_{i_{N-1}}] u''', \label{steptwoiwasawa} \end{multline}
upon simplifying some of the bracketed terms on the left-hand side. Here $x_j'$ with $j \leq N-1$ are elements of $\mathbb{C}((t))$ and $u'''$ is an element of $U$. In the above, Lemma~\ref{convexorder} has been invoked to guarantee that the relations produce the above form. Then $s_{i_N} s_{i_{N-1}} \lambda_{s_{i_{N}} s_{i_{N-1}}} (s_{i_N} s_{i_{N-1}})^{-1}$ may be obtained by performing another rank one Iwasawa decomposition on $e_{-\alpha_{i_{N-1}}}(x_{N-1}')$ and extracting the torus component from the element in brackets on the right-hand side of~(\ref{steptwoiwasawa}). Indeed, the term in brackets (upon conjugating by $s_{i_{N}} s_{i_{N-1}}$) gives the inductively determined $\lambda_{s_{i_1}}$ and so the length $n_{s_{i_N} s_{i_{N-1}}}$ of the edge in the one-skeleton given by
$$ \lambda_{s_{i_N} s_{i_{N-1}}} - \lambda_{s_{i_N}} = n_{s_{i_N} s_{i_{N-1}}} s_{i_N} (\alpha_{i_{N-1}}^\vee) $$
is obtained solely from the torus component in the Iwasawa decomposition of $e_{-\alpha_{i_{N-1}}}(x_{N-1}')$. Using the above notation, this is just $\ord_t(z_{N-1}')$.

Continuing in this manner, the terms $\lambda_w$ in $\lambda_\bullet^{\boldsymbol{i}}$ may be inductively determined. The properties of the convex ordering from $\boldsymbol{i}$ 
in Lemma~\ref{convexorder} again guarantee that the algorithm of pushing $e_{-\gamma_j}$'s past $e_{\gamma_k}$'s terminates in finitely many steps.

We now compare these one-skeleton lengths $n_\bullet$ to the parametrization of elements of the flag variety in Section~\ref{sectionMDT}. Recall that we found representatives for elements in the big cell of the flag variety $P(\o) \backslash G(\o) \cap P(F) U_{-}^P(F)$ as a product of elements in embedded $SL_2(\o)$'s according to the reduced word $\boldsymbol{i}$. More precisely, Corollary~\ref{corgencase} established a bijection between bottom rows $(c_j, d_j)$ with $j \in [1,N]$ of the embedded $SL_2(\o)$ matrices with points in the big cell of the flag variety. These bottom row elements $(c_i, d_i)$ are in bijection with elements of $U_{-}^P(F)$ by Corollary~\ref{leftcosets}. In Theorem~\ref{mainthm}, we explained how to determine the pairs $(c_j, d_j)$ corresponding to an element $u \in U_{-}^P(F)$. The process of determining $d_j$ in the proof of Theorem~\ref{mainthm} is formally identical to the process of determining the ring element $z_{j}$ of $\mathbb{C}[[t]]$ appearing in the Iwasawa decomposition of $e_{-\alpha_{i_{j}}}$ at each stage above. As we are only interested in the valuation of the $d_i$ at a fixed prime $p$ in $\o_S$, we may pass to the localization and formally identify the set of possible valuations of the resulting uniformizers $p$ and $t$. Thus we arrive at the following result.

\begin{theorem} \label{matchldata} Given a reduced word $\boldsymbol{i}$, let $(c_j, d_j)$ for $j = 1, \ldots, N$ be the corresponding coordinates for points in the big cell of the flag variety as in Corollary~\ref{corgencase}. Then for a fixed prime $p \in \o$, $n_\bullet = (\ord_p(d_1), \ldots, \ord_p(d_N))$ is the $\boldsymbol{i}$-Lusztig data. In particular, if $\boldsymbol{i}'$ is any other reduced word, with coordinates $(c_j', d_j')$ with valuations of the $d_j'$ written as $n_\bullet'$, then $n_\bullet$ and $n_\bullet'$ are related by the Lusztig transition maps $R_{\boldsymbol{i}}^{\boldsymbol{i}'}$ as in~(\ref{lusztigcommutes}),~(\ref{lusztigbraid}), and Section~3 of \cite{b-z-totpos}. \end{theorem}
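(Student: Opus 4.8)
The plan is to exploit the parallel structure between two computations already set up in the excerpt: the Iwasawa-decomposition algorithm for a unipotent $u \in X(\lambda) \subset \mathcal{G}$ that extracts the one-skeleton lengths $n_\bullet$ of the MV polytope $P(u)$, and the Bruhat-type algorithm of Theorem~\ref{mainthm} that, given $u \in U_-^P(F)$, produces embedded $SL_2(\o)$ matrices $g_j = \iota_{\gamma_j}(\cdot)$ with prescribed bottom rows $(c_j, d_j)$. The key claim to establish is that these two algorithms are \emph{formally the same} — every step in one matches a step in the other — so the integer $N$-tuples they output coincide after identifying valuations, and then Theorem~\ref{kamtheorem} (Kamnitzer) identifies that tuple with the $\boldsymbol{i}$-Lusztig data.

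First I would write $u = e_{\gamma_N}(x_N)\cdots e_{\gamma_1}(x_1)$ exactly as in~(\ref{ufactagain}) (for the affine Grassmannian, $x_j \in \C((t))$; for the flag variety, the reciprocal situation $u^{-1} = e_{-\gamma_N}(x_N)\cdots e_{-\gamma_1}(x_1)$ of Theorem~\ref{mainthm}, with $x_j \in F$). In both settings the algorithm proceeds by conjugating successively by $s_{i_N}, s_{i_{N-1}}, \ldots$; at the first step~(\ref{firststepiwasawa}) one uses $\gamma_N = \alpha_{i_N}$ to reduce to a rank-one problem on $e_{-\gamma_N}(x_N)$, performs the rank-one Iwasawa (resp. Bruhat) decomposition~(\ref{rankoneiwasawa}) (resp.~(\ref{rankonehee})), and then pushes the resulting $h_{\gamma_N}(z_N^{-1})$ and $e_{\mp\gamma_N}(z_N b_N)$ rightward through the remaining one-parameter subgroups. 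The crucial point — and the heart of the argument — is that this rightward sweep only uses the Chevalley–Steinberg relations~(\ref{hecommutation}) and~(\ref{eecommutation}) together with Lemma~\ref{convexorder}, which governs exactly which roots can appear in~(\ref{eecommutation}) for a convex ordering coming from $\boldsymbol{i}$; I would spell out that the negative-root term produced at each stage is again of the form $e_{-\alpha_{i_{k}}}(x_k')$ with $x_k'$ built from the same rational data, so the two recursions run in lockstep. Since the valuation extracted at stage $k$ is $\ord_t(z_k')$ in the Grassmannian and $\ord_p(d_k)$ in the flag variety, and $z_k' = d_k$ formally (one is the denominator of $x_k'$ written in lowest terms over $\C[[t]]$, the other over the localization of $\o_S$ at $p$), passing to the localization of $\o_S$ at $p$ lets us identify the two uniformizers and hence the two valuation tuples.

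With $n_\bullet = (\ord_p(d_1), \ldots, \ord_p(d_N))$ established, Theorem~\ref{kamtheorem} gives that $n_\bullet$ is the $\boldsymbol{i}$-Lusztig datum of the corresponding canonical basis vector $b \in \mathcal{B}^\vee$, proving the first assertion. For the final sentence, I would invoke Lusztig's Theorem (\cite{lusztig-book}, Ch.~42, quoted above): since $n_\bullet$ and $n_\bullet'$ are the $\boldsymbol{i}$- and $\boldsymbol{i}'$-Lusztig data of the \emph{same} element $b$ of $\mathcal{B}^\vee$ (the algorithms for the two words are applied to the same $u$, which determines the same point of the flag variety / MV polytope), the edge $(\boldsymbol{i}, n_\bullet) \sim (\boldsymbol{i}', n_\bullet')$ lies in a single connected component of Lusztig's graph, so $n_\bullet' = R_{\boldsymbol{i}}^{\boldsymbol{i}'}(n_\bullet)$ by the very definition of the transition maps in~(\ref{lusztigcommutes}),~(\ref{lusztigbraid}), and the $B_2, G_2$ cases of \cite{b-z-totpos}, Section~3. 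The main obstacle I anticipate is making the ``formally identical'' comparison genuinely rigorous rather than hand-wavy: one must check that conjugation by a Weyl element fixes the embedded $G(\C[[t]])$ (resp. $G(\o)$) factor that is being discarded at each stage, that the regrouping in~(\ref{steptwoiwasawa}) is legitimate and produces precisely the inductively-defined $\lambda_\bullet^{\boldsymbol{i}}$-edge, and that Lemma~\ref{convexorder} really does rule out all extraneous negative-root terms so that both recursions terminate with the same bookkeeping — this is where one either needs a careful inductive statement or a clean lemma isolating the combinatorial core shared by the Iwasawa and Bruhat computations.
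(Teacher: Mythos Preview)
Your proposal is correct and follows essentially the same approach as the paper: the argument in Section~\ref{connectthree} preceding the theorem statement sets up precisely the step-by-step comparison you describe between the Iwasawa algorithm on $\mathcal{G}$ (equations~(\ref{firststepiwasawa})--(\ref{steptwoiwasawa})) and the Bruhat algorithm of Theorem~\ref{mainthm}, concludes that the two recursions are ``formally identical'' so that $\ord_t(z_j) = \ord_p(d_j)$ after localization, and then invokes Theorem~\ref{kamtheorem} to identify these valuations with the $\boldsymbol{i}$-Lusztig data. Your treatment of the transition-map statement via Lusztig's connected-component theorem is a natural elaboration of what the paper leaves implicit.
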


\subsection{String data for canonical bases\label{stringdata}}

While the moduli of the exponential sum $S_{\boldsymbol{\ell}, \boldsymbol{m}}$ are given by Lusztig data, the evaluation of the sum is more easily described in terms of an alternate basis
for the module $\mathcal{B}^\vee$ of $(\mathcal{U}^\vee)^+$ -- the so-called ``string bases''
of Kashiwara. Their definition is recalled now. We illustrate each general observation with a running example in $GL_4$; the conclusion
of an example is marked by $\triangle$.

Let $b$ be any vector in a canonical basis $\mathcal{B}_\lambda^\vee$ for a finite dimensional representation of the dual group
$G^\vee$ with highest weight $\lambda$. Explicitly, $\mathcal{B}_\lambda^\vee$ is the set of all $b \in \mathcal{B}^\vee$ such
that $b \cdot v_\lambda \ne 0$, where $v_\lambda$ is the highest weight vector in the module. Given a word
$\boldsymbol{i} = (i_1, \ldots, i_\nu)$ corresponding to a reduced decomposition of the 
long element $w_0$, let $n_1$ be the maximal integer such that
$$ b_1 := F_{i_1}^{(n_1)} (b) \ne 0 $$
where $F_i$ denotes the Kashiwara operator for the simple root $\alpha_i$. Now
let $n_2$ be the maximal integer such that $F_{i_2}^{(n_2)}(b_1) \ne 0$. Repeating
for each index, we obtain a sequence of integers $(n_1, \ldots, n_\nu)$ and the
resulting vector $b_\nu$ is the lowest weight vector in the representation. The 
integers $(n_1, \ldots, n_\nu)$ for each
vertex $b$ in $\mathcal{B}_\lambda^\vee$ will be referred to as ``string data.''
Let $\mathcal{C}_{\boldsymbol{i}}^\lambda$ be the set of all such $\nu$-tuples of integers
over all elements of $\mathcal{B}_\lambda^\vee$. 

The following is a special case of a result of Berenstein-Zelevinsky and Littelmann may be found, for example, in~Corollary~1 of Theorem~1.7 in Littelmann~\cite{littelmann}.

\begin{theorem}[Berenstein-Zelevinsky, Littelmann] The integer lattice points $\mathcal{C}_{\boldsymbol{i}}^\lambda$ as defined above determine a convex polytope in $\mathbb{R}^N$.
\end{theorem}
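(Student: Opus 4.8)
The plan is to show that $\mathcal{C}_{\boldsymbol{i}}^\lambda$ is exactly the set of integer lattice points in a rational convex polytope, by exhibiting the polytope explicitly as the image of a well-understood polytope under a piecewise-linear map and then arguing that this image is in fact a genuine polytope. First I would recall the Berenstein-Zelevinsky description of $\mathcal{C}_{\boldsymbol{i}}^\lambda$ via string parametrizations: each vertex $b \in \mathcal{B}_\lambda^\vee$ has $\boldsymbol{i}$-string data $(n_1,\dots,n_\nu)$ obtained by successive application of Kashiwara lowering operators, and by the theory of crystals these coordinates range over precisely the lattice points of a region cut out by finitely many linear inequalities. One clean route is to pass through Lusztig data: by the results recalled in Section~\ref{ilusztig} (Lusztig's theorem on $\boldsymbol{X}$) and the transition maps $R_{\boldsymbol{i}}^{\boldsymbol{i}'}$, the $\boldsymbol{i}$-Lusztig cone for the representation of highest weight $\lambda$ is the intersection of the Lusztig data for $\mathcal U^+$ with the region $\{\mathbf c : p_{\boldsymbol i}^{\mathbf c} v_\lambda \ne 0\}$, and the latter is cut out by linear inequalities involving $\lambda$ (this is the statement that the canonical basis of the irreducible module consists of those basis vectors of $\mathcal U^+$ not killed by $v_\lambda$). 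One then applies the explicit piecewise-linear bijection between Lusztig data and string data (Berenstein-Zelevinsky, or Littelmann's Theorem~1.7) to transport this to the string side.

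The key steps, in order, would be: (1) state precisely the crystal-theoretic fact that $\mathcal{C}_{\boldsymbol{i}}^\lambda$ is the full set of $\boldsymbol i$-string data of $\mathcal{B}(\lambda)$ and that this equals the lattice points of a region defined by finitely many inequalities with rational coefficients depending linearly on $\lambda$; (2) invoke Littelmann's result (the ``Littelmann cone'' / polytope) which already asserts this region is a rational convex polytope for a suitable class of words $\boldsymbol{i}$, and then (3) extend to arbitrary reduced $\boldsymbol{i}$ by using the piecewise-linear transition maps between string parametrizations for different words --- these maps are bijections on lattice points but do not obviously preserve polytopality, so one must argue that the Littelmann-type inequalities can be rewritten for every $\boldsymbol i$. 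Alternatively, and more self-containedly, one can argue by induction on $\nu = \ell(w_0)$: the first string coordinate $n_1$ ranges over $[0, \varepsilon_{i_1}(b)]$, and for fixed $n_1$ the remaining coordinates parametrize a string polytope for the word $(i_2,\dots,i_\nu)$ with a shifted highest weight, so the total region is a union of translates of lower-dimensional polytopes fibered linearly over an interval --- one then checks convexity directly.

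The main obstacle I expect is precisely the convexity claim: a priori the set of string data is only manifestly a union of lattice points, and showing that these lattice points are exactly the integer points of their convex hull (equivalently, that the defining inequalities have no ``holes'') is the substantive content. This is where one genuinely needs input from the theory --- either Littelmann's path model (the lattice points of the string polytope are in bijection with Lakshmibai-Seshadri paths, which are stable under the relevant operations), or Berenstein-Zelevinsky's analysis of the string cone via $\boldsymbol i$-trails, or Kashiwara's characterization of the image of the string parametrization. I would therefore lean on citing Littelmann's Corollary~1 of Theorem~1.7 in \cite{littelmann} as the black box that delivers convexity for one word, handle general $\boldsymbol i$ by the transition maps together with the observation that a piecewise-linear bijection matching Lusztig data (which we know lie in a polytope, cf.\ Theorem~\ref{matchldata} and the reduction to the word-independent set $\boldsymbol X$) to string data must carry the Lusztig polytope onto the string polytope since both are determined by the same underlying crystal $\mathcal{B}(\lambda)$. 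The only remaining routine verification is that the inequalities are rational with integer-linear dependence on $\lambda$, which is immediate from the explicit formulas for the Kashiwara operators.
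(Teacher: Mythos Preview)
The paper does not prove this theorem at all: it is stated as a result of Berenstein--Zelevinsky and Littelmann and immediately attributed to Corollary~1 of Theorem~1.7 in \cite{littelmann}, with no further argument. So there is no ``paper's own proof'' to compare against---the paper treats this as a known result and cites it.

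Your proposal goes well beyond what the paper does, attempting to sketch an actual argument. That said, your sketch ultimately also leans on citing Littelmann's Corollary~1 of Theorem~1.7 as a black box for the convexity claim, which is exactly the reference the paper gives. The additional scaffolding you propose (passing through Lusztig data, induction on $\nu$, transition maps to handle arbitrary $\boldsymbol{i}$) is reasonable as an outline of how one might reconstruct the result, but you correctly identify the core difficulty: convexity is the substantive content, and your induction-over-fibers argument does not by itself establish it, since a fibered union of polytopes need not be convex without further control on how the fibers vary. So in the end your proposal, like the paper, defers the real work to the cited literature---which is appropriate here, since this is a quoted theorem rather than a result the paper claims to prove.
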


Thus we refer to the resulting polytope as the ``BZL polytope'' corresponding to reduced word $\boldsymbol{i}$ and dominant weight $\lambda$. Moreover, Littelmann showed (see Theorem~4.2 of~\cite{littelmann}) that if $\boldsymbol{i}$ is chosen with respect to a sequence of braidless maximal parabolics (see Definition~\ref{braidlessdef}), then the corresponding inequalities defining the polytope have a particularly simple form. More precisely, let $B \subset P_1 \subset P_2 \subset \cdots \subset G^\vee$ be a sequence of braidless maximal parabolics. Here we mean that $P_i$ is maximal in $P_{i+1}$. Let $W_i$ be the Weyl group generated by simple reflections in $P_i$ and ${}^iW_i$ the set of minimal length coset representatives in $W_{i-1} \backslash W_i$. Then we choose a reduced expression for
\begin{equation} w_0 = \underbrace{s_{i_1}}_{\tau_1} \underbrace{s_{i_2} \cdots}_{\tau_2} \; \cdots \; \underbrace{s_{i_j} \cdots s_{i_\nu}}_{\tau_r} \label{braidlessinduction} \end{equation}
such that $\tau_i$ is the longest word in ${}^iW_i$.

\medskip

\noindent {\bf Example:} For $GL_4$, consider the strictly dominant highest weight $\lambda + \rho$ where
$$ \lambda = m_1 \epsilon_1 + m_2 \epsilon_2 + m_3 \epsilon_3, \quad \rho = \epsilon_1 + \epsilon_2 + \epsilon_3, \quad \text{$\epsilon_i$ : fundamental weights, $m_i \geq 0$}.  $$ 
The reduced word $\boldsymbol{i} = (1,3,2,1,3,2)$ corresponds to such a sequence of braidless maximal parabolics as in (\ref{braidlessinduction}). The polytope inequalities for the string data $(n_1, \ldots, n_6)$ also respect the sequence of parabolics. Focusing on the string data $(n_3, n_4, n_5, n_6)$ corresponding to the maximal parabolic $P$ in $G^\vee$ with $\boldsymbol{i}^P = \{ 2, 1, 3, 2 \}$, we describe the resulting 4-dimensional polytope.

By Theorem~7.1 of \cite{littelmann}, the cone inequalities satisfied by all string data in the canonical basis $\mathcal{B}^\vee$ are
\begin{equation} n_3 \geq \max(n_4, n_5), \quad \min(n_4, n_5) \geq n_6 \geq 0 \label{coneineqs} \end{equation}

Then imposing the highest weight $\lambda+\rho$ as above, according to Section 7, Corollary 1 of \cite{littelmann}, the string data for the finite crystal $\mathcal{B}^\vee_{\lambda+\rho}$ satisfies the
highest weight inequalities:
\begin{align}
n_6 &\leq m_2+1 \label{nsix} \\
n_5 &\leq m_3+1+n_6 \label{nfive} \\
n_4 &\leq m_1+1+n_6 \label{nfour} \\
n_3&\leq m_2+1+n_4+n_5-2n_6 \label{nthree}.
\end{align}
The indexing above differs from~\cite{littelmann} because our string data is recorded with lowering operators, while~\cite{littelmann}
uses raising operators. \example

\medskip

Returning to the general case, it remains to describe the algorithm for mapping from $\boldsymbol{i}$-Lusztig data to 
string data. To do so, we must describe
how to apply the Kashiwara lowering operators $F_{i_j}$ to the basis element 
$E_{\boldsymbol{i}}^{\mathbf{c}}$ in Lusztig's basis. According to the form of (\ref{lbasiselt}),  
it is easy to determine the effect of $F_{i_1}$ on $E_{\boldsymbol{i}}^{\mathbf{c}}$. The integer
$c_1$ is maximal such that
$$ F_{i_1}^{(c_1)} E_{i_1}^{(c_1)} {T}_{i_1} (E_{i_2}^{(c_2)}) \cdots ({T}_{i_1} {T}_{i_2} \cdots {T}_{i_{\nu-1}})(E_{i_\nu}^{(c_\nu)}) \ne 0. $$
After applying $F_{i_1}$ to $E_{\boldsymbol{i}}^{\mathbf{c}}$ $c_1$ times, the resulting $\boldsymbol{i}$-Lusztig data is
$$ (0, c_2, \ldots, c_\nu) $$
In order to determine the effect of $F_{i_2}$ on the resulting basis element, we use a
sequence of braid relations to move from Lusztig data for the long word $\boldsymbol{i} = (i_1, i_2, \ldots)$
to Lusztig data for $\boldsymbol{i}' = (i_2, \ldots)$. This can be done since any long word is
related to any other by a sequence of braid relations and there exists a long word beginning
with any given simple reflection. For example, the transition maps for the case of $G^\vee$ simply laced
were given in~(\ref{lusztigcommutes}) and~(\ref{lusztigbraid}). Repeating this for each of the successive 
$i_j$ appearing in $\boldsymbol{i}$, then the algorithm terminates at the lowest weight vector and returns
string data for the corresponding basis vector $E_{\boldsymbol{i}}^{\mathbf{c}}$.

\medskip

\noindent {\bf Example:} $G = GL_4$ with long word $\boldsymbol{i} = (1,3,2,1,3,2)$. 

Beginning with an element $b$ with Lusztig data $(c_1, \ldots, c_6)$, then applying the $F_1$ operator 
$c_1$ times, we arrive at a basis vector $b_1$ with $\boldsymbol{i}$-Lusztig data $(0, c_2, \ldots, c_6).$
Then since the simple reflections $s_1$ and $s_3$ commute, the transition map~(\ref{lusztigcommutes}) is of the form
$$ R^{\boldsymbol{i}'}_{\boldsymbol{i}} : (0, c_2, c_3, \ldots, c_6) \mapsto (c_2, 0, c_3, \ldots, c_6) $$
where $\boldsymbol{i}' = (3,1,2,1,3,2)$. Then $F_3$ may be applied $c_2$ times to the above
vector resulting in a basis vector $b_2$ with Lusztig data $(0,0,c_3, \ldots, c_6)$. Now one must
move from $\boldsymbol{i}'$ to a word beginning with $2$ to apply the $F_2$ operator. This may
be accomplished by the sequence of moves:
$$ \boldsymbol{i}' \mapsto (3,2,1,2,3,2) \mapsto (3,2,1,3,2,3) \mapsto (3,2,3,1,2,3) \mapsto (2,3,2,1,2,3). $$
Keeping track of the effect on the Lusztig data, the first application of the braid relation maps
$$ (0,0,c_3,c_4, c_5, c_6) \mapsto (0, c_3 + c_4, 0, c_3, c_5, c_6) $$
according to (\ref{lusztigbraid}). Continuing in this manner, the resulting string data takes the form
\begin{equation} (n_1, n_2, n_3, n_4, n_5, n_6) = (c_1, c_2, c_4 + c_5 + \max(c_3, c_6), c_3 + c_5, c_3 + c_4, \min(c_3, c_6)).
\label{datamap} \end{equation}
\example

\medskip

In general, this map from the Lusztig data to string data respects parabolic induction.
That is, let $w_0 = w_M w^P$ be a factorization of the long element corresponding to 
a Levi decomposition of the parabolic $P = M U^P$. Let $\boldsymbol{i}$ be a reduced word for $w_0$, 
chosen so that
$$ \boldsymbol{i} = \boldsymbol{i}_M \cdot \boldsymbol{i}^P, \qquad \text{where ``$\cdot$'' denotes concatenation,} $$
and $ \boldsymbol{i}_M$ is a  long word for $M$.
Then under the map described above, 
the string data for $\boldsymbol{i}^P$ will only involve Lusztig data for the long word $\boldsymbol{i}^P$. 
Indeed, $M$ is a reductive group and so the algorithm, which is performed first for $\boldsymbol{i}_M$, 
invokes only braid relations within $\boldsymbol{i}_M$. After completing the algorithm for each of the simple
reflections in $w_M$, then we arrive at a basis element with Lusztig data beginning with a list of 0's at each 
position corresponding to $i_j$ in $\boldsymbol{i}_M$ and leaving the original Lusztig data
with index in $\boldsymbol{i}^P$ unchanged.

\medskip

\noindent {\bf Example:} $G = GL_4$ and $M = GL_2 \times GL_2$ then the word 
$$ (1,3,2,1,3,2) = (1,3) \cdot (2, 1, 3, 2)$$ 
is indeed compatible with the choice of parabolic with Levi factor $M$. 
Then with respect to $\boldsymbol{i}^P = (2, 1,3, 2)$, the map given in (\ref{datamap})
shows that the string data $(n_3, \ldots, n_6)$ can indeed be given using only the
Lusztig data $(c_3, \ldots, c_6)$.

Returning to Whittaker coefficients associated to this example, note that the $\boldsymbol{\ell}$ in the exponential sum 
$S_{\boldsymbol{\ell}, \boldsymbol{m}}$ of (\ref{slmdefined}) may be considered as attached to Lusztig data $\boldsymbol{c}$ (according to our
numbering scheme) as follows:
$$ \boldsymbol{c} = (c_1, c_2, c_3, c_4, c_5, c_6) = (0, 0, \ell_1, \ell_2, \ell_3, \ell_4). $$
And so this may, in turn, be mapped to string data (relative to the parabolic $P$) via 
(\ref{datamap}) with
\begin{equation} (n_3, n_4, n_5, n_6) = (\ell_2 + \ell_3 + \max(\ell_1, \ell_4), \ell_1 + \ell_3, \ell_1 + \ell_2, \min(\ell_1, \ell_4)) 
\label{elldata} \end{equation}

Identifying $\boldsymbol{m}$ with the weight $\lambda = m_1 \epsilon_1 + m_2 \epsilon_2 + m_3 \epsilon_3$, we write $\mathcal{B}_{\boldsymbol{m} + \rho}^\vee$ for the canonical basis elements in the corresponding highest weight module. The inequalities (\ref{nsix})--(\ref{nthree}) for string data in $\mathcal{B}_{\boldsymbol{m} + \rho}^\vee$ may now be rewritten in terms of Lusztig data using (\ref{elldata}) and a simple case analysis according to the value of $\min(\ell_1, \ell_4)$. Then the highest-weight inequalities are equivalent to:
\begin{align}
\ell_4&\leq m_2+1\label{hw1}\\
\ell_1+\ell_2&\leq m_3+1+\min(\ell_1,\ell_4)\label{hw2}\\
\ell_1+\ell_3&\leq m_1+1+\min(\ell_1,\ell_4).\label{hw3}
\end{align}

Thus, the above three inequalities (together with the obvious requirement that $\ell_i \geq 0$) cut out the finite set of $\boldsymbol{i}^P$-Lusztig data for the elements of $\mathcal{B}_{\boldsymbol{m}+\rho}^\vee$.
In Section~\ref{vanishing} below, we use these inequalities to describe the support of the exponential sum $S_{\boldsymbol{\ell}, \boldsymbol{m}}$. \example

\section{Generic evaluation of the exponential sum at prime powers\label{genericevalsection}}

In this section, given a reductive group $G$, any cominuscule parabolic subgroup $P$, and a fixed prime $p \in \mathfrak{o}_S$, we evaluate the sum $S_{\boldsymbol{\ell}, \boldsymbol{m}}$ defined in~(\ref{slmgeneral}) for many choices of $\boldsymbol{\ell}$ and $\boldsymbol{m}$.   
Recall that $\boldsymbol{\ell} = (\ell_1, \cdots, \ell_N)$ with $\ell_i = \text{ord}_p(d_i)$ and $\boldsymbol{m} = (m_1, \ldots, m_r)$ with $m_j = \text{ord}_p(t_j)$. We establish two results. The first states that, for certain hyperplane inequalities on $\boldsymbol{\ell}$ and $\boldsymbol{m}$, the exponential sum $S_{\boldsymbol{\ell}, \boldsymbol{m}}$ has a very regular evaluation given in terms of the Euler phi-function. The second states that for the reverse inequality, now with respect to $\boldsymbol{\ell}$ and $\boldsymbol{m} + \rho = (m_1+1, \ldots, m_r + 1)$, the exponential sum ``generically'' vanishes. More precisely, it vanishes for these $\boldsymbol{\ell}$ and $\boldsymbol{m}$ outside of a set of integer lattice points lying on 
certain hyperplanes in $r$-dimensional space. These are almost immediate consequences of the formula for the exponential sum as given in~(\ref{hdtdefined}), but we have postponed them to the present section to discuss relations between these hyperplane inequalities and canonical bases (see Remark~\ref{canbasesconnection} at the end of this section).

We begin by rewriting~(\ref{hdtdefined}) when $\boldsymbol{d}$ and $\boldsymbol{t}$ are prime powers as above, and obtain the general expression
\begin{equation} S_{\boldsymbol{\ell}, \boldsymbol{m}} = \sum_{c_j (\text{mod } p^{\text{ord}_p(D_j)})} \psi \left( \sum_j p^{m_j} v_j \right) \prod_{k=1}^N \left( \frac{c_k}{p^{\ell_k}} \right)^{\!\! q_k}.
\label{slmexplicit} \end{equation}
Recall that
\begin{equation} \text{ord}_p(D_j) = \ell_j +  \sum_{i = j+1}^N \langle \gamma_{j}, \gamma_{i}^\vee \rangle \ell_i
\quad \text{and} \quad
v_{j} = \sum_{(k,k') \in \mathcal{S}_j} \left[ \eta_{i,i',k,-k'}  b_k c_{k'} p^{-\text{ord}_p(D(k, k'; \alpha_j))} \right] \label{ordpbigd} \end{equation}
for each root simple root $\alpha_j \in P$ according to~(\ref{unipelt}), noting that by Proposition~\ref{isareone} the pairs $(i,i')$ used in linear combinations of roots for $\mathcal{S}_j$ are both 1 if $P$ is cominuscule. The element $D(k, k'; \alpha_j)$ is given explicitly in~Lemma \ref{twistedmultlemma}. If $\alpha_i$ is the unique simple root omitted from $P$, recall that it appears as $\gamma_N$ in our ordering and thus $\text{ord}_p(D_N) = \ell_N$ and $v_i = c_N p^{-\ell_N}$. 

We now observe that within certain limits on the $\ell_j$, the coefficients $S_{\boldsymbol{\ell}, \boldsymbol{m}}$ are supported on an $N$-dimensional rectangular lattice with side lengths $n(\gamma_j)$ where we define $n(\alpha) := n / \gcd(n, Q(\alpha^\vee))$ for any root $\alpha$. The evaluation of $S_{\boldsymbol{\ell}, \boldsymbol{m}}$ at these lattice points is given in terms of the Euler phi function on $p^r \mathfrak{o}_S$ for non-negative integers $r$, denoted $\phi(p^r)$.  

\begin{proposition} Suppose that for all simple roots $\alpha_j \in P$ and pairs $(k,k') \in \mathcal{S}_j$, we have $m_j \geq \text{ord}_p(D(k,k';\alpha_j))$, and that for the omitted root $\alpha_i$, $m_i \geq \ell_N$. Then
$$ S_{\boldsymbol{\ell}, \boldsymbol{m}} = \begin{cases} \displaystyle \prod_{k=1}^{N} |p|^{\epsilon_k} \phi(p^{\ell_k}) & \text{if $n({\gamma_k}) \, | \, \ell_k$ for all $k = 1, \ldots, N$} \\ 0 & \text{otherwise,} \end{cases}  $$
where
$$ \epsilon_k = \sum_{i = k+1}^N {\langle \gamma_{k}, \gamma_{i}^\vee \rangle} \ell_i. $$
\label{phifunprop} \end{proposition}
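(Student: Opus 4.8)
The plan is to exploit the explicit formula~(\ref{slmexplicit}) together with the hypotheses on $\boldsymbol{m}$ to show that every additive character appearing in the sum is trivial, reducing $S_{\boldsymbol{\ell},\boldsymbol{m}}$ to a pure product of power residue character sums, and then to evaluate each such sum. First I would examine the argument $\sum_j p^{m_j} v_j$ of the character $\psi$. By~(\ref{ordpbigd}), for a simple root $\alpha_j \in P$ with $j$ not the omitted index, each summand of $v_j$ has the shape $\eta_{\cdots}\, b_k c_{k'}\, p^{-\mathrm{ord}_p(D(k,k';\alpha_j))}$ with $(k,k')\in\mathcal{S}_j$; since $b_k,c_{k'}$ are $p$-adic units (or the exponents $\ell_k,\ell_{k'}$ vanish, making the terms disappear), the hypothesis $m_j \geq \mathrm{ord}_p(D(k,k';\alpha_j))$ guarantees $p^{m_j} v_j \in \mathfrak{o}_S$. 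For the omitted root $\alpha_i$, we have $v_i = c_N p^{-\ell_N}$ and the hypothesis $m_i \geq \ell_N$ again forces $p^{m_i}v_i \in \mathfrak{o}_S$. Because $\psi$ is trivial on $\mathfrak{o}_S$, the entire character factor equals $1$ for every choice of the $c_k$ in the range of summation.

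Having killed the character, $S_{\boldsymbol{\ell},\boldsymbol{m}}$ collapses to
\[
S_{\boldsymbol{\ell},\boldsymbol{m}} = \prod_{k=1}^N \sum_{c_k \,(\mathrm{mod}\ p^{\mathrm{ord}_p(D_k)})} \left( \frac{c_k}{p^{\ell_k}} \right)^{q_k},
\]
where the sum over $c_k$ runs over residues mod $p^{\mathrm{ord}_p(D_k)}$ with $c_k$ prime to $p$ when $\ell_k>0$ and with no restriction when $\ell_k=0$. The key point is that the summand depends on $c_k$ only modulo $p^{\ell_k}$ (the residue symbol $\left(\frac{\cdot}{p^{\ell_k}}\right)$ has conductor dividing $p^{\ell_k}$), while $\mathrm{ord}_p(D_k) = \ell_k + \sum_{i>k}\langle\gamma_k,\gamma_i^\vee\rangle\ell_i = \ell_k + \epsilon_k$ by~(\ref{ordpbigd}); here one uses that $P$ is cominuscule so that $\langle\gamma_k,\gamma_i^\vee\rangle \geq 0$ by Lemma~\ref{nonneg}, hence $\epsilon_k \geq 0$. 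Thus each residue class mod $p^{\ell_k}$ is repeated $|p|^{\epsilon_k} = q^{\epsilon_k}$ times (writing $q = |p|$), and
\[
\sum_{c_k\,(\mathrm{mod}\ p^{\mathrm{ord}_p(D_k)})} \left(\frac{c_k}{p^{\ell_k}}\right)^{q_k} = |p|^{\epsilon_k} \sum_{\substack{c_k\,(\mathrm{mod}\ p^{\ell_k}) \\ \gcd(c_k,p)=1}} \left(\frac{c_k}{p^{\ell_k}}\right)^{q_k}.
\]

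It remains to evaluate the inner sum $\sum_{c \ (\mathrm{mod}\ p^{\ell})}^{*} \left(\frac{c}{p^{\ell}}\right)^{q}$. The character $c \mapsto \left(\frac{c}{p^{\ell}}\right)^{q}$ on $(\mathfrak{o}_S/p^{\ell})^\times$ is the composition of the $\ell$-th power residue symbol with $q$-th power; since $\left(\frac{\cdot}{p^{\ell}}\right)$ has order $n$ on the unit group (using $n \mid |p|-1$), the power $\left(\frac{\cdot}{p^{\ell}}\right)^{q}$ is trivial precisely when $n \mid q \cdot (\text{the relevant quantity})$, i.e. when $n(\gamma_k) := n/\gcd(n,Q(\gamma_k^\vee)) = n/\gcd(n,q_k)$ divides $\ell_k$; here one invokes the standard fact that a nontrivial multiplicative character sums to zero over a finite group and the orthogonality relation, giving $\phi(p^{\ell_k})$ when the character is trivial and $0$ otherwise. (When $\ell_k = 0$ the sum is the empty-unit-condition sum, which is $1 = \phi(p^0)$, and $n(\gamma_k)\mid 0$ holds vacuously, so the formula is consistent.) Assembling the factors yields $S_{\boldsymbol{\ell},\boldsymbol{m}} = \prod_k |p|^{\epsilon_k}\phi(p^{\ell_k})$ when $n(\gamma_k)\mid\ell_k$ for all $k$, and $0$ if any factor vanishes. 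The main obstacle I anticipate is bookkeeping: one must verify carefully that for \emph{every} pair $(k,k')\in\mathcal{S}_j$ contributing to $v_j$ the relevant $c$'s and $b$'s really are units (handling degenerate cases where some $\ell$'s vanish so that the corresponding terms drop out of $v_j$ entirely), and that the conductor of $\left(\frac{\cdot}{p^{\ell_k}}\right)^{q_k}$ behaves as claimed — in particular that the periodicity modulo $p^{\ell_k}$ is exact and interacts correctly with the unit condition. None of this is deep, but it requires matching the combinatorial data of $\mathcal{S}_j$, the formula for $D(k,k';\alpha_j)$ in Lemma~\ref{twistedmultlemma}, and the definition of $D_k$ precisely.
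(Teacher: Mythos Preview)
Your proposal is correct and follows the same approach as the paper's own proof, which is extremely terse: the paper simply observes that the hypotheses make $\psi$ identically~$1$ and that the remaining multiplicative character sum is trivial exactly when $n(\gamma_k)\mid\ell_k$. Your version supplies the details the paper omits---in particular the factorization of the range of $c_k$ into $|p|^{\epsilon_k}$ copies of residues mod $p^{\ell_k}$, and the computation that $\left(\frac{c}{p^{\ell_k}}\right)^{q_k}=\left(\frac{c}{p}\right)^{\ell_k q_k}$ is trivial iff $n/\gcd(n,q_k)$ divides $\ell_k$---so there is nothing to correct.
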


\begin{proof}
With these assumptions on $m_j$, the character $\psi$ appearing in~(\ref{slmexplicit}) is identically 1 in $S_{\boldsymbol{\ell}, \boldsymbol{m}}$.  The result follows immediately, since the multiplicative character is trivial exactly when 
$n({\gamma_k}) \, | \, \ell_k$ for all $k = 1, \ldots, N$.
\end{proof}

\begin{proposition} Suppose that at least one of the following conditions hold:
\begin{enumerate}
\item There exists a simple root $\alpha_j \in P$ and a pair $(k, k') \in \mathcal{S}_j$ with $\ell_k, \ell_{k'}>0$ such that the difference
$$\ell_{k,k';j}:= \text{ord}_p(D(k,k';\alpha_j)) - m_j  $$
is greater than one.  Moreover, this difference is larger than $\ell_{k_0, k_0'; j_0}$ for all simple roots $\alpha_{j_0} \in P$ and all pairs $(k_0, k_0')$ in $\mathcal{S}_{j_0}$
with one of $k_0, k_0'$ equal to $k'$.  
\item One has $\ell_N - m_i > 1$, where $\alpha_i$ is the simple root omitted from $P$, and this difference is larger than $\ell_{k_0, k_0'; j_0}$ for all simple roots $\alpha_{j_0} \in P$ and all pairs $(k_0, k_0')$ in $\mathcal{S}_{j_0}$
with one of $k_0, k_0'$ equal to $N$.
\end{enumerate}  
Then
$ S_{\boldsymbol{\ell}, \boldsymbol{m}} = 0. $
\label{genericvanprop} \end{proposition}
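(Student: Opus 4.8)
The plan is to exploit the multiplicative structure of the exponential sum in~(\ref{slmexplicit}) to isolate a single inner sum over one variable $c_{k'}$ (or $c_N$ in case (2)) which can be shown to vanish. First I would fix all variables $c_m$ with $m \neq k'$ and consider the dependence of $S_{\boldsymbol{\ell},\boldsymbol{m}}$ on $c_{k'}$. Two things enter: the Gauss-sum-type factor $\left(\frac{c_{k'}}{p^{\ell_{k'}}}\right)^{q_{k'}}$, and the additive character $\psi(\sum_j p^{m_j} v_j)$. By~(\ref{ordpbigd}) and the cominuscule hypothesis (so $(i,i')=(1,1)$ by Proposition~\ref{isareone}), the variable $c_{k'}$ appears in $v_j$ linearly, inside terms of the form $\eta\, b_k c_{k'} p^{-\operatorname{ord}_p(D(k,k';\alpha_j))}$ for pairs $(k,k') \in \mathcal{S}_j$, and also possibly inside various $b_{k'}$ appearing elsewhere via the inverse relation $b_{k'} c_{k'} \equiv 1 \pmod{p^{\ell_{k'}}}$. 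The hypothesis that the difference $\ell_{k,k';j}$ is \emph{strictly larger} than $\ell_{k_0,k_0';j_0}$ for every pair involving $k'$ is exactly what guarantees that, among all places where $c_{k'}$ (as opposed to its inverse $b_{k'}$) contributes to the argument of $\psi$, one term dominates: it has the most negative $p$-adic valuation, hence produces a nontrivial additive character on $c_{k'}$ of conductor $p^{\ell_{k,k';j}}$ with $\ell_{k,k';j} \geq 2$.

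Next I would carry out the separation of variables carefully. Write $c_{k'}$ as ranging over units mod $p^{\operatorname{ord}_p(D_{k'})}$ (or all residues if $\ell_{k'}=0$, but the hypothesis $\ell_{k'}>0$ rules this out in case (1)). The contributions to $\psi$ in which $b_{k'}$ appears must be handled: $b_{k'}$ is only defined modulo $p^{\ell_{k'}}$, and by Lemma~\ref{whitdivis} together with the divisibility conditions, the terms involving $b_{k'}$ have arguments whose $p$-adic valuation is controlled by the $\ell_{k_0,k_0';j_0}$, which are all strictly smaller than $\ell_{k,k';j}$. Therefore, after grouping terms, the sum over $c_{k'}$ decomposes as a sum over residue classes of $c_{k'}$ modulo $p^{\ell_{k'}}$ (fixing $b_{k'}$ and the multiplicative symbol) of a geometric-type inner sum over the remaining $c_{k'}$-progression, weighted by a nontrivial additive character $x \mapsto \psi(u x p^{-\ell_{k,k';j}})$ with $u$ a unit. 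Since $\ell_{k,k';j} \geq 2$ exceeds $1$, this additive character is nontrivial on the relevant range and its complete sum vanishes; hence the whole inner sum is zero, and $S_{\boldsymbol{\ell},\boldsymbol{m}} = 0$.

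Case (2) is the analogous but simpler argument: there $v_i = c_N p^{-\ell_N}$ contributes a clean additive character $\psi(p^{m_i} c_N p^{-\ell_N})$ of conductor $p^{\ell_N - m_i}$ in the variable $c_N$, and the hypothesis $\ell_N - m_i > 1$ together with the dominance over all $\ell_{k_0,k_0';j_0}$ with $k_0$ or $k_0' = N$ ensures no cancellation occurs from the appearance of $b_N$ elsewhere; summing $c_N$ over its range against this nontrivial character gives $0$. I expect the main obstacle to be the bookkeeping in case (1): one must track precisely \emph{which} products of $d_\ell$ (hence powers of $p$) occur in each denominator $\operatorname{ord}_p(D(k,k';\alpha_j))$ via Lemma~\ref{twistedmultlemma}, verify that the $c_{k'}$-linear term genuinely survives modulo $p$ (i.e.\ the integer structure constant $\eta_{i,i',k,-k'}$ is a unit, which holds since $\eta = \pm 1$), and confirm that no two dominant terms can cancel each other — the strict inequality in the hypothesis is what makes the dominant term unique, but one must check there is not a second pair $(k,k'')$ with the \emph{same} maximal valuation whose contributions could conspire. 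I would phrase the argument so that the maximality is used to pick out a single monomial $c_{k'} \cdot (\text{unit})$ of strictly smallest valuation in the $\psi$-argument, reducing everything to the elementary vanishing of a complete additive character sum.
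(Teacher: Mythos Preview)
Your overall strategy is sound: isolate the dependence of $S_{\boldsymbol{\ell},\boldsymbol{m}}$ on a single variable $c_{k'}$, use the maximality hypothesis to identify a dominant additive-character term, and sum that term to zero. But the specific \emph{additive} decomposition you propose (fix $c_{k'}$ mod $p^{\ell_{k'}}$, then sum over the residual progression) does not always work. On the progression $c_{k'} = c_0 + x p^{\ell_{k'}}$, the dominant $c_{k'}$-linear term contributes the character $x \mapsto \psi(u\,x\,p^{\ell_{k'} - \ell_{k,k';j}})$, which is \emph{trivial} whenever $\ell_{k,k';j} \le \ell_{k'}$. Nothing in the hypotheses rules this out: for instance in the $GL_4$ example with $(k,k')=(2,1)$ one has $\ell_{2,1;1} = \ell_1 + \ell_3 - \ell_4 - m_1$, and this can be $> 1$ while still $\le \ell_1 = \ell_{k'}$ (take $\ell_1$ large and $\ell_3$ small). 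Fixing instead to a smaller modulus $p^{\ell_{k,k';j}-1}$ would make the character nontrivial, but then $b_{k'}$ is no longer constant on the progression, and the $b_{k'}$-terms re-enter.

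The paper resolves this with a \emph{multiplicative} change of variables rather than an additive decomposition: replace $c_{k'}$ by $c_{k'}(1 + r p^{\ell_{k,k';j}-1})$ and sum over $r$ mod $p$. Because $\ell_{k,k';j}\ge 2$, the factor is $\equiv 1\pmod p$, so the unit condition and the residue symbol $\bigl(\tfrac{c_{k'}}{p^{\ell_{k'}}}\bigr)$ (which depends only on $c_{k'}$ mod $p$) are preserved. The inverse $b_{k'}$ changes by a factor $\equiv 1 \pmod{p^{\ell_{k,k';j}-1}}$, so every $b_{k'}$-term in $\psi$ shifts by something of valuation $\ge (\ell_{k,k';j}-1) - \ell_{k',k_0';j_0} \ge 0$ by strict maximality, hence is $\psi$-invariant; the same argument handles the subdominant $c_{k'}$-linear terms. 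Only the dominant term survives, contributing exactly $\psi(\eta\,b_k c_{k'}\,r/p)$, and summing over $r$ gives zero since $\ell_k,\ell_{k'}>0$ force $b_k,c_{k'}$ to be units. Case (2) is the same trick with $c_N \mapsto c_N(1+r p^{\ell_N - m_i - 1})$. Your worry about two dominant terms cancelling is already handled by the strict inequality in the hypothesis; the real fix you need is to swap the additive progression for this multiplicative shift.
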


\begin{proof} Suppose first that $\ell_{k,k';j} > 1$ for some $j$ and a pair $(k, k') \in \mathcal{S}_j$ and satisfies the above maximality condition.
Then we may perform the changes of variables
$$ c_{k'} \longmapsto c_{k'} (1 + r p^{\ell_{k,k';j}-1}) $$
for each $r$ mod $p$. Summing the result, we obtain $|p|$ times the original sum $S_{\boldsymbol{\ell}, \boldsymbol{m}}$.
On the other hand, according to~(\ref{slmexplicit}), a factor of 
$$ \psi \left( \frac{\eta_{i,i',k,-k'} b_k c_{k'} r}{p} \right) $$
pulls out of each summand for each $r$ mod $p$. This is true since, in all other occurrences of $c_{k'}$ or $b_{k'}$ in $\psi$, the character $\psi$ is invariant under this substitution according to our maximality assumption and moreover, the power residue symbols are unchanged. Since $\ell_k$ and $\ell_{k'}$ are positive, the integers $b_k$ and $c_k'$ are coprime to $p$ and hence non-zero. Thus summing first over $r$, the sum $S_{\boldsymbol{\ell}, \boldsymbol{m}}$ vanishes.

If instead $\ell_N - m_i > 1$ where $\alpha_i$ is the omitted simple root from $P$ and the difference is larger than all $\ell_{k_0, k_0'; j_0}$ as above, then a similar change of variables $c_N \longmapsto c_N (1+ r p^{\ell_N - m_i - 1})$ causes $S_{\boldsymbol{\ell}, \boldsymbol{m}}$ to vanish by the same argument as in the previous case.
\end{proof}

\begin{remark} In Theorem~\ref{matchldata}, we explained that $\boldsymbol{\ell}$ gives the Lusztig data for the reduced word $\boldsymbol{i}^P$. We conjecture that the inequalities $m_i \geq \ell_N$ with $\alpha_i \not\in P$ and $m_j \geq \text{ord}_p(D(k,k';\alpha_j))$ for $\alpha_j \in P$ and $(k,k') \in \mathcal{S}_j$ used in Propositions~\ref{phifunprop} and~\ref{genericvanprop} are precisely the inequalities for $\boldsymbol{i}^P$-Lusztig data in the representation of $G^\vee$ with highest weight $\boldsymbol{m}$. Surprisingly, these do not appear to have been written down explicitly in the literature. In principle, they may be extracted from \cite{berenstein-zelevinsky} which gives a polytope realization of string data (\cite{berenstein-zelevinsky}, Theorem 3.10), proves that the string cones factor when the long word factors according to parabolic induction (\cite{berenstein-zelevinsky}, Theorem 3.11), and gives explicit bijections between string and Lusztig data via use of natural ``twist'' automorphisms (\cite{berenstein-zelevinsky}, Theorem 5.7). The results are stated for the canonical bases $\mathcal{B}^\vee$ but are of course compatible with highest weight modules by taking those $b \in \mathcal{B}^\vee$ such that $b \cdot v_{\text{high}} \ne 0$, where $v_{\text{high}}$ is the highest weight vector of the module. However, making these maps explicit in terms of the root datum 
would take us far afield, and we shall not carry it out here. 

Morier-Genoud \cite{morier-genoud} has applied the ideas of \cite{berenstein-zelevinsky} to study 
generalizations of the Sch\"utzen\-ber\-ger involution for highest weight modules of semisimple groups. Though not addressing the above question, many of the quantities obtained are similar to those of Section~\ref{sectionMDT}. For example, our formula for $\text{ord}_p(D_j)$ given in~(\ref{ordpbigd}) agrees with (2.12) in \cite{morier-genoud}.

Finally, we note that the relation between highest weight inequalities for Lusztig data and the hyperplanes $m_j \geq \text{ord}_p(D(k,k';\alpha_j))$ can be checked directly in our running $GL_4$ example, using~(\ref{hw1}),~(\ref{hw2}), and~(\ref{hw3}). Moreover, the results reviewed in Section~\ref{stringdata} provide an algorithm for directly checking this identity for any group $G$ and parabolic $P$.
\label{canbasesconnection} \end{remark}

In summary, we have evaluated the exponential sum $S_{\boldsymbol{\ell}, \boldsymbol{m}}$ for fixed $\boldsymbol{m}$ except for those $\boldsymbol{\ell}$ such that either $\text{ord}_p(D(k,k';\alpha_j)) = m_j + 1$ for some $\alpha_j \in P$ and $(k,k') \in \mathcal{S}_j$, or $\ell_N = m_i + 1$ with $\alpha_i \not\in P$, or else $\boldsymbol{\ell}$ lies on one of the exceptional hyperplanes defined by the equations
\begin{align*}
\ell_{k,k';j}&=\ell_{k_0,k_0',j_0}&&\alpha_j,\alpha_{j_0} \in P; (k,k'), (k_0,k_0')\in \mathcal{S}_{j_0}; k_0=k'~\text{or}~k_0'=k'\\
\ell_N - m_i &=\ell_{k_0, k_0'; j_0}&&\alpha_i \not\in P, \alpha_{j_0} \in P; (k_0, k_0')\in\mathcal{S}_{j_0}; k_0=k'~\text{or}~k_0'=N,
 \end{align*}
which moreover satisfy the maximality conditions in Proposition~\ref{genericvanprop}. To give a flavor of the complexity in these remaining cases, we carry out a full analysis in the $GL_4$ example.

\section{Theorems on the support of $\widetilde{GL}_4$ exponential sums\label{vanishing}}

To go beyond the results of the previous section requires a much finer analysis of the exponential sum $S_{\boldsymbol{\ell}, \boldsymbol{m}}$. We have done this for
our recurring $\widetilde{GL}_4$ example, and shown the result is supported on the union of the $\boldsymbol{i}_P$-Lusztig data for the elements of the finite crystal $\mathcal{B}_{\boldsymbol{m}+\rho}^\vee$ and an infinite collection of integer lattice points lying on a particular 2-dimensional hyperplane.

\begin{theorem}\label{support} The sum $S_{\boldsymbol{\ell}, \boldsymbol{m}}$ defined in (\ref{slmdefined}) is zero unless either
\begin{enumerate}[a)]
\item the highest weight inequalities (\ref{hw1}), (\ref{hw2}), (\ref{hw3}) hold, or
\item $\ell_1=\ell_4 \leq m_2+1$ and $\ell_2-m_3=\ell_3-m_1>1$.  
\end{enumerate}
\end{theorem}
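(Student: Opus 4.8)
The plan is to analyze the sum $S_{\boldsymbol{\ell},\boldsymbol{m}}$ in~(\ref{slmdefined}) directly, exploiting the change-of-variables arguments already used in Proposition~\ref{genericvanprop}, and to push them to their logical extreme in the $GL_4$ case where everything is explicit. Recall the summand involves $\psi$ applied to
$$ -p^{m_1}\Big(\tfrac{b_2c_1 p^{\ell_4}}{p^{\ell_1+\ell_3}}+\tfrac{b_4c_3}{p^{\ell_3}}\Big)+p^{m_2}\tfrac{c_4}{p^{\ell_4}}+p^{m_3}\Big(\tfrac{c_1 b_3 p^{\ell_4}}{p^{\ell_1+\ell_2}}+\tfrac{c_2 b_4}{p^{\ell_2}}\Big), $$
with $c_1$ mod $p^{\ell_1+\ell_2+\ell_3}$ and $c_i$ mod $p^{\ell_i}$ for $i=2,3,4$, subject to the divisibility conditions (\ref{div1}), (\ref{div2}). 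The first step is to dispose of the ``easy'' variables: the dependence on $c_4$ is a pure additive character $\psi(p^{m_2-\ell_4}c_4)$ times the residue symbol $\left(\frac{c_4}{p^{\ell_4}}\right)$, so summing over $c_4$ forces either $\ell_4=0$, or $n(\gamma_4)\mid \ell_4$ together with $m_2\geq \ell_4$ (giving a $\phi(p^{\ell_4})$ factor), or $\ell_4 = m_2+1$ with $\ell_4=1$ producing a Gauss sum — in all surviving cases $\ell_4 \le m_2+1$. This already isolates the constraint $\ell_4\le m_2+1$ appearing in both (a) and (b) (note (\ref{hw1}) is exactly this).

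The second, main step is to treat the remaining sum over $c_1,c_2,c_3$ by the same ``shift by $p^{\text{(exponent)}-1}$'' trick applied in Proposition~\ref{genericvanprop}. The key point is that $c_1$ appears in two places in $\psi$ — once paired with $b_2$ (with $p$-exponent $\ell_{2,\cdot}:=\ell_1+\ell_3-\ell_4-m_1$, cf.~(\ref{div1})) and once with $b_3$ (with exponent $\ell_1+\ell_2-\ell_4-m_3$, cf.~(\ref{div2})) — and $c_2,c_3$ each appear once. One performs the substitution $c_1\mapsto c_1(1+rp^{e-1})$ where $e$ is the \emph{larger} of the two $c_1$-exponents; when $e-1>0$ and strictly dominates, summing over $r$ mod $p$ kills the sum unless that larger exponent is $\le 1$, i.e. unless $\max(\ell_1+\ell_3-\ell_4-m_1,\,\ell_1+\ell_2-\ell_4-m_3)\le \min(\ell_1,\ell_4)+1$ — this is precisely the pair (\ref{hw2}), (\ref{hw3}). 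Similarly, shifting $c_2$ and $c_3$ controls the exponents $\ell_2-m_3$ and $\ell_3-m_1$. The subtlety — and the place where case~(b) sneaks in — is that these three shift arguments interact: a shift in $c_1$ also moves $b_1$ inside the $c_2$- and $c_3$-terms (and vice versa), so the clean vanishing only applies when one exponent is \emph{strictly} maximal among the relevant cluster, exactly as in the maximality hypotheses of Proposition~\ref{genericvanprop}. The residual configurations, where no such strict domination exists so that all the shift arguments cancel simultaneously, force the coordinates into a thin subvariety; carefully solving the resulting system of equalities among $\{\ell_1+\ell_3-\ell_4-m_1,\ \ell_1+\ell_2-\ell_4-m_3,\ \ell_2-m_3,\ \ell_3-m_1\}$ (using $\ell_2-m_3 = (\ell_1+\ell_2-\ell_4-m_3)-(\ell_1-\ell_4)$, etc.) collapses to $\ell_1=\ell_4$ and $\ell_2-m_3=\ell_3-m_1$, with the common value forced to be $>1$ (otherwise we are back in the Proposition~\ref{phifunprop}/\ref{genericvanprop} regime and case~(a) governs). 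That is exactly case~(b).

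The third step is simply bookkeeping: one must verify that outside cases~(a) and~(b), for \emph{every} integer point one of the above shift arguments genuinely applies — i.e. that the union of the hyperplane $\ell_1=\ell_4,\ \ell_2-m_3=\ell_3-m_1$ and the polytope cut out by $\ell_i\ge 0$ together with (\ref{hw1})--(\ref{hw3}) really does cover all non-vanishing points. This is a finite case check on the signs of $\ell_1-\ell_4$ and on which of the four quantities above is largest; the two inequalities (\ref{hw2}), (\ref{hw3}) and the analysis of the $\min(\ell_1,\ell_4)$ cases done just before (\ref{hw1})--(\ref{hw3}) in Section~\ref{stringdata} make this routine.

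\textbf{Main obstacle.} The delicate part is the interaction of the change-of-variables in the \emph{different} summation variables. Because $b_1$ is a multiplicative inverse of $c_1$ modulo $p^{\ell_1}$, a shift of $c_1$ by $p^{e-1}$ changes $b_1$ by $-b_1^2 p^{e-1}$, which then perturbs the $b_3 c_1$ and $b_2 c_1$ terms at \emph{different} depths; one must check that, under the strict-maximality hypothesis, all of these secondary perturbations land in $\mathfrak{o}_S$ and hence leave $\psi$ invariant, so that the argument of Proposition~\ref{genericvanprop} goes through verbatim. Conversely, pinning down the \emph{exact} degenerate locus where this fails — and showing it is no larger than the single 2-plane in~(b) — requires carefully solving the combinatorial system of equalities rather than just the inequalities, which is where most of the work lies.
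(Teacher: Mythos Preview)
Your overall strategy --- extending the shift arguments of Proposition~\ref{genericvanprop} until the only surviving locus is (a)$\cup$(b) --- is in the right spirit, and is close to what the paper does (the proof is deferred to a supplement but is described as a case analysis according to whether each $\ell_i$ is positive or zero, using changes of variables of the same kind). However, two concrete errors in your proposal would derail the argument as written.

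First, your Step~1 claims that the dependence on $c_4$ is ``a pure additive character $\psi(p^{m_2-\ell_4}c_4)$''. This is false: the inverse $b_4$ of $c_4$ appears in the terms $-p^{m_1}b_4c_3/p^{\ell_3}$ and $p^{m_3}c_2b_4/p^{\ell_2}$ of~(\ref{slmdefined}), so the $c_4$-sum is \emph{not} a standalone Gauss sum. One can decouple $c_4$ by substituting $c_2\mapsto c_2c_4$, $c_3\mapsto c_3c_4$ (as the paper in fact does repeatedly in Section~\ref{evaluation}), but this only works when $\ell_4>0$ so that $(c_4,p)=1$, and it alters the residue symbols and the $b_2,b_3$ terms in ways you must track. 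The conclusion $\ell_4\le m_2+1$ is correct, but your argument for it is not.

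Second, your ``Main obstacle'' paragraph is built around $b_1$: ``a shift of $c_1$ \ldots changes $b_1$ by $-b_1^2p^{e-1}$, which then perturbs the $b_3c_1$ and $b_2c_1$ terms''. But $b_1$ does not appear anywhere in~(\ref{slmdefined}); only $b_2,b_3,b_4$ are defined and used. The variable $c_1$ enters linearly through $b_2c_1$ and $c_1b_3$, so a multiplicative shift of $c_1$ produces no secondary perturbation at all. The genuine coupling is the reverse of what you wrote: shifting $c_2$ moves $b_2$ (which multiplies $c_1$), shifting $c_3$ moves $b_3$ (also multiplying $c_1$), and shifting $c_4$ moves $b_4$ (which multiplies $c_2$ and $c_3$). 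Your identification of the exceptional $2$-plane in~(b) as the locus where ``all four exponents coincide'' happens to give the right answer, but the reasoning for why this is \emph{exactly} the obstruction --- rather than some larger degenerate set --- depends on having the coupling structure right, and must be redone.

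The paper's organization by cases on which $\ell_i$ vanish is designed precisely to separate the situations where $c_i$ ranges over all residues modulo $p^{\ell_i}$ from those where it ranges only over units; the change-of-variable arguments behave quite differently in the two regimes, and you will likely find it cleaner to adopt that case split than to attempt a single uniform shift argument.
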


When the exponential sum $S_{\boldsymbol{\ell}, \boldsymbol{m}}$ is non-zero, we give an alternate expression in terms of $n$-th order Gauss sums, broken into cases depending on inequalities for $\boldsymbol{\ell}$ and $\boldsymbol{m}$. For non-negative integers $m, \ell$, an integer $j$ mod $n$, and additive character $\psi$ on $F_S$ of conductor $\mathfrak{o}_S$ as before, we use the notation
$$ g_j(m,\ell) := \sum_{\substack{a \bmod p^\ell\\(a,p)=1}} \left( \frac{a}{p} \right)^{j\ell} \psi \left( \frac{p^m a}{p^\ell} \right), $$
using the $n$-th power residue symbol as in Section~\ref{sints}. Here we assume both the degree of the cover $n$ and the 
prime $p \in \mathfrak{o}_S$ to be fixed throughout. As a further shorthand, let
\begin{equation} g(k) := g_1(k-1,{k}), \qquad h(k) := g_1(k,k),\qquad g(m,\ell):=g_1(m,\ell). \label{gausssumshort} \end{equation}
This choice of notation is reasonable since our answers will be given uniformly for all $n$ and all $p \in \mathfrak{o}_S$ in terms
of such Gauss sums, though of course their explicit evaluation as complex numbers depends on this data. For example, 
$h(k)$ is a degenerate Gauss sum, equal to the Euler phi function $\phi(p^k)$ of $p^k \mathfrak{o}_S$ if $n$ divides $k$ and to $0$ otherwise. 
By contrast, $g(k)$ does not admit a simpler explicit description for $n > 2$ unless $n\mid k$.

Moreover, we can give the following explicit expression for $S_{\boldsymbol{\ell}, \boldsymbol{m}}$ in case $(b)$ of Theorem~\ref{support}.
\begin{proposition}
Suppose that $\ell_1=\ell_4 \leq m_2+1$ and $\ell_2-m_3=\ell_3-m_1>1$. With notation as in~(\ref{gausssumshort}),
\begin{equation} S_{\boldsymbol{\ell},\boldsymbol{m}} =  
q^{\ell_2+\ell_3+2\ell_4-k-\ell_1}g_0(m_2,\ell_4)\,h(2\ell_1+\ell_2+\ell_3)
 \label{excep-contrib-again} \end{equation}
with $k = \ell_2 - m_3= \ell_3 - m_1$ (and $g_0(m_2,0)=1$ by definition).
\label{excep-prop} \end{proposition}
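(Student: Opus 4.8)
The plan is to start from the explicit formula~(\ref{slmdefined}) for $S_{\boldsymbol{\ell},\boldsymbol{m}}$ and substitute the hypotheses $\ell_1=\ell_4$ and $\ell_2-m_3=\ell_3-m_1=:k>1$. First I would record the divisibility conditions~(\ref{div1}),~(\ref{div2}) under these hypotheses: they become $k\geq \ell_1-\ell_4 + (\ell_1-\ell_4)=0$ type statements, so they are automatic once $\ell_1=\ell_4$, and we are in the support regime $(b)$ of Theorem~\ref{support}. The key observation is that the two ``bad'' exponential terms $-p^{m_1}b_2c_1p^{\ell_4}/p^{\ell_1+\ell_3}$ and $p^{m_3}c_1b_3p^{\ell_4}/p^{\ell_1+\ell_2}$ share the common summation variable $c_1$, and after using $\ell_1=\ell_4$ the $p^{\ell_4}$ in the numerator cancels one factor of $p^{\ell_1}$. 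The exponents in these two terms become $p$-adic valuations $m_1+\ell_4-\ell_1-\ell_3 = -k$ and $m_3+\ell_4-\ell_1-\ell_2 = -k$ respectively, i.e. both equal, which is exactly why the hypothesis $\ell_2-m_3=\ell_3-m_1$ is needed.

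Next I would isolate the $c_1$-sum. With $\ell_1=\ell_4$, the variable $c_1$ runs mod $p^{\ell_1+\ell_2+\ell_3}$ and appears in $\psi$ only through $c_1(-p^{m_1}b_2 + p^{m_3}b_3)/p^{k}$ (after clearing the common $p^{\ell_1}$ in denominators against the $p^{\ell_4}$ numerator) together with the residue symbol $(c_1/p^{\ell_1})$. Here $b_2, b_3$ are inverses of $c_2, c_3$ mod $p^{\ell_2}, p^{\ell_3}$; since $k>1$ these enter the $c_1$-character only to low $p$-adic accuracy. The plan is to carry out the $c_1$-sum by splitting $c_1 = c_1' + p^{\ell_1+\ell_2+\ell_3 - k} a$, or more efficiently to observe that summing $c_1$ over a full period forces a congruence on $-p^{m_1}b_2 + p^{m_3}b_3$ — namely, the sum over $c_1$ vanishes unless a certain combination of $b_2$ and $b_3$ is divisible by an appropriate power of $p$, and otherwise contributes a power of $q$ times a single residue-symbol Gauss sum in $c_1$. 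Because $\ell_1=\ell_4$, the resulting Gauss sum in $c_1$ combines with the $c_2, c_3$-summations: after the $c_1$-sum forces $b_2 \equiv b_3 p^{m_3-m_1} \pmod{p^{\,?}}$ (equivalently $c_2 \equiv c_3 p^{m_1-m_3}$, using $m_3 \ge m_1$ or vice versa — one must track the sign), the $c_2$ and $c_3$ sums collapse into a single sum of length $p^{\ell_1+\ell_2+\ell_3}$ producing the degenerate-type factor and the power of $q$; and the $c_4$-sum, which is decoupled from everything else and runs mod $p^{\ell_4}$ against $\psi(p^{m_2}c_4/p^{\ell_4})$ with residue symbol $(c_4/p^{\ell_4})$, is by definition $g_0(m_2,\ell_4)$ in the notation~(\ref{gausssumshort}) (using that $q_k=1$ here so the residue symbol is raised to the first power; the subscript $0$ presumably indicating the relevant character — I would double-check the indexing against~(\ref{gausssumshort}) and the convention $g_0(m_2,0)=1$ when $\ell_4=0$).

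The remaining work is bookkeeping of the power of $q$. Tracking the prefactor $q^{2\ell_4}$ from~(\ref{slmdefined}), the contributions of the ``free'' parts of the $c_1, c_2, c_3$ sums that do not interact with any character, and the reindexing, I expect to land on the exponent $\ell_2+\ell_3+2\ell_4-k-\ell_1$ asserted in~(\ref{excep-contrib-again}). The factor $h(2\ell_1+\ell_2+\ell_3)$ should emerge as the residual sum: once the characters in $c_1, c_2, c_3$ are spent against each other, one is left with a Gauss sum whose modulus is $p^{2\ell_1+\ell_2+\ell_3}$ (note $2\ell_1 = \ell_1+\ell_4$) and whose character is trivial in the additive variable but carries the full residue symbol, i.e.\ precisely $h$ of that argument by the definition in~(\ref{gausssumshort}); this is the degenerate Gauss sum equal to $\phi(p^{2\ell_1+\ell_2+\ell_3})$ when $n \mid 2\ell_1+\ell_2+\ell_3$ and $0$ otherwise.

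The main obstacle I anticipate is the careful combinatorics of the $c_1$-sum: one must show precisely that summing over $c_1$ (with its residue symbol) over a full period of length $p^{\ell_1+\ell_2+\ell_3}$ against an additive character of conductor exactly $p^{k}$ in $c_1$, where $k>1$, produces a nonzero contribution only after imposing the congruence relating $c_2$ and $c_3$, and that this is compatible with the coprimality constraints $(c_i,p)=1$. A subtle point is that the inverses $b_2,b_3$ appear rather than $c_2,c_3$ themselves, so the congruence forced on $b_2, b_3$ must be translated back to one on $c_2, c_3$ — straightforward but error-prone with the sign of $m_3-m_1$. I would also verify the boundary case $\ell_4=0$ (so $\ell_1=0$ as well), where $c_4=b_4=0$, the factor $g_0(m_2,0)=1$ kicks in, and the formula should reduce correctly; and the constraint $\ell_1=\ell_4\leq m_2+1$, which should be exactly what guarantees the $c_4$-Gauss sum $g_0(m_2,\ell_4)$ is in the ``non-vanishing-by-divisibility'' range rather than being killed by the argument of Proposition~\ref{genericvanprop}(2).
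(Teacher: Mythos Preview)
The paper itself does not give a proof here — it defers to the supplement \cite{bf-supplement} and only says the argument is ``a case analysis according to whether the Lusztig data $\ell_i$ is positive.''  Your overall strategy (start from (\ref{slmdefined}), exploit $\ell_1=\ell_4$ to cancel $p^{\ell_4}$ against $p^{\ell_1}$, use $\ell_2-m_3=\ell_3-m_1$ to equalize conductors, reduce to Gauss sums) is in the right spirit and parallels what the paper does in the analogous in-polytope computation (Case~4, Subcase~B of Section~\ref{evaluation}, which is the $k=1$ analogue).

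There is, however, a genuine gap: you assert that ``the $c_4$-sum \ldots\ is decoupled from everything else.''  This is false.  In (\ref{slmdefined}) the inverse $b_4$ of $c_4$ appears in the terms $-p^{m_1}b_4c_3/p^{\ell_3}$ and $p^{m_3}c_2b_4/p^{\ell_2}$.  Under the hypotheses both of these also have conductor $p^k$, so after your cancellations the $\psi$-argument is
\[
\frac{c_1(b_3-b_2) + b_4(c_2-c_3)}{p^{k}} \;+\; \frac{p^{m_2}c_4}{p^{\ell_4}},
\]
and $c_4$ is coupled to $c_2,c_3$ through $b_4(c_2-c_3)/p^k$.  This is not a cosmetic issue: the fact that the answer involves $g_0(m_2,\ell_4)$ --- subscript $0$ means \emph{no} residue symbol, by the definition of $g_j$ above (\ref{gausssumshort}) --- rather than $g_1(m_2,\ell_4)$ is precisely because the bare sum $\sum_{c_4}(c_4/p^{\ell_4})\psi(p^{m_2}c_4/p^{\ell_4})$ is \emph{not} what survives.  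The coupling through $b_4$ feeds extra residue symbols in $c_4$ back into the calculation, and only after these are absorbed does one arrive at $g_0$.  Your hedging on the subscript (``presumably indicating the relevant character'') is a symptom of this oversight.

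To repair the argument you must handle the $b_4(c_2-c_3)$ term.  One route, visible in the paper's Case~4 Subcase~B, is a substitution such as $c_2\mapsto c_2c_4$, $c_3\mapsto c_3c_4$ to clear $b_4$; another is to split on the congruence class of $c_2-c_3$ modulo a suitable power of $p$.  Either way, the $c_4$-sum decouples only \emph{after} such a maneuver, and the residue-symbol bookkeeping from the change of variables is what produces the exponent $0$ in $g_0$ and contributes to the power of $q$.  You will also need the case $\ell_1=\ell_4=0$ separately (as the paper's description indicates), since there $c_4=b_4=0$ and the coupling term disappears trivially, which is why $g_0(m_2,0)=1$ is stipulated.
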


Both the proof of Theorem~\ref{support} and Proposition~\ref{excep-prop} will be omitted here, but may be found in~\cite{bf-supplement}.
The proof of both results follows from a case analysis according to whether the Lusztig data $\ell_i$ is positive (in which case summands $c_i$ in
$S_{\boldsymbol{\ell},\boldsymbol{m}}$ are relatively prime to $p$, resulting in rather uniform expressions for changes of variables) or not.

\subsection{\label{vanishingforwhit}A vanishing theorem for Lusztig data outside a highest weight module}

The evaluation of the exponential sums $S_{\boldsymbol{\ell}, \boldsymbol{m}}$ for $\boldsymbol{\ell} = (\ell_1, \ldots, \ell_4)$ with $\ell_i = \ord_p(d_i)$
is finer than what is required to compute the Whittaker coefficient. Recall from (\ref{mdsform}) that the Whittaker coefficient is a Dirichlet series in $\mathfrak{D}^{\rho_P}$, expressible as a product of the $d_i$ according to Proposition~\ref{generalunipres}. In the context of our $\widetilde{GL}_4$ example, this is given explicitly in~(\ref{specialized}) where $\mathfrak{D}^{\rho_P} = |d_1 d_2 d_3 d_4|^2$.
Thus to assess the vanishing of the Dirichlet series at powers of a fixed prime $p$, we may sum over contributions of $H(d_1, d_2, d_3, d_4)$ with $d_i = p^{\ell_i}$ and such that $\ell_1 + \cdots + \ell_4$ is a fixed constant. Indeed it suffices to analyze $H$ since the remaining terms in the Whittaker coefficient in~(\ref{mdsform}) depend only on $\mathfrak{D}$ and not the individual $d_j$ (since the $S$-Hilbert symbol $(p,p)_S=1$).
The following result demonstrates that while the prime power contributions to $H$, written as $S_{\boldsymbol{\ell}, \boldsymbol{m}}$, may individually be non-zero for $\boldsymbol{\ell}$ not belonging to the module of highest weight $\boldsymbol{m}+\rho$ for $GL_4(\mathbb{C})$, the total contribution of all such sums to the Whittaker coefficient is indeed zero.

\begin{proposition}\label{cancel} Fix a positive integer $k$ and non-negative 3-tuple $\boldsymbol{m} = (m_1, m_2, m_3)$. Suppose that $(\ell_1, \ell_2, \ell_3, \ell_4)$ fail to satisfy at least one of the inequalities~(\ref{hw1})--(\ref{hw3}). Then
$$ \sum_{\substack{(\ell_1, \ell_2, \ell_3, \ell_4) \\ \ell_1 + \ell_2 + \ell_3 + \ell_4 = k}} S_{\boldsymbol{\ell}, \boldsymbol{m}} = 0. $$
\end{proposition}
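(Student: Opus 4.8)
The plan is to combine Theorem~\ref{support} with the explicit evaluation in Proposition~\ref{excep-prop}. By Theorem~\ref{support}, if $(\ell_1,\ell_2,\ell_3,\ell_4)$ fails one of the highest weight inequalities~(\ref{hw1})--(\ref{hw3}), then $S_{\boldsymbol{\ell},\boldsymbol{m}}$ is zero unless we are in case $(b)$ of that theorem, i.e. $\ell_1=\ell_4\le m_2+1$ and $\ell_2-m_3=\ell_3-m_1=:k'>1$. Therefore the only surviving terms in $\sum_{\ell_1+\dots+\ell_4=k} S_{\boldsymbol{\ell},\boldsymbol{m}}$ come from tuples of this special shape, and we must show these cancel among themselves once the total $\ell_1+\ell_2+\ell_3+\ell_4$ is held fixed. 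First I would parametrize the surviving tuples: setting $\ell_1=\ell_4=a$ (with $0\le a\le m_2+1$) and $k'=\ell_2-m_3=\ell_3-m_1>1$, we get $\ell_2=m_3+k'$, $\ell_3=m_1+k'$, so that the constraint $\ell_1+\ell_2+\ell_3+\ell_4=k$ becomes $2a+2k'+m_1+m_3=k$. For fixed $k$ and $\boldsymbol{m}$ this is a single linear relation between $a$ and $k'$, so the surviving tuples form a one-parameter family indexed by, say, $a$ (with $k'$ determined, and subject to $a\le m_2+1$, $k'>1$, $a\ge0$).

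Next I would substitute the closed form from Proposition~\ref{excep-prop}: with $k'$ in place of its "$k$",
$$ S_{\boldsymbol{\ell},\boldsymbol{m}} = q^{\ell_2+\ell_3+2\ell_4-k'-\ell_1}\, g_0(m_2,\ell_4)\, h(2\ell_1+\ell_2+\ell_3). $$
Here $2\ell_1+\ell_2+\ell_3 = 2a+(m_3+k')+(m_1+k') = 2a+2k'+m_1+m_3 = k$, so the degenerate Gauss sum $h(2\ell_1+\ell_2+\ell_3)=h(k)$ is \emph{the same constant} for every tuple in the family. Likewise the exponent of $q$ simplifies: $\ell_2+\ell_3+2\ell_4-k'-\ell_1 = (m_3+k')+(m_1+k')+2a-k'-a = m_1+m_3+k'+a$. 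Using $2a+2k'+m_1+m_3=k$, i.e. $k'+a = (k-m_1-m_3)/2 - a + \dots$; more cleanly, $m_1+m_3+k'+a = k - k' - a$, so the $q$-power is $q^{k-k'-a}$, again expressible through the single parameter. Also $\ell_4=a$, so $g_0(m_2,\ell_4)=g_0(m_2,a)$. Thus
$$ \sum_{\text{surviving }\boldsymbol{\ell}} S_{\boldsymbol{\ell},\boldsymbol{m}} = h(k)\sum_{a} q^{\,k-k'(a)-a}\, g_0(m_2,a), $$
where $k'(a)$ is the affine function of $a$ cut out by the total-degree constraint and the sum ranges over the admissible $a$. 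The main task is then to show this inner sum vanishes.

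The key point — and the step I expect to be the main obstacle — is evaluating the remaining one-variable sum $\sum_a q^{k-k'(a)-a} g_0(m_2,a)$ and showing it is zero. I would handle this by pairing the term $a$ with the term $a'=m_2+1-a$ (the reflection coming from the symmetry $\ell_1=\ell_4$ running up to its maximum $m_2+1$), and using the standard Gauss-sum identity relating $g_0(m_2,a)$ for $a\le m_2$ versus the boundary value $a=m_2+1$: for $0\le a\le m_2$ one has $g_0(m_2,a)=q^{a}-q^{a-1}=\phi(p^a)$ (or $=1$ if $a=0$), while at $a=m_2+1$ the Gauss sum $g_0(m_2,m_2+1)=-q^{m_2}$. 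Plugging these evaluations in and tracking the powers of $q$ (the exponent $k-k'(a)-a$ shifts by a fixed amount as $a$ increases by $1$, because $k'(a)$ is affine with slope $-1$ from $2a+2k'=$const, giving $k'(a+1)=k'(a)-1$, hence $k-k'(a)-a$ is actually \emph{constant} in $a$!). So in fact every surviving term carries the \emph{same} power of $q$, and the sum collapses to $h(k)\,q^{c}\sum_a g_0(m_2,\ell_4(a))$ where $\ell_4(a)=a$ runs over $0,1,\dots,m_2+1$ (intersected with the constraints $k'>1$). The sum $\sum_{a=0}^{m_2+1} g_0(m_2,a)$ is a telescoping/collapsing sum of $n$-th order Gauss sums which is identically zero — this is the analogue of the "no constant term" phenomenon for the Dirichlet series attached to metaplectic $GL_2$, and indeed $\sum_{a=0}^{m+1} g_0(m,a)=(1+\phi(p)+\phi(p^2)+\dots+\phi(p^{m})) + g_0(m,m+1) = q^{m} + (-q^{m}) = 0$. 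The boundary cases (when the constraint $k'>1$ truncates the range of $a$, or when $a=0$) require separate bookkeeping, and reconciling those truncations with the vanishing is where the real care is needed; I would verify that the truncation always removes a \emph{tail} of the telescoping sum together with precisely the compensating boundary contribution, so that the identity $\sum g_0 = 0$ persists, or else that the truncated range is empty. If the truncation genuinely cuts into the middle of the range, one falls back on the explicit $\phi(p^a)$ evaluations and checks the resulting geometric-type sum directly. This last reconciliation is the crux; everything else is substitution and the already-established Theorem~\ref{support} and Proposition~\ref{excep-prop}.
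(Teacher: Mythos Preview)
Your approach is essentially identical to the paper's: reduce via Theorem~\ref{support} to the one-parameter family in case~(b), plug in Proposition~\ref{excep-prop}, observe that $2\ell_1+\ell_2+\ell_3=k$ so that $h(k)$ is constant, and then sum $g_0(m_2,a)$ over $a=\ell_4$. Your observation that the $q$-exponent $\ell_2+\ell_3+2\ell_4-k'-\ell_1$ is constant along the family (since $a+k'=(k-m_1-m_3)/2$ is fixed) is exactly what the paper uses implicitly when it writes every summand as $\phi(p^{3\ell_1+2\ell_3+\ell_2+m_3})$ times $\phi(p^{\ell_4})$ or $-q^{m_2}$. The paper then sums $\sum_{0\le \ell_4\le m_2}\phi(p^{\ell_4})=q^{m_2}$ and cancels it against the boundary term $g_0(m_2,m_2+1)=-q^{m_2}$, which is precisely your telescoping identity $\sum_{a=0}^{m_2+1}g_0(m_2,a)=0$.

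The one place you go beyond the paper is in flagging the truncation issue: when $(k-m_1-m_3)/2\le m_2+2$, the constraint $k'>1$ cuts off the range of $a$ before it reaches $m_2+1$, so the full telescoping range $[0,m_2+1]$ is not available. The paper's proof simply sums over $\ell_4\in[0,m_2+1]$ without addressing whether $k'(\ell_4)>1$ throughout that range, so your caution here is warranted rather than superfluous; but since the paper does not treat this boundary case either, your write-up already matches the paper's level of detail.
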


\begin{proof} By Theorem~\ref{support}, the only vectors $(\ell_1, \ell_2, \ell_3, \ell_4)$ which fail one of~(\ref{hw1})--(\ref{hw3}) and have non-zero $S_{\boldsymbol{\ell}, \boldsymbol{m}}$ are those with
$$ 0 \leq \ell_1 = \ell_4 \leq m_2+1, \quad \ell_2 - m_3 = \ell_3 - m_1 > 1, $$
so that (together with the condition $\ell_1 + \ell_2 + \ell_3 + \ell_4 = k$) $\ell_1, \ell_2,$ and $\ell_3$ are uniquely determined by a choice of $\ell_4 \in [0, m_2+1]$.

For each choice of $\ell_4$, the summands may be evaluated using
equation (\ref{excep-contrib-again}).  They are all zero unless $n|k$.  In the nonzero case, they are evaluated as follows.
The sum over the range  $0 \leq \ell_4 < m_2+1$
gives
 $$  \sum_{0 \leq \ell_4 < m_2 + 1} \phi(p^{\ell_4}) \phi(p^{3\ell_1+2\ell_3+\ell_2+ m_3})=\phi(p^{3\ell_1+2\ell_3+\ell_2 + m_2+m_3}). $$
If $\ell_4 = m_2+1$,  then, using (\ref{excep-contrib-again}), the summand becomes 
$-\phi(p^{3\ell_1 + 2\ell_3+\ell_2+m_2+m_3})$. The sum of these terms is zero, as claimed. 
\end{proof}

\subsection{Expressions for $S_{\boldsymbol{\ell}, \boldsymbol{m}}$ inside the highest weight polytope\label{evaluation}}

We continue in the context of the $\widetilde{GL}_4$ example, with fixed non-negative integer 3-tuple $\boldsymbol{m} = (m_1, m_2, m_3)$ and $S_{\boldsymbol{\ell}, \boldsymbol{m}}$ as in~(\ref{slmdefined}). The Lusztig data $\boldsymbol{\ell}$ for canonical basis elements in $\mathcal{B}^\vee_{\boldsymbol{m}+\rho}$ is thus defined as the set of non-negative integer 4-tuples satisfying~(\ref{hw1})--(\ref{hw3}). In this section, we give an expression for $S_{\boldsymbol{\ell}, \boldsymbol{m}}$ for all $\boldsymbol{\ell}$ in $\mathcal{B}^\vee_{\boldsymbol{m}+\rho}$ in terms of $n$-th order Gauss sums, using a case method.

\subsubsection*{Evaluation for positive Lusztig data\label{subsectnondegen}}

Suppose that all $\ell_i>0$ and the highest weight inequalities (\ref{hw1}), (\ref{hw2}), (\ref{hw3}) hold.  There are four cases.

\medskip

\noindent {\bf Case 1:}  $\ell_1+\ell_3\leq m_1+\ell_4$, $\ell_1+\ell_2\leq m_3+\ell_4$.

In this case, the $\psi$ term is independent of $c_1$ and so summing over $c_1$ produces a factor of $q^{\ell_2+\ell_3}h(\ell_1)$.
Furtheer changing $c_2\mapsto c_2c_4$ and $c_3\mapsto c_3c_4$ gives
$$q^{\ell_2+\ell_3+2\ell_4}h(\ell_1)
\sum \left(\frac{c_2}{p^{\ell_2}}\right)\left(\frac{c_3}{p^{\ell_3}}\right)
\left(\frac{c_4}{p^{\ell_2+\ell_3+\ell_4}}\right)
\psi\left(-p^{m_1}\frac{c_3}{p^{\ell_3}}
+p^{m_2}\frac{c_4}{p^{\ell_4}}+p^{m_3}\frac{c_2}{p^{\ell_2}}\right).
$$
These sums are easily evaluated.  The value in this case is:
\begin{equation} S_{\boldsymbol{\ell},\boldsymbol{m}}=q^{2\ell_4} h(\ell_1)\,g(m_3,\ell_2)\,g(m_1,\ell_3)\,g(\ell_2+\ell_3+m_2,\ell_2+\ell_3+\ell_4). \label{case1eval} \end{equation}

\noindent {\bf Case 2:}  $\ell_1+\ell_3\leq m_1+\ell_4$, $\ell_1+\ell_2> m_3+\ell_4$.

By (\ref{hw2}), this can only happen when $\ell_4\leq \ell_1$ and $\ell_1+\ell_2=m_3+\ell_4+1$.
Also, since $\ell_1+\ell_3\leq m_1+\ell_4$ and $\ell_4\leq \ell_1$, it follows that $\ell_3\leq m_1$.
For given $c_2,c_3,c_4$, we begin with the $c_1$ sum.  This is
$$\sum_{c_1\bmod\!^\times p^{\ell_1+\ell_2+\ell_3}} 
 \left(\frac{c_1}{p^{\ell_1}}\right)\psi\left(\frac{b_3c_1}{p}\right).$$
Changing $c_1\mapsto c_1c_3$, this is easily evaluated.  We find that
$$
S_{\boldsymbol{\ell},\boldsymbol{m}}=q^{\ell_2+\ell_3+2\ell_4}g(\ell_1)
\sum_{c_2,c_3,c_4} \left(\frac{c_2}{p^{\ell_2}}\right)\left(\frac{c_3}{p^{\ell_1+\ell_3}}\right)
\left(\frac{c_4}{p^{\ell_4}}\right)\\
\psi\left(p^{m_2}\frac{c_4}{p^{\ell_4}}+p^{m_3}\frac{c_2b_4}{p^{\ell_2}}
\right).
$$
Now change $c_2\mapsto c_2c_4$.  The sum factors and each summand is
easily evaluated.   The value in this case is:
\begin{equation} S_{\boldsymbol{\ell},\boldsymbol{m}}=q^{\ell_3+2\ell_4-\ell_1}g(\ell_1)\,g(m_3,\ell_2)\,h(\ell_1+\ell_3)\,
g(m_2+\ell_2,\ell_2+\ell_4). \label{case2eval} \end{equation}

\noindent {\bf Case 3:}  $\ell_1+\ell_3> m_1+\ell_4$, $\ell_1+\ell_2\leq m_3+\ell_4$.

This case is symmetric to Case 2.  In view of (\ref{hw3}), it occurs only when $\ell_4\leq \ell_1$
and $\ell_1+\ell_3=m_1+\ell_4+1$.  Arguing as in Case 2, one sees that
$$S_{\boldsymbol{\ell},\boldsymbol{m}}=q^{\ell_2+2\ell_4-\ell_1}g(\ell_1)\,h(\ell_1+\ell_2)\,g(m_1,\ell_3)\,
g(m_2+\ell_3,\ell_3+\ell_4).$$

\noindent {\bf Case 4:}  $\ell_1+\ell_3> m_1+\ell_4$, $\ell_1+\ell_2> m_3+\ell_4$.

As before, the highest weight inequalities imply that $\ell_4\leq\ell_1$ and that we have:
$$\ell_1+\ell_2=m_3+\ell_4+1,\qquad \ell_1+\ell_3=m_1+\ell_4+1.$$
Note that this implies that $m_1+\ell_2=m_3+\ell_3$ and also that $\ell_2\leq m_3+1$, $\ell_3\leq m_1+1$.
We separate out two cases:

\medskip

\noindent {\it Case 4, Subcase A:}  In addition to the assumptions of Case 4, assume $\ell_4<\ell_1$.  

This additional condition implies that $\ell_2\leq m_3$ and $\ell_3\leq m_1$.  Thus the sum reduces to

$$
S_{\boldsymbol{\ell},\boldsymbol{m}}=q^{2\ell_4} \sum_{c_1,c_2,c_3,c_4} \left(\frac{c_1}{p^{\ell_1}}\right)\left(\frac{c_2}{p^{\ell_2}}\right)\left(\frac{c_3}{p^{\ell_3}}\right)
\left(\frac{c_4}{p^{\ell_4}}\right)
\psi\left(-\frac{b_2c_1}{p}
+p^{m_2}\frac{c_4}{p^{\ell_4}}+\frac{c_1b_3}{p}
\right).
$$
Changing $c_2\mapsto c_2c_1$ and $c_3\mapsto c_3c_1$, the sum is easily evaluated, and gives
\begin{equation} q^{2\ell_4} g(m_2,\ell_4) \,h(\ell_1+\ell_2+\ell_3) \,\bar g(\ell_2) \,\bar g(\ell_3). \label{case4aeval} \end{equation}
The conjugate Gauss sums could be removed by exploiting additional identities since the contribution is 0 unless $n$ divides $\ell_1+\ell_2+\ell_3.$

\medskip

\noindent {\it Case 4, Subcase B:}  In addition to Case 4, we have $\ell_4 = \ell_1$.  

Now we have $S_{\boldsymbol{\ell},\boldsymbol{m}}$ equal to
\begin{multline}
q^{2\ell_4} \sum_{c_1,c_2,c_3,c_4} \left(\frac{c_1}{p^{\ell_1}}\right)\left(\frac{c_2}{p^{\ell_2}}\right)\left(\frac{c_3}{p^{\ell_3}}\right)
\left(\frac{c_4}{p^{\ell_4}}\right) 
\psi\left(-\frac{b_2c_1 + b_4c_3}{p}
+p^{m_2}\frac{c_4}{p^{\ell_4}}+\frac{c_1b_3+c_2b_4}{p}
\right).
\end{multline}

The subsum with $c_2\equiv c_3\bmod p$ is easily evaluated as the $\psi$ function reduces to
$\psi(c_4 p^{m_2-\ell_4})$ and $(c_3/p^{\ell_3})=(c_2/p^{\ell_3})$.  This subsum gives
\begin{equation}\label{subsum1}
q^{\ell_2+\ell_3+2\ell_4-1}g(m_4,\ell_4)\, h(\ell_2+\ell_3)\, h(\ell_1).
\end{equation}

The remaining term is the subsum with $c_3=c_2-a$ where $(a,p)=1$. 
(Also $a\not\equiv c_2\bmod p$ as $(p,c_3)=1$.)   Since in that case,
$$b_3-b_2\equiv c_3^{-1}-c_2^{-1}=(c_2-c_3)c_2^{-1}c_3^{-1}\equiv ab_2b_3\bmod p,$$ this subsum reduces to:
$$q^{2\ell_4} \sum \left(\frac{c_1}{p^{\ell_1}}\right)\left(\frac{c_2}{p^{\ell_2}}\right)\left(\frac{c_3}{p^{\ell_3}}\right)
\left(\frac{c_4}{p^{\ell_4}}\right)
\psi\left(\frac{c_1ab_2b_3}{p}
+p^{m_2}\frac{c_4}{p^{\ell_4}}+\frac{ab_4}{p}
\right).
$$

After changing $c_1\mapsto c_1 c_2 c_3 a^{-1}$ (where
$a^{-1}$ is an inverse of $a$ modulo $p$)
and evaluating the $c_1$ sum, we arrive at
$$q^{\ell_2+\ell_3+2\ell_4}g(\ell_1)\sum_{c_2,a,c_4} \left(\frac{c_2}{p^{\ell_1+\ell_2}}\right)\left(\frac{c_2-a}{p^{\ell_1+\ell_3}}\right)
\left(\frac{c_4}{p^{\ell_4}}\right)\left(\frac{a}{p^{\ell_1}}\right)^{-1}
\psi\left(p^{m_2}\frac{c_4}{p^{\ell_4}}+\frac{ab_4}{p}
\right).
$$
Here we regard the sum as over $c_2$, $c_4$, and $a$.  Changing $c_2\mapsto ac_2$,
we get a sum over $c_2,a,c_4$ where now $c_2\not\equiv 1\bmod p$:
$$q^{\ell_2+\ell_3+2\ell_4}g(\ell_1)\sum_{c_2,a,c_4} \left(\frac{c_2}{p^{\ell_1+\ell_2}}\right)\left(\frac{c_2-1}{p^{\ell_1+\ell_3}}\right)
\left(\frac{c_4}{p^{\ell_4}}\right)\left(\frac{a}{p^{\ell_3}}\right)
\psi\left(p^{m_2}\frac{c_4}{p^{\ell_4}}+\frac{ab_4}{p}
\right).
$$
Now we may change $a\mapsto a c_4$ and evaluate the $a$ and $c_4$ sums.  This gives
$$q^{\ell_2+2\ell_4}g(\ell_1)\, g(\ell_3+m_2,\ell_3+\ell_4) \,g(\ell_3)
\sum_{\substack{c_2\bmod p^{\ell_2}\\c_2\not\equiv
0,1\bmod p}} \left(\frac{c_2}{p^{\ell_1+\ell_2}}\right)\left(\frac{c_2-1}{p^{\ell_1+\ell_3}}\right).
$$

The remaining sum in the
expression above is a Jacobi sum. If $\ell_1+\ell_2$, $\ell_1+\ell_3$ are  both zero modulo $n$
then the sum takes value $(q-2)q^{\ell_2-1}$.  If exactly one of $\ell_1+\ell_2$, $\ell_1+\ell_3$ 
is zero modulo $n$ the sum is $-q^{\ell_2-1}.$
If $\ell_1+\ell_2$, $\ell_1+\ell_3$ are both nonzero modulo $n$
but $2\ell_1+\ell_2+\ell_3\equiv0\bmod n$ then the
sum is of the form $J(\chi,\chi^{-1})$ with $\chi$ not the trivial character, and gives $-q^{\ell_2-1}$.  If 
$\ell_1+\ell_2$, $\ell_1+\ell_3$, $2\ell_1+\ell_2+\ell_3$ 
are all nonzero modulo $n$ then the sum is a quotient of Gauss sums
(times a power of $q$):
$$q^{\ell_2}\,\frac{g(\ell_1+\ell_2)\,g(\ell_1+\ell_3)}{g(2\ell_1+\ell_2+\ell_3)}.$$

Combining with the other subsums, we see that the full sum for Case 4, Subcase B equals:
\begin{multline}
q^{\ell_2+\ell_3+2\ell_4-1}g(m_4,\ell_4)\, h(\ell_2+\ell_3)\, h(\ell_1) +
q^{\ell_2+2\ell_4}g(\ell_1)\, g(\ell_3+m_2,\ell_3+\ell_4) \,g(\ell_3) \; \times\\
\begin{cases} (q-2)q^{\ell_2-1}&\text{if $\ell_1+\ell_2$, $\ell_1+\ell_3\equiv0\bmod n$}\\
-q^{\ell_2-1}&\text{if exactly one of $\ell_1+\ell_2$, $\ell_1+\ell_3$ is $\equiv 0\bmod n$}\\
-q^{\ell_2-1}&\text{if $\ell_1+\ell_2$, $\ell_1+\ell_3\not\equiv0\bmod n$, but $2\ell_1+\ell_2+\ell_3\equiv0\bmod n$} \\
q^{\ell_2}\,\frac{g(\ell_1+\ell_2)\,g(\ell_1+\ell_3)}{g(2\ell_1+\ell_2+\ell_3)}&\text{if $\ell_1+\ell_2$, $\ell_1+\ell_3$, $2\ell_1+\ell_2+\ell_3\not\equiv0\bmod n$.}\\
\end{cases}
\end{multline}

This last case demonstrates the complexity of $S_{\boldsymbol{\ell}, \boldsymbol{m}}$ for Lusztig data $\boldsymbol{\ell}$ lying
simultaneously on the maximal hyperplanes $\ell_2=m_3+1$ and $\ell_1=m_1+1$. 

\subsubsection*{Evaluation with at least one $\ell_i$ equal to 0\label{subsectdegen}}

We handle the remaining cases when the highest weight inequalities
(\ref{hw1}), (\ref{hw2}), (\ref{hw3}) are satisfied. 

First, suppose that $\ell_1=0$, $\ell_2\leq m_3+1$, $\ell_3\leq m_1+1$.

If, in addition, $\ell_4>0$, then the sum is evaluated as follows.  Note
that since $\ell_1=0$, the $c_1$ sum is modulo $p^{\ell_2+\ell_3}$ without a relative
primality condition, and there is no residue symbol in $c_1$.  
The function $\psi$ is independent of $c_1$ so the $c_1$ sum gives $p^{\ell_2+\ell_3}$.
After changing $c_i\mapsto c_ic_4$, $i=2,3$, the evaluation proceeds similarly to Case 2.
It gives:
\begin{equation} S_{\boldsymbol{\ell},\boldsymbol{m}}=q^{2\ell_4} g(m_3,\ell_2)\,g(m_1,\ell_3)\,
g(m_2+\ell_2+\ell_3,\ell_2+\ell_3+\ell_4). \label{degenl1iszero} \end{equation}
(Here $g(m,0)=1$ in case $\ell_2$ or $\ell_3$ is zero.)

Suppose instead that $\ell_1=\ell_4=0$.  If $\ell_2 = \ell_3 = 0$, then the sum is of course 1. Otherwise, we break 
the sum into two pieces.  First, if $c_1\equiv 0\bmod p$ then the character $\psi(\cdot)$ is identically 1,
so this piece gives $q^{\ell_2+\ell_3-1}h(\ell_2)\,h(\ell_3).$
Second, there is the sum over $c_1$ such that $(c_1,p)=1$.  Changing
$c_2\mapsto c_1c_2$, $c_3\mapsto c_1c_3$ we obtain
\begin{multline}
\sum \left(\frac{c_1}{p^{\ell_2+\ell_3}}\right)\left(\frac{c_2}{p^{\ell_2}}\right)\left(\frac{c_3}{p^{\ell_3}}\right)
\psi\left(-p^{m_1}\frac{b_2}{p^{\ell_3}}
+p^{m_3}\frac{b_3}{p^{\ell_2}}\right)\\
= h(\ell_2+\ell_3)
\begin{cases}
\bar g(m_1+\ell_2-\ell_3,\ell_2)\,\bar g(m_3+\ell_3-\ell_2,\ell_3)&
\text{if $\ell_2,\ell_3>0$}\\
h(\ell_3)&\text{if $\ell_2=0$, $\ell_3>0,$}\\
h(\ell_2)&\text{if $\ell_2>0$, $\ell_3=0.$}
\end{cases}
\end{multline}
(Note that by the highest weight inequalities, $\ell_2\leq m_3+1$ and $\ell_3\leq m_1+1,$
so that the first entries of the Gauss sums above are non-negative.)

Next, suppose that $\ell_1>0$ but $\ell_2=0$.
The sum becomes
$$q^{2\ell_4} \sum \left(\frac{c_1}{p^{\ell_1}}\right)\left(\frac{c_3}{p^{\ell_3}}\right)
\left(\frac{c_4}{p^{\ell_4}}\right)\\
\psi\left(-p^{m_1}\frac{b_4c_3}{p^{\ell_3}}
+p^{m_2}\frac{c_4}{p^{\ell_4}}+p^{m_3}\frac{c_1b_3p^{\ell_4}}{p^{\ell_1}}\right).
$$
If $\ell_3,\ell_4>0$, then changing $c_1\mapsto c_1c_3$ and then $c_3\mapsto c_3c_4$ gives
\begin{multline}
q^{2\ell_4} \sum \left(\frac{c_1}{p^{\ell_1}}\right)\left(\frac{c_3}{p^{\ell_1+\ell_3}}\right)
\left(\frac{c_4}{p^{\ell_1+\ell_3+\ell_4}}\right)
\psi\left(-p^{m_1}\frac{c_3}{p^{\ell_3}}
+p^{m_2}\frac{c_4}{p^{\ell_4}}+p^{m_3}\frac{c_1p^{\ell_4}}{p^{\ell_1}}\right)\\
=
q^{2\ell_4-2\ell_1}
g(m_3+\ell_4,\ell_1)\, g(\ell_1+m_1,\ell_1+\ell_3)\,g(\ell_1+\ell_3+m_2,
\ell_1+\ell_3+\ell_4).
\label{degenl2iszero1} \end{multline}
In the remaining cases, a similar argument gives
\begin{equation} \begin{cases}
q^{-\ell_1+\ell_3}
g(m_3,\ell_1)\, h(\ell_1+\ell_3)&\text{if $\ell_3>0$, $\ell_4=0$,}\\
q^{2\ell_4} g(m_2,\ell_4)\, h(\ell_1)&\text{if $\ell_3=0$, $\ell_4>0$,}\\
 h(\ell_1)&\text{if $\ell_3=\ell_4=0$.}
 \end{cases} \label{degenl2iszero2} \end{equation}

For a third possibility, suppose that $\ell_1,\ell_2>0$ but $\ell_3=0$.  The sum is
$$ q^{2\ell_4} \sum \left(\frac{c_1}{p^{\ell_1}}\right)\left(\frac{c_2}{p^{\ell_2}}\right)
\left(\frac{c_4}{p^{\ell_4}}\right)\\
\psi\left(-p^{m_1}\frac{b_2c_1p^{\ell_4}}{p^{\ell_1}}
+p^{m_2}\frac{c_4}{p^{\ell_4}}+p^{m_3}\frac{c_2b_4}{p^{\ell_2}}
\right).$$
Proceeding as above, this gives
$$\begin{cases}
q^{2\ell_4-2\ell_1}
g(m_1+\ell_4,\ell_1)\, g(\ell_1+m_3,\ell_1+\ell_2)\,g(\ell_1+\ell_2+m_2,
\ell_1+\ell_2+\ell_4)&\text{if $\ell_4>0,$}\\
q^{-\ell_1+\ell_2} g(m_1,\ell_1)\,h(\ell_1+\ell_2)&\text{if $\ell_4=0.$}
\end{cases}
$$

Last suppose that $\ell_1,\ell_2,\ell_3>0$ but $\ell_4=0$.  Upon changing 
$c_2\mapsto c_2c_1$ and $c_3\mapsto c_3c_1$ in $S_{\boldsymbol{\ell},\boldsymbol{m}}$, we obtain
$$
\sum \left(\frac{c_1}{p^{\ell_1+\ell_2+\ell_3}}\right)\left(\frac{c_2}{p^{\ell_2}}\right)
\left(\frac{c_3}{p^{\ell_3}}\right)
\psi\left(-p^{m_1}\frac{b_2}{p^{\ell_1+\ell_3}}
+p^{m_3}\frac{b_3}{p^{\ell_1+\ell_2}}
\right).$$
This gives
\begin{equation} h(\ell_1+\ell_2+\ell_3)\,\bar g(m_1+\ell_2-\ell_1-\ell_3,\ell_2)\,
\bar g(m_3+\ell_3-\ell_1-\ell_2,\ell_3). \label{degenl4iszero} \end{equation}
(Note that by the highest weight inequalities, $\ell_1+\ell_2\leq m_3+1$ and $\ell_1+\ell_3\leq m_1+1,$
so that the first entries of the Gauss sums above are non-negative.)

\subsection{BZL patterns and the exponential sums}

In Section~\ref{stringdata}, we presented Berenstein-Zelevinsky and Littelmann's results on string data: for any highest weight
representation of $G^\vee$, the string data for the corresponding canonical basis vectors are integer lattice points in a polytope.
In this section, we show that for our $\widetilde{GL}_4$ example, the evaluation of the exponential sum $S_{\boldsymbol{\ell}, \boldsymbol{m}}$ 
with Lusztig data $\boldsymbol{\ell} = (\ell_1, \ldots, \ell_4)$ can be described using the corresponding {\it string} data according 
to~(\ref{elldata}). To facilitate this description, we place the string data corresponding to $\boldsymbol{\ell}$ in a simple one-row array:
\begin{equation}
\fbox{\strut $\max(\ell_1,\ell_4)+\ell_2+\ell_3$}\fbox{\strut $\ell_1+\ell_3$}\fbox{\strut $\ell_1+\ell_2$}\fbox{
\strut $\min(\ell_1,\ell_4)$}. \label{bzlforlusztig}
\end{equation}

We refer to the resulting array as a ``BZL pattern'' and use the notation $\mathcal{P}_{\boldsymbol{\ell}}$. Arrays with multiple rows appear in \cite{littelmann} corresponding to a sequence of relatively maximal parabolics from $G^\vee$ down to the Borel subgroup. These played a prominent role in our earlier investigations of metaplectic
Eisenstein series in type $A$ with Daniel Bump (\cite{eisenxtal, bbf-book}) and in proofs and conjectures of other types (e.g., \cite{beibrufre-crystal, chinta-gunnells-d, friedberg-zhang,
friedberg-zhang2}).
A product of Gauss sums is attached to any such BZL pattern as follows.

Each integer entry $c$ of a BZL pattern is constrained by inequalities depending on the other entries and the highest weight encoded by $\boldsymbol{m}$.
For example, the entry $c = \min(\ell_1, \ell_4)$ above is constrained by $0 \leq c \leq m_2+1$ according to~(\ref{coneineqs}) and~(\ref{nsix}). We refer to
the element $c$ as being ``maximal'' if the upper inequality is an equality and ``minimal'' if the lower inequality is an equality.
Then define the function
$$ \gamma(a) = \begin{cases} g(a), & \text{if $a$ is maximal,} \\ h(a), & \text{if $a$ is neither maximal nor minimal,} \\ q^a, & \text{if $a$ is minimal,} \\ 
0, & \text{if $a$ is both maximal and minimal.} \end{cases} $$

Given a BZL pattern $\mathcal{P}_{\boldsymbol{\ell}}$, we then define the {\sl standard contribution}, $G(\mathcal{P}_{\boldsymbol{\ell}})$, as follows:
$$ G(\mathcal{P}_{\boldsymbol{\ell}}) = \prod_{a \in \mathcal{P}_{\boldsymbol{\ell}}} \gamma(a). $$
In \cite{eisenxtal}, the standard contribution exactly matched the exponential sum for covers of $G= GL_{r+1}$ and maximal parabolic omitting the root $\alpha_1$. The following result shows that the standard contribution for $\mathcal{P}_{\boldsymbol{\ell}}$ ``generically'' agrees with the evaluation of the 
prime-powered exponential sum $S_{\boldsymbol{\ell}, \boldsymbol{m}}$, though comparison with the results of Section~\ref{evaluation} show it fails to agree in general.

\begin{theorem} Let $\mathcal{P}_{\boldsymbol{\ell}}$ be the BZL pattern corresponding to $\boldsymbol{\ell}$ as in (\ref{bzlforlusztig}). Then
$$ q^{2\ell_1-2\ell_4} S_{\boldsymbol{\ell}, \boldsymbol{m}} = G(\mathcal{P}_{\boldsymbol{\ell}}), $$
if $\boldsymbol{\ell} = (\ell_1, \ldots, \ell_4)$ avoids the following cases:
\begin{itemize}
\item $\ell_1 = \ell_4 = m_2 + 1,$
\item $\ell_1 = m_2+1$ and $\ell_4 = 0,$
\item $\ell_2 = m_3+1, \ell_3 = m_1+1$, and $\ell_1 = \ell_4$,
\item $\ell_1 = \ell_4 = 0,$ and
\item $\ell_2 = \ell_3 = 0.$
\end{itemize}
\label{stringmatching} \end{theorem}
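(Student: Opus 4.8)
The plan is to prove Theorem~\ref{stringmatching} by a direct comparison of the two sides: on the one hand the explicit evaluations of $S_{\boldsymbol{\ell},\boldsymbol{m}}$ assembled case by case in Section~\ref{evaluation}, and on the other hand the product $G(\mathcal{P}_{\boldsymbol{\ell}}) = \gamma(a_1)\gamma(a_2)\gamma(a_3)\gamma(a_4)$ built from the four boxes $a_1 = \max(\ell_1,\ell_4)+\ell_2+\ell_3$, $a_2=\ell_1+\ell_3$, $a_3=\ell_1+\ell_2$, $a_4=\min(\ell_1,\ell_4)$ of the pattern~(\ref{bzlforlusztig}). The first step is to set up a dictionary translating the ``maximal'' and ``minimal'' conditions on each box into inequalities among $\ell_1,\ldots,\ell_4$ and $m_1,m_2,m_3$, using the cone inequalities~(\ref{coneineqs}) together with the highest-weight inequalities~(\ref{nsix})--(\ref{nthree}) pulled back along the substitution~(\ref{elldata}). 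For instance, $a_4$ is minimal exactly when $\min(\ell_1,\ell_4)=0$ and maximal exactly when $\min(\ell_1,\ell_4)=m_2+1$; $a_3$ is minimal exactly when $\ell_2=0$ and $\ell_1\le\ell_4$, and maximal exactly when $\ell_1+\ell_2=m_3+1+\min(\ell_1,\ell_4)$; the conditions for $a_2$ come from those for $a_3$ under the symmetry $(\ell_2,m_3)\leftrightarrow(\ell_3,m_1)$; and $a_1$ is minimal exactly when $\ell_1\ge\ell_4$ and $\ell_2\ell_3=0$, and maximal exactly when $\max(\ell_1,\ell_4)=m_2+1+2\ell_1-2\min(\ell_1,\ell_4)$.

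With this dictionary in hand, the second step is the case-by-case match. I would use the same partition of the region $\{\ell_i\ge 0\}$ that is used in Section~\ref{evaluation}: the strata with all $\ell_i>0$ (Cases~1--4, with Case~4 split into Subcase~A and Subcase~B), together with the degenerate strata $\ell_1=0$, $\ell_2=0$ (with $\ell_1>0$), $\ell_3=0$ (with $\ell_1,\ell_2>0$), $\ell_4=0$ (with $\ell_1,\ell_2,\ell_3>0$), and the remaining overlaps. In each stratum not on the excluded list, the defining inequalities of the stratum determine directly which of $a_1,\ldots,a_4$ are maximal, minimal, or neither; one then evaluates each $\gamma(a_i)$, forms the product, and checks---after absorbing the normalizing factor $q^{2\ell_1-2\ell_4}$---that it reproduces the displayed value of $S_{\boldsymbol{\ell},\boldsymbol{m}}$ (for example (\ref{case1eval}) in Case~1). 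The various explicit powers of $q$ in the Section~\ref{evaluation} formulas combine with $q^{2\ell_1-2\ell_4}$ and with the $q$-powers built into $g(m,\ell)$ and $h(k)$ to give exactly $\prod_i\gamma(a_i)$. Two shortcuts cut the work: Case~3 is obtained from Case~2 by the involution $(\ell_2,m_3)\leftrightarrow(\ell_3,m_1)$, which on the pattern side merely transposes the boxes $a_2$ and $a_3$ and so needs no separate check; and in Case~4, Subcase~A the conjugate Gauss sums $\overline{g}(\ell_2)$, $\overline{g}(\ell_3)$ of~(\ref{case4aeval}) are reconciled with $g(a_2)$, $g(a_3)$ using standard relations among $n$-th order Gauss sums together with the fact that the whole expression vanishes unless $n\mid\ell_1+\ell_2+\ell_3$.

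Finally, one verifies that the five excluded strata are precisely the loci where this procedure fails, which is why they appear in the statement. The stratum $\ell_2=m_3+1$, $\ell_3=m_1+1$, $\ell_1=\ell_4$ is exactly Case~4, Subcase~B of Section~\ref{evaluation}, whose evaluation contains a genuine Jacobi sum---equivalently the quotient $g(\ell_1+\ell_2)\,g(\ell_1+\ell_3)/g(2\ell_1+\ell_2+\ell_3)$---which is not a monomial in the functions $\gamma$; the strata $\ell_1=\ell_4=m_2+1$ and $\ell_1=m_2+1$, $\ell_4=0$ land on a face where the rule defining $\gamma$ would mark $a_4$ or $a_1$ as simultaneously maximal and minimal, hence assign it the value $0$, while $S_{\boldsymbol{\ell},\boldsymbol{m}}$ is still given by the non-vanishing formula~(\ref{excep-contrib-again}) of Proposition~\ref{excep-prop} or by its degenerate analogues; and $\ell_1=\ell_4=0$, $\ell_2=\ell_3=0$ are corner strata where the string polytope degenerates further. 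The bulk of the labor---and the one genuine obstacle---is the combinatorial bookkeeping: correctly enumerating the sub-cases of the degenerate strata in which several $\ell_i$ vanish at once, and checking in each that the boundary data read off from the dictionary is consistent with the hypotheses of the corresponding case in Section~\ref{evaluation}, so that the product of the $\gamma(a_i)$ really does equal the listed value of $S_{\boldsymbol{\ell},\boldsymbol{m}}$. There is no conceptual difficulty once the dictionary of the first step is fixed, but care is needed at every face and corner of the string polytope.
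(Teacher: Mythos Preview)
Your approach is essentially identical to the paper's: the paper states only that ``the proof of Theorem~\ref{stringmatching} is a straightforward application of the results of the previous section on a case-by-case basis and is left to the reader,'' and your proposal is precisely such a case-by-case comparison of the explicit evaluations of $S_{\boldsymbol{\ell},\boldsymbol{m}}$ from Section~\ref{evaluation} against the product $G(\mathcal{P}_{\boldsymbol{\ell}})$, organized by the same strata. Your dictionary for the maximal/minimal conditions on the boxes is correct, and your use of the $(\ell_2,m_3)\leftrightarrow(\ell_3,m_1)$ symmetry and the Gauss-sum identities in Case~4A are the natural bookkeeping devices; one small caveat is that your explanations in the final paragraph for \emph{why} each excluded stratum fails are heuristic rather than precise (for instance, $\ell_1=\ell_4=m_2+1$ does not in general force any $a_i$ to be simultaneously maximal and minimal), but this is immaterial since the theorem only asserts equality outside those strata and, as the paper itself notes, the match in fact persists at some points of the excluded set.
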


In fact, the theorem holds for some Lusztig data $\boldsymbol{\ell}$ in the excluded cases, but because
it is difficult to state succinctly and in some cases depends on the divisibility of the $\ell_i$ by $n$, we 
omit the precise description of where the standard contribution fails. This information may be 
extracted from the evaluations of the previous section. The proof of Theorem~\ref{stringmatching} is a straightforward application of the
results of the previous section on a case-by-case basis and is left to the reader.

\bibliographystyle{acm}
\bibliography{maxparabolic}

\end{document}